\newtheorem{theorem}{Theorem}[section]
\newtheorem{lemma}[theorem]{Lemma}
\newtheorem{proposition}[theorem]{Proposition}
\newtheorem{corollary}[theorem]{Corollary}
\newtheorem{definition}[theorem]{Definition}
\theoremstyle{remark}
\newtheorem{remark}[theorem]{\it \bf{Remark}\/}
\numberwithin{equation}{section}
\def\section{\@startsection{section}{1}%
  \z@{1.5\linespacing\@plus\linespacing}{.5\linespacing}%
  {\normalfont\bfseries\large\centering}}
\newcommand{\be}{\begin{equation}}
\newcommand{\ee}{\end{equation}}
\newcommand{\bea}{\begin{eqnarray}}
\newcommand{\eea}{\end{eqnarray}}
\newcommand{\bee}{\begin{eqnarray*}}
\newcommand{\eee}{\end{eqnarray*}}
\def\RR{\mathbb{R}}
\def\fref#1{{\rm (\ref{#1})}}
\def\supess{\mathop{\operator@font Sup\,ess}}
\def\RR{\mathbb{R}}
\def\bar#1{{\overline #1}}
\def\fref#1{{\rm (\ref{#1})}}
\def\R2+{\RR ^2_+}
\def\lim{\mathop{\rm lim}}
\def\log{{\rm log}}
\def\Id{\rm{Id}}
\def\ba{\begin{array}}
\def\ea{\end{array}}
\def\ep{\epsilon}
\author[C. Collot]{Charles Collot}
\address{CNRS, and Laboratoire Analyse G\'eometrie Mod\'elisation, CY Cergy Paris Universit\'e, 33 Boulevard du Port, 95000 Cergy, France}
\email{ccollot@cyu.fr}
\author[P. Germain]{Pierre Germain}
\address{Courant Institute of Mathematical Sciences, New York University, 251 Mercer Street, New York, NY 10003, United States of America.}
\email{pgermain@cims.nyu.edu}
\title{Derivation of the homogeneous kinetic wave equation: longer time scales}
\keywords{Nonlinear Schr\"odinger equation, weak wave turbulence, random initial data, kinetic wave equation, Feynman graphs}
\subjclass[2010]{primary, 35Q55 70K70 35C20, secondary 35B34 81Q30} 
\begin{document}

\maketitle

\begin{abstract}
We consider the nonlinear Schr\"odinger equation set on a flat torus, in the regime which is conjectured to lead to the kinetic wave equation; in particular, the data are random, and spread up to high frequency in a weakly nonlinear regime. We pursue the investigations of our previous paper, and show that, in the case where the torus is the standard one, only the scaling considered there allows convergence of the Dyson series up to the kinetic time scale. We also show that, for generic quadratic dispersion relations (non rectangular tori), the Dyson series converges on significantly longer time scales; we are able to reach the kinetic time up to an arbitrarily small polynomial error for a larger set of scalings. These results show the importance of the exact structure of the dispersion relation, more specifically of equidistribution properties of some bilinear quantities akin to pair correlations derived from it.
\end{abstract}

\tableofcontents

\section{Introduction}

\subsection{From the nonlinear Schr\"odinger equation to the kinetic wave equation} 

We want to study the kinetic limit in the weakly turbulent regime for the nonlinear Schr\"odinger equation\footnote{It would be indifferent for the remainder of this paper to flip the sign in front of the nonlinearity, turning the equation from focusing to defocusing}
\begin{equation} \tag{NLS} \label{id:NLSgeneric}
\left\{ 
\begin{array}{l}
i \partial_t u - \Delta_H u = \lambda^2 |u|^2 u, \\
u(t=0) = u_0,
\end{array}
\right. \qquad x\in \mathbb T^d = \mathbb{R}^d / (2\pi \mathbb{Z}^d).
\end{equation}
The Hamiltonian of the equation is, for $\lambda>0$:
$$
\frac 12 \sum_{k\in \mathbb Z^d} |k|_H^2 |\widehat u(k)|^2-\frac{\lambda^2}{4} \int_{x\in \mathbb T^d} |u(x)|^4 \, dx,
$$
and we shall consider both the canonical Laplacian $|k|_{\Id}^2=|k|^2$, as well as general linear Hamiltonians with quadratic dispersion relation:
\be \label{def:H}
\widehat{-\Delta_H u}(k)=|k|_H^2 \hat u(k),\qquad |k|_H^2=\sum_{i,j=1}^d h_{i,j}k_ik_j=Hk\cdot k,
\ee
where the matrix $H=(h_{ij})_{1\leq i,j\leq d}\in \mathbb R^{d\times d}$ is symmetric positive definite. Weak turbulence is a general framework to study linear Hamiltonians perturbed by a weak nonlinearity; we will see that "generic" dispersion relations are better behaved compared to the standard case $H = \operatorname{Id}$. 

These more general dispersion relations can also arise from natural geometric considerations: namely, if one considers the nonlinear Schr\"odinger equation with standard Laplacian
\be  \label{id:NLS}
\left\{ 
\begin{array}{l}
i \partial_t u - \Delta u = \lambda^2 |u|^2 u, \\
u(t=0) = u_0,
\end{array}
\right.
\ee
set on the quotient of $\mathbb{R}^d$ by a lattice. Mapping the equation to the standard torus, it is equivalent to consider \fref{id:NLSgeneric} set on the standard torus $\mathbb{T}^d$ where $H = \operatorname{Id}$ for the standard torus, $H$ is diagonal for a rectangular torus, and $H$ is a general symmetric matrix for a general torus.\\

\noindent The initial data is chosen to be a Gaussian field of the form
\begin{equation}
\label{data}
u_0(x,\omega) = \frac{\epsilon^{d/2}}{(2\pi)^{d/2}} \sum_{k \in \mathbb{Z}^d} A(\epsilon k) G_k(\omega)e^{ik\cdot x},
\end{equation}
where $A \in \mathcal{C}^\infty_0(\mathbb{R}^d,[0,\infty))$ and $(G_k)_{k \in \mathbb{Z}^d}$ are independent standard centred complex Gaussians defined on a probability space $\Omega$ - notice that this normalization implies that, on average $\| u_0 \|_{L^2(\mathbb T^d)} \sim 1$, and, by Khinchine's inequality, $\| u_0 \|_{L^p(\mathbb T^d)} \sim_p 1$, for any $p < \infty$.

Heuristic derivations show that, as $\epsilon \lambda \to 0$ (weakly nonlinear regime) and $\epsilon \to 0$ (high frequency limit), scaling properly the expectation (denoted $\mathbb{E}$) of the square modulus of the Fourier coefficients of $u$, it satisfies
\begin{equation}
\label{quetzalresplendissant}
\epsilon^{-d} \mathbb{E} \left| \widehat{u}( \lfloor \epsilon k \rfloor) \left( T_{kin}t \right) \right|^2 \longrightarrow \rho(t,k), \qquad \qquad \mbox{for }T_{kin} = \frac{1}{\epsilon^2 \lambda^4}
\end{equation}
where $\rho$ solves the kinetic wave equation
\begin{equation}
\tag{KWE} \label{KWE}
\left\{
\begin{array}{l}
\partial_t \rho(t,k) = c_0 \mathcal{C}[\rho](k) ,\\
\rho(0,k) = |F(k)|^2,
\end{array}
\right.
\end{equation}
with the collision operator given by
\begin{equation}
\begin{split}
\label{collisionoperator}
\mathcal{C}[\rho](k) =  & \int_{(\mathbb{R}^d)^3} \delta(k+\ell-m-n) \delta(|k|_H^2 + |\ell|_H^2 - |m|_H^2 - |n|_H^2)\\
& \qquad \qquad \qquad \qquad  \rho(k) \rho(\ell) \rho(m) \rho(n) \left[ \frac{1}{\rho(k)} + \frac{1}{\rho(\ell)} - \frac{1}{\rho(m)} - \frac{1}{\rho(n)} \right] \,d\ell \,dm \,dn.
\end{split}
\end{equation}
and
\be \label{def:c0}
c_0 = 2^{2-2d} \pi^{1-2d}.
\ee

\subsection{Background} We survey briefly relevant references here, and refer to~\cite{CG} for a more detailed discussion.

\subsubsection{Heuristic derivation of~\eqref{KWE}} The kinetic wave equation appeared first in work by Peierls~\cite{Peierls}, and then Hasselmann~\cite{Hasselmann1,Hasselmann2}, before becoming a focus of the school around Zakharov~\cite{ZLF}. More modern introductory texts are~\cite{Nazarenko,NR}. 

\subsubsection{Rigorous derivation of~\eqref{KWE}} As far as rigorous mathematics go, a fundamental work is due to Lukkarinen and Spohn~\cite{LS1} in the framewok of stationary statistical mechanics, see also~\cite{Faou,Spohn2,LS2}.  The first rigorous (partial) derivation for data out of statistical equilibrium is due to Buckmaster, Germain, Hani and Shatah~\cite{BGHS}. Derivation on a time scale arbitrarily close to the kinetic time $T_{kin}$ was established in the previous article by the authors~\cite{CG}, and independently, by Deng and Hani~\cite{DengHani}. Both these references could only reach the kinetic time up to an arbitrarily polynomial loss in the case $T_{kin} \sim 1$, and in the present paper we are able to reach a larger regime of time scales $T_{kin} \gg 1$.

Deng and Hani~\cite{DengHani} and Deng, Nahmod and Yue~\cite{DengNahmodYue2}, proved long time existence results of solutions to \fref{id:NLSgeneric} with data \fref{data} in the case $H$ diagonal in the regimes. In the regime $T_{kin}\ll 1$ we obtain a lower bound on the Dyson series that diverges at a time larger than theirs and smaller than $T_{kin}$, supporting ill-posedness below the probabilistic scaling (see \cite{DengNahmodYue1}). In the regime $T_{kin}\gg 1$, we prove an upper bound that diverges right after the time they reach.

Another line of research focuses on the derivation of~\eqref{KWE} from~\eqref{id:NLS} to which random forcing and dissipation are added, see \cite{ZL,DK1,DK2,DK3}.

\subsubsection{Derivation of related collisional kinetic models} The Boltzmann equation is perhaps the most famous collisional kinetic model; its derivation has been mathematically elucidated~\cite{Lanford,GST}. The derivation of its quantum counterpart remains mostly open, see however~\cite{BCEP1,BCEP2, BCEP3}; note that this question is closely related to the derivation of the kinetic wave equation.

Another strand of research addresses linear dispersive models with random potential, from which one can derive the linear Boltzmann equation on a short time scale~\cite{Spohn1}, and the heat equation on a longer time scale~\cite{ESY1,ESY2}. 

Finally, let us mention the possibility of deriving Hamiltonian models for the nonlinear Schr\"odinger equation with deterministic data in the infinite volume, or big box, limit~\cite{FGH,BGHS0}.

\subsubsection{Nonlinear dispersive equations with random data} Considering~\eqref{id:NLS} with data~\eqref{data} is an instance of nonlinear dispersive equations with random data, which is an active research field, see for instance \cite{BT1,BT2,CO,NS}. However, the regime of interest in the present paper is different from most articles on the subject, which typically deals with Sobolev data, and where the nonlinearity is not necessarily weak with respect to the linear part of the equation, see however~\cite{dST}.

An important research direction deals with Gibbs measures and how they can be used to obtain a global control of the equation, see~\cite{Bourgain2,Bourgain3,DengNahmodYue1}.

\subsubsection{KAM theory for PDEs} This is another point of view on the dynamics of nonlinear dispersive equations on compact domains, or, more generally, of Hamiltonian systems without a dissipation, or relaxation mechanism, see~\cite{Grebert,Berti} for reviews. This approach yields stability results in Sobolev spaces (for instance), but it does not seem to apply to the regime considered in the present paper.

\subsection{The Dyson series} 
To study a solution to \fref{id:NLS}, we apply first Wick renormalisation for the phase: $u=e^{-it \lambda^2 \frac{2}{(2\pi)^d}\| u_0\|_{L^2}^2}v$. This is a usual transformation that cancels the leading order nonlinear effect, as $|u|^2u\approx 2 (2\pi)^{-d}\| u_0\|_{L^2}^2u$ in some appropriate sense, and which does not alter relevant statistics. The equation for $v$ is
$$
i\partial_t v=-\Delta v +\lambda^2 \left(|v|^2-\frac{2}{(2\pi)^d}\| v\|_{L^2}^2\right)v.
$$
Since linear dynamics acts at a shorter time scale than nonlinear effects (by the weak nonlinearity assumption), it is customary to find an approximate solution by iterating Duhamel's formula, giving the so-called Dyson series expansion. We define for this purpose the truncation operator
\be \label{def:P}
\mathcal{F} [P(a,b,c)](k) = \frac{1}{(2\pi)^d} \sum_{k_1+k_2+k_3=k}  \widehat a(k_1)\widehat b(k_2)\widehat c(k_3)  (1-\delta(k_1+k_2)-\delta(k_2+k_3)).
\ee
This gives the decomposition of the product
$$
abc =P(a,b,c) + \frac{1}{(2\pi)^d} \langle \overline{a},b\rangle c + \frac{1}{(2\pi)^d} a \langle \overline{b},c\rangle.
$$
A quasi-solution is then a series expansion
$$
u^{app} = e^{-it \lambda^2 \frac{2}{(2\pi)^d}\| u_0\|_{L^2}^2} v^{app} \qquad \mbox{with} \qquad v^{app} = \sum_{n=0}^N u^n.
$$
The terms are defined through the following iterative resolution scheme
\begin{equation}
\label{defun}
u^0 = e^{it\Delta} u_0 \qquad \mbox{and if $n \geq 1$,} 
 \left\{ \begin{array}{l l}\displaystyle i \partial_t u^n + \Delta u^n = \lambda^2 \sum_{ i+j+k=n-1}P (u^i,\overline{u^j},u^k), \\ u^n(0)=0, \end{array} \right..
\end{equation}
It is useful to give a formula for $u^n$ with terms that are encoded by Feynman interaction diagrams:
\be \label{id:expansionfeynman}
u^n= \sum_{G\in \mathcal G(n)}u_G,
\ee
see Section \ref{sectiongraph}. Above, $\mathcal G(n)$ is the set of interaction diagrams of depth $n$. Formal derivations of the kinetic wave equation (see for example \cite{Nazarenko,ZLF}) usually stop at the second iteration $u\approx u^0+u^1+u^2$.

\bigskip

\noindent Our aim in the present article is to investigate the convergence properties of the Dyson series. This series is always truncated, but in order for the kinetic limit to be justified, it seems necessary to have polynomial bounds making the radius of convergence be of order $T_{kin}$. Rough formal estimates lead to the guess that
\begin{equation}
\label{papillon}
u^n=O\left(\frac{t}{T_{kin}}\right)^{\frac{n}{2}}
\end{equation}
in an appropriate topology. This was proved in~\cite{CG, DengHani} for $t \ll T_{kin} \sim 1$, and our aim here is to investigate the regimes $T_{kin} \ll 1$ and $T_{kin} \gg 1$.

To understand the fundamental difference between these two regimes, consider the so-called four waves interaction between frequencies $\xi_1, \xi_2, \xi_3$ in~\eqref{id:NLSgeneric}, which generates frequency $\xi_4 = \xi_1 - \xi_2 + \xi_3$. This interaction can have a significant impact on the dynamics, on a time scale $T$, if the modulus of resonance
$$
 \Omega_H(\xi_1, \xi_2, \xi_3,\xi_4) = | \xi_1|^2_H - |\xi_2|^2_H + | \xi_3|^2_H - |\xi_4|^2_H 
$$
 is such that
$$
\left| \Omega_H(\xi_1, \xi_2, \xi_3,\xi_4) \right| \lesssim \frac{1}{T}.
$$
In the kinetic limit that we consider, it is thus clear that the distribution properties of $\Omega_H$ on a scale $\sim \frac{1}{T}$ will be a decisive factor. As a simple heuristic argument shows, any choice of $H$ is expected to give equidistribution if $T \ll 1$, but not for $T \gg 1$ - indeed, it is obvious $\Omega_{\operatorname{Id}}$ takes values in the integers, hence it cannot be equidistributed on a scale less than 1.

\subsection{Convergence for $T_{kin} \gg 1$, $H$ generic} \label{subsec:convergence}
Our first result is the existence of a solution with suitable estimates up to times arbitrarily close to the kinetic time, in the large kinetic time regime. It is obtained by proving first that the estimate~\eqref{papillon} holds.

\begin{theorem} \label{th:main} Consider for $d\geq 3$ the equation \eqref{id:NLSgeneric} with data~\eqref{data}, in the regime where 
\be \label{id:largekinetictime}
T_{kin} \geq \ep^{-\nu}
\ee
for any fixed $0<\nu \ll 1$. For almost all (in the Lebesgue sense) symmetric matrices $H\in \mathbb R^{d\times d}$ close to the identity matrix, the following holds true. 

For any $\kappa,\beta >0$, $s\geq 0$ and $N\in \mathbb N$, there exist $\epsilon^*,\mu>0$, such that, for any $0<\ep<\ep^*$, there exists an exceptional set of size $\epsilon^\mu$ over the complement of which, for $T = \ep^\beta \min( T_{kin}, \epsilon^{2-d})$:
\begin{itemize}
\item The iterates $u^n$ enjoy the bound for $0\leq n\leq N$ and any $m\geq 0$:
\be \label{bd:iteratesHgeneric1}
\| u^n \|_{C_t([0,T],H^m(\mathbb T^d))} \lesssim_{H,N,\kappa} \epsilon^{-m-\kappa} \left( \frac{T}{T_{kin}} \right)^{n/2},
\ee
where $H^m(\mathbb T^d)$ stands for the usual Sobolev space.
\item There exists a unique smooth solution $u$ on the time interval $[0,T]$, with:
\be \label{def:omega}
u=e^{i\lambda^2\omega(t)}\sum_{n=0}^N u^n+\tilde u,  \qquad \qquad \omega(t)=\frac{2}{(2\pi)^d}\| u_0\|_{L^2(\mathbb T^d)}^2t,
\ee
where the remainder satisfies
\be \label{bd:iteratesHgeneric2}
\| \tilde u \|_{C_t([0,T],H^s(\mathbb T^d))} \lesssim_{H,N,\kappa}  \epsilon^{-s-\kappa}\left( \frac{T}{T_{kin}} \right)^{(N+1)/2}.
\ee
\end{itemize}
\end{theorem}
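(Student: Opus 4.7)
The plan is to establish \eqref{bd:iteratesHgeneric1} by a moment-method analysis of the Feynman diagram expansion \eqref{id:expansionfeynman}, with the main new input being a Diophantine equidistribution property of the quadratic form $\Omega_H$ that holds for almost every $H$. Once the iterate bounds are in place, the remainder $\tilde u$ is obtained by a short-time contraction argument.

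First, I would expand $u^n = \sum_{G \in \mathcal G(n)} u_G$ and write each $u_G$ explicitly. Each diagram $G$ produces an integral over the simplex in time of a sum over frequencies $(k_v)$ constrained by the vertices of $G$, with Gaussian coefficients $G_{k_v}$ at the leaves and an oscillatory factor $e^{is\Omega_H}$ at each internal vertex. Integrating the time simplex gives nested factors of the form $\frac{e^{it\Omega_H}-1}{i\Omega_H}$, i.e. essentially $\min(t, 1/|\Omega_H|)$. To bound $\|u^n\|_{H^m}$ I would compute $\mathbb{E}\|u^n(t)\|_{H^m}^{2p}$ for large $p$; by Wick's theorem this yields a sum over pairings of the $2pn$ Gaussians of the "doubled" diagram, which can be reorganized into a sum over paired Feynman diagrams with each pairing enforcing a frequency identification. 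The required bound reduces to a counting estimate on these constrained lattice sums weighted by the resonance factors.

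The hard part will be this counting estimate. For each paired diagram, the sum has the schematic form
\begin{equation*}
S_T(G) = \sum_{(k_v)\in (\mathbb Z^d)^V \textup{ constrained}} \prod_{e \in E_{\textup{int}}} \min\bigl(T,\tfrac{1}{|\Omega_H^{(e)}|}\bigr)\,\prod_{v\in V_{\textup{leaf}}}|A(\epsilon k_v)|^2.
\end{equation*}
For $H=\operatorname{Id}$ the values of $\Omega_H$ lie in $\mathbb Z$, which forces $T \lesssim 1$; this is precisely the obstruction noted in the introduction. For generic $H$, I would prove that for every $\delta>0$ the set of $H$ (near $\operatorname{Id}$) for which the bilinear quantity $\Omega_H$ fails to be sufficiently equidistributed at scale $1/T$ up to $T \leq T_{kin}\epsilon^{-\delta}$ has measure $\ll 1$. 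Concretely, one controls the small-value set $\{(k_1,\dots,k_4):|\Omega_H|<1/T\}$ by a Markov/Borel--Cantelli argument on the $H$-parameter, after averaging $|\Omega_H|^{-1}$ in $H$, which for nondegenerate quadratic forms produces the desired polynomial gain. Combined with standard power-counting in the diagram (each internal edge contributes essentially the volume of a slab $\{|\Omega_H|<1/T\}$, each pairing reduces the number of free $k_v$), this yields the iterate bound $\|u^n\|\lesssim (T/T_{kin})^{n/2}$ times an $\epsilon^{-m-\kappa}$ loss, once one factors in the Sobolev weight $\langle k\rangle^{2m}$ and the combinatorial count $|\mathcal G(n)|\lesssim C^n n!$ which is absorbed by taking $p$ large.

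Next, I would promote the moment bound to a pathwise bound on $[0,T]$ outside an exceptional set of probability $\epsilon^\mu$: by Gaussian hypercontractivity $\|u^n\|_{L^{2p}_\omega}$ controls arbitrary higher moments of $u^n$ in a Gaussian chaos of fixed order, and Chebyshev together with a time/frequency net (using the trivial pointwise Lipschitz bound in $t$ at cost $\epsilon^{-C}$) yields \eqref{bd:iteratesHgeneric1}. Finally, for \eqref{bd:iteratesHgeneric2} I would write the equation satisfied by $\tilde u$, namely a cubic NLS with the Wick-renormalized nonlinearity and a source built trilinearly from $(u^0,\dots,u^N,\tilde u)$, and run a continuity argument in $C_t H^s$. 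On the time interval $[0,T]$ with $T = \epsilon^\beta \min(T_{kin}, \epsilon^{2-d})$ the linear Duhamel term has size $(T/T_{kin})^{(N+1)/2}\epsilon^{-s-\kappa}$ by \eqref{bd:iteratesHgeneric1}, and the cubic self-interaction of $\tilde u$ is controlled by $\lambda^2 T \|\tilde u\|_{H^s}^2$ times the energy $\|\tilde u\|_{L^2}$, which is small because $\lambda^2 T \epsilon^{-2s}\ll 1$ in the chosen regime; this closes the bootstrap and delivers the desired solution and remainder estimate.
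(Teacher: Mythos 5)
Your plan captures the right overall architecture (diagram expansion, averaging over $H$, power counting, then a contraction for the remainder), and it is structurally close to the paper's proof. However, there are two genuine gaps.

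\smallskip
\emph{Linearized dynamics around the approximate solution.} When you write the equation for $\tilde u$, the right-hand side is not just a source built from $(u^0,\dots,u^N)$ plus a cubic self-interaction of $\tilde u$; it also contains the linearized operator $\mathcal L(\tilde u)$, with leading term $\lambda^2|v^{app}|^2\tilde u$ (and the Wick counterterms), and a bilinear term $\mathcal B(\tilde u)$. Your proposal estimates only the cubic self-interaction and the source term, but never the linearized operator. This is actually the hardest estimate in the remainder analysis: deterministically, $\lambda^2 T\,\||v^{app}|^2\|_{L^\infty}$ is enormous, and one must exploit the Gaussian-chaos structure of $v^{app}$ to prove an operator-norm bound of order $\sqrt{T/T_{kin}}$, which requires another full layer of graph analysis (this is the bound \eqref{bd:mathfrakL}). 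Without that, the contraction does not close. Similarly, running the fixed point in $C_tH^s$ as you propose would require $s>d/2$ for the algebra property; the theorem covers $s\geq 0$, which is why the paper works in the time-localized Bourgain spaces $X^{s,b}_{\epsilon,T}$ where Strichartz-type estimates replace the algebra property.

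\smallskip
\emph{The slab-volume heuristic is not the correct generic count.} You say that each resonance factor ``contributes essentially the volume of a slab $\{|\Omega_H|<1/T\}$.'' Even for generic $H$, that is false: the slab has volume $\sim L^{d-1}\delta$ (take $L=\epsilon^{-1}$, $\delta=1/T$), but the actual generic lattice-point count, established by geometry of numbers and circle method after averaging over $H$, is $L^{d-1}\sqrt\delta$ for the degree-one constraints $|H\zeta\cdot\xi|<\delta$ and $||\eta|^2_H-a|<\delta$ (see \eqref{NT1}--\eqref{NT2}), \emph{larger} than the volume. Only the degree-two bilinear constraint $|H\xi\cdot\eta-a|<\delta$ obeys a volume-type bound $L^{2d-2}\delta$ (see \eqref{NT3}). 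The degree-one/degree-two distinction and the $\sqrt\delta$ versus $\delta$ exponents are exactly what makes the power counting close to $(T/T_{kin})^{n}$ for $\mathbb{E}\|u^n\|_{L^2}^2$ independently of the vertex degrees, and a naive volume argument would give the wrong answer (or, at best, be a non-rigorous coincidence). So while your Borel--Cantelli idea in the $H$ variable is the correct mechanism, you need to actually prove and then correctly \emph{use} these three differently-scaled number-theoretic estimates, not a single slab-volume bound.

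\smallskip
Apart from these, the moment-method/hypercontractivity route to pathwise control is a viable alternative to the paper's Bienaym\'e--Chebyshev on second moments, and the framework of resolvent expansions of paired Feynman diagrams is the same.
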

We refer to Section~\ref{renard} for a proof of this theorem. It follows mostly the scheme laid out in~\cite{CG}, but new number theoretical results are needed. They have to do with the distribution properties of the functions
$$
H \xi_0 \cdot \xi, \qquad |\xi|_H^2, \qquad \mbox{and} \qquad H\xi \cdot \eta
$$
(which are functions on $\mathbb{R}^d$, $\mathbb{R}^d$, and $\mathbb{R}^{2d}$ respectively) over integers. More precisely, we prove in Section~\ref{sec:numbergeneric} that the estimates, for $L^{2-d} < \delta < 1$, 
\begin{align*}
& \# \{ \xi, \; |\xi| \leq L, \; |H\xi_0 \cdot \xi| < \delta \} \lesssim_{H,\kappa} L^{d-1} \sqrt{\delta} \\
& \# \{ \eta, \; |\eta| < L , \; ||\eta|_H^2 - a| < \delta \} \lesssim_{H,\kappa} L^{d-1} \sqrt{\delta} \\
& \# \left\{\eta,\xi \in \mathbb Z^d \mbox{ with } |\eta|,|\xi|\leq L, \quad | H\xi \cdot \eta- a|\leq \delta \right\} \lesssim_{H,\kappa} L^{2d-2} \delta
\end{align*}
hold for $1\leq |\xi_0| \leq L$, $a \in \mathbb{R}$, generically in $H$. The parameters $L$ and $\delta$ should be thought of as $\epsilon^{-1}$ and $T^{-1}$ respectively. The above estimates allow to control resonant interactions in the kinetic limit under consideration. They are proved using tools of analytic number theory: geometry of numbers and the circle method, combined with averaging over the matrix $H$.

\medskip

\noindent The above theorem provides the desired control of~\eqref{id:NLSgeneric} almost up to the kinetic time scale. The natural next step would be to prove that the dynamics, on this time scale, are approximated by the kinetic wave equation (KWE). Following the proof in~\cite{CG}, the only missing item is a quantitative equidistribution result for the function $H \xi \cdot \eta$, extending the result of Sarnak~\cite{Sarnak} to include error terms as in the result of Bourgain~\cite{Bourgain4} for generic diagonal forms.

\begin{remark} \label{rk:dengnahmodyue}

Deng, Nahmod and Yue \cite{DengNahmodYue2} were able to show that for $H$ diagonal, solutions to:
\be \label{id:NLS2}
\left\{ 
\begin{array}{l l}
i\partial_t u-\Delta_Hu=|u|^{p-1}u, \\ u_0(x)=\ep^{\alpha}\sum_{k\in \mathbb Z^d} e^{ik.x}A(\ep k)g_k(\omega),
\end{array}
\right.
\qquad \alpha=s+\frac d2
\ee
exist with large probability up to time $T\sim \ep^{\kappa}\ep^{-(p-1)(s-s_{pr})}$ for $s>s_{pr}$, where $s_{pr}=-(p-1)^{-1}$ is the probabilistic scaling exponent. Renormalising the solution to map it to our problem \fref{id:NLSgeneric}, Theorem \ref{id:largekinetictime} gives existence up to the larger time $T\sim \ep^{\kappa}\ep^{-2(p-1)(s-s_{pr})}$ for $H$ generic and $p=3$, provided $-\frac 12 <s<\frac d4-1$. This answers positively the conjecture they made in Remark 1.8 of \cite{DengNahmodYue2}.

\end{remark}

\subsection{Failure of convergence up to the kinetic time scale} \label{subsec:failure}

\subsubsection{The case $T_{kin} \gg 1$}
The estimate~\eqref{papillon} fails if $T_{kin} \gg 1$ and $t\gg 1$ for the Laplacian (dispersion relation $H = \operatorname{Id}$). We will, for simplicity, assume that $A$ takes the form
\be \label{id:assumptionA}
A(k)=\chi(|k|) \ \mbox{is a smooth non-negative cut-off: } \chi(r)=1 \mbox{ for } r\leq 1 \mbox{ and } \chi(r)=0 \mbox{ for } r\geq 2.
\ee

\begin{proposition} \label{pr:tgeq1}
Assume $d\geq 2$, $H = \operatorname{Id}$, and~\eqref{id:assumptionA}. For all $n\geq 1$, three constants $0<c(n)<C(n)$ and $C'(n)>0$ exist such that for any interaction diagram $G$ of depth $n$, for all $t\geq \langle \log \epsilon \rangle^{C'(n)}$ there holds for $\epsilon$ small enough for $d\geq 3$:
$$
c(n)\left(\frac{t^2}{T_{kin}}\right)^n \leq \ \mathbb E \| u^n_{G} \|_{L^2}^2\  \leq \ C(n)\left(\frac{t^2}{T_{kin}}\right)^n.
$$
For $d=2$ the same result holds with the estimate:
$$
c(n)\left(\frac{t^2}{T_{kin}}\right)^n\langle \log \epsilon \rangle^{n} \ \leq \ \mathbb E \| u^n_{G} \|_{L^2}^2\  \leq \ C(n)\left(\frac{t^2}{T_{kin}}\right)^n\langle \log \epsilon \rangle^{n}.
$$

\end{proposition}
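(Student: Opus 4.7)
The plan is to compute $\mathbb E\|u^n_G\|_{L^2}^2$ directly from the explicit $(2n+1)$-linear expression for $u^n_G$ in the Gaussian coefficients $G_k$, and to isolate the principal resonant contribution. Applying Wick's theorem to $\mathbb E|\widehat{u^n_G}(k)|^2$ yields a sum over pairings $\pi$ of the Gaussian indices of $u^n_G$ with those of $\overline{u^n_G}$; the \emph{principal pairing} $\pi_*$ identifies each leaf with its own conjugate and its contribution takes the schematic form
\[
\mathcal M_* \,\sim\, \lambda^{4n}\ep^{d(2n+1)}\sum_{\xi\in(\ZZ^d)^{2n+1}}\prod_{\ell}|A(\ep\xi_\ell)|^2\prod_{v=1}^n\bigl|J_t(\Omega_v(\xi))\bigr|^2,
\]
where $\Omega_v$ is the modulus of resonance at the $v$-th internal vertex of $G$ and $J_t$ the associated iterated time integral. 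Crucially, since $H=\Id$, each $\Omega_v\in\ZZ$; a direct computation at each vertex with incoming frequencies $(\xi_1,\xi_2,\xi_3)$ gives $\Omega_v=2(\xi_1+\xi_2)\cdot(\xi_2+\xi_3)$, so, after discarding the self-interactions removed by $P$, the exact resonance $\Omega_v=0$ amounts to the orthogonality $(\xi_1+\xi_2)\perp(\xi_2+\xi_3)$ in $\ZZ^d$.

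I then split the principal sum into the fully resonant part $\Omega_1=\cdots=\Omega_n=0$ and its complement. On the complement, the bound $|J_t(m)|^2\lesssim \min(t^2,1/m^2)$ together with a Fej\'er-kernel type argument yields only $O(t)$ per vertex rather than $t^2$, hence an $O(1/t)$ loss per vertex. On the fully resonant part, $|J_t|^2=t^2$ at every vertex and one reduces to counting lattice points on the orthogonality variety $\{(a,b)\in(\ZZ^d)^2: a\cdot b=0,\,|a|,|b|\lesssim L\}$: writing $a=g\tilde a$ with $\gcd(\tilde a)=1$, the sublattice $a^\perp\cap\ZZ^d$ is generated by a primitive vector of norm $|\tilde a|=|a|/g$, so the count reduces to $\sum_{|a|\leq L}gL^{d-1}/|a|$, which is of order $L^{2d-2}$ for $d\geq 3$ and $L^2\log L$ for $d=2$ (the log coming from $\sum_{g\leq L}1/g$). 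Combining with an $L^d$-dimensional spectator frequency at each vertex, the joint resonance variety has cardinality $\sim L^{d(2n+1)-2n}$ (times $\log^n L$ in $d=2$), and with $L=\ep^{-1}$ and the amplitude factors $|A|^2$ uniformly positive on a set of positive measure one obtains
\[
\mathcal M_*\bigr|_{\Omega\equiv 0}\sim \lambda^{4n}\ep^{d(2n+1)}\cdot L^{d(2n+1)-2n}\cdot t^{2n}=(t^2\ep^2\lambda^4)^n=(t^2/T_{kin})^n,
\]
with an additional $\log^n\ep$ factor in $d=2$. Since the integrand is pointwise nonnegative, no cancellation can occur, yielding both the upper and lower bound for the principal pairing.

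Finally, each non-principal pairing $\pi\neq\pi_*$ reshuffles leaves between the two copies of $G$, producing additional arithmetic identifications on the frequencies which are incompatible with the full orthogonality pattern; this lowers the cardinality of the joint resonance variety by at least a factor $L^{-1}=\ep$ and hence produces a strictly smaller contribution. The logarithmic threshold $t\geq\langle\log\ep\rangle^{C'(n)}$ is precisely what absorbs the polylogarithmic losses coming from non-principal pairings and from nonzero integer resonances, as well as the Wick-renormalisation phase $e^{i\lambda^2\omega(t)}$. The main obstacle is the systematic verification that every $\pi\neq\pi_*$ introduces such an $\ep$-saving uniformly in $n$: this is a graph-by-graph combinatorial check that tracks how each permutation of the Wick contractions collapses or redistributes the vertex arithmetic, and it is the analogue for $H=\Id$ of the ``pair-graph'' analysis of~\cite{CG}. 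A secondary difficulty is avoiding spurious logarithmic losses in the lattice point count when $d\geq 3$, which is handled via the sublattice description of $a^\perp\cap\ZZ^d$ above.
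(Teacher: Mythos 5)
Your overall architecture (resonant/non-resonant splitting, the identity $\Omega_v\in\mathbb Z$, the lattice-point count for the orthogonality variety, and the identification of a principal/ladder pairing saturating the count) matches the paper's, and your count $L^{d(2n+1)-2n}$ (with the extra $\log^n$ in $d=2$) agrees with Lemmas \ref{lem:tgeq1upper}--\ref{lem:tgeq1lower}. But there is a genuine gap in how you dispose of the remaining contributions. You claim that \emph{every} non-principal pairing loses at least a factor $L^{-1}=\epsilon$ on the joint resonance variety, and you yourself flag that verifying this is ``the main obstacle''; as stated the claim is in fact false in $d=2$: the belt pairing analysed in Proposition \ref{pr:tgeq1examples} contributes $\approx (t^2/T_{kin})^n\,\epsilon^{d-2}$ for $2n>d$, i.e.\ in dimension $2$ it is of the \emph{same} polynomial order as the ladder, smaller only by logarithms. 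So the strict $\epsilon$-gain you rely on for both the upper bound and (via non-cancellation) the lower bound cannot be established uniformly, and your positivity argument only covers the principal pairing with matched time orderings, leaving the cross terms $\ell\neq\ell'$ and all other pairings capable, in principle, of cancelling your main term.

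The paper sidesteps this entirely with a structural observation you are missing: on the fully resonant set $\mathcal R$ (all $\Omega_k=\Omega_k'=0$) the iterated time integral equals the same positive constant $Ct^{2n}$ for \emph{every} pairing $P$ and every pair of orderings $(\ell,\ell')$, and the amplitudes $|A|^2$ are nonnegative, so \emph{all} fully resonant contributions have the same sign (Lemma \ref{lem:resonant}). Consequently the lower bound needs only one saturating pairing (the ladder, via the lower bound \fref{bd:degree2resonantlower}), and the upper bound needs only a uniform, non-strict bound on the resonant count valid for every pairing, which is supplied by the spanning-tree/degree-$0$-$1$-$2$ algorithm of Theorem \ref{th:spanning} together with the counting estimates \fref{bd:degree1quadraticresonant}, \fref{bd:degree1linearresonant}, \fref{bd:degree2resonant}; no pairing-by-pairing comparison is required. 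The non-resonant remainder is then shown in Lemma \ref{lem:nonresonant} to be smaller by a single factor $1/t$ (up to $\langle\log\epsilon\rangle^{2n+2}$), which the threshold $t\geq\langle\log\epsilon\rangle^{C'(n)}$ absorbs; note also that your schematic factorisation $\prod_v|J_t(\Omega_v)|^2$ is not exact off the resonant set, since the time slices are coupled by $\sum_i s_i=t$ --- the paper handles this with the resolvent identity \fref{bd:resolventidentity} and a case analysis on which partial sums $\sum_{i\geq k}\Omega_i$ hit the nearest integer. To repair your proof you should replace the claimed $\epsilon$-gain for non-principal pairings by the sign argument on $\mathcal R$ plus a uniform resonant-count upper bound over all pairings, and treat the non-resonant part with a resolvent-type bound rather than a per-vertex Fej\'er estimate.
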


This proposition is proved in Section~\ref{sectionlarge}.
The basic mechanism behind the above statement a failure of equidistribution for the function $(\xi, \eta) \mapsto \xi \cdot \eta$. Indeed, this function obviously takes integer values, making it not equidistributed on scales $\ll 1$. Quasi-resonances contribute to lower order compared to exact resonances, which is a mechanism for the failure of kinetic approximation already noticed in the physics literature, see the discussion in Section 6.5 of \cite{Nazarenko}.

This failure of equidistribution allows us to isolate specific pairings of each graph $G$ for which the lower bound is achieved. Then, all other pairings are shown to have either a compatible sign, or to give a smaller contribution.

Such a divergence from the kinetic equation scaling $(\frac{t^2}{T_{kin}})^{\frac n2}$ is also expected for diagonal dispersion relations $A$ (i.e. rectangular tori, with different bounds though). This could easily be investigated with the same techniques. The above proposition is proved for $A$ a cut-off solely for the sake of clarity and would adapt to other functions. Deng and Hani \cite{DengHani}, and Deng, Nahmod and Yue \cite{DengNahmodYue2} proved existence up to time $T\sim \sqrt{T_{kin}}$ for $T_{kin}\gg 1$. Thus, Proposition \ref{pr:tgeq1} suggests that energy transfer between Fourier modes indeed starts at time $\sqrt{T_{kin}}$ and that their result is optimal. 

\subsubsection{The case $T_{kin} \ll 1$} In the regime $t\ll 1$, we find a bound $(\frac{1}{T_{kin}})^{n/2}$ (up to a fixed factor), showing again the violation of the $(\frac{t}{T_{kin}})^{n/2}$ bound. This happens \emph{for any dispersion relation}.
 
\begin{proposition} \label{pr:tleq1}
Consider any symmetric positive definite $H$ for the dispersion relation \fref{def:H}. Assume $d\geq 2$, \fref{id:assumptionA}, and pick any $1-\frac{1}{2d+1}<c_2<c_1<1$. Then for all $n$ with $2n\geq d+2$, there exist a graph $G_n\in \mathcal G(n)$, two constants $0<c(n)<C(n)$ such that for $\epsilon$ small enough for all $t \in [\ep^{c_1},\ep^{c_2}]$:
$$
c(n)\left(\frac{1}{T_{kin}}\right)^nt \ep^{d-1} \ \leq \ \mathbb E \| u^n_{G_n} \|_{L^2}^2\  \leq \ C(n)\left(\frac{1}{T_{kin}}\right)^nt \ep^{d-1}.
$$
\end{proposition}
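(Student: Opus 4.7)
The plan parallels that of Proposition~\ref{pr:tgeq1} (Section~\ref{sectionlarge}), adapted to the complementary regime where both $T_{kin}$ and $t$ are small. The idea is to select a specific depth-$n$ interaction diagram $G_n\in\mathcal G(n)$ for which the Wick expansion of $\mathbb E\|u^n_{G_n}\|_{L^2}^2$ can be bracketed between constants times $(1/T_{kin})^n t \epsilon^{d-1}$. Unfolding the Duhamel iteration~\eqref{defun} yields the Fourier representation
\[
\widehat{u^n_{G_n}}(t,k) = C_n\,\lambda^{2n}\,\epsilon^{d(2n+1)/2}\sum_{k = \xi_1-\xi_2+\cdots+\xi_{2n+1}}\prod_j A(\epsilon\xi_j)\, G^{\#}_{\xi_1}\cdots G^{\#}_{\xi_{2n+1}}\,\mathcal I_n(t,\xi),
\]
where $\mathcal I_n$ is an $n$-fold nested oscillatory integral over the simplex $0\le s_1\le\cdots\le s_n\le t$ whose phases are the moduli of resonance $\Omega_m$ at each vertex of the diagram.

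Applying Wick's theorem to $\sum_k\mathbb E|\widehat{u^n_{G_n}}(t,k)|^2$ decomposes the expression into a sum over pairings of the $(2n{+}1)$ Gaussians with the $(2n{+}1)$ conjugate Gaussians. The contribution of the \emph{diagonal} pairing---where each Gaussian is matched with its exact counterpart in the complex conjugate factor, forcing the two frequency tuples to coincide---is sign-definite, equal to
\[
\lambda^{4n}\,\epsilon^{d(2n+1)}\sum_{\xi_1,\ldots,\xi_{2n+1}}\prod_j|A(\epsilon\xi_j)|^2\,|\mathcal I_n(t,\xi)|^2,
\]
so it automatically provides a lower bound on $\mathbb E\|u^n_{G_n}\|_{L^2}^2$. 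The heart of the proof is the asymptotic evaluation of this sum: performing the $n$ nested time integrations expresses $\mathcal I_n$ as a combination of terms of the form $\prod_m(e^{it\sigma_m}-1)/(i\sigma_m)$, where the $\sigma_m$ range over partial sums of the $\Omega_\ell$. In the regime $t\in[\epsilon^{c_1},\epsilon^{c_2}]$, I expect the leading contribution to come from configurations where a single such $\sigma$---typically the total $\Omega_1+\cdots+\Omega_n$, which reduces to a mass-shell-type expression---is near-resonant ($|\sigma|\lesssim 1/t$) while the other partial sums are generic of size $\sim\epsilon^{-2}$. The Plancherel-type identity $\int\bigl|\int_0^t e^{is\Omega}\,ds\bigr|^2\,d\Omega=2\pi t$ applied to the near-resonant phase produces the factor $t$, the restriction to the associated $(d-1)$-dimensional mass shell in frequency space yields $\epsilon^{d-1}$, and each of the $n-1$ non-resonant partial sums contributes $\epsilon^2$ on average; combined with the prefactor $\lambda^{4n}\epsilon^{d(2n+1)}$, this reproduces the claimed $(1/T_{kin})^n t \epsilon^{d-1}$.

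The main obstacle is twofold. First, one must choose $G_n$ so that this ``one near-resonance $+$ $(d-1)$-dimensional shell'' contribution indeed controls the full diagonal sum---in particular, dominates the naive kinetic-scaling $(t/T_{kin})^n$ that would come from fully non-resonant configurations, the Taylor-scaling $(\lambda^2 t)^{2n}$, and all intermediate partial-resonance configurations; the hypothesis $2n\geq d+2$ enters precisely to make this phase-space counting favorable. Second, both the subleading terms in the expansion of $|\mathcal I_n|^2$ and all non-diagonal pairings must be carefully bounded to establish the matching upper bound, which requires the combinatorial bookkeeping framework of~\cite{CG} and Section~\ref{sectionlarge}. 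The restriction $c_2>1-\frac{1}{2d+1}$ guarantees both that the Plancherel mechanism is effective on the required scale and that the needed lattice-point estimates on quadric hypersurfaces (counting near-solutions of $|k|_H^2=\mathrm{const}$) hold uniformly in the matrix $H$, sidestepping the generic-$H$ assumption of Theorem~\ref{th:main}.
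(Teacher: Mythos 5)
The proposal misidentifies the pairing responsible for the dominant contribution. You take the \emph{diagonal} (ladder) pairing---each $G_\xi$ matched with its conjugate counterpart---as both the source of the lower bound and the pairing whose asymptotics you aim to compute. In the paper's analysis of $G_n=G^*$, however, the ladder pairing has $n_0=n$ degree-zero vertices and $n_2=n$ degree-two vertices, so in the regime $t\in[\ep^{c_1},\ep^{c_2}]$ with $c_1,c_2$ close to $1$ the algorithmic estimate yields
$$
\mathcal F(\overline{\ell^*},\ell^*,P_{ladder})\ \lesssim\ \frac{1}{T_{kin}^n}\,\ep^{-n}\,t^{2n}\,\ep^{-\kappa},
$$
which is \emph{strictly smaller} than $\frac{1}{T_{kin}^n}t\ep^{d-1}$ once $n\geq d+1$ (indeed $\ep^{-n}t^{2n}\ll t\ep^{d-1}$ as $t\to\ep^{1-}$). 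So the ladder cannot deliver the claimed lower bound, and your ``one near-resonant partial sum $+$ $(d-1)$-dimensional mass shell $+$ $\ep^2$ per generic partial sum'' accounting for $|\mathcal I_n|^2$ does not reproduce the claimed scaling. The pairing that actually saturates $\frac{1}{T_{kin}^n}t\ep^{d-1}$ is the \emph{belt} pairing $P_{belt}$, which has a fundamentally different structure: the associated paired graph has $2n-2$ degree-one vertices of linear type that all share the \emph{same} dual variable $\zeta=2H(\tilde\xi_3-\tilde\xi_2)$, plus one degree-zero and one degree-two vertex. Consequently every $\Omega_k,\Omega_k'$ is a multiple of $\zeta$, and the dominant contribution arises from the region $|\zeta|\lesssim\ep^\kappa t^{-1}$ where all $2n$ phases are \emph{simultaneously} small --- a collective resonance completely unlike the single-near-resonance picture you describe. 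The Plancherel identity you invoke does appear in the paper, but in the belt analysis (applied after Poisson summation and continuum approximation in the $\bar\xi_1$ variable), not in the diagonal sum.

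This misidentification also breaks your upper-bound strategy: if the belt pairing exceeds the diagonal one in this regime, then ``carefully bounding non-diagonal pairings by the diagonal'' cannot close the argument. What the paper actually proves is the opposite inequality --- that $P_{belt}$ is extremal among $P\in\mathcal P(\overline{\ell^*},\ell^*)$, and that \emph{every other} pairing contributes $\lesssim\frac{1}{T_{kin}^n}t\ep^{d-1+c}$ for some $c>0$. Establishing this requires a refined classification based on Definition~\ref{def:degreeonecases} (linear vs.\ quadratic degree-one vertices and their dual variables) together with the weighted resolvent estimates \fref{bd:degre1lineaire2}--\fref{bd:degre22}, which propagate gain factors of $t$ each time a dual variable of a linear vertex depends on a later free frequency. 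That machinery, and the identification of the belt pairing itself, are the essential missing ideas here.
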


This proposition is proved in Section~\ref{sectionsmall}. The bound above is of a different nature than that of Proposition \ref{pr:tgeq1}. It holds true for a specific graph $G_n$, and wether it holds true for all graphs $G\in \mathcal G(n)$ is an open question. For this specific $G_n$, we identify a specific pairing giving the lower bound; we call it the belt pairing due to the shape of the graph. This pathological paired graph was already noticed in \cite{DengHani} where it was asked wether cancellations coming from other pairings could erase this term. We prove here a sharp bound for this term, and show that \emph{all} other pairings give a lower order contribution, establishing that no cancellation from other pairings occur. This analysis goes deeper into the structure of graphs, and is a first step towards the full classification of graphs which is needed to reach the kinetic time scale. Note that we did not try to reach optimality for the polynomial bounds for the time interval $t\in [\ep,\ep^{1-\frac{1}{2d+1}}]$ as they would require a much more refined analysis due to number theoretic issues. Again, the result would adapt to other $A$'s.

Deng and Hani \cite{DengHani} proved existence up to the nonlinear time $T\sim \lambda^{-2}$ in this regime. The above Proposition supports a pathological behaviour for the solution around or prior to time $T\sim\ep \gg \lambda^{-2}$. Remark that the case $T_{kin}\ll 1$ corresponds to a supercritical regularity $s<s_{pr}$ for the normalisation \fref{id:NLS2}. Hence this Proposition supports ill-posedness for \fref{id:NLS2} for Sobolev data at supercritical regularity, that is, for initial data $\sum_{k\in \mathbb Z^d} e^{ik.x}\frac{1}{\langle k \rangle^\alpha}g_k(\omega)$ with $\alpha=s+\frac d2$ and $s<s_{pr}$. It is a first result toward almost sure ill-posedness as conjectured in Remark 1.10 of \cite{DengNahmodYue2}.

\subsection{General dispersion relations}
We believe that the above results shed light on the expected validity of the kinetic wave equation for nonlinear dispersive equations of the type
$$
i\partial_t u - \tau(D) u = \lambda^2 |u|^2 u,
$$
where $\tau(D)$ is the Fourier multiplier with symbol $\tau(\xi)$, and where this equation is set on the torus, with data~\eqref{data}. In order to simplify the discussion, let us assume that $\tau(D)$ behaves quadratically, so that the scaling of the above agrees with that of~\eqref{id:NLSgeneric}. The natural expectation is that it behaves similarly to that model, suggesting the following open question.

\medskip

\noindent
\underline{Open question} Prove that
\begin{itemize}
\item if $T_{kin} \ll 1$, the kinetic wave equation is never obtained in the limit under consideration;
\item if $T_{kin} \gg 1$, the kinetic wave equation appears in the limit, provided $\tau$ is generic.
\end{itemize}

\medskip

One should be able to obtain corresponding statements for more general nonlinear dispersive PDEs set on the torus. 

How about more general domains? This question is fascinating, but it also seems out of reach for the moment.

\subsection{Acknowledgements} C. Collot is supported by the ERC-2014-CoG 646650 SingWave. P. Germain is supported by the NSF grant DMS-1501019, by the Simons collaborative grant on weak turbulence, and by the Center for Stability, Instability and Turbulence (NYUAD). Part of this work was done when C. Collot was working at New York University, and he thanks the Courant Institute.

The authors are grateful to Simon Meyerson and Jacek Jendrej for very helpful conversations about Section~\ref{sec:numbergeneric}.

\subsection{Notations}

Throughout the paper, $\kappa>0$ will denote a positive constant that can be made arbitrarily small, whose value will possibly change from one line to another. We choose for the space and space time Fourier transform the normalizations
\begin{align*}
& \widehat{f}(k) = \frac{1}{(2\pi)^{d/2}} \int_{\mathbb{T}^d} f(x) e^{-i k \cdot x}\,dx \\
& \widetilde{f}(\tau,k) = \frac{1}{(2\pi)^{\frac{d+1}{2}}} \int_{\mathbb{T}^d} f(x) e^{-i (\tau t + k \cdot x)}\,dx\,dt.
\end{align*}

\section{Graph analysis}

\label{sectiongraph}

We recall in this section how to represent graphically the expansion \fref{defun} for the solution. This graph analysis is also used to compute key quantities such as norms of the quasi-solution in various function spaces and the operator of the linearised dynamics close to it. It will be used throughout the paper. This analysis is classical, so we repeat without proof its adaptation to the present problem that was done in \cite{CG} and refer to this paper for details. The analysis in \cite{CG} borrowed heavily from \cite{LS2} and we refer to this paper for references.

\subsection{Dyson series, Feynman interaction diagrams}

We first describe the expansion \fref{defun}, explaining the following notation (used in \fref{id:expansionfeynman}):
$$
u^n=\sum_{G\in \mathcal G(n)}u_G, \quad \quad \quad u_G=\sum_{o\in \mathcal O(G)}u_{\ell}.
$$
Above, the first formula encodes iterations of Duhamel formula \fref{defun} via diagrams, and the second formula associates to a diagram all possible time ordering which will provide us with an analytical formula. A Feynman interaction diagram of depth $n$ is an oriented planar tree $G=\{\mathcal V,\mathcal E,p)$ where:
\begin{itemize}
\item $\mathcal V=\mathcal V_R\cup \mathcal V_i \cup \mathcal V_0$ is the collection of vertices. $\mathcal V_R=\{v_R\}$ contains the root vertex (representing $\widehat{u_G}(k_R)$). $\mathcal V_0\neq \emptyset$ contains the initial vertices (representing the initial datum $\widehat{u_0}(k_{v_0}$)). $\mathcal V_i$ contains the $n$ interaction vertices (representing an iteration of the nonlinearity).
\item $ \mathcal E\subset \mathcal V^2$ is the set of edges (representing a free evolution $e^{is\Delta}$), and satisfies:
\begin{itemize}
\item[(i)] There exists a unique $v\in \mathcal V_0\cup \mathcal V_i$ such that $(v,v_R)\in \mathcal E$. There are no $v'\in \mathcal V$ with $(v_R,v')\in \mathcal E$.
\item[(ii)] For all $v\in \mathcal V_i$, there exist four unique $(v_i)_{-1\leq i \leq 1}\in (\mathcal V_i\cup \mathcal V_0)^3$ and $v'\in \mathcal V_R \cup \mathcal V_i$ (three vertices below $v$ and one above) such that $(v_i,v)\in \mathcal E$ for $i=-1,0,1$ and $(v,v')\in \mathcal E$.
\item[(iii)] For all $v_0\in \mathcal V_0$, there exists a unique $v'\in \mathcal V_R \cup \mathcal V_i$.
\end{itemize}
\item $p:\mathcal V_i\times \mathcal V_0\rightarrow \{-1,0,1\}$ is the position (represents the position with respect to the vertex above, $-1/0/1$ for left/center/right). For (i) above, $p(v)=0$; for (ii) $p(v_{-1})=-1$, $p(v_{0})=0$ and $p(v_{1})=1$.
\end{itemize}
With this definition, one can check that the tree is entirely connected and that $p$ is uniquely determined. The set of interaction diagram of depth $n$ is denoted by $\mathcal G(n)$.

One problem with these graph theoretic notations is that they are not convenient for enumerations. So next, we order the time slices; given $G\in \mathcal G(n)$, we say $o:\mathcal V\rightarrow \{1,...,n\}$ is an admissible ordering if it is a bijection such that, keeping the notation used above in $(ii)$: $o(v)>o(v_i)$ for $i=1,2,3$. The set of admissible ordering is denoted by $\mathcal O(G)$.

Given $G\in \mathcal G(n)$ and $o\in \mathcal O(G)$, we can now give an analytical expression. Define the index set $I_n=\{1,2,...,n\}$. Then there exists a unique so called interaction history $\ell=\ell(G,o)\in I_{2n-1}\times I_{2n-3}\times ...\times I_1$ encoding uniquely a diagram $G(\ell)$ which represents $G$ with ordered time slices.

More precisely, following \cite{CG,LS2}, a graph with $n$ interaction has the total time $t$ divided into $n+1$ time slices of length $s_i$, $i=0,1,...,n$ whose index label the time ordering: from bottom to top in the graph. Associated with a time slice $i$ there are $1+2(n-i)$ "waves", with three of them merging into a single one for the next time slice. Each wave in each time slice is represented by an edge $e_{i,j}$ in the graph, with index set $ \mathcal I_{n}=\{(i,j), \ 0 \leq i \leq n,1\leq j\leq 1+2(n-i)\}$. Edges of the $i$-th time slice are related to edges of the $i+1$-th as follows: an edge with index $(i,k)$ for $k< \ell_{i+1}$ is matched with the edge with index $(i+1,k)$ above it; the three edges with indexes $ (i,\ell_{i+1})$, $(i,\ell_{i+1}+1)$ and $(i,\ell_{i+1}+2)$ merge through the vertex $v_{i+1}$ to form the edge with index $(i+1,\ell_i)$; and an edge with index $(i,k)$ for $k>\ell_{i+1}+2$ is matched with the edge with index $(i+1,k-2)$. Complex conjugation is encoded by the \textit{parity} $\sigma_{i,j}\in \{ \pm 1\}$ associated to each edge. Parity is defined recursively from top to bottom: $\sigma_{n,1}=+1$ or $\sigma_{n,1}=-1$ if the graph corresponds to $u^n$ or to $\bar u^n$. Then, parity is kept unchanged from an edge to the one below in absence of merging, and in case of a merging we require that $\sigma_{i,\ell_{i+1}}=-1$, $\sigma_{i,\ell_{i+1}+1}=\sigma_{i+1,\ell_i}$ and $\sigma_{i,\ell_{i+1}+2}=+1$. 

At the initial time slice $s_0$, below each of the edges of index $(0,j)$ for $1\leq j \leq 2n+1$ an \textit{initial vertex} $v_{0,j}$ is placed. At the final time slice slice $s_n$, a \textit{root vertex} $v_{R}$ is placed. Vertices which are neither initial vertices nor root vertices are called \textit{interaction vertices}.

The graph obtained this way is a tree, and edges are oriented from bottom to top. 
The natural ordering between vertices is: $v\leq v'$ if there is an oriented path from $v$ to $v'$. We define for each interaction vertex $v_i$ the set of its initial vertices below as $In(v_i)=\{ j\in (0,2n+1), \ v_{0,j} \leq v_i\}$.
An example of a diagram $G$ and of a possible ordered version of $G$ is given below:\\

\begin{center}
\includegraphics[width=14cm]{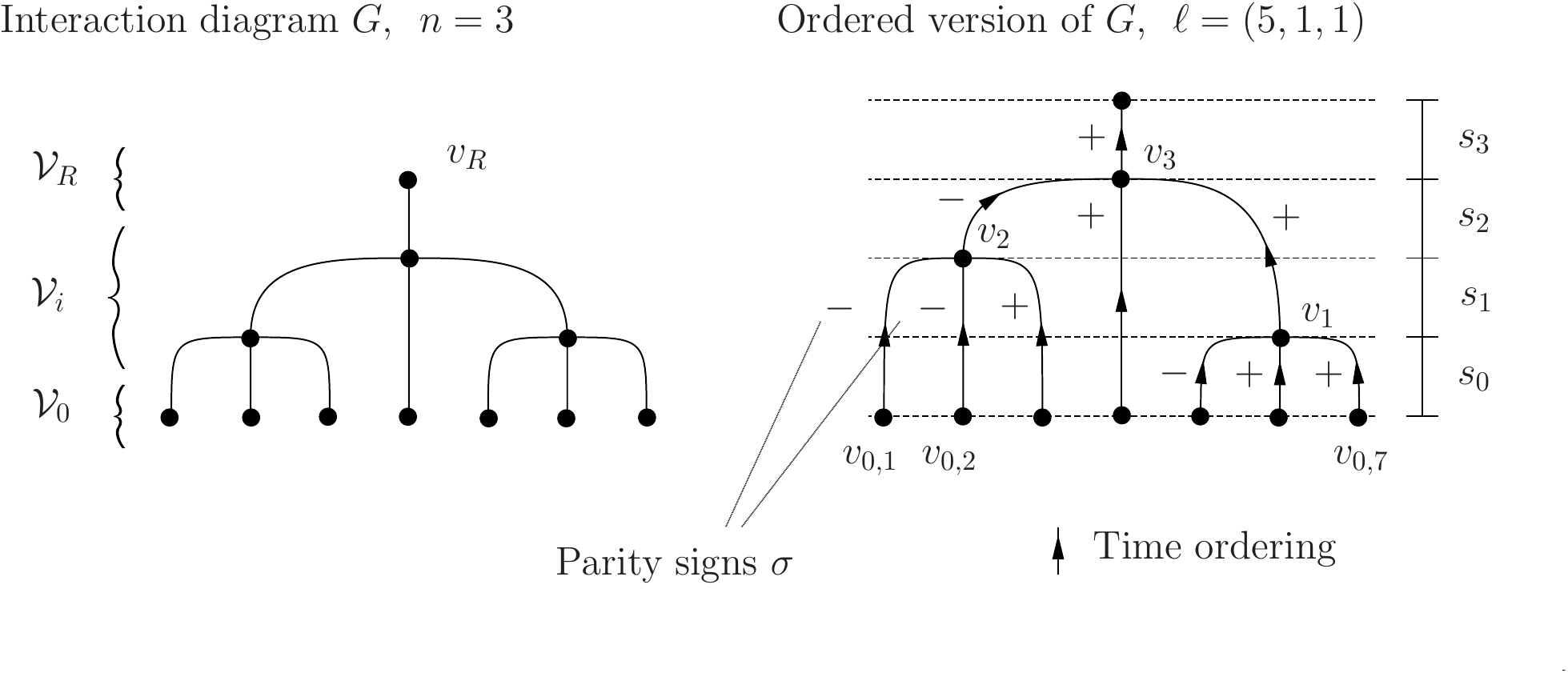}
\end{center}

We now associate to each edge $e_{i,j}$ in the extended graph a frequency $k_{i,j}$. At each vertex corresponds a $\delta$ function ensuring that the sum of the frequencies associated to the edges below is equal to that of the frequencies for edges above, and that frequencies associated with Wick ordering are removed. These are the Kirchhoff rules for the graph. At the final vertex $v_R$, we impose the Dirac $\delta(k_{n,1}-k_R)$ where $k_R$ denotes the total output frequency. This gives the following formula for $\widehat{u_{\ell}} (k)$, where $p$ is the number of vertices $v_i$ whose edge above them carries a $-1$ parity sign
\begin{equation} 
\label{colvert}
\begin{split}
& \widehat{u_{\ell}}(t,k_R) = e^{-it |k_R|^2} i^n (-1)^p \left(\frac{-i\lambda^2}{(2\pi)^d}\right)^n\sum_{\underline{k} \in \mathbb{Z}^{d \# \mathcal I_n}} \\
& \qquad \qquad \qquad  \int_{\mathbb R_+^{n+1}} \prod_{i\in I_{2n+1} }\widehat{u_0}(k_{0,i},\sigma_{0,i})  \prod_{k=1}^{n}e^{-i\Omega_k \sum_{j=0}^{k-1}s_j} \Delta_{\ell}( \underline{k}, k_R)\delta(t-\sum_{i=0}^{n}s_i) d \underline{s}
\end{split}
\end{equation}
where we used the shorthand notations 
\begin{itemize}
\item $\underline{k} =  (k_{i,j})_{(i,j) \in \mathcal{I}_n} \in \mathbb Z^{d \# \mathcal I_n}$, $\mathcal I_n$ being the total number of edges in $G(\ell)$. Each edge of index $(i,j)$ having an associated momentum $k_{i,j}$.
\item $\underline{s} = (s_0,\dots,s_n) \in \mathbb{R}_+^{n+1}$ representing the ordered time slices.
\item $\widehat{u_0}(k,+1)=\widehat{u_0}(k)$ and $\widehat u_0(k,-1)= \widehat{ \overline{ u_0}}(k) = \overline{\widehat{ u_0}}(-k)$, $(\sigma_{0,i})_{1\leq i \leq 2n+1}$ representing the parities of initial vertices.
\item $\Omega_k=|k_{k-1,\ell_k+2}|^2-|k_{k-1,\ell_k}|^2+\sigma_{k,\ell_k}\left( |k_{k-1,\ell_k+1} |^2- |k_{k,\ell_k}|^2\right)$ is the resonance modulus corresponding to the vertex $v_k$
\end{itemize}
and, finally, $ \Delta_{\ell}$ encapsulates the Kirchhoff law and frequency truncation due to Wick renormalisation at each vertex:
$$
\Delta_{\ell}(\underline{k},k_R) = \Delta_\ell^{K}(\underline{k}) \Delta_{\ell}^{W}(\underline{k}) \delta(k_{n,1} - k_R),
$$
with
\be
\label{id:defDeltaln} \Delta_\ell^{K}(\underline{k}) = \prod_{i=1}^n \left\{ \left(\prod_{j=1}^{\ell_i-1}\delta(k_{i,j}-k_{i-1,j})\right) \delta \left(k_{i,\ell_i}-\sum_{j=0}^2 k_{i-1,\ell_i+j} \right)\prod_{j=\ell_i+1}^{1+2(n-i)} \delta(k_{i,j}-k_{i-1,j+2}) \right\} ,
\ee
\be \label{id:defDeltaWick}
\Delta_{\ell}^W(\underline{k}) = \prod_{i=1}^n \left(1-\delta(k_{i-1,\ell_i}+k_{i-1,\ell_i+2})-\delta(k_{i-1,\ell_i+1}+k_{i-1,\ell_i+1-\sigma_{i,\ell_i}})\right).
\ee

\subsection{Computing averaged quantities, paired diagrams} \label{subsec:paireddiagrams}

We now describe how to represent diagrammatically the expectation of $\| u_G \|_{L^2}^2$. Consider two summands as in~\eqref{colvert} corresponding to two histories $\ell$ and $\ell'$. Each is represented by a tree, that we call the \textit{left subtree} and the \textit{right subtree}. We adopt the notation that the vertices, time variables, etc...  of the right subtree carry primes, and those of the left subtree do not. We will sometimes concatenate the initial frequencies (and parities, etc...) $(k_{0,i})$ and $(k'_{0,i})$ into a single vector as follows
$$
(\widetilde{k}_{0,1}, \dots , \widetilde{k}_{0,4n+2}) = (k_{0,1},\dots,k_{0,2n+1}, k'_{0,1},\dots,k'_{0,2n+1}).
$$
The expectation will force two by two equalities for initial frequencies. Indeed, by Wick's formula, 
\be \label{id:wickformula}
\mathbb E\left( \prod_{i=1}^{4n+2} \widehat {u_0}(k_{0,i},\sigma_{0,i}) \right) =\ep^{d(2n+1)}\sum_{P\in \mathcal P(n)} 
\prod_{\{i,j\}\in P} |A(\epsilon \widetilde k_{0,i})|^2,
\ee
where the so-called pairing $P$ is a partition of $I_{4n+2}$ into pairs satisfying $\sigma_{0,i}\sigma_{0,j}=-1$ if $\{i,j\}\in P$, and $\mathcal P(\ell,\ell')$ denotes the set of such pairings.

To each interactions histories $\ell$ and $\ell'$ and pairing $P$, we will associate a \emph{paired diagram} the following way. The top parities of the left and right subtrees are set to $\sigma_{n,1}=1$ and $\sigma_{n,1}' = -1$ respectively. The root vertices $v_R$ and $v_R'$ of the left and right subtrees are merged into a single root vertex $v_R$. An edge $e_R$ is attached to this root vertex, with frequency $k_R = 0$. We demand that Kirchhoff's rule applies at $v_R$, namely: $k_{n,1} + k'_{n,1} = 0$. For each $\{i,j\} \in P$, we create a vertex $v_{\{i,j\}}$; then we add two edges $e_{-1,i}$ and $e_{-1,j}$, called \textit{upper pairing edges}, between $v_{0,i}$ or $v_{0,j}$ respectively, and $v_{\{i,j\}}$. We attach an edge $e_{\{i,j\}}$, called \emph{root pairing edge}, to $v_{\{i,j\}}$, and set its frequency $k_{\{i,j\}} = 0$. Finally, we demand that Kirchhoff's rule applies at $v_{\{i,j\}}$ and $v_{0,i}$: namely $k_{0,i} + k_{0,j} = 0$ if $\{i,j\} \in P$, and $k_{-1,i} = k_{0,i}$ for all $i$.

This construction corresponds to the following picture for the paired diagram:

\begin{center}
\includegraphics[width=15cm]{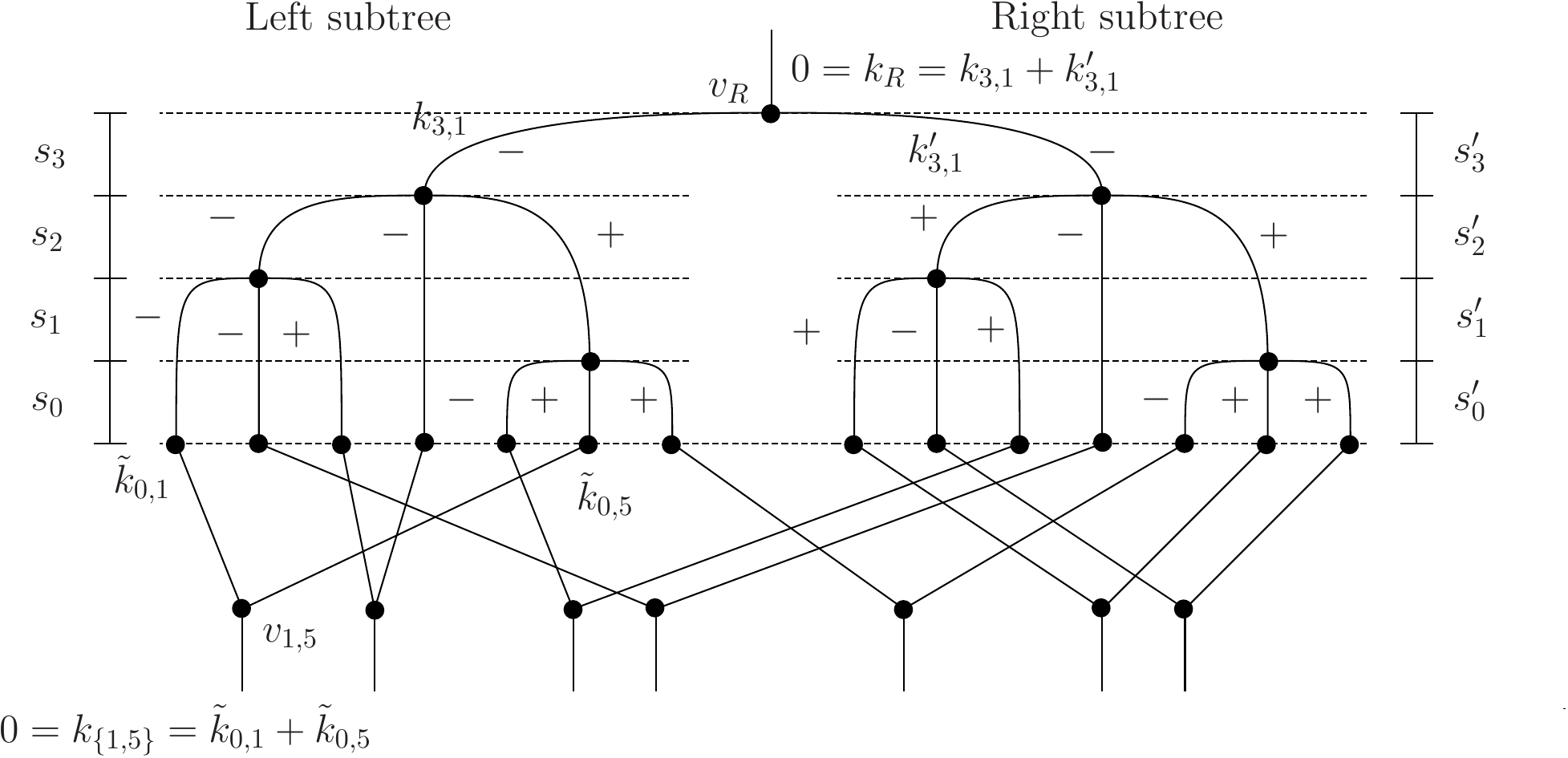}
\end{center}

We extend to paired diagrams the definition of the natural time ordering which was defined for ordered graphs. For two edges $e$, $e'$, we write $e\leq e'$ if:
\begin{itemize}
\item $e=e_{\{ i,j\}}$ is a root pairing edge and $e'$ is any other edge.
\item $e=e_{-1,i}$ is an upper pairing edge, and $e'\neq e_{\{i,j\}}$ is not a root pairing edge.
\item $e$ and $e'$ are not pairing edges, $e$ belongs to the interaction graph on the right and $e'$ to that on the left.
\item $e$ and $e'$ are not pairing edges and belong to the same interaction graph (left or right), and end at two time slices ($s_i$ and $s_j$ or $s_i'$ and $s_j'$) with $i\leq j$.
\end{itemize}
Notice that edges that lie below a same vertex are considered as equal for this order. This ordering corresponds to the following fact: to compute upper or lower bounds for the quantities represented by paired diagrams, we will use algorithms that will consider iteratively vertices over the graph according to the natural time ordering of the time slices, first over the graph on the right, then over the graph on the left.

From \fref{id:defDeltaln}, and from Wick formula \fref{id:wickformula}, there holds the identity:

\be \label{LpLpexpression}
\mathbb E \| u^n_{G} \|_{L^2}^2 =\sum_{P,\ell,\ell'} \mathcal F(\ell,\ell',P),
\ee
where $\ell$ and $\ell'$ give the interaction history for the left and right subgraphs (for $\hat{\overline{u_\ell}}$ and $\hat u_\ell$ respectively), that is, a special choice of the ordering of time slices for $G$; and $P \in \mathcal{P}$ is the pairing which is considered, and
\begin{align}
\label{id:mathcalF}\mathcal F(\ell,\ell',P)&= \frac{\lambda^{4n}\ep^{d(2n+1)}}{(2\pi)^{2dn}} \sum_{\substack{\underline{k},\underline{k'} \in \mathbb{Z}^{d(2 \# \mathcal{I}_n)}\\ \underline{k_{-1}} \in \mathbb{Z}^{d(4n+2)}}} \int_{\mathbb R_+^{n+1}\times \mathbb R_+^{n+1}} \prod_{k=1}^{n}e^{-i\Omega_k \sum_{j=0}^{k-1}s_j}\prod_{k=1}^{n}e^{-i\Omega_k' \sum_{j=0}^{k-1}s_j'} \\
\nonumber & \qquad \qquad  \qquad  \qquad  \qquad \prod_{\{i,j\}\in P} |A(\epsilon \widetilde k_{0,i})|^2   \Delta_{\ell,\ell',P}(\underline{k},\underline{k}',0)\delta \left(t-\sum_{i=0}^{n}s_i \right)\delta \left(t-\sum_{i=0}^{n}s_i' \right) \, d\underline{s} \, d \underline{s}' 
\end{align}
where 
\begin{itemize}
\item $\underline{k} =  (k_{i,j})_{(i,j) \in \mathcal{I}_n}  \in \mathbb{Z}^{d(\# \mathcal I_n + 2N+1)}$ and similarly for $\underline{k'}$. $\underline{k_{-1}} =  (k_{-1,i})_{i \in \{1,\dots,4n+2\}}$.
\item $\underline{s} = (s_0,\dots,s_n) \in \mathbb{R}_+^{n+1}$ and similarly for $\underline{s'}$,
\item $\Omega_k$ was defined previously below \fref{colvert}, and $\Omega_k'$ is defined in an analogous fashion,
\end{itemize}
and $\Delta_{\ell,\ell',P}$ records the Kirchhoff rules and frequency truncation of Wick renormalisation:
\begin{align*}
\Delta_{\ell,\ell',P}(\underline{k},\underline{k}',\underline{k_{-1}},k_R) = \Delta_{\ell,\ell',P}^K(\underline{k},\underline{k}',\underline{k_{-1}},k_R)  \Delta_{\ell,\ell'}^W(\underline{k},\underline{k}') 
\end{align*}
with
$$
 \Delta_{\ell,\ell'}^W(\underline{k},\underline{k}') =  \Delta_{\ell}^W(\underline{k})\Delta_{\ell'}^W(\underline{k'}),
$$
$$
\Delta_{\ell,\ell',P}^K(\underline{k},\underline{k}',\underline{k_{-1}},k_R) = \Delta_{\ell}^K(\underline{k}) \Delta_{\ell'}^K(\underline{k'}) \delta(k_R - k_{n,1} - k'_{n,1}) \Delta_P(\underline{k}),
$$
where $\Delta_P$ records the Kirchhoff laws due to the pairing:
$$
\Delta_P(\underline{k},\underline{k'}) = \prod_{\{i,j\}\in P}\delta(\widetilde k_{0,i}- k_{-1,i}) \delta(\widetilde k_{0,j}- k_{-1,j}) \delta(k_{-1,i}+k_{-1,j}).
$$

\subsection{Solving Kirchhoff's laws}

The difficulty in the study of $\mathcal F(\ell,\ell',P)$ defined by \fref{id:mathcalF} is that momenta in the graph are constrained via the Kirchhoff laws of the graph. This is analytically represented by the support of $\Delta_{\ell,\ell',P}^K$, and that the oscillatory phases take into account the natural time ordering in the graph. We aim at finding a minimal collection of edges from which, given their associated wave number $k_{i,j}$, one can retrieve the values of all other wave numbers. The first trivial simplification is to identify two edges when they are continued unchanged (no merging) from one slice to another: $(e_{i,k},k_{i,k})$ is identified with $(e_{i+1,k},k_{i+1,k})$ if $k<\ell_{i+1}$, and $(e_{i,k},k_{i,k})$ is identified with $(e_{i+1,k},k_{i+1,k-2})$ if $k>\ell_{i+1}+2$.
The vector space given by the Kirchhoff rules for the $k_{i,j}$ has dimension $2n+1$: indeed, there are $2(2n+1)$ initial frequencies, and $2n+1$ pairings (the condition that the frequencies add up to zero being induced by the pairing). Therefore, we will choose $2n+1$ \textit{free edges}, from whose frequencies all other frequencies can be reconstructed; these free edges will be determined by a spanning tree which will be constructed shortly.
Edges that are not free will be called \textit{integrated}. In addition, frequencies of integrated edges will only depend on the frequencies of free edges appearing after for the natural time ordering, allowing for an iterative algorithm to estimate $\mathcal F(\ell,\ell',P)$.\\

We sum up in the next Theorem the outcome of the standard strategy, explained in \cite{CG}, to determine such edges, and giving suitable properties. An example is given in the proof of Lemma \ref{lem:resonant}. The following Theorem collects the results obtained in Subsection 4.4 of \cite{CG}, as an adaptation of \cite{LS2}. Pictures of examples are given in Subsubsection \ref{subsubsec:example}.

\begin{theorem}[see \cite{CG}] \label{th:spanning}

Consider a frequency graph $G(\ell,\ell')$ with pairing $P$. There exists a complete integration of the frequency constraints \fref{id:defDeltaln} in the following sense. There exists a subset of free interaction edges:
$$
\mathcal E^f =\{e^f_1,...,e^f_{2n+1} \},
$$
with associated frequencies $(\xi_i)_{1\leq i \leq 2n+1}\in \mathbb Z^{d(2n+1)}$, satisfying the following properties.
\begin{itemize}
\item \emph{Basis property}: On the vectorial subspace of $\mathbb R^{2 \# \mathcal I_n+4n+2}$ determined by the Kirchhoff rules encoded in $\Delta_{\ell,\ell'}$, the family $(k^f_i)_{1\leq i \leq 2n + 1} $ is a basis in the following sense: the map $(\xi_i)_{1 \leq i \leq 2n+1} \to (\underline{k},\underline{k'},\underline{k_{-1}})$ is a linear bijection.
\item \emph{Time ordering for the spanning}: if $1\leq i\leq j\leq 2n+1$ then $e^f_i\leq e^f_j$ for the natural time ordering of the graph. Furthermore, if $e$ is not a free edge, then its associated wave number can be uniquely written as:
$$
k_e=\sum_{1\leq i \leq 2n+1} c_{i,e} \xi_i \quad \mbox{with} \quad c_{i,e}\in \{-1,0,1\},
$$
and $c_{i,e}=0$ whenever $e^f_i < e$ for the natural time ordering of the diagram.
\item \emph{Location of the last free edge}: The last free edge $e_{2n+1}^f$ is the one at the bottom left of the root vertex.
\item \emph{Properties of the vertices}: any interaction vertex $v$ is of one of the following forms:
\begin{itemize}
\item there is no free edge below $v$, and we say $v$ is of degree zero.
\item there is one free edge below $v$, and we say $v$ is of degree one.
\item there are two free edges below $v$, and we say $v$ is of degree two.
\end{itemize}
Denoting by $n_0$, $n_1$ and $n_2$ the number of degree zero, one and two vertices there holds:
\be \label{id:ni}
n_0+n_1+n_2=2n, \ n_1+2n_2=2n.
\ee
Moreover,
\be \label{id:nondegenn0}
\left|
\begin{array}{l l}
\mbox{either }n_0\geq 1,\\
 \mbox{or $v_1'$ is of degree one and there exist } (c_i)_{1<i\leq 2n+1}\mbox{ such that }|\Delta^W_{\ell,\ell'}(\underline{k},\underline{k}') |\lesssim \delta \left(\xi_1-\sum_{i=2}^{2n+1}c_i\xi_i\right)
\end{array}
\right.
\ee
\item \emph{Formula for degree one vertices}: for all $1\leq i \leq n$, assume that only one of the edges below $v_i$ is free, and denote by $\xi_k$ its wave number. Then there exist $\sigma,\sigma'\in \{\pm1\}$, $\zeta \in \mathbb Z^d$ that is a linear combination of free momenta $\xi_{k'}$ for $k'>k$, and $\omega \in \mathbb R$ a number depending only on $(\xi_{k'})_{k'>k}$ such that:
\be \label{id:formuladegre1omega}
\Omega_i=\sigma |\xi_k|^2_H+\sigma'|\xi_k+\zeta |^2_H+\omega.
\ee
In addition, if $\sigma\sigma'=-1$, then there exists $\eta \in \mathbb Z^d$ that is a linear combination of free momenta $\xi_{k'}$ for $k'>k$ such that:
\be  \label{id:wicknondegeneracy}
|\Delta^W_{\ell,\ell'}(\underline{k},\underline{k}') |\lesssim |1-\delta (\zeta)-\delta(\xi_k+\eta)|
\ee

The same property holds for resonance moduli $(\Omega_i')_{1\leq i \leq n}$ of the right subtree.
\item \emph{Formula for degree two vertices}: for all $1\leq i \leq n$, assume that two of the edges below $e_i$ are free, and denote by $\xi_k$ and $\xi_{k+1}$ their wave numbers. Then there exist $\tilde \xi \in \mathbb Z^d$ that is only a sum of free moment $\xi_{k'}$ for $k'>k$ such that:
\be \label{id:formuladegre2omega}
\Omega_i=2H\left(\xi_k+\tilde \xi \right)\cdot \left(\xi_{k+1} +\tilde \xi \right) \quad \mbox{or}\quad \Omega_i=2H\left(\xi_k+\tilde \xi \right)\cdot \left(\xi_k+\xi_{k+1} \right) .
\ee
The same formula applies for resonance moduli $(\Omega_i')_{1\leq i \leq n}$ of the right subtree.
\end{itemize}

\end{theorem}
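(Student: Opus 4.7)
My plan is to follow the algorithm of Subsection 4.4 of \cite{CG}, which itself adapts the construction of \cite{LS2} to the paired-diagram setting. The free edges are chosen by walking through the paired diagram in the reverse of the natural time ordering. First, I declare the top-left edge of the left subtree to be free with frequency $\xi_{2n+1}$; Kirchhoff at the root then fixes the frequency of the top edge of the right subtree. Then I traverse the interaction vertices $v_1',\dots,v_n'$ of the right subtree, followed by $v_1,\dots,v_n$ of the left subtree, in descending order of time slice. At each such vertex $v$, the edge immediately above has a frequency already expressed as a signed sum of previously chosen $\xi_j$'s, and Kirchhoff at $v$ relates this to a signed sum of the three edges below; among those three below, I declare as free exactly those whose frequencies are not already forced through the pairing $P$ by previously encountered initial vertices. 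This produces $0$, $1$, or $2$ new free frequencies at $v$, thereby classifying it as degree zero, one, or two.

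The basis property then follows from dimension counting: the Kirchhoff system on $\mathbb{Z}^{d(2\#\mathcal I_n+4n+2)}$ has solution space of dimension $2n+1$ (initial edges contribute $4n+2$, pairings impose $2n+1$ relations, and interactions propagate upward deterministically), so the surjection built by the algorithm is a linear bijection. The time-ordering property for the spanning and the location of the last free edge at the bottom-left of the root are manifest from the top-down nature of the construction. The degree counts \eqref{id:ni} come from bookkeeping the $2n$ interaction vertices against the $2n$ non-root free edges. For the non-degeneracy condition \eqref{id:nondegenn0}: if $n_0 = 0$ then in particular $v_1'$ (the earliest in time) is degree one or two; in the degree-one case the two non-free edges below $v_1'$ are paired initial edges whose frequencies are forced to previously chosen values, and the Wick factor $\Delta^W$ at $v_1'$ excludes precisely the pairings that would let $\xi_1$ appear free, yielding the claimed relation $|\Delta^W|\lesssim\delta(\xi_1-\sum_{i\geq 2} c_i \xi_i)$.

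Finally, the formulas \eqref{id:formuladegre1omega} and \eqref{id:formuladegre2omega} for $\Omega_k$ are obtained by substituting Kirchhoff at $v_k$ into the defining formula for $\Omega_k$ and expanding the quadratic form $|\cdot|_H^2$. At a degree-two vertex, the pure quadratic contributions in the two new free variables $\xi_k,\xi_{k+1}$ cancel thanks to the sign structure dictated by the parities $\sigma_{i,j}$, leaving exactly the bilinear form \eqref{id:formuladegre2omega} with $\tilde\xi$ recording the shift coming from the previously determined frequencies. At a degree-one vertex with new free edge $\xi_k$, direct expansion gives \eqref{id:formuladegre1omega}; in the case $\sigma\sigma'=-1$ the $|\xi_k|_H^2$ contributions cancel so that $\Omega_k$ is linear in $\xi_k$, and the Wick factor at $v_k$ then forbids the degenerate cases $\zeta=0$ and $\xi_k+\eta=0$, yielding \eqref{id:wicknondegeneracy}. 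The main obstacle throughout is bookkeeping of parities and Wick-induced sign exclusions rather than any genuine mathematical difficulty, which is why the full argument is best packaged as in \cite{CG,LS2}.
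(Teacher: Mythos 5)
Your overall plan coincides with the paper's treatment: the paper does not reprove Theorem \ref{th:spanning} but imports it from Subsection 4.4 of \cite{CG} (itself an adaptation of \cite{LS2}), and your proposal is a sketch of exactly that spanning-tree construction, with the formulas \fref{id:formuladegre1omega}--\fref{id:formuladegre2omega} obtained, correctly, by substituting Kirchhoff's law into $\Omega_i$ and observing the cancellation of the pure quadratic terms at degree two vertices. However, the itinerary you actually write down contains a step that would fail. You announce a traversal in the \emph{reverse} of the natural time ordering (which is the right principle), but then traverse the \emph{right} subtree before the left one. Since in the natural ordering every right-subtree edge precedes every left-subtree edge, your rule ``declare free those below edges not already forced through the pairing by previously encountered initial vertices'' then frees the right-subtree member of every left--right pair and integrates the left-subtree member as minus its partner; that left edge $e$ then satisfies $c_{i,e}\neq 0$ for a free edge $e^f_i<e$, violating the \emph{time ordering for the spanning} property, which is the whole point of the theorem (it is what legitimises the bottom-up summation algorithms of Sections 3--5). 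You can check the failure concretely on the ladder pairing: the paper (proof of Lemma \ref{lem:tgeq1lower}) records that the construction makes all right-graph vertices degree zero and all left-graph vertices degree two, whereas your order produces the mirror image. The fix is simply to process the later subtree first (left, top-down, then right), so that for each pair the \emph{later} member is the candidate free edge; this is also what makes the degree of a vertex count the below edges paired with earlier ones, as used in the classification argument of Section 5.

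Two smaller points. First, ``declare as free exactly those below edges not forced by the pairing'' must be supplemented by the proviso that the Kirchhoff relation at each vertex is used to integrate one of the below edges; otherwise three free edges could be declared at a vertex, and the Kirchhoff constraint there would then contradict the basis (bijection) property. Your parenthetical ``$0$, $1$, or $2$'' suggests you know this, but the rule as stated does not enforce it. Second, for \fref{id:nondegenn0} you allow ``$v_1'$ of degree one or two''; in fact \fref{id:ni} gives $n_0=n_2$, so $n_0=0$ forces every vertex to be of degree one, and in particular $v_1'$, which you should say. The remaining Wick-factor statements, \fref{id:nondegenn0} and \fref{id:wicknondegeneracy}, are asserted rather than derived in your sketch (and \fref{id:wicknondegeneracy} is an upper bound on $\Delta^W$ by $|1-\delta(\zeta)-\delta(\xi_k+\eta)|$, not a prohibition of the degenerate cases); deferring these to \cite{CG,LS2} is acceptable, since that is precisely what the paper does, but they are not consequences of the bookkeeping you describe.
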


In addition to the previous Theorem used in \cite{CG}, we need to refine the treatment of degree one vertices. Consider a degree one vertex $v$, with associated free variable $\xi$, and with the formula \fref{id:formuladegre1omega} expressing the resonance modulus at $v$ as a function of $\xi$. All possible combinations of $\sigma,\sigma'\in \{-1,+1\}$ in \fref{id:formuladegre1omega} are possible (examples are easy to construct). Then, the modulus given by \fref{id:formuladegre1omega} can be of two forms. It can be of linear type:
\be \label{id:degreeonelinear}
\Omega_v=2H\zeta \cdot\xi+\tilde \Omega,
\ee
in the case $\sigma \sigma'=-1$, where $\zeta$ and $\tilde \Omega$ depend only on free variables appearing after $\xi$. Or it can be of quadratic type:
\be \label{id:degreeonequadratic}
\Omega_v=2\sigma \left|\xi+\frac{\zeta}{2}\right|^2_H+\tilde \Omega
\ee
in the case $\sigma \sigma'=1$, where $\zeta$ and $\tilde \Omega$ depend only on free variables appearing after $\xi$.

\begin{definition} \label{def:degreeonecases}

We say a degree one vertex $v$ is of \emph{linear type} if the formula \fref{id:degreeonelinear} holds true. We then call $\zeta$ in \fref{id:degreeonelinear} the \emph{dual variable} at the vertex $v$. We say a degree one vertex $v$ is of \emph{quadratic type} if the formula \fref{id:degreeonequadratic} holds true. The total number of degree one vertices of linear type is denoted by $n_1^l$, and that of quadratic type by $n_1^q$, so that $n_1=n_1^l+n_1^q$.

\end{definition}

The identity \fref{id:wicknondegeneracy} directly implies the following.

\begin{lemma} \label{lem:wicknondegeneracy}

Consider a degree one vertex of linear type with associated free and dual variables $\xi_k$ for some $k\in \{1,...,2n+1\}$ and $\zeta$ respectively. Then there exists $\eta$ a linear combination of free variables $(\xi_{k'})_{k'>k}$ appearing after $\xi_k$, such that:
$$
\mbox{if} \quad \zeta=0\quad  \mbox{then}\quad  |\Delta^W_{\ell,\ell'}(\underline k,\underline{k'})|\lesssim \delta (\xi_k+\eta).
$$

\end{lemma}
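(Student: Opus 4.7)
The statement is essentially a direct specialization of the bound \fref{id:wicknondegeneracy} furnished by Theorem \ref{th:spanning}, so my plan is to extract it as a short corollary rather than redo the spanning tree analysis from scratch.

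First, I would recall the definitions: at a degree one vertex of linear type, the resonance formula \fref{id:formuladegre1omega} holds with $\sigma\sigma' = -1$, which is exactly the hypothesis triggering the second conclusion in Theorem \ref{th:spanning}. That theorem produces a vector $\eta \in \mathbb Z^d$ which is a $\mathbb Z$-linear combination of free momenta $\xi_{k'}$ with $k' > k$ appearing after $\xi_k$ in the natural time ordering, together with the pointwise inequality
\begin{equation*}
|\Delta^W_{\ell,\ell'}(\underline{k},\underline{k}')| \lesssim |1 - \delta(\zeta) - \delta(\xi_k + \eta)|.
\end{equation*}
This $\eta$ is precisely the one whose existence the lemma asserts.

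Second, I would specialize to $\zeta = 0$. Since the $\delta$'s here are Kronecker deltas on $\mathbb Z^d$ (arising from the product of indicator factors in \fref{id:defDeltaWick}), $\zeta = 0$ gives $\delta(\zeta) = 1$, whence
\begin{equation*}
|1 - \delta(\zeta) - \delta(\xi_k + \eta)| = |{-\delta(\xi_k + \eta)}| = \delta(\xi_k + \eta),
\end{equation*}
the last equality using that $\delta(\xi_k+\eta) \in \{0,1\}$. Combining the two displays yields the bound stated in the lemma.

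There is no real obstacle: the only thing one needs to check is that the $\eta$ produced by Theorem \ref{th:spanning} indeed depends solely on free variables appearing strictly after $\xi_k$, which is part of that theorem's conclusion, and that the $\delta$ symbols should be read as integer Kronecker deltas rather than Dirac distributions so that the arithmetic $\delta(0) = 1$ and $|{-\delta(\cdot)}| = \delta(\cdot)$ makes sense; this is consistent with the convention used throughout \fref{id:defDeltaWick} and \fref{id:wicknondegeneracy}.
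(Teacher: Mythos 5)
Your proposal is correct and coincides with the paper's own argument: the paper simply observes that the bound \fref{id:wicknondegeneracy} from Theorem \ref{th:spanning} (available precisely because linear type means $\sigma\sigma'=-1$) directly implies the lemma, which is exactly your specialization $\delta(\zeta)=1$ when $\zeta=0$ so that $|1-\delta(\zeta)-\delta(\xi_k+\eta)|=\delta(\xi_k+\eta)$. Your extra remarks on the Kronecker-delta convention and on $\eta$ depending only on later free variables are consistent with the paper and add nothing beyond what Theorem \ref{th:spanning} already guarantees.
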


\section{Convergence for large kinetic time and generic dispersion relation} \label{sec:diagrams}

\label{renard}
We give in this Section the proof of Theorem \ref{th:main}. Throughout the Section, $H$ is taken such that the "generic" bounds of Theorem \ref{theoremNT} hold true. The proof uses an approximation scheme, relying on certain estimates for its implementation. These estimates are proved using graph analysis and number theoretical results. The analysis follows closely that of \cite{CG}, except for the number theoretical results. These new number theoretical results are proved precisely in Section \ref{sec:numbergeneric}. The present Section uses them to prove Theorem \ref{th:main}; it is very similar to the analogue part in \cite{CG}. Thus, we start by detailing the approximation scheme for the sake of clarity, then we detail the proof of a key bound using the new number theoretical results, and next refer safely to \cite{CG} for the obtention of the other bounds adapting the very same strategy.

\subsection{Approximate solution, resolution scheme}

We construct a solution to \fref{id:NLSgeneric} as the sum of an approximate solution and a remainder:
$$
u = u^{app} + u^{err}=e^{i\lambda^2 \omega}\left(v^{app}+ v^{err} \right), \qquad \mbox{with} \qquad v^{app} = \sum_{n=0}^N u^n ,
$$
where the phase $\omega$ is given by \fref{def:omega} and where $u^n$ is defined via the iteration of Duhamel formula and Wick renormalisation \fref{defun}. We define further:
$$
V^{i,j} =  \langle  u^i , u^{j}\rangle , \qquad V = \sum_{i,j \leq N} V^{i,j}=\left\| \sum_{n=0}^N u^n\right\|_{L^2(\mathbb T^d) }^2.
$$
As equation \fref{id:NLSgeneric} preserves the mass $\| u(t) \|_{L^2(\mathbb T^d)}=\| u_0\|_{L^2(\mathbb T^d)}$, from its very definition \fref{defun} the approximate solution satisfies:
\begin{align*}
 i\partial_t u^{app} + \Delta_H u^{app} - \lambda^2 |u^{app}|^2 u^{app}  &= \frac{2\lambda^2}{(2\pi)^d} \left(2\mathfrak{Re}\langle u^{err},u^{app}\rangle +\left\| u^{err}\right\|_{L^2(\mathbb T^d)}^2\right)u^{app}+e^{i\lambda^2 \omega} E^N ,
\end{align*}
where on the right hand side the first term is part of the linearised dynamics for the remainder $u^{err}$ and where error that is generated is
\begin{align*}
&E^N = \lambda^2 \left[- \sum_{\substack{i,j,k \leq N \\ i+j+k \geq N}} u^i \overline{u^j} u^k + \frac{2}{(2\pi)^d} \sum_{\substack{i,j,k \leq N \\ i+j+k \geq N}} V^{i,j} u^k \right].
\end{align*}
The equation for the renormalised remainder $v^{err}$ is then:
\begin{align*}
i\partial_t v^{err} + \Delta_H v^{err} + \frac{2 \lambda^2}{(2\pi)^d} \| u(t)\|_{L^2}^2 v^{err} &= \lambda^2 \left( |v^{err} + v^{app}|^2 (v^{err} + v^{app}) - |v^{app}|^2v^{app} \right)  - E^N\\
&- \frac{2\lambda^2}{(2\pi)^d} \left(2\mathfrak{Re}\langle v^{err},v^{app}\rangle +\left\| v^{err}\right\|_{L^2}^2\right)v^{app}.
\end{align*}
In order to use Bourgain spaces, we shall only solve locally in time the above equation, and introduce for this aim a smooth cut-off function $\chi \in C^{\infty}(\mathbb R,\mathbb R)$ with $\chi(t)=1$ for $|t|\leq 1$ and $\chi (t)=0$ for $|t|\geq 2$. We will then construct a solution to the above equation but smoothly localised in time:
\begin{equation} \label{id:eqwerr}
i\partial_t w^{err} + \Delta_H w^{err} = \mathcal{L}(w^{err}) + \mathcal{B}(w^{err}) + \mathcal{T}(w^{err}) + \mathcal{E},
\end{equation}
where the linear, bilinear, trilinear, and error terms are given by
\begin{align*}
& \mathcal{L}(w) = \chi \left( \frac{t}{T}\right) \lambda^2 \left[ 2 |v^{app}|^2 w - 2(2\pi)^{-d} V w-2(2\pi)^{-d}\langle v^{app},w\rangle v^{app} + (v^{app})^2 \overline{w}-2(2\pi)^{-d}\langle w,v^{app}\rangle v^{app} \right] \\
& \mathcal{B}(w) = \chi \left( \frac{t}{T}\right) \lambda^2 \left[ 2 |w|^2 v^{app}-2(2\pi)^{-d}\| w\|_{L^2}^2v^{app}-2(2\pi)^{-d}\langle w,v^{app}\rangle w + w^2 \overline{v^{app}}-2(2\pi)^{-d}\langle v^{app},w\rangle w \right] \\
& \mathcal{T}(w) =\chi \left( \frac{t}{T}\right) \lambda^2 \left(|w|^2 w- 2(2\pi)^{-d} \| w \|_{L^2}^2w\right) \\
& \mathcal{E} = - \chi \left( \frac{t}{T}\right)  E^N.
\end{align*}
We will ensure that $v^{err}\equiv w^{err}$ on $[0,\ep^\nu T_{kin}]$ by taking for $0<\tilde \nu<\nu$:
\be \label{def:T}
T=\ep^{\tilde \nu} T_{kin}.
\ee

\subsection{Control of the remainder, proof of Theorem \ref{th:main}}

We now turn to the construction and control of $w^{err}$ solving \fref{id:eqwerr}.

\subsubsection{$X^{s,b}_{\epsilon,T}$ spaces}

We define the adapted $H^s_\epsilon$ Sobolev space by its norm
$$
\| f \|_{H^s_\epsilon} = \| \langle \epsilon D \rangle^s f \|_{L^2},
$$
and accordingly the adapted $X^{s,b}_{\epsilon,T}$ Bourgain space (depending on the relation dispersion $|\cdot |_H^2$):
$$
\| u \|_{X^{s,b}_\epsilon} = \| \langle \epsilon k \rangle^s \left( |\tau + |k|_H^2| + T^{-1} \right)^b \widetilde{u}(\tau,k) \|_{L^2_\tau L^2_k}.
$$
It satisfies the following properties. Note that since $b$ will be taken very close to $1/2$ the $T^{b-1/2}$ factors will not matter for the analysis.
\begin{itemize}
\item (Free solution) 
$$
\left\| \chi \left( \frac{t}{T} \right) e^{it\Delta_H} u_0 \right\|_{X^{s,b}_{T,\epsilon}} \lesssim T^{\frac{1}{2}-b} \| u_0 \|_{H^s_\epsilon}.
$$

\item (Time restriction)
$$
\left\| \chi \left( \frac{t}{T} \right) u \right\|_{X^{s,b}_{T,\epsilon}} \lesssim  \| u \|_{X_{T,\epsilon}^{s,b}}.
$$

\item (Time continuity)
\be \label{bd:bourgaintosobo}
\displaystyle \| u \|_{\mathcal{C}_t H^s_\ep} \lesssim T^{b-\frac{1}{2}} \| u \|_{X^{s,b}_{T,\epsilon}}
\ee
 
\item (Strichartz estimates). We recall the following result from Theorem 2.2 in \cite{BoDe} for $s>\frac d4-\frac 12$:
\be \label{bd:strichartzsobolevspaces}
\| e^{it \Delta_H } f \|_{L^4_{T} L^4} \lesssim \left( 1 + T^{\frac 14} \right) \left\| \langle D \rangle^{s} f \right\|_{L^2}
\ee
which then implies after a scaling argument and applying Lemma 2.9 from \cite{Tao}:
\be \label{bd:strichartzbourgainspaces}
\| u \|_{L^4_T L^4} \lesssim T^{b - \frac{1}{2}} \epsilon^{\frac{1}{2} - \frac{d}{4}-\kappa} \left( 1 + T^{\frac 14} \right) \left\| u \right\|_{X^{s,b}_{T,\epsilon}}.
\ee

\item (Hyperbolic regularity) If $u$ solves
$$
\left\{
\begin{array}{l}
i \partial_t u + \Delta_H u = F \\
u(t=0) = 0,
\end{array}
\right.
$$
then
\be
\label{hyperbolicregularity}
\left\| \chi \left( \frac{t}{T} \right) u \right\|_{X^{s,b}_{\epsilon,T}} \lesssim \| F \|_{X^{s,b-1}_{\epsilon,T}}.
\ee
\end{itemize}

\subsubsection{Estimates for the resolution scheme, contraction argument}

The following proposition states that all necessary bounds to solve \fref{id:eqwerr} via a Banach-Picard expansion hold with large probability.

\begin{proposition}  \label{aigleroyal}

Assume \fref{id:largekinetictime}, and take $N\in \mathbb N$, $p\geq 2$ and $\mu>0$, $s>\frac d2-1$ and $b^*>\frac 12$. Then there exists $b^*\geq b > \frac{1}{2}$ and a set $E=E_{t,N,\mu,s,p}$ with probability $\mathbb{P}( E )\geq 1 - \epsilon^{\mu}$ such that on $E$, if $n \leq N$,  $1\leq T\leq \min(T_{kin},\ep^{2-d+\kappa})$ and $0<t\leq T$:
\begin{itemize}
\item Bounds on the expansion:
\begin{align}
\label{bd:unL2} & \| u^n(t) \|_{L^2} \lesssim_{N,\mu} \epsilon^{-\mu} \left( \frac{1+t}{T_{kin}} \right)^{n/2} \\
\label{bd:unXsb} & \| \chi\left(\frac tT \right)u^n \|_{X^{0,b}_{\ep,T}} \lesssim_{N,\mu,b'} \epsilon^{-\mu} \left( \frac{T}{T_{kin}} \right)^{n/2}  \\
\label{bd:unLp} & \| u^n(t) \|_{L^p_T L^p} \lesssim_{N,\mu,p} \epsilon^{-\mu} T^{\frac 1p} \left(  \frac{T}{T_{kin}}\right)^{\frac{n}{2}} (T^{1/2} \epsilon^{-1})^{\frac{1}{2} - \frac{1}{p}} \\
\label{bd:mathcalE}& \left\| \chi\left( \frac{t}{2T} \right) \int_0^t e^{i(t-s)\Delta_H} \mathcal E \,ds \right\|_{X^{s,b}_{\epsilon,T}} \lesssim_{N,\mu} \epsilon^{-2\mu} T^{\frac{11}{8}} \ep^{-\frac 14 -\frac d4}\left(\frac{T}{T_{kin}}\right)^{\frac N2}
\end{align}
\item Bound on the linear term:
\be \label{bd:mathfrakL}
\left\| \chi\left(\frac{t}{2T}\right)\int_0^t e^{i(t-s)\Delta_H} \mathfrak{L} \,ds \right\|_{X^{s,b}_{\epsilon,T} \to X^{s,b}_{\epsilon,T}} \lesssim_{N,\mu}
\epsilon^{-\mu} \sqrt{\frac{T}{T_{kin}}}
\ee
\item Bound on the bilinear term:
\be \label{bd:mathcalB}
\left\| \chi \left( \frac{t}{2T}\right)\int_0^t e^{i(t-s)\Delta_H} \mathcal{B}(u) \,ds  \right\|_{X^{s,b}_{\epsilon,T}} \lesssim_{N,\mu} \lambda^2 T^{\frac 54} \epsilon^{\frac{1}{2} - \frac{d}{2}-2\mu} \| u \|_{X^{s,b}_\epsilon}^2
\ee
\item Bound on the trilinear term:
\be \label{bd:mathcalT}
\left\| \chi \left( \frac{t}{2T}\right) \int_0^t e^{i(t-s)\Delta_H} \mathcal{T}(u) \,ds  \right\|_{X^{s,b}_\epsilon}  \lesssim \lambda^2 T \epsilon^{2-d-\kappa} \| u \|_{X^{s,b}_{\epsilon}}^3
\ee
\end{itemize}
\end{proposition}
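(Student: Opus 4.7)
I would follow the roadmap of \cite{CG} with one essential change: the lattice point counts governing near-resonance contributions are replaced by the generic-$H$ estimates proved in Section~\ref{sec:numbergeneric} and listed in the statement preceding Remark~\ref{rk:dengnahmodyue}. All probabilistic estimates rest on the graph identity~\eqref{LpLpexpression}: for each $n\leq N$ one computes high moments $\mathbb{E}\|u^n\|_{L^2}^{2p}$ (and the analogue for the Bourgain norm, after inserting the $X^{0,b}_{\epsilon,T}$ weights in frequency), Wick's formula~\eqref{id:wickformula} converting each such moment into a sum over paired diagrams of depth $np$. Taking $p=p(\mu)$ large and invoking Markov's inequality then produces an exceptional set of measure $\leq\epsilon^\mu$ outside which the $\epsilon^{-\mu}$ losses in \eqref{bd:unL2}--\eqref{bd:mathcalE} are tolerated. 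The main obstacle will be the linear operator bound~\eqref{bd:mathfrakL}, which must be strictly smaller than $1$ to close the contraction, whereas \eqref{bd:unL2}--\eqref{bd:mathcalT} are needed only up to $\epsilon^{-\mu}$ losses.

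\medskip

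\textbf{Estimating a paired diagram.} Fix a paired diagram $(\ell,\ell',P)$. Theorem~\ref{th:spanning} picks $2n+1$ free momenta $(\xi_i)$ per subtree and expresses every other $k_{i,j}$ as a $\{-1,0,1\}$-combination of them, ordered compatibly with the time order so that the resolution is iterative from top to bottom. Performing the time integrals in $\mathcal{F}(\ell,\ell',P)$ on slices of length $\lesssim T$ produces at each vertex $v_k$ the weight $(|\Omega_k|+T^{-1})^{-1}$; summing over the free momentum $\xi_k$ introduced at that vertex reduces to a lattice point count on a level set $\{|\Omega_k|\lesssim T^{-1}\}$. The three regimes in Definition~\ref{def:degreeonecases} and Theorem~\ref{th:spanning} call for the three counting estimates: the linear form estimate is used for linear-type degree-one vertices~\eqref{id:degreeonelinear}, with the degenerate case $\zeta=0$ handled by Lemma~\ref{lem:wicknondegeneracy}; the quadratic form estimate for quadratic-type degree-one vertices~\eqref{id:degreeonequadratic}; and the bilinear form estimate for the pair $(\xi_k,\xi_{k+1})$ at degree-two vertices~\eqref{id:formuladegre2omega}. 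With~\eqref{id:nondegenn0} ruling out the fully degenerate case and the combinatorial count of pairings bounded by $C^n n!$, bookkeeping the powers of $\lambda$, $T$ and $\epsilon^{-1}$ yields $(T/T_{kin})^n$ up to $\epsilon^{-\kappa}$ losses. This gives~\eqref{bd:unL2}--\eqref{bd:unXsb}; \eqref{bd:unLp} follows by interpolation using the Strichartz bound~\eqref{bd:strichartzbourgainspaces}; and \eqref{bd:mathcalE} reflects the fact that each summand of $E^N$ has depth $\geq N$.

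\medskip

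\textbf{Operator bounds and the key difficulty.} The trilinear bound~\eqref{bd:mathcalT} is immediate from Hölder on $L^4_{t,x}$, Strichartz~\eqref{bd:strichartzbourgainspaces}, and hyperbolic regularity~\eqref{hyperbolicregularity}. The bilinear bound~\eqref{bd:mathcalB} follows along the same lines with one factor fixed to $v^{app}$ and controlled through~\eqref{bd:unLp}. For the linear bound~\eqref{bd:mathfrakL}, write $\mathcal{L}(w)$ as a bilinear form in $v^{app}$ acting on $w$, iterate Duhamel in the $v^{app}$ legs only, and obtain paired-diagram kernels with one free slot representing $w$. Running the above graph machinery while leaving the $w$-slot unintegrated, then closing by a Schur test against the $X^{s,b}_{\epsilon,T}$ norm, yields the desired factor $\sqrt{T/T_{kin}}$. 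This last step is the genuine difficulty: the counting estimates must now be uniform over the frequency of the free $w$-leg, and it is precisely here that genericity of $H$ is most visibly exploited. Once~\eqref{bd:mathfrakL} is secured, a standard Banach--Picard argument in $X^{s,b}_{\epsilon,T}$ constructs $w^{err}$ and hence the full solution $u$ of Theorem~\ref{th:main}.
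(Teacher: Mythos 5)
Your proposal correctly reproduces the architecture of the paper's proof: the key ingredients are indeed (i) the graph analysis of Section~\ref{sectiongraph} via Theorem~\ref{th:spanning} and Definition~\ref{def:degreeonecases}, (ii) the resolvent identity~\eqref{bd:resolventidentity} turning time integrals into resolvent weights $(|\Omega_k|+T^{-1})^{-1}$, (iii) the generic-$H$ counting bounds of Theorem~\ref{theoremNT} packaged as the resolvent sums of Lemma~\ref{lemmaRS}, and (iv) a Chebyshev/Markov argument to pass from moment bounds to the event $E$. Your description of the iterative algorithm at degree-zero/one(linear)/one(quadratic)/two vertices matches the proof of Proposition~\ref{pr:L2expansion}, and your sketch of the operator-norm bound~\eqref{bd:mathfrakL} (kernel with a free $w$-slot closed by a Schur-type argument, the $v^{app}$ legs resolved into trees) is the same strategy the paper imports from Section~6 of \cite{CG}.

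There is one concrete gap. You assert that the $L^p$ estimate~\eqref{bd:unLp} ``follows by interpolation using the Strichartz bound~\eqref{bd:strichartzbourgainspaces}.'' It does not. Interpolating the $L^2$ bound against the $L^\infty$ bound coming from the Fourier support $|k|\lesssim\epsilon^{-1}$ (Bernstein) yields a loss $\epsilon^{-d(1/2-1/p)}$, and routing through~\eqref{bd:strichartzbourgainspaces} plus the $X^{0,b}_{\epsilon,T}$ bound loses a factor of order $\epsilon^{1/2-d/2+d/p}$; both are strictly worse than the stated $\epsilon^{-(1/2-1/p)}$ factor in~\eqref{bd:unLp} as soon as $d\geq 2$. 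The bound~\eqref{bd:unLp} as written is a genuinely probabilistic estimate: as the paper indicates by referring to Subsection~5.3 of \cite{CG}, it is proved by its own $2m$-th moment computation via paired diagrams (with the $L^p$ structure changing the combinatorics of the pairing sum), not by deterministic interpolation. In the present setting this gap is forgiving because the $L^p$ losses only feed into~\eqref{bd:mathcalE},~\eqref{bd:mathcalB},~\eqref{bd:mathcalT}, which carry ample slack (the remark after Proposition~\ref{aigleroyal} makes this explicit), so a weaker $L^p$ bound could in principle be accommodated by increasing $N$ — but then the stated form of~\eqref{bd:unLp} would have to be weakened accordingly, which you have not done. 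The rest of your proposal is sound and coincides with the paper's approach.
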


\begin{remark}

Apart from the $T/T_{kin}$ factors, there are also $T^C$ and $\ep^{-C}$ extra factors for various constant $C$'s above, except for the bounds \fref{bd:unL2}, \fref{bd:unXsb}, \fref{bd:mathfrakL} which are sharp. These extra factors may not be sharp: they come from the $T^{1/4}$ factor in the long time Strichartz estimate \fref{bd:strichartzsobolevspaces} and from the $L^p$ bound \fref{bd:unLp} for $p>2$ which are probably not optimal. However, since they are only used in multilinear estimates, and that the error can be made arbitrarily small in \fref{bd:mathcalE} by taking $N$ large enough, these potentially not optimal factors are harmless for the fixed point argument.

\end{remark}

Assuming the above Proposition, we can end the proof of Theorem \ref{th:main}.

\begin{proof}[Proof of Theorem \ref{th:main}]
We apply the Banach fixed point theorem in $B_{X^{s,b}_{\epsilon,T}}(0,\rho)$, where $s> \frac{d}{2}-1$, and $\rho>0$ will be fixed shortly, to the mapping
$$
\Phi: w \mapsto \chi\left(\frac{t}{2T}\right) \int_0^t e^{i(t-s) \Delta_H} [\mathcal{L}(w) + \mathcal{B}(w) + \mathcal{T}(w) + \mathcal{E} ] \,ds 
$$
The assumption \fref{id:largekinetictime} and \fref{def:T} ensure $\frac{1+T}{T_{kin}} \leq 2\ep^{\mu}$ for $\mu>0$ small enough. Hence, applying \fref{bd:mathcalE}, the error term can be made $< \epsilon^M$, for any arbitrarily large fixed $M$, in $X^{s,b}_{\ep,T}$ by choosing $N$ sufficiently big. This leads to the choice $\rho = 2 \epsilon^{M}$. The bound \fref{bd:mathfrakL} for $\mu$ small enough depending on $\nu$ then shows that the linear term has an operator norm $\ll 1$. Similarly, applying the bounds \eqref{bd:mathcalB} and~\ref{bd:mathcalT}, one checks easily that the bilinear and cubic term act as contractions on $B(0,\rho)$. Therefore, the Banach fixed point theorem gives the existence of a fixed point $w^{err}$, with norm $\| w^{err}\|_{X^{s,b}_{\ep,T}}\lesssim \epsilon^{M}$. By construction, this fixed point $w^{err}$ then solves \fref{id:eqwerr} on the time interval $[0,T]$. The bounds \fref{bd:iteratesHgeneric1} and \fref{bd:iteratesHgeneric2} then follow easily: once first applies the fixed point argument for some $\tilde N\gg N$ so that the size of $w^{err}$ is as small as we want, and then use the bounds \fref{bd:unXsb} and \fref{bd:bourgaintosobo} and the fact that $u^n$ has Fourier support in $|k|\lesssim_n \ep^{-1}$.
\end{proof}

\subsection{Estimates for the resolution scheme, proof of Proposition \ref{aigleroyal}}

\subsubsection{Paired Feynman diagrams}

We give first a detailed proof of the bound \fref{bd:unL2}. It is a direct consequence of the following proposition and of the Bienaym\'e-Tchebychef inequality.

\begin{proposition}  \label{pr:L2expansion}
There holds the following estimate for $0\leq T\leq \ep^{2-d+\kappa}$:
$$
\mathbb E \| u^n(T) \|_{L^2}^2 \lesssim_{n} \epsilon^{-\kappa} \left( \frac{1+T}{T_{kin}} \right)^{n} 
$$
\end{proposition}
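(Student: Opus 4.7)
The plan is to expand the $L^2$ norm square diagrammatically and bound each resulting term individually. By the formula \eqref{LpLpexpression}, it suffices to prove that for every $G\in\mathcal G(n)$, every pair of interaction histories $(\ell,\ell')$, and every pairing $P$, one has
$$
\mathcal F(\ell,\ell',P) \lesssim_n \epsilon^{-\kappa}\left(\frac{1+T}{T_{kin}}\right)^n,
$$
since the number of such triples is bounded by a constant depending only on $n$. To estimate $\mathcal F$, I would first apply Theorem \ref{th:spanning} to replace the constrained sums over $\underline k$, $\underline{k'}$, $\underline{k_{-1}}$ by an unconstrained sum over the $2n+1$ free frequencies $\xi_1,\dots,\xi_{2n+1}$, each confined to $|\xi_i|\lesssim\epsilon^{-1}$ by the compact support of $A$.

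Next, I would carry out the $2(n+1)$ time integrations. The usual argument (via Fourier in the $\delta(t-\sum s_i)$ constraints, or iteratively integrating by parts the nested oscillatory integrals) produces a bound of the shape
$$
|\mathcal F(\ell,\ell',P)| \lesssim \lambda^{4n}\epsilon^{d(2n+1)}\sum_{\underline\xi} \prod_{v\in\mathcal V_i\cup\mathcal V_i'} \min\!\left(T,\frac{1}{|\Omega_v|}\right),
$$
with the products ranging over the $2n$ interaction vertices of both subtrees, where each $\Omega_v$ is expressed as a polynomial in the free variables via \eqref{id:formuladegre1omega} or \eqref{id:formuladegre2omega}.

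The core of the argument is to process the free variables iteratively (from late to early, following the time ordering in Theorem \ref{th:spanning}). At each step, the variable being summed appears in precisely one $\Omega_v$, where $v$ is its parent vertex, and the remaining variables entering $\Omega_v$ are already summed. Distinguishing vertex types:
\begin{itemize}
\item At a degree-two vertex, two new free variables $\xi_k,\xi_{k+1}$ appear, $\Omega_v$ is bilinear in $H\xi\cdot\eta$ via \eqref{id:formuladegre2omega}, and the third number-theoretic estimate in Subsection \ref{subsec:convergence} combined with a dyadic decomposition in $|\Omega_v|$ yields $\sum_{\xi_k,\xi_{k+1}}\min(T,1/|\Omega_v|)\lesssim \epsilon^{-(2d-2)-\kappa}$.
\item At a degree-one vertex of linear type with nonzero dual variable $\zeta$, one uses the first number-theoretic estimate; at a degree-one vertex of quadratic type, one uses the second; both give $\sum_{\xi_k}\min(T,1/|\Omega_v|)\lesssim\epsilon^{-(d-1)-\kappa}\sqrt T$.
\item At a degree-zero vertex no new variable is summed, and one pays $\min(T,1/|\Omega_v|)\leq T$.
\item The last free variable $\xi_{2n+1}$ at the root vertex is summed trivially, contributing $\epsilon^{-d}$.
\end{itemize}
Invoking the combinatorial identities \eqref{id:ni} (in particular $n_0=n_2$ and $n_1+2n_2=2n$), the global $\epsilon$ and $T$ exponents collapse to $\lambda^{4n}\epsilon^{2n}T^n\epsilon^{-\kappa} = \epsilon^{-\kappa}(T/T_{kin})^n$, which is the desired bound.

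The main obstacle will be handling the degenerate configurations that escape the scheme above. Specifically, at a degree-one vertex of linear type with vanishing dual variable $\zeta=0$, the phase $\Omega_v$ is constant in $\xi_k$, so the number-theoretic gain disappears. This is exactly the situation addressed by Lemma \ref{lem:wicknondegeneracy}: the Wick factor $\Delta^W$ forces $\xi_k$ to equal a fixed linear combination of later variables, which effectively reduces the sum to a single term and removes the factor $\epsilon^{-(d-1)}\sqrt T$ from the count — a loss which must then be compensated by the non-degeneracy clause \eqref{id:nondegenn0}, ensuring either an extra degree-zero vertex (providing the missing $T$ factor) or an analogous $\delta$-constraint on $\xi_1$. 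Putting all cases together, and absorbing the $\log T$ losses from the dyadic decomposition into the $\epsilon^{-\kappa}$ slack (using $T\leq\epsilon^{2-d+\kappa}$), yields the proposition. This analysis is the same as in \cite{CG} except that in the degree-two case we now rely on the generic bilinear counting from Section \ref{sec:numbergeneric} instead of the Euclidean divisor bound, and in the degree-one quadratic case on the generic level-set estimate for $|\xi|_H^2$.
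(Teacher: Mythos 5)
Your overall architecture (reduce to the finitely many $\mathcal F(\ell,\ell',P)$, solve Kirchhoff's laws via the spanning tree of Theorem \ref{th:spanning}, then sum the free variables vertex by vertex using the three generic number-theoretic estimates and close with \fref{id:ni}) is the same as the paper's, and your bookkeeping of the $\epsilon$ and $T$ powers is correct. But there is a genuine gap at the step where you dispose of the time integrations: the claimed bound of the shape
$$
\Bigl|\int \prod_{k} e^{-i\Omega_k \sum_{j<k}s_j}\,\delta\bigl(t-\textstyle\sum s_i\bigr)\,d\underline{s}\Bigr| \;\lesssim\; \prod_{v}\min\Bigl(T,\tfrac{1}{|\Omega_v|}\Bigr)
$$
with the \emph{individual} resonance moduli $\Omega_v$ is false. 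Each time slice $s_j$ multiplies the cumulative phase $\sum_{k>j}\Omega_k$, so iterated integration (or Fourier inversion of the $\delta$-constraint, i.e. the resolvent identity \fref{bd:resolventidentity}) produces factors $\bigl|\alpha-\sum_{i\geq k}\Omega_i+\tfrac{i}{t}\bigr|^{-1}$ involving cumulative sums and an auxiliary $\alpha$-integration, not a product over single moduli. A concrete counterexample: with two vertices and $\Omega_1=-\Omega_2=\Omega$, $|\Omega|t\gg1$, the nested time integral has size $\sim t/|\Omega|$, whereas your claimed bound gives $1/|\Omega|^2\ll t/|\Omega|$. This is precisely the ladder mechanism that drives Proposition \ref{pr:tgeq1}, so the inequality you rely on underestimates exactly the dominant pairings.

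The gap is repairable, and the repair is what the paper does: use \fref{bd:resolventidentity} to pass to \fref{id:Fresolvant}, truncate the region $\max(|\alpha|,|\alpha'|)\geq\epsilon^{-K}$ separately, and then run your vertex-by-vertex algorithm on the factors $\bigl|\alpha-\sum_{i\geq k}\Omega_i+\tfrac{i}{T}\bigr|^{-1}$; since $\sum_{i>k}\Omega_i$ depends only on free variables appearing later, it is absorbed into the shift $\alpha$, and Lemma \ref{lemmaRS} is uniform in that shift, so your counting goes through unchanged, with the final $d\alpha\,d\alpha'$ integration of $|\alpha+\tfrac iT|^{-1}|\alpha'+\tfrac iT|^{-1}$ costing only a logarithm absorbed in $\epsilon^{-\kappa}$. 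Two smaller remarks: in the degenerate case $\zeta=0$ no appeal to \fref{id:nondegenn0} is needed for this proposition — Lemma \ref{lem:wicknondegeneracy} reduces the sum to a single term bounded by $T$, and $T\leq\epsilon^{2-d+\kappa}$ already gives $T\lesssim\sqrt T\,\epsilon^{1-d-\kappa}$; and the quadratic degree-one and degree-two estimates you invoke are exactly Lemma \ref{lemmaRS} (i)--(ii), i.e. consequences of \fref{NT2} and \fref{NT3}, as you say.
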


We recall that from \fref{LpLpexpression}, $\mathbb E \| u^n_{G} \|_{L^2}^2 =\sum_{P,\ell,\ell'} \mathcal F(\ell,\ell',P)$ where the quantity $\mathcal F(\ell,\ell',P)$ is defined by \fref{id:mathcalF}, and are represented by the corresponding paired diagram defined in Section \ref{subsec:paireddiagrams}. To bound these quantities, we first recall the resolvent formula from Lemma 4.2 in \cite{CG}:
\be \label{bd:resolventidentity}
\int_{\mathbb R_+^{m}} \prod_{k=1}^{m} e^{-i s_k e_k} \delta \left( \sum_{k=1}^{m} s_k - t \right) ds_1 \dots ds_m =\frac{e}{2\pi}\int_{\mathbb R}e^{-i\alpha t}\prod_{k=1}^{m} \frac{i}{\alpha-e_k+\frac{i}{t}} d\alpha.
\ee
In particular we infer that (all terms below being defined at the same location as \fref{id:mathcalF}):
\bea
\label{id:Fresolvant}\left| \mathcal F\ell,\ell',P)\right|&\leq &\left(\frac{\lambda^2}{(2\pi)^d}\right)^{2n}\ep^{d(2n+1)} \frac{e^2}{(2\pi)^2}\int_{\mathbb R^2}\sum_{\substack{\underline{k},\underline{k'} \in \mathbb{Z}^{d(2 \# \mathcal{I}_n)}\\ \underline{k_{-1}} \in \mathbb{Z}^{d(4n+2)}}}  d\alpha d \alpha ' |\Delta_{\ell,\ell',P}(\underline{k},\underline{k}',0)| \\
\nonumber &&\qquad  \prod_{k=1}^{n} \frac{1}{|\alpha-\sum_{i=k}^{n}\Omega_i+\frac{i}{t}|}\frac{1}{|\alpha'-\sum_{i=k}^{n}\Omega_i'+\frac{i}{t}|}\frac{1}{|\alpha+\frac{i}{t}|}\frac{1}{|\alpha'+\frac{i}{t}|}  \prod_{\{i,j\}\in P} |A(\epsilon \widetilde k_{0,i})|^2  .
\eea

\subsubsection{Resolvent sums}

In view of \fref{id:FresolvantmathcalRc}, we now give technical lemmas to estimate terms of the form $\sum \frac{1}{|\alpha-\Omega+\frac{i}{t}|}$. The sum will be performed over the free variables of Theorem \ref{th:spanning}, for which the resonance moduli will take the forms \fref{id:formuladegre1omega} and \fref{id:formuladegre2omega}.

\begin{lemma} \label{lemmaRS}
Assume $d\geq 3$, $1\leq T\leq \ep^{2-d+\kappa}$ and that $H$ is generic. Then:
\begin{itemize}
\item[(i)] (Degree one vertices) If $\zeta\in \mathbb Z^d$ with $|\zeta|\lesssim \ep^{-1}$:
$$
\sum_{|\xi| < \ep^{-1}} \frac{1}{\left| |\xi|^2_H + |\zeta - \xi|^2_H - \alpha + \frac{i}{T}\right|} 
\lesssim_\kappa 
\epsilon^{1-d-\kappa} \sqrt T 
$$
and if $\zeta \neq 0$:
$$
\sum_{|\xi| < \ep^{-1}} \frac{1}{\left| |\xi|^2_H - |\zeta - \xi|^2_H - \alpha + \frac{i}{T}\right|} 
\lesssim_\kappa
\epsilon^{1-d-\kappa} \sqrt T
$$
\item[(ii)] (Degree two vertices) If $(\sigma,\sigma') = (-,-)$, $(-,+)$, or $(+,-)$:
$$
\sum_{|\xi|, |\xi'| < \ep^{-1}} \frac{1}{\left| |\xi|^2_H + \sigma |\xi'|_H^2 + \sigma' |\xi_0 - \xi - \xi'|^2_H - \alpha + \frac{i}{T}\right|} \lesssim_\kappa \epsilon^{2-2d-\kappa}.
$$
\end{itemize}
\end{lemma}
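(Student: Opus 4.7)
The strategy in both parts is to dyadically decompose the resolvent in the size of the resonance modulus and then apply the generic counting bounds of Section~\ref{sec:numbergeneric}. Since $|E+i/T|\gtrsim\max(|E|,T^{-1})$, for any real valued function $E$ of the relevant variables one has
\begin{equation*}
\frac{1}{|E+i/T|}\;\lesssim\;\sum_{j\ge 0}\frac{T}{2^{j}}\,\mathbf{1}_{|E|\in I_j},
\qquad I_0=[0,T^{-1}),\quad I_j=[2^{j-1}T^{-1},2^jT^{-1})\ \ (j\ge 1),
\end{equation*}
so each of the two resolvent sums reduces, with $\delta_j=2^jT^{-1}$, to controlling $\#\{\,\cdot\,:|E|\sim\delta_j\}$ and summing $T\cdot 2^{-j}$ times these counts over $j$. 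Note that $\delta_j\ge T^{-1}\ge\epsilon^{d-2-\kappa}$ under the hypothesis $T\le\epsilon^{2-d+\kappa}$, so the counts lie in the regime covered by Section~\ref{sec:numbergeneric}.

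\textbf{Part (i).} For the first sum, completing the square yields $|\xi|^2_H+|\zeta-\xi|^2_H=2|\xi-\zeta/2|^2_H+|\zeta|^2_H/2$, a positive definite quadratic form in $\xi$. The quadratic counting estimate of Section~\ref{sec:numbergeneric} (whose proof adapts verbatim to any positive definite quadratic form whose leading matrix is $2H$, or after splitting $\xi$ into residue classes modulo $2$ to reduce to integer or half-integer shifted lattices) gives at most $\epsilon^{-(d-1)-\kappa}\sqrt{\delta_j}$ such $\xi$ per band. Summing the band contributions $T 2^{-j}\cdot\epsilon^{-(d-1)-\kappa}\sqrt{\delta_j}=\epsilon^{-(d-1)-\kappa}\sqrt{T}\cdot 2^{-j/2}$ yields a geometric series of value $\lesssim\epsilon^{1-d-\kappa}\sqrt{T}$. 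For the second sum, $|\xi|^2_H-|\zeta-\xi|^2_H=2H\zeta\cdot\xi-|\zeta|^2_H$ is linear in $\xi$. Since $\zeta\in\mathbb{Z}^d\setminus\{0\}$ with $|\zeta|\lesssim\epsilon^{-1}$ (as $\zeta$ is a combination of free momenta bounded by $\epsilon^{-1}$, see Theorem~\ref{th:spanning}), the linear generic counting estimate with $\xi_0=\zeta$ applies, producing the identical count per band and hence the identical final bound.

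\textbf{Part (ii).} For each sign choice I would first convert the resonance modulus into a pure bilinear form by a unimodular affine change of variables $(\xi,\xi')\mapsto(\eta,\eta')$ in $\mathbb{Z}^{2d}$: setting $\eta=\xi-\xi_0$, $\eta'=\xi'-\xi_0$ for $(\sigma,\sigma')=(+,-)$ gives
\begin{equation*}
|\xi|^2_H+|\xi'|^2_H-|\xi_0-\xi-\xi'|^2_H=-2H\eta\cdot\eta'+|\xi_0|^2_H;
\end{equation*}
setting $\eta=\xi-\xi_0$, $\eta'=\xi+\xi'$ for $(-,+)$ gives $+2H\eta\cdot\eta'+|\xi_0|^2_H$; setting $\eta=\xi'-\xi_0$, $\eta'=\xi+\xi'$ for $(-,-)$ gives $-2H\eta\cdot\eta'-|\xi_0|^2_H$. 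Since $|\xi_0|\lesssim\epsilon^{-1}$, one has $|\eta|,|\eta'|\lesssim\epsilon^{-1}$ in each case, so the bilinear generic counting estimate of Section~\ref{sec:numbergeneric} yields at most $\epsilon^{-(2d-2)-\kappa}\delta_j$ pairs per band. The band contribution $T 2^{-j}\cdot\epsilon^{-(2d-2)-\kappa}\cdot\delta_j=\epsilon^{-(2d-2)-\kappa}$ is then uniform in $j$, and the sum over the $O(\log(T\epsilon^{-2}))$ non-trivial bands gives the claimed $\epsilon^{2-2d-\kappa}$, the logarithm being absorbed into $\epsilon^{-\kappa}$.

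\textbf{Main obstacle.} The counting estimates of Section~\ref{sec:numbergeneric} are stated for $L^{2-d}<\delta<1$, while the dyadic decomposition above ranges a priori up to $\delta\sim\epsilon^{-2}$. For part (ii), this is handled by iterating the bilinear estimate over dyadic subshells in $|\eta|,|\eta'|$, which extends its validity up to $\delta\sim L^2$. For part (i), at $\delta\gtrsim 1$ the resolvent factor $1/\delta$ dominates and a coarse slab-count $\lesssim L^{d-1}\delta/|H\zeta|$ (obtained by fixing $\xi$ in the direction orthogonal to $\zeta$) closes the estimate. Apart from this technical extension, the argument proceeds exactly along the same lines as the analogous lemma in~\cite{CG}.
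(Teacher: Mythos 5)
Your proposal is correct and takes essentially the same approach as the paper: decompose the resolvent sum into bands according to the size of the resonance modulus, bound the number of lattice points per band via the generic counting estimates of Theorem~\ref{theoremNT} (quadratic for the $+$ case, linear for the $-$ case, bilinear for degree two after the affine change of variables), and sum. The paper uses uniform bands of width $T^{-1}$ (which automatically keeps $\delta=T^{-1}<1$), whereas you use dyadic bands, forcing the additional remarks about $\delta\gtrsim 1$ — a cosmetic variation that both works and is carefully patched.
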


\begin{proof} These bounds follow directly from Theorem~\ref{theoremNT}. Indeed, consider the first one. We estimate from (ii) in Theorem~\ref{theoremNT}:
\bee
\sum_{|k| < \ep^{-1}} \frac{1}{\left| |k|^2_H + |k_0 - k|^2_H - \alpha + \frac{i}{T}\right|} &=&\sum_{n\approx -\ep^{-2}}^{\approx \ep^{-2}} \frac{T}{\langle n\rangle} \# \left\{ nT \leq |k|^2_H + |k_0 - k|^2_H-\alpha<(n+1)T  \right\}\\
&\lesssim & \sqrt{T}\ep^{d-1-\kappa}
\eee
and the other bounds of the Lemma are obtained analogously.

\end{proof}

\begin{remark} Lemma \ref{lemmaRS} fails for $H$ diagonal, in particular for the Laplacian $H=\Id$. Indeed, in the case $T>1$, $H$ diagonal, one can derive the bound
$$
\sum_{|\xi| < \ep^{-1}} \frac{1}{\left| |\xi|^2_H + \sigma |\zeta - \xi|^2_H - \alpha + \frac{i}{T}\right|} 
\lesssim_\kappa \epsilon^{1-d-\kappa} T,
$$
which is furthermore optimal. To see why it is optimal, simply choose $\sigma = -1$, $\alpha = -1$, $H = \operatorname{Id}$, and $\zeta = e_1$ (unitary vector along the first coordinate axis). The sum becomes then $\sum_{|\xi| < \ep^{-1}} \frac{1}{\left|2 \xi_1 + \frac{i}{T}\right|}$, which is clearly of order $\epsilon^{1-d} T+\epsilon^{1-d-\kappa}$.
\end{remark}

\subsubsection{Upper bounds for paired diagrams}

We are now ready to estimate \fref{id:Fresolvant} using Lemma \ref{lemmaRS} and the graph analysis of Section \ref{sec:diagrams}. This will end the proof of Proposition \ref{pr:L2expansion}.

\begin{proof}[Proof of Proposition \ref{pr:L2expansion}]

Recall that $\mathbb E \| u^n \|_{L^2}^2$ is given by \fref{id:FresolvantmathcalRc}, so that we need to bound $\mathcal F(\ell,\ell',P)$ given by \fref{id:Fresolvant}. In particular, denote by $(\xi_i)_{1\leq i \leq 2n+1}$ the free variables in the paired graph given by Theorem \ref{th:spanning}. We first split the sum for a fixed large constant $K>0$:

\bee
\left| \mathcal F(\ell,\ell',P)\right|&\lesssim & \underbrace{\lambda^{4n} \ep^{d(2n+1)} \int_{\max(|\alpha|,|\alpha'|)\geq \epsilon^{-K}}\sum_{\substack{\underline{k},\underline{k'} \in \mathbb{Z}^{d(2 \# \mathcal{I}_n)}\\ \underline{k_{-1}} \in \mathbb{Z}^{d(4n+2)}}}  [...]}_{I} +\underbrace{\lambda^{4n} \ep^{d(2n+1)} \int_{|\alpha|,|\alpha'|\leq \epsilon^{-K}}\sum_{\substack{\underline{k},\underline{k'} \in \mathbb{Z}^{d(2 \# \mathcal{I}_n)}\\ \underline{k_{-1}} \in \mathbb{Z}^{d(4n+2)}}}  [...]}_{II}.
\eee
By noticing that $A(\ep k)=0$ for $|k|\gg \ep^{-1}$, one has that in the integrand $\prod_{\{i,j\}\in P} |A(\epsilon \widetilde k_{0,i})|^2=0$ for $|(\xi_1,...,\xi_{2n+1}|\gg \ep^{-1}$. Therefore, we can rewrite the first term as:
\bee
I&\lesssim& \lambda^{4n} \ep^{d(2n+1)} \int_{\max(|\alpha|,|\alpha'|)  \geq  \epsilon^{-K}}\sum_{|\xi_1|,...,|\xi_{2n+1}|\lesssim \ep^{-1}} \frac{1}{|\alpha+\frac{i}{T}|}\frac{1}{|\alpha'+\frac{i}{T}|} \\
&& \qquad \qquad \qquad \qquad \qquad \qquad \qquad \qquad \prod_{k=1}^n \frac{1}{|\alpha+O(\ep^{-2})+\frac{i}{T}|}\frac{1}{|\alpha'+O(\ep^{-2})+\frac{i}{T}|} \lesssim \ep^{CK} 
\eee
for some universal $C>0$, hence the term $I$ is negligible for $K$ large enough.\\

\noindent We now turn to estimating the term $II$. We will employ an algorithm that involves all properties and definitions of Theorem \ref{th:spanning}.

In the first part of the algorithm, the variables $\alpha$, $\alpha'$ and $\xi_{2n+1}$ are fixed. The right graph is considered first. The algorithm estimates iteratively for $k=1,...,n$ the sums $\sum \frac{1}{|\alpha'-\sum_{i=k}^n \Omega_i'+\frac iT|}$ as follows, while the term $\prod_{k=1}^n \frac{1}{|\alpha-\sum_{i=k}^n \Omega_i+\frac iT|}$ associated to the left graph is kept fixed. The interaction vertices are considered one after another, starting from the bottom of the right graph, and going up toward its top. At the $k$-th step, the algorithm considers the vertex $e_k'$ and estimates the term $\sum \frac{1}{|\alpha'-\sum_{i=k}^n \Omega_i'+\frac iT|}$ where the sum is performed over the free variables associated to this vertex, while the term $\prod_{k'=k+1} \frac{1}{|\alpha'-\sum_{i=k'}^n \Omega_i'+\frac iT|}$ is kept fixed. This algorithm is permitted by the following observation: from Theorem \ref{th:spanning}, one has that $\Omega_k'$ depends on the free variables associated to $e_k'$ and on free variables appearing after $e_k'$ for the natural time ordering of the paired diagram, while $\Omega_{k'}'$ for $k'>k$ or $\Omega_{k''}$ for $1\leq k'' \leq n$ only depends on free variables appearing strictly after $e_k'$ (and are thus fixed quantities when summing over the free variables associated to $e_k'$).

Let us run the first part of the algorithm on the right graph. Let us assume that we are at step $k$, considering the interaction vertex $e_k'$ with resonance modulus $\Omega_k'$.
\begin{itemize}
\item If $e_k'$ is a degree zero vertex, we then estimate $ \frac{1}{|\alpha'-\sum_{i=k}^n \Omega_i'+\frac iT|}\leq T $ and do not perform summation as there are no free variables to sum over.
\item If $e_k'$ is a degree one vertex, let $\xi_k$ be the free variable attached below it. Let all free variables $(\xi_{j})_{j>k}$ appearing after $\xi_k$ being fixed. We use the formula \fref{id:formuladegre1omega} to compute $\Omega_k'$, and the bounds (i) of Lemma \ref{lemmaRS} to bound:
$$
\sum_{|\xi_k|\lesssim \ep^{-1}} \frac{1}{|\alpha'-\Omega_k'-\sum_{i=k+1}^n \Omega_i'+\frac iT|}\lesssim \sqrt T \ep^{1-d-\kappa}.
$$
There is only one degenerate case for which the above bound does not hold: when $\sigma\sigma'=-1$ and $\zeta=0$ in \fref{id:formuladegre1omega}. But in that case, Lemma \ref{lem:wicknondegeneracy} implies that on the support of $\Delta_{\ell,\ell',P}$, $\xi_k$ is a linear combination of the free variables $(\xi_{j})_{j>k}$. Hence the above sum restricted to the support of $\Delta_{\ell,\ell',P}$ contains only one element and is $\lesssim T\lesssim \sqrt T \ep^{1-d-\kappa}$.

\item If $e$ is a degree two vertex, let $(\xi_k,\xi_{k+1})$ be the free variables attached below it. Let all free variables $(\xi_{j})_{j>k+1}$ appearing after $\xi_k,\xi_{k+1}$ be fixed. We use the formula \fref{id:formuladegre2omega} to compute $\Omega$, and the bounds (ii) of Lemma \ref{lemmaRS} to bound:
$$
\sum_{|\xi_k,\xi_{k+1}|\lesssim \ep^{-1}} \frac{|\Delta_{\ell,\ell',P}(\underline{k},\underline{k}',0)|}{|\alpha'-\Omega_k'-\sum_{i=k+1}^n \Omega_i'+\frac iT|}\lesssim \ep^{2-2d-\kappa}.
$$
\end{itemize}
Once the algorithm has considered all vertices of the right graph, the left graph is considered and the very same estimates are performed. At the end of this first step, the summation has been performed over all free variables $(\xi_{i})_{1\leq i \leq 2n}$ except the last one $\xi_{2n+1}$. In \fref{id:Fresolvant}, all terms $\frac{1}{|\alpha-\sum_k^n \Omega_i +\frac iT|}$ and $\frac{1}{|\alpha-\sum_k^n \Omega_i' +\frac iT|}$ have been estimated and only the two terms $\frac{1}{|\alpha+\frac iT|}$ and $\frac{1}{|\alpha'+\frac iT|}$ remain.

In the second part of the algorithm, we estimate the number of possible values for $\xi_{2n+1}$ by the rough estimate $\epsilon^{-d}$ since $|\xi_{2n+1}|\leq \epsilon^{-1}$. Then we integrate over $d\alpha d\alpha'$ the term $\frac{1}{|\alpha+\frac iT|}\frac{1}{|\alpha'+\frac iT|}$ which produces a log correction.

We finally arrive at the estimate:
$$
II\lesssim \lambda^{4n}\ep^{d(2n+1)}T^{n_0}T^{\frac{n_1}{2}}\ep^{(1-d)n_1}\ep^{(2-2d)n_2}\ep^{-d}\ep^{-\kappa}=\left(\frac{T}{T_{kin}}\right)^{n}\ep^{-\kappa},
$$
where we used \fref{id:ni}. This bound and the first one we derived for the term $I$ prove Proposition \ref{pr:L2expansion}.

\end{proof}

We just exemplified how, admitting the number theoretical results of Theorem \ref{theoremNT}, the analysis of \cite{CG} adapts directly to prove the first bound of Proposition \ref{aigleroyal}, through the proof of Proposition \ref{pr:L2expansion}. This is also the case for all other bounds of Proposition \ref{aigleroyal}, so that we now solely sketch the adaptation of \cite{CG}.

\begin{proof}[Proof of Proposition \ref{aigleroyal}]

All bounds can be proved by employing the graph analysis tools and estimation algorithms as in \cite{CG}. The only major difference that the resolvent bounds there have to be replaced by those of Lemma \fref{lemmaRS}.

The $X^{s,b}_{\ep,T}$ bound \fref{bd:unXsb} can be obtained by this exact way following Subsection 5.4 of \cite{CG}, with the sole minor modifications of the weight involved in the $X^{s,b}_{\ep,T}$ space and of the time localising cut-off $\chi$ that needs to be replaced by $\chi(\frac{t}{T})$.

The $L^p$ bound \fref{bd:unLp} can be obtained by this exact way following Subsection 5.3 of \cite{CG} without other modifications.

The bound on the error \fref{bd:mathcalE} is estimated the same way as in Subsection 5.5 of \cite{CG}, using the Strichartz estimates \fref{bd:strichartzbourgainspaces} and \fref{hyperbolicregularity}, and the $L^p$ bound \fref{bd:unLp}.

The linear bound \fref{bd:mathfrakL} can be obtained following Section 6 of \cite{CG}. There, again the $X^{s,b}$ weights need to be replaced by $X^{s,b}_{\ep,T}$ weights, the cut-offs $\chi(t)$ by $\chi(\frac tT)$, so that the resolvent factor $\frac{1}{|[...]+i|}$ are transformed into $\frac{1}{|[...]+\frac{i}{T}|}$. All the computations follow through the exact same way.

The multilinear bounds \fref{bd:mathcalB} and \fref{bd:mathcalT} can be proved as in Section 7 of \cite{CG}. The only replacements are: $\chi$ by $\chi(t/T)$, $X^{s,b}$ by $X^{s,b}_{\ep,T}$, standard Strichartz estimates by \fref{bd:strichartzbourgainspaces} and \fref{hyperbolicregularity}, and $L^p$ bounds by \fref{bd:unLp}.

\end{proof}

\section{Failure of convergence for large kinetic time and standard Laplacian}

\label{sectionlarge}

This section is devoted to the proof of Proposition \ref{pr:tgeq1}. We also detail the analysis of two explicit examples of paired diagrams. For $t\gg 1$, the accumulation of nonlinear effects is due to leading order to completely resonant configurations. To show it we decompose the sums in \fref{id:mathcalF} as:
\begin{align}
\nonumber \mathcal F(\ell,\ell',P)&= \frac{\lambda^{4n}\ep^{d(2n+1)}}{(2\pi)^{2dn}} \sum_{\substack{\underline{k},\underline{k'} \in \mathbb{Z}^{d(2 \# \mathcal{I}_n)}\\ \underline{k_{-1}} \in \mathbb{Z}^{d(4n+2)}}} \int_{\mathbb R_+^{n+1}\times \mathbb R_+^{n+1}} \prod_{k=1}^{n}e^{-i\Omega_k \sum_{j=0}^{k-1}s_j}\prod_{k=1}^{n}e^{-i\Omega_k' \sum_{j=0}^{k-1}s_j'} \\
\nonumber & \qquad \qquad  \qquad  \qquad  \qquad \prod_{\{i,j\}\in P} |A(\epsilon \widetilde k_{0,i})|^2   \Delta_{\ell,\ell',P}(\underline{k},\underline{k}',0)\delta \left(t-\sum_{i=0}^{n}s_i \right)\delta \left(t-\sum_{i=0}^{n}s_i' \right) \, d\underline{s} \, d \underline{s}'  \\
\label{id:decompositionmathcalF} &= \underbrace{\frac{\lambda^{4n}\ep^{d(2n+1)}}{(2\pi)^{2dn}}  \sum_{\mathcal R}...}_{\mathcal F_{\mathcal R}(\ell,\ell',P)}+\underbrace{\frac{\lambda^{4n}\ep^{d(2n+1)}}{(2\pi)^{2dn}}  \sum_{\mathcal R^c}...}_{\mathcal F_{\mathcal R^c}(\ell,\ell',P)}
\end{align}
where  $\mathcal R$ is the set of totally resonant configurations, and $\mathcal R^c$ its complement set:
$$
(\underline k,\underline{k'},\underline{k_{-1}})\in \mathcal R \mbox{ if } \Omega_k=0 \mbox{ and }\Omega_{k}'=0 \mbox{ for all }1\leq k \leq n.
$$
As all frequencies $(\underline k,\underline{k'},\underline{k_{-1}})$ are retrieved from the free frequencies $(\xi_{i})_{1\leq i \leq 2n+1}$ given by Theorem \ref{th:spanning}, we shall abuse notations and write $(\xi_{i})_{1\leq i \leq 2n+1} \in \mathcal R$ and $(\xi_{i})_{1\leq i \leq 2n+1} \in \mathcal R^c$. Proposition \ref{pr:tgeq1} is a direct consequence of the two Lemmas below.

\begin{lemma}[Estimate for totally resonant configurations] \label{lem:resonant}

There exists $0<c<C$ depending uniquely on $n$ and $d$, such that for all $t\geq 0$, $0<\epsilon \leq 1$ and $0<\lambda$ in dimension $d=2$:
\be \label{bd:totallyresnonant}
c \left(\frac{t^2}{T_{kin}} \right)^n \langle \log \epsilon \rangle^{n} \leq \sum_{P,\ell,\ell'} \mathcal F_{\mathcal R}(\ell,\ell',P)\leq C \left(\frac{t^2}{T_{kin}} \right)^n \langle \log \epsilon \rangle^{n},
\ee
and in dimension $d\geq 3$:
\be \label{bd:totallyresnonant2}
c \left(\frac{t^2}{T_{kin}} \right)^n \leq \sum_{P,\ell,\ell'} \mathcal F_{\mathcal R}(\ell,\ell',P)\leq C \left(\frac{t^2}{T_{kin}} \right)^n .
\ee

\end{lemma}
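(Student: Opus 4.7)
The plan is the following. On $\mathcal R$ the resonance moduli $\Omega_k$ and $\Omega_k'$ vanish identically, so all oscillatory factors in~\eqref{id:mathcalF} collapse to $1$ and each of the two time integrals produces the simplex volume $t^n/n!$. Pulling these out gives
$$
\mathcal F_{\mathcal R}(\ell,\ell',P) \;=\; \frac{\lambda^{4n}\epsilon^{d(2n+1)}}{(2\pi)^{2dn}} \cdot \frac{t^{2n}}{(n!)^2} \cdot N_{\mathcal R}(\ell,\ell',P),
$$
where $N_{\mathcal R}(\ell,\ell',P)\geq 0$ is the weighted count of integer frequency configurations lying in $\mathcal R$ and compatible with $\Delta_{\ell,\ell',P}$, weighted by $\prod_{\{i,j\}\in P}|A(\epsilon\tilde k_{0,i})|^2$. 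In particular every summand is non-negative, a feature that will be decisive for the lower bound.

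For the upper bound, I would parametrize configurations by the $2n+1$ free frequencies $\xi_1,\dots,\xi_{2n+1}$ supplied by Theorem~\ref{th:spanning} and process vertices from top to bottom exactly as in the algorithm proving Proposition~\ref{pr:L2expansion}, replacing the resolvent bound of Lemma~\ref{lemmaRS} by sharp counts of lattice points on the resonance loci. Using the formulas~\eqref{id:degreeonelinear},~\eqref{id:degreeonequadratic},~\eqref{id:formuladegre2omega}: each degree-one linear vertex gives $\lesssim \epsilon^{-(d-1)+\kappa}$ hyperplane solutions (the degenerate case $\zeta=0$ being killed by Lemma~\ref{lem:wicknondegeneracy}); each degree-one quadratic vertex gives $\lesssim\epsilon^{-(d-2)+\kappa}$ for $d\geq 3$ and $\lesssim \langle\log\epsilon\rangle$ for $d=2$ by the Gauss circle problem; each degree-two vertex gives $\lesssim \epsilon^{-(2d-2)+\kappa}$, with an extra $\langle\log\epsilon\rangle$ factor in $d=2$ via a divisor-type estimate; each degree-zero vertex contributes $O(1)$, with nondegeneracy~\eqref{id:nondegenn0} guaranteeing sufficient rigidity. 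Combining these with the identities $n_0+n_1+n_2=2n$ and $n_1+2n_2=2n$ from~\eqref{id:ni}, summing the unconstrained last free frequency $\xi_{2n+1}$ over a ball of radius $\epsilon^{-1}$ (factor $\epsilon^{-d}$), and multiplying by the prefactor $\lambda^{4n}\epsilon^{d(2n+1)}t^{2n}$ yields the upper bound $\lesssim (t^2/T_{kin})^n$ for $d\geq 3$ and $\lesssim (t^2/T_{kin})^n\langle\log\epsilon\rangle^n$ for $d=2$ after summing over the finitely many $(\ell,\ell',P)$.

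For the lower bound, thanks to the non-negativity of each $\mathcal F_{\mathcal R}(\ell,\ell',P)$, it suffices to produce one specific triple $(\ell_0,\ell_0',P_0)$ realizing the claimed magnitude. I would choose a paired diagram whose interaction history and pairing make every interaction vertex a degree-two vertex with nonzero shift $\tilde\xi$ and with the Wick constraints $\Delta^W$ generically satisfied, so that the resonance reduces at each vertex to a bilinear equation $(\xi_k+\tilde\xi)\cdot H(\xi_{k+1}+\tilde\xi)=0$ in two fresh free frequencies. On the support of $|A|^2$ one produces $\gtrsim \epsilon^{-(2d-2)}$ solutions explicitly (fix $\xi_k+\tilde\xi$ along a coordinate axis, and count lattice points in the orthogonal hyperplane intersected with the support of $A$); the $n$ such independent vertices on each side multiply, and in $d=2$ an averaged lower bound on the number of lattice points near a circle provides the extra $\langle\log\epsilon\rangle^n$ factor. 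Together with the free factor $\epsilon^{-d}$ for $\xi_{2n+1}$ this matches the upper bound.

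The main obstacle is the lower bound in $d=2$, where one must extract $n$ independent logarithmic factors simultaneously. This requires choosing a graph $G_n$ for which the $n$ quadratic-type equations on the left and right subtrees independently admit $\gtrsim \langle\log\epsilon\rangle$ solutions on a positive-density set of radii, and then combining these counts diagrammatically without any sign cancellation; positivity of $\mathcal F_{\mathcal R}$ resolves the sign issue, while an averaged $r_2$-lower-bound (in the spirit of the classical Gauss circle asymptotics) handles the $n$-fold product of logarithms.
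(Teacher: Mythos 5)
Your overall reduction is the paper's: on $\mathcal R$ the time integrals give $Ct^{2n}$, every $\mathcal F_{\mathcal R}(\ell,\ell',P)$ is nonnegative, so one proves the upper bound for every triple and the lower bound for a single well-chosen triple. Your upper-bound algorithm is essentially the paper's proof of Lemma \ref{lem:tgeq1upper}, modulo inaccurate per-vertex counts: a degree-one linear vertex can contribute the full $\epsilon^{1-d}$ (when its dual variable is a unit vector), not $\epsilon^{1-d+\kappa}$, and in $d=2$ a quadratic degree-one vertex contributes $\epsilon^{-\kappa}$, not $\langle\log\epsilon\rangle$, since $r_2(m)$ is not $O(\log m)$. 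These slips are harmless for the upper bound because the cruder budget $\epsilon^{1-d}$ per degree-one vertex and $\epsilon^{2-2d}$ (times $\langle\log\epsilon\rangle$ if $d=2$) per degree-two vertex, combined with \fref{id:ni}, already gives the stated bound.

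The genuine gap is in the lower bound. A paired diagram of depth $n$ has only $2n+1$ free frequencies, and \fref{id:ni} forces $n_0=n_2\leq n$: it is impossible to make \emph{every} interaction vertex degree two, and a fortiori impossible to have ``$n$ such independent vertices on each side,'' which would consume $4n$ fresh free variables. The extremal configuration is the mirror interaction history together with the ladder pairing: all $n$ vertices of the right subtree are then degree zero and all $n$ vertices of the left subtree are degree two, and---this is the key mechanism your proposal never addresses---the ladder structure gives $\Omega_i'=-\Omega_i$, so the $n$ resonance conditions attached to the degree-zero vertices are automatically satisfied and cost nothing in the count; without this, $2n$ independent conditions cannot be saturated by $2n+1$ free variables and the count falls far below the upper bound. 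Moreover, your explicit count at a degree-two vertex (``fix $\xi_k+\tilde\xi$ along a coordinate axis'') yields only about $\epsilon^{-1}\cdot\epsilon^{-(d-1)}=\epsilon^{-d}$ pairs, short of the required $\epsilon^{2-2d}$ when $d\geq3$; the correct lower bound \fref{bd:degree2resonantlower} comes from summing $\epsilon^{1-d}/|\xi_v|$ over all $\xi$, and it is this orthogonality (divisor-type) sum---not a Gauss-circle/$r_2$ count, which does not even occur in the ladder configuration since $n_1=0$---that produces the $\langle\log\epsilon\rangle^n$ in $d=2$. Finally, the reconstructed frequencies must remain in the support of $A$, which the paper ensures by confining the $i$-th free variable to a ball of radius $100^{-\lceil i/2\rceil}\epsilon^{-1}$; ``on the support of $|A|^2$'' needs such a quantitative implementation.
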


\begin{lemma}[Estimate for non-resonant configurations] \label{lem:nonresonant}

There exists $0<C$ depending uniquely on $n$, such that for all $t\geq 1$, $0<\epsilon \leq 1$ and $0<\lambda$, for all $d\geq 2$:
\be \label{bd:nonresonant}
\left|\sum_{P,\ell,\ell'} \mathcal F_{\mathcal R^c}(\ell,\ell',P)\right| \leq \frac{C}{t} \left(\frac{t^2}{T_{kin}} \right)^n \langle \log \epsilon \rangle^{2n+2}.
\ee

\end{lemma}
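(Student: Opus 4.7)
The proof adapts the resolvent-sum algorithm of Proposition~\ref{pr:L2expansion} to extract the oscillation provided by non-vanishing resonance moduli. Since we are in the case $H=\operatorname{Id}$, every $\Omega_k$ and $\Omega_k'$ is an integer, so on $\mathcal R^c$ at least one of them satisfies $|\Omega|\geq 1$, which is the source of all cancellation.

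The first step is to apply the resolvent identity~\fref{bd:resolventidentity} separately to the left and right time simplices in~\fref{id:mathcalF}. Writing $\sigma_k=\sum_{i\geq k}\Omega_i$ and $\sigma'_k=\sum_{i\geq k}\Omega'_i$, this yields a representation of $\mathcal F_{\mathcal R^c}(\ell,\ell',P)$ analogous to~\fref{id:Fresolvant} but with the summation restricted to configurations in which not every $\sigma_k$ and $\sigma'_k$ vanishes. The second step is to close the $\alpha$ (respectively $\alpha'$) contour in the lower half-plane and evaluate residues. If $m$ of the $\sigma_k$ are zero, the pole at $\alpha=-i/t$ has order $m+1$ and contributes a factor $\sim t^m$, while the remaining simple poles at $\alpha=\sigma_k-i/t$ give contributions that are $O(1)$ in $t$, controlled by $|\prod_{j\neq k}(\sigma_k-\sigma_j)|^{-1}$ up to corrections of order $1/t$. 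Since at least one $\sigma_k$ is non-zero on $\mathcal R^c$, one has $m\leq n-1$, yielding the crucial $1/t$ gain compared with the totally resonant situation of Lemma~\ref{lem:resonant}. The same bookkeeping is performed on the primed side; only one of the two sides is required to be non-resonant in order to get the factor $1/t$.

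The third step is to sum over the free frequencies $(\xi_i)_{1\leq i\leq 2n+1}$ supplied by Theorem~\ref{th:spanning}, running the same natural-time-ordered algorithm as in the proof of Proposition~\ref{pr:L2expansion}. The resolvent bound of Lemma~\ref{lemmaRS} is not available for $H=\operatorname{Id}$; instead, at each degree-one vertex whose $\Omega$ enters a residue denominator one uses the elementary estimates
\[
\sum_{|\xi|\leq \epsilon^{-1}}\frac{1}{|P(\xi)|+1}\lesssim \epsilon^{-d+1}\langle \log\epsilon\rangle^2
\]
for quadratic or linear integer-valued polynomials $P$, combined with standard lattice-point counts for the other summations. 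At each degree-two vertex one uses similarly $\sum_{|\xi|,|\eta|\leq \epsilon^{-1}}\frac{1}{|H\xi\cdot\eta-a|+1}\lesssim \epsilon^{-2d+2}\langle\log\epsilon\rangle^2$. Running these bounds through the $O(n)$ interaction vertices of the paired diagram, and using the identities~\fref{id:ni}, produces the factor $\langle\log\epsilon\rangle^{2n+2}$, while the $\epsilon$-powers combine with $\lambda^{4n}\epsilon^{d(2n+1)}$ exactly as in the resonant case to reproduce $(t^2/T_{kin})^n$ multiplied by the $1/t$ gain extracted at Step 2.

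The main technical obstacle lies in Step 3: one must order the frequency summation and residue expansion so that, for every configuration in $\mathcal R^c$, each non-vanishing denominator $\sigma_k-\sigma_j$ appearing in the residue is, at the moment it is summed, expressible as a linear combination of free variables appearing \emph{after} the current vertex in the natural time ordering. This is not automatic: when several $\sigma_k$ are simultaneously non-zero, the bookkeeping becomes involved, and one has to argue case by case using the structure of degree-one and degree-two vertices in Theorem~\ref{th:spanning} and the Wick non-degeneracy of Lemma~\ref{lem:wicknondegeneracy} to rule out degenerate configurations that could destroy the $1/t$ gain. Once this ordering is established, summing over the finitely many pairings $P$ and interaction histories $\ell,\ell'$ contributes only a constant depending on $n$, and~\fref{bd:nonresonant} follows.
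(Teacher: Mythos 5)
Your high-level mechanism is the right one (integrality of the moduli for $H=\operatorname{Id}$, a $1/t$ gain because not all cumulative sums can vanish on $\mathcal R^c$, and per-vertex counting/resolvent-sum bounds producing the $\epsilon$- and log-factors), but the route you take through contour closing and residues has a genuine gap exactly where the work lies. First, your residue bookkeeping is not correct as stated: the poles at $\alpha=\sigma_k-i/t$ with $\sigma_k\neq0$ are \emph{not} generically simple, since distinct levels can share the same nonzero value of the cumulative sum (this happens whenever some intermediate $\Omega_i=0$, a large set of configurations); in that case the partial-fraction coefficient $\prod_{j\neq k}|\sigma_k-\sigma_j|^{-1}$ is undefined and the higher-order pole produces additional powers of $t$, not an $O(1)$ contribution. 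One can redo the count grouping equal values and still get at most $t^{n-1}$ per non-resonant side, but this must be argued. Second, and more seriously, after taking residues the surviving denominators are differences $\sigma_k-\sigma_j=\sum_{i=j\wedge k}^{(j\vee k)-1}\Omega_i$, i.e.\ sums of moduli over \emph{intermediate} ranges of the tree. These no longer have the triangular dependence on the free variables (each factor depending only on the free variable at the current vertex and later ones) that Theorem~\ref{th:spanning} guarantees for the cumulative sums $\sum_{i\geq k}\Omega_i$, and which is what makes the time-ordered summation algorithm of Proposition~\ref{pr:L2expansion} run. You acknowledge this yourself ("the main technical obstacle lies in Step 3 \dots this is not automatic \dots one has to argue case by case") but do not resolve it; since this is precisely the step that converts the $1/t$ heuristic into the bound \fref{bd:nonresonant}, the proof is incomplete.

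For comparison, the paper avoids residues altogether: it keeps the $\alpha,\alpha'$ integrals from \fref{bd:resolventidentity}, restricts them (after discarding $|\alpha|,|\alpha'|\geq\epsilon^{-K}$) to unit windows around integers $a,a'$, and classifies each level $k$ by whether $\sum_{i=k}^n\Omega_i=a$ (in which case the resolvent factor is pulled out as $|\alpha-a+i/t|^{-1}$ and the frequencies at that level are \emph{counted} via \fref{bd:degree1quadraticresonant}--\fref{bd:degree2resonant}) or not (in which case $|\alpha-\sum_{i\geq k}\Omega_i+i/t|\geq1/2$ and the level is summed via \fref{bd:resolvantnonresonantdeg11}--\fref{bd:resolvantnonresonantdeg2}). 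Each factor then still involves only cumulative sums, so the time-ordered algorithm applies verbatim, and the $\alpha,\alpha'$ integrals are evaluated at the end by Lemma~\ref{lem:tech3}, giving $t^{\#\mathcal S+\#\mathcal S'}$ with $\#\mathcal S+\#\mathcal S'<2n$ thanks to the exclusion \fref{id:convention}; the sum over $a,a'\neq0$ is handled by the extra $1/|a|$, $1/|a'|$ factors. If you want to salvage your approach you would need to redo the residue calculus with grouped (possibly multiple) poles and then prove a replacement for the time-ordering property for the difference denominators $\sigma_k-\sigma_j$; as written, that step is missing.
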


\subsection{Key contributions: completely resonant configurations}

We prove in this subsubsection Lemma \ref{lem:resonant}. A first Lemma establishes an upper bound on the number of completely resonant configurations.

\begin{lemma} \label{lem:tgeq1upper}

For all interaction histories $\ell$, $\ell'$ of depth $n$ for the left and right subtrees, for all admissible pairing $P$:
\be \label{id:fact1}
N_{\ell,\ell',P,\mathcal R}=\# \{ (\xi_i)_{1\leq i \leq 2n+1}\in \mathcal R \mbox{ with } |\xi_i|\leq \epsilon^{-1}\} \lesssim \left\{ \begin{array}{l l} \epsilon^{-2n-2}\langle \log \epsilon \rangle^n \qquad \mbox{for }d=2, \\ \epsilon^{-2n(d-1)-d}  \qquad \mbox{for }d\geq 3. \end{array}\right.
\ee

\end{lemma}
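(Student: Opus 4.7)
The plan is to parametrize the completely resonant configurations by the $2n+1$ free variables $\xi_1,\dots,\xi_{2n+1}$ provided by Theorem~\ref{th:spanning}, then count solutions by iterating through the $2n$ interaction vertices in reverse natural time ordering (i.e.\ top-down). The key point is the third bullet of Theorem~\ref{th:spanning}: at each vertex, the resonance modulus $\Omega$ depends only on the free variable(s) associated to that vertex and those appearing strictly later in the ordering. So when we process a vertex after all later vertices, we only need to count how many choices of its associated free variable(s) satisfy the single equation $\Omega = 0$, conditionally on the later ones being fixed. The last free edge $e^f_{2n+1}$ sits at the bottom left of the root vertex and is not subject to any resonance constraint, giving the trivial bound $\#\{|\xi_{2n+1}|\le \varepsilon^{-1}\}\lesssim \varepsilon^{-d}$.

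Next, I would estimate the number of solutions at each vertex using Theorem~\ref{th:spanning} and Definition~\ref{def:degreeonecases}. \emph{Degree zero:} no free variable is introduced, so I would bound the vertex trivially by $1$ (ignoring the constraint it imposes on later variables). \emph{Degree one, linear type:} by~\fref{id:degreeonelinear} the equation is $2\zeta\cdot\xi_k + \tilde\Omega = 0$; when $\zeta\neq 0$ this is a lattice-hyperplane condition giving $O(\varepsilon^{-(d-1)})$ solutions, while the degenerate case $\zeta=0$ is handled by Lemma~\ref{lem:wicknondegeneracy}, which forces $\xi_k$ to be a fixed linear combination of later variables (thus an $O(1)$ factor, better than $\varepsilon^{-(d-1)}$). \emph{Degree one, quadratic type:} by~\fref{id:degreeonequadratic} the equation is $|\xi_k + \zeta/2|^2_{\operatorname{Id}} = c$, a lattice-sphere condition; I invoke the classical bound $O(R^{d-2+\kappa})$ on lattice points on a sphere of radius $R$ for $d\geq 3$, and the divisor bound $O(R^{\kappa})$ for $d=2$. \emph{Degree two:} the conditions~\fref{id:formuladegre2omega} read, after a shift $\eta_k = \xi_k+\tilde\xi$, either $\eta_k\cdot \eta_{k+1}=0$ or a linear equation $\eta_k\cdot\xi_{k+1}=c(\eta_k)$. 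Summing over $\eta_k$ the number of $\xi_{k+1}$ lying in the lattice hyperplane $\eta_k^{\perp}\cap\mathbb Z^d$ (whose density is $\gcd(\eta_k)/|\eta_k|$), I would decompose $\eta_k = m\eta'$ with $\eta'$ primitive and get $\sum_m\sum_{|\eta'|\le \varepsilon^{-1}/m}\varepsilon^{-(d-1)}/|\eta'|$. For $d\geq 3$ this gives $\varepsilon^{-2(d-1)}$; for $d=2$ the partial sum is logarithmic in $\varepsilon^{-1}$, yielding $\varepsilon^{-2}\langle\log\varepsilon\rangle$.

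Multiplying over all vertices and invoking the balance identities~\fref{id:ni}, $n_1+2n_2=2n$ and $n_1 = n_1^l + n_1^q$, the total count is at most
\[
\varepsilon^{-d}\cdot \bigl(\varepsilon^{-2(d-1)}\bigr)^{n_2}\cdot\bigl(\varepsilon^{-(d-1)}\bigr)^{n_1^l}\cdot \bigl(\varepsilon^{-(d-2)-\kappa}\bigr)^{n_1^q}
\]
for $d\geq 3$, which simplifies to $\varepsilon^{-d-2n(d-1)+(1+\kappa)n_1^q}$, maximized at $n_1^q=0$ and giving $\varepsilon^{-2n(d-1)-d}$. For $d=2$ the same manipulation with the extra $\langle\log\varepsilon\rangle^{n_2}$ factor yields $\varepsilon^{-2n-2+(1-\kappa)n_1^q}\langle\log\varepsilon\rangle^{n_2}$; the maximum is attained at $n_1^q=0$, $n_1=0$, $n_2=n$ (equivalently, an all-degree-two graph), giving $\varepsilon^{-2n-2}\langle\log\varepsilon\rangle^n$.

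The main technical hurdle is the $d=2$ estimate for a degree-two vertex: orthogonality in $\mathbb Z^2$ forces the partner vector onto a one-dimensional sublattice whose density depends on $\gcd$ of the coordinates, and this must be summed with care via the primitive-vector decomposition to recover the correct logarithmic factor (rather than an unwanted power loss). All other ingredients are either standard lattice-point bounds or direct consequences of Theorem~\ref{th:spanning} and Lemma~\ref{lem:wicknondegeneracy}.
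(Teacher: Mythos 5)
Your proposal is correct and takes essentially the same route as the paper: the same parametrization by the free variables of Theorem~\ref{th:spanning}, the same vertex-by-vertex conditional count (with the degenerate $\zeta=0$ linear case handled via Lemma~\ref{lem:wicknondegeneracy}), the same per-vertex lattice counts — the paper simply cites its Lemma~\ref{lem:comptagecompletementresonnant}, whose proof is the very hyperplane-density/primitive-vector argument you reproduce for degree-two vertices — and the same final bookkeeping through \fref{id:ni}. Your finer tracking of quadratic versus linear degree-one vertices (and the immaterial $(1+\kappa)$ versus $(1-\kappa)$ slip in the exponent) only sharpens intermediate bounds; since the worst case is $n_1^q=0$, it reproduces the paper's conclusion, which uses the uniform bound $\epsilon^{1-d}$ for all degree-one vertices.
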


\begin{proof}

Fix $(\ell,\ell',P)$, and denote by $(\xi_i)_{1\leq i \leq 2n+1}$ the free variables in the graph given by Theorem \ref{th:spanning}. We estimate $N_{\ell,\ell',P,\mathcal R}$ iteratively using an algorithm that is very similar to that of the proof of Proposition \ref{pr:L2expansion}. We look at the vertices one after another, starting from the bottom of the right graph, going up to the top of the right graph, then starting from the bottom of the left graph and going up to its top. This algorithm is permitted by the following: observe from Theorem \ref{th:spanning} that given a vertex $e$ with associated resonance modulus $\Omega$, the condition $\Omega=0$ only involve the free variables that may be attached below $e$, and the free variables appearing after.\\
Let us run our algorithm, and assume that we are at step $i$, and let $e$ be the vertex under consideration, with resonance modulus $\Omega$.
\begin{itemize}
\item If $e$ is a degree zero vertex, then we drop the constraint $\Omega=0$ out of the definition of the set $\mathcal R$. That is to say, we do not use this constraint for the free variables appearing in the next steps of the algorithm. Hence our algorithm will count a number of points which is greater our equal to $N_{\ell,\ell',P}$.
\item If $e$ is a degree one vertex, let $\xi_k$ be the free variable attached below it. Let all free variables $(\xi_{j})_{j>k}$ appearing after $\xi_k$ being fixed. We use the formula \fref{id:formuladegre1omega} to compute $\Omega$, and the bounds \fref{bd:degree1quadraticresonant} and \fref{bd:degree1linearresonant} to estimate:
$$
\forall (\xi_j)_{j>k} \mbox{ with }|\xi_j|\leq \epsilon^{-1}, \quad \# \left\{ |\xi_k|\leq \epsilon^{-1}, \quad \Omega=0\right\}\lesssim \epsilon^{1-d}.
$$
The above bound does not hold for only one degenerate case: if $\sigma\sigma'=-1$ and $\zeta=0$ in \fref{id:formuladegre1omega}. In that case however, Lemma \ref{lem:wicknondegeneracy} implies that on the support of $\Delta_{\ell,\ell',P}$, $\xi_k$ is a linear combination of the free variables $(\xi_{j})_{j>k}$. Hence the above set, intersected with the support of $\Delta_{\ell,\ell',P}$, contains only $1\lesssim \ep^{1-d}$ element.

\item If $e$ is a degree two vertex, let $(\xi_k,\xi_{k+1})$ be the free variables attached below it. Let all free variables $(\xi_{j})_{j>k+1}$ appearing after $\xi_k,\xi_{k+1}$ be fixed. We use the formula \fref{id:formuladegre2omega} to compute $\Omega$, and the bound \fref{bd:degree2resonant} to bound:
$$
\forall (\xi_j)_{j>k+1} \mbox{ with }|\xi_j|\leq \epsilon^{-1}, \quad \# \left\{ |\xi_k|,|\xi_{k+1}|\leq \epsilon^{-1}, \quad \Omega=0\right\}\lesssim \left\{ \begin{array}{l l} \epsilon^{-2}\langle \log \epsilon\rangle \qquad \mbox{for }d=2, \\ \epsilon^{2-2d} \qquad \mbox{for }d\geq 3. \end{array} \right.
$$
\end{itemize}
At the end of the algorithm, we estimate the number of possible values for $\xi_{2n+1}$ by the rough estimate $\epsilon^{-d}$ since $|\xi_{2n+1}|\leq \epsilon^{-1}$, so that one has using \fref{id:ni} that for $d=2$:
$$
N_{\ell,\ell',P}\lesssim \epsilon^{-2}\epsilon^{-n_1}\epsilon^{-2n_2}\langle \log \epsilon \rangle^{n_2}\lesssim \epsilon^{-2n-2}\langle \log \epsilon \rangle^{n},
$$
and for $d\geq 3$:
$$
N_{\ell,\ell',P}\lesssim \epsilon^{-d}\epsilon^{-n_1 (d-1)}\epsilon^{-2n_2 (d-1)} \lesssim \epsilon^{-2n(d-1)-d} 
$$

\end{proof}

A second Lemma finds pairings that maximise the number of completely resonant configurations, so that the upper bound \fref{id:fact1} is saturated. These are the so-called ladder configurations

\begin{lemma}  \label{lem:tgeq1lower}
We keep the notations of Lemma \ref{lem:tgeq1upper}. Then there holds the following property:
\be \label{id:fact2}
\forall \ell'\mbox{ there exists }P, \ \ell\mbox{ and }c>0 \quad  \mbox{such that } \quad N_{\ell,\ell',P,\mathcal R}\geq \left\{ \begin{array}{l l}  c \epsilon^{-2n-2}\langle \log \epsilon \rangle^n \qquad \mbox{for }d=2, \\ c \epsilon^{-2n(d-1)-d} \qquad \mbox{for }d\geq 3. \end{array} \right.
\ee

\end{lemma}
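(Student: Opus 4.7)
My plan is to saturate the upper bound from Lemma~\ref{lem:tgeq1upper} by exhibiting, for each right-subtree history $\ell'$, an explicit \emph{ladder} configuration $(\ell, P)$ in which exactly $n$ interaction vertices of the paired diagram are of degree two (the maximum compatible with~\fref{id:ni}) and the remaining $n$ degree-zero vertices carry resonance conditions that are automatically satisfied. The natural candidate is $\ell=\ell'$ together with the mirror pairing $P$ matching each left initial vertex $v_{0,i}$ with its mirror $v_{0,i}'$ on the right. Admissibility $\sigma_{0,i}\sigma_{0,i}'=-1$ follows from the opposite top parities $\sigma_{n,1}=+1$ and $\sigma_{n,1}'=-1$ combined with identical parity propagation through the (identical) interaction histories.

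Under this pairing the Kirchhoff constraints impose $k_{0,i}'=-k_{0,i}$, so all left-subtree momenta are determined by the right-subtree momenta once one fixes the top left edge $\xi_{2n+1}$ below the root vertex (the only free edge on the left, per the \emph{location of the last free edge} clause of Theorem~\ref{th:spanning}). The spanning tree can then be completed by selecting, at each of the $n$ right interaction vertices, two of the three lower incoming edges as free; this forces every right interaction vertex to be of degree two and every left interaction vertex to be of degree zero, yielding the extremal counts $(n_0,n_1,n_2)=(n,0,n)$ of~\fref{id:ni}. Crucially, the mirror symmetry between the two subtrees produces, at each left (degree-zero) vertex, a resonance modulus $\Omega_j$ equal (up to a sign) to $\Omega_{j'}'$ at its mirror right vertex, so imposing $\Omega_{j'}'=0$ at each of the $n$ right vertices automatically realises the $n$ remaining resonances.

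It then remains to lower-bound, at each right degree-two vertex, the number of free-variable pairs $(\xi_k,\xi_{k+1})$ with $|\xi_k|,|\xi_{k+1}|\lesssim \epsilon^{-1}$ satisfying the quadratic constraint coming from~\fref{id:formuladegre2omega}, namely $H(\xi_k+\tilde\xi)\cdot(\xi_{k+1}+\tilde\xi)=0$ with $\tilde\xi$ depending only on later free variables. Setting $(a,b)=(\xi_k+\tilde\xi,\xi_{k+1}+\tilde\xi)$ and summing the number of lattice points $b$ on the sublattice $\{b\in\mathbb Z^d: Ha\cdot b=0\}$ inside the ball of radius $R=\epsilon^{-1}$ (which has covolume $\sim |Ha|/\gcd(a)$) gives $\sim R^{d-1}\gcd(a)/|a|$ points per primitive $a$. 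For $d\geq 3$, restricting $a$ to primitive vectors with $|a|\sim R$ (a positive-density set) already yields $\sim R^{2d-2}=\epsilon^{2-2d}$ pairs; for $d=2$ the extra logarithm arises from the sum $\sum_{1\leq r\leq R}rR\log r/r\sim R^2\log R$, where the $\log r$ comes from summing $\gcd(a)/|a|$ over $a$ of size $\sim r$. The $n$ degree-two counts multiply, and the final free variable $\xi_{2n+1}$ contributes the trivial factor $\epsilon^{-d}$ from $|\xi_{2n+1}|\lesssim\epsilon^{-1}$, delivering~\fref{id:fact2}.

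The main obstacle will be to verify rigorously the combinatorial claim of the second paragraph: that the ladder pairing is compatible with a spanning tree of the type described (all right interaction vertices of degree two, all left ones of degree zero), and that the Wick truncation $\Delta^W_{\ell,\ell'}$ of~\fref{id:defDeltaWick} does not vanish on a positive-density subset of the ladder configurations. Establishing this requires carefully unfolding the inductive spanning-tree construction of \cite{CG} in the mirror-symmetric setting, and checking vertex by vertex that the forbidden Wick equalities are avoided for generic ladder assignments; the degenerate cases of linear-type degree-one vertices flagged in Lemma~\ref{lem:wicknondegeneracy} do not even arise here since $n_1=0$, which should simplify the bookkeeping.
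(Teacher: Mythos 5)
Your overall strategy coincides with the paper's: saturate the upper bound of Lemma \ref{lem:tgeq1upper} with a ladder pairing in which one subtree carries only degree-two vertices, use the mirror symmetry to make the other subtree's resonance conditions automatic (indeed $\Omega_i=-\Omega_i'$ in the paper's construction), and lower-bound at each degree-two vertex the number of orthogonal lattice pairs; your visible-point/divisor count giving $\epsilon^{2-2d}$, resp.\ $\epsilon^{-2}\langle\log\epsilon\rangle$ in $d=2$, is exactly the paper's use of \fref{bd:interortho} and \fref{bd:degree2resonantlower}. However, the combinatorial construction you defer as ``the main obstacle'' is precisely where your proposal, as written, fails. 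First, the choice $\ell=\ell'$ with the pairing $v_{0,i}\leftrightarrow v_{0,i}'$ is not admissible: the parity rule stated before \fref{colvert} forces the outer children of every merging to carry $-1$ and $+1$ \emph{independently of the parent's parity}, so two subtrees with identical histories and opposite top parities have \emph{equal} leaf parities except along the single chain of middle children; hence $\sigma_{0,i}\sigma_{0,i}'=+1$ for most $i$, violating the constraint $\sigma_{0,i}\sigma_{0,j}=-1$ required in \fref{id:wickformula}. The paper instead takes $\ell=\ell'_{mirror}$ and pairs each left initial vertex with its mirror image (the $(2n+2-i)$-th right vertex); under the mirror the forced $\pm1$ resets swap sign, and all paired parities are then opposite. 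Second, you cannot simply ``select'' two free edges below each right interaction vertex: the free edges, hence the vertex degrees, are produced by the spanning-tree algorithm of Theorem \ref{th:spanning} (for the ladder pairing it yields all right vertices of degree zero and all left vertices of degree two), and $N_{\ell,\ell',P,\mathcal R}$ is defined in terms of those free variables; this identification has to be carried out, not postulated, since it is the heart of the claim that for \emph{every} $\ell'$ an extremal $(\ell,P)$ exists.

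A smaller but genuine point: counting pairs $(a,b)=(\xi_k+\tilde\xi,\xi_{k+1}+\tilde\xi)$ in the full ball $|a|,|b|\leq\epsilon^{-1}$ does not by itself produce configurations inside the support of the data cutoffs, because $\tilde\xi$ (and the top momentum at the vertex) depends on later free variables and may itself be of size $\epsilon^{-1}$. The paper handles this by counting with nested radii $|\xi_i|\leq 100^{-\lceil i/2\rceil}\epsilon^{-1}$, so that after the change of variables the constraint $\Omega_i=0$ can be realised while all original momenta remain in $B(0,\epsilon^{-1})$; this only costs a constant, so the lower bound survives, but the step must appear. Your final remark about the Wick factor is correct and harmless here, since no linear degree-one vertices occur in the extremal configuration.
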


\begin{proof}

Fix an interaction history $\ell'$ for the right graph. Take the interaction history $\ell=\ell'_{mirror}$ for the left graph such that the left graph is obtained by the application to the right graph of the symmetry with respect to the central axis as in the picture below. We take the ladder pairing $P_{ladder}$, that is to say we take the pairing that pairs the $i$-th initial vertex of the left graph with its symmetric image being the $2n+2-i$-th initial vertex of the right graph. Then, the application of the algorithm to construct the minimal spanning tree of Theorem \ref{th:spanning} gives that all vertices of the right graph are of degree zero, and that all that of the left graph are of degree two. The integration of Kirchhoff's laws produce the following. For all $i=1,...,n$, the free variable associated to the edge on the bottom left of the vertex $e_i$ is $\xi_{2i+2}$, that on for the bottom center is $\xi_{2i+1}$, and let us denote by $k_{top}$ that of the edge on top which from Theorem \ref{th:spanning} only depends on $(\xi_k)_{k>2i+2}$. Then:
$$
\Omega_i=-\Omega_i', \quad \Omega_i=|k_{top}-\xi_{2i+1}-\xi_{2i+2}|^2-|\xi_{2i+2}|^2-\sigma_i\left(|k_{top}|^2-|\xi_{2i+1}|^2 \right)
$$
This is illustrated on an example below.
\begin{center}
\includegraphics[width=17cm]{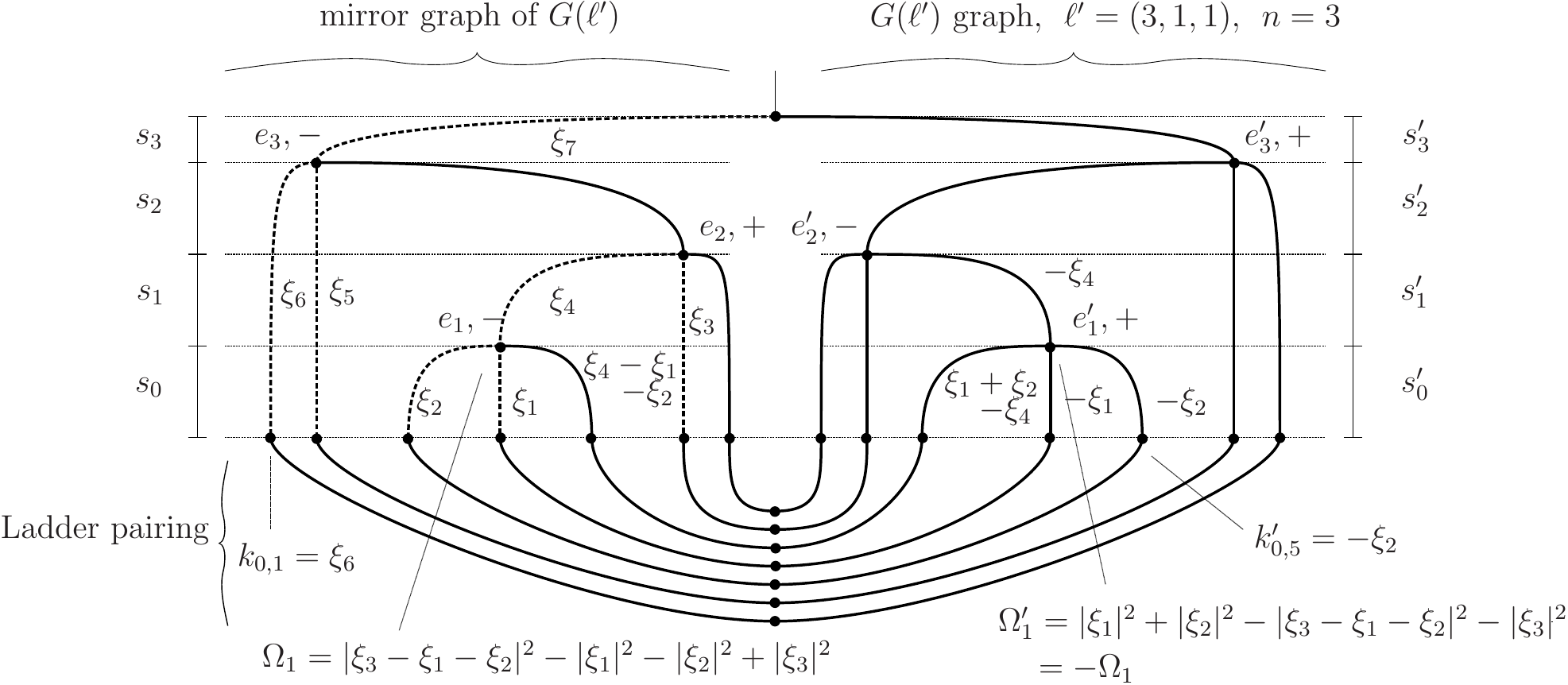}
\end{center}

To control that the addition of momenta we will perform do not take us out of the ball $B(0,\epsilon^{-1})$, we will find a lower bound for:
$$
\tilde N_{\ell'_{mirror},\ell',P_{ladder},\mathcal R}=\# \{ (\xi_i)_{1\leq i \leq 2n+1}\in \mathcal R \mbox{ with } |\xi_{i}|\leq \frac{1}{100^{\lceil \frac i2 \rceil}}\epsilon^{-1}\}
$$

We consider each vertex of the left graph one by one, from bottom to top. At the $i$-th step, let $\xi_k$ for $k>2i+2$ be all fixed satisfying $|\xi_{i}|\leq \frac{1}{100^{\lceil k \rceil}}\epsilon^{-1}$. In particular, this forces $|k_{top}|\leq \frac{1}{100}\frac{1}{100^{\lceil i \rceil}}$. We treat the case $\sigma_i=+1$ without loss of generality. Then
$$
\Omega_i=2\left(\xi_{2i+1}+\xi_{2i+2}\right).\left(\xi_{2i+1}-k_{top}\right)=2\eta.\eta'
$$
where we changed variables $\eta=\xi_{2i+1}+\xi_{2i+2}$ and $\eta'=\xi_{2i+1}-k_{top}$. Hence:
\bee
\# \left\{ |\xi_{2i+1}|,|\xi_{2i+2}|\leq \frac{\ep^{-1}}{100^{i}}, \ \Omega_i=0\right\}& \geq & \# \left\{ |\eta_{2i+1}|,|\eta'_{2i+2}|\leq \frac 13 \frac{\epsilon^{-1}}{100^{i}}, \ \eta.\eta'=0\right\}\\
&\gtrsim & \left\{ \begin{array}{l l} \epsilon^{-2}\langle \log \epsilon \rangle \qquad \mbox{for }d=2, \\ \epsilon^{2-2d}  \qquad \mbox{for }d\geq 3, \end{array} \right.
\eee
where we used the lower bound \fref{bd:degree2resonantlower}. Performing $n$ times this computation for $i=1,...,2n$, then estimating the contribution of the last variable $\xi_{2n+1}$ by $\#\{|\xi_{2n+1}|\leq 100^{-n}\epsilon^{-1} \}\gtrsim \epsilon^{-d}$ produces precisely \fref{id:fact2}.

\end{proof}

We now easily end the proof of Lemma \ref{lem:resonant}.

\begin{proof}[Proof of Lemma \ref{lem:resonant}]

Recall \fref{id:decompositionmathcalF}. On $\mathcal R$, all resonance moduli $\Omega_k$ and $\Omega_k'$ are $0$ so that for any $\ell$, $\ell'$ and $P$:
$$
 \int_{\mathbb R_+^{n+1}\times \mathbb R_+^{n+1}} \prod_{k=1}^{n}e^{-i\Omega_k \sum_{j=0}^{k-1}s_j}\prod_{k=1}^{n}e^{-i\Omega_k' \sum_{j=0}^{k-1}s_j'}\delta \left(t-\sum_{i=0}^{n}s_i \right)\delta \left(t-\sum_{i=0}^{n}s_i' \right) \, d\underline{s} \, d \underline{s}'= C t^{2n}
$$
for some $C>0$. In particular, all terms are nonnegative: $\mathcal F_{\mathcal R}(\ell,\ell',P)\geq 0$ for all $\ell,\ell',P$. This implies that to prove the desired bound of the Lemma, it is sufficient to prove the upper bound in \fref{bd:totallyresnonant} and \fref{bd:totallyresnonant2} for all terms $\mathcal F_{\mathcal R}(\ell,\ell',P)$, and that there exists at least one choice $(\ell,\ell',P)$ such that the lower bound in \fref{bd:totallyresnonant} and \fref{bd:totallyresnonant2} is satisfied by $\mathcal F_{\mathcal R}(\ell,\ell',P)$. Recalling that $T_{kin}=\lambda^{-4}\ep^{-2}$, these two results are exactly those provided by Lemma \ref{lem:tgeq1upper} and Lemma  \ref{lem:tgeq1lower} respectively.

\end{proof}

\subsection{Lower order terms}

In this subsection we examine the non fully resonant terms corresponding to $\mathcal R^c$ in \fref{id:decompositionmathcalF}, and prove Lemma \ref{lem:nonresonant}. We refine the analysis performed in \cite{CG}, extracting an additional gain for the temporal term from the definition of $\mathcal R^c$. We rewrite $\mathcal F_{\mathcal R^c}(\ell,\ell',P)$ in \fref{id:decompositionmathcalF}, once the resolvent identity \fref{bd:resolventidentity} and Theorem \ref{th:spanning} has been applied to determine the free frequencies $(\xi_1,...,\xi_{2n+1})$ as:
\bea
\nonumber \mathcal F_{\mathcal R^c}(\ell,\ell',P)&=&\left(\frac{\lambda^2}{(2\pi)^d}\right)^{2N}\ep^{d(2n+1)} \frac{e^2}{(2\pi)^2}\int_{\mathbb R^2}\sum_{(\xi_1,...,\xi_{2n+1})\in \mathcal R^c} d\alpha d \alpha 'e^{-it(\alpha+\alpha')} \Delta_{\ell,\ell',P}(\underline{k},\underline{k}',0) \\
\label{id:FresolvantmathcalRc} &&\prod_{k=1}^{n} \frac{1}{\alpha-\sum_{i=k}^{n}\Omega_i+\frac{i}{t}}\frac{1}{\alpha'-\sum_{i=k}^{n}\Omega_i'+\frac{i}{t}}\frac{1}{\alpha+\frac{i}{t}}\frac{1}{\alpha'+\frac{i}{t}}  \prod_{\{i,j\}\in P} |A(\epsilon \widetilde k_{0,i})|^2 .
\eea

\begin{proof}[Proof of Lemma \ref{lem:nonresonant}]

Our first step is to localise for $|\alpha|,|\alpha'|\leq  \epsilon^{-K}$ for $K\gg 1$ and to localise the $d\alpha$ and $d\alpha'$ integrals near integers:
\bee
 \left| \mathcal F_{\mathcal R^c}(\ell,\ell',P) \right| \lesssim  \underbrace{\lambda^{4n} \ep^{2d(2n+1)} \int_{\max (|\alpha|,|\alpha'|)\geq \epsilon^{-K}} |...|}_{I}+\underbrace{\lambda^{4n}\ep^{2d(2n+1)}\sum_{a,a'=-\lceil \epsilon^{-K}\rceil}^{\lceil \epsilon^{-K}\rceil} \int_{a-\frac 12}^{a+\frac 12} d\alpha \int_{a'-\frac 12}^{a'+\frac 12} d\alpha' |...|}_{II}.
\eee
Treating $I$ the exact same way as in the proof of Proposition \ref{pr:L2expansion}, we find that it has irrelevant size
\be \label{bd:nonfullyresonantinter1}
I\lesssim \epsilon^{cK}\ll \frac{C}{t} \left(\frac{t^2}{T_{kin}} \right)^n \langle \log \epsilon \rangle^{n+1}
\ee
for some universal $c>0$ and for $K$ large enough. To evaluate $II$ we fix $a,a'$. In order to distinguish wether a term is resonating with the temporal frequencies $a$ and $a'$ or not we do the following decomposition. For all subsets $\mathcal S,\mathcal S'$ of $\{1,...,n\}$ we define:
\bee
\mathcal R^c_{\mathcal S,\mathcal S',a,a'}&:=&\Bigl\{ \sum_{i=k}^n \Omega_i=a\mbox{ for all }k\in \mathcal S, \ \sum_{i=k}^n \Omega_i\neq a\mbox{ for all }k\notin \mathcal S,\\
&&\quad \quad \sum_{i=k}^n \Omega_i'=a'\mbox{ for all }k\in \mathcal S', \mbox{ and } \sum_{i=k}^n \Omega_i'\neq a'\mbox{ for all }k\notin \mathcal S'  \Bigr\},
\eee
with the following convention since $\mathcal R^c$ excludes the totally resonant configuration:
\be \label{id:convention}
\mathcal R^c_{\{1,...,n\},\{1,...,n\},0,0}=\emptyset
\ee
and write:
\bea
\nonumber \int_{a-\frac 12}^{a+\frac 12} d\alpha \int_{a'-\frac 12}^{a'+\frac 12} d\alpha' |...|&\lesssim& \sum_{\mathcal S,\mathcal S'}  \int_{a-\frac 12}^{a+\frac 12} d\alpha \int_{a'-\frac 12}^{a'+\frac 12} d\alpha' \sum_{(\xi_i)_{1\leq i \leq 2n+1}\in \mathcal R^c_{\mathcal S,\mathcal S',a,a'}}  |\Delta_{\ell,\ell',P}|(\underline{k},\underline{k}',0) \\
 \label{id:localisedresolvent}  && \prod_{k=1}^{n} \frac{1}{\left|\alpha-\sum_{i=k}^{n}\Omega_i+\frac{i}{t}\right|}\frac{1}{\left|\alpha'-\sum_{i=k}^{n}\Omega_i'+\frac{i}{t}\right|}\frac{1}{\left|\alpha+\frac{i}{t}\right|}\frac{1}{\left|\alpha'+\frac{i}{t}\right|}
\eea
We distinguish now depending on the values of $a,a'$.

\textbf{Case $a=a'=0$}. We keep the $a$ and $a'$ notation since this will adapt naturally for $(a,a')\neq (0,0)$. In this case, we estimate the sums and integral the following way. We first fix $a-1/2<\alpha<a+1/2$ and $a'-1/2<\alpha'<a'+1/2$. We use an algorithm whose iteration procedure is similar to that of the proof of Lemma \ref{lem:tgeq1upper}. We consider each vertex one by one, starting from the bottom of the right subtree then going to its top, then going from bottom to top of the left subtree. At the $k$-th step on the right graph, for $1\leq k \leq n$, we consider the vertex $v_k'$. Since $\sum_{i=k}^n \Omega_i'$ does only depend on the free variables either below $v_k'$ or appearing after $v_k'$, we can assume that all free variables appearing after $v_k'$ are fixed, and that the one appearing before have already been treated. We do the following:

\emph{Subcase (i) If $k\in \mathcal S'$:} We write 
$$
\frac{1}{\left|\alpha'-\sum_{i=k}^{n}\Omega_i'+\frac{i}{t}\right|}=\frac{1}{|\alpha-a'+\frac{i}{t}|}
$$
We then estimate that:
\begin{itemize}
\item If $v_k'$ is a degree one vertex, let us denote by $\xi_{i_k}$ the free variable attached to it and by $(\xi_{j})_{j>i_k}$ the free variables appearing after $v_k$. Then we write the condition $\sum_{i=k}^{n}\Omega_i'=a'$ as $\Omega_{k}'(\xi_{i_k})=a'-\sum_{i=k+1}^n\Omega_i'$. Note that the right hand side does not depend on $\xi_{i_k}$. We estimate using the formulas \fref{id:formuladegre1omega} and \fref{id:wicknondegeneracy}, and the upper bounds \fref{bd:degree1quadraticresonant} and  \fref{bd:degree1linearresonant}:
$$
\# \left\{\xi_{i_k}\in \mathbb Z^d, \quad  |\xi_{i_k}|\leq \epsilon^{-1}, \mbox{ and } \Omega_{k}'(\xi_{i_k})=a'-\sum_{i=k+1}^n\Omega_i'\right\} \lesssim \epsilon^{1-d}.
$$
The above bound does not hold for only one degenerate case: if $\sigma\sigma'=-1$ and $\zeta=0$ in \fref{id:formuladegre1omega}. In this case, Lemma \ref{lem:wicknondegeneracy} implies that the above set, intersected with the support of $\Delta_{\ell,\ell',P}$, contains only $1\ll \ep^{1-d}$ element so the bound is actually improved.

\item If $v_k'$ is a degree two vertex, let us denote by $\xi_{i_k},\xi_{i_k+1}$ the free variables attached to it and by $(\xi_{j})_{j>i_k+1}$ the free variables appearing after $v_k$. Then we write the condition $\sum_{i=k}^{n}\Omega_i'=a'$ as $\Omega_{k}'(\xi_{i_k},\xi_{i_{k+1}})=a'-\sum_{i=k+1}^n\Omega_i'$. Note that the right hand side does not depend on $\xi_{i_k},\xi_{i_{k}+1}$. We estimate using the formula \fref{id:formuladegre2omega} and the upper bound \fref{bd:degree2resonant}:
$$
\# \left\{\xi_{i_k},\xi_{i_k+1}\in \mathbb Z^d, \quad  |\xi_{i_k}|,|\xi_{i_k+1}|\leq \epsilon^{-1}, \mbox{ and } \Omega_{k}'(\xi_{i_k},\xi_{i_k+1})=a'-\sum_{i=k+1}^n\Omega_i'\right\} \lesssim  \epsilon^{2-2d}\langle \log \epsilon \rangle
$$
\item If $v_k'$ is a degree zero vertex, we do nothing.
\end{itemize}

\emph{Subcase (ii) If $k\notin \mathcal S'$:} Then since $a'-1/2<\alpha'<a'+1/2$, there holds $|\sum_{i=k}^{n}\Omega_i'-\alpha|>1/2$. We then estimate that:
\begin{itemize}
\item If $v_k'$ is a degree one vertex, let us denote by $\xi_{i_k}$ the free variable attached to it and by $(\xi_{j})_{j>i_k}$ the free variables appearing after $v_k$. Note that $\sum_{i=k+1}^n\Omega_i'$ only depends on $(\xi_{j})_{j>i_k}$. We sum over the $\xi_{i_k}$ variable, and estimate using the formulas \fref{id:formuladegre1omega} and \fref{id:wicknondegeneracy} and the two bounds \fref{bd:resolvantnonresonantdeg11} and \fref{bd:resolvantnonresonantdeg12}:
$$
\sum_{|\xi_{i_k}|\leq \epsilon^{-1}, \ |\sum_{i=k}^{n}\Omega_i'-a'|>1/2} \frac{1}{\left|a'- \sum_{i=k}^{n}\Omega_i'+\frac it \right|}\lesssim \epsilon^{1-d}\langle \log \epsilon \rangle.
$$
The above bound does not hold for only one degenerate case: if $\sigma\sigma'=-1$ and $\zeta=0$ in \fref{id:formuladegre1omega}. In this case, the above sum, restricted to the support of $\Delta_{\ell,\ell',P}$, contains only one element from Lemma \ref{lem:wicknondegeneracy}. Hence it is $\leq 2 \ll \epsilon^{1-d}\langle \log \epsilon \rangle$ and the bound is improved.

\item If $v_k'$ is a degree two vertex, let us denote by $\xi_{i_k},\xi_{i_k+1}$ the free variables attached to it and by $(\xi_{j})_{j>i_k+1}$ the free variables appearing after $v_k$. Note that $\sum_{i=k+1}^n\Omega_i'$ does not depend on $\xi_{i_k},\xi_{i_{k}+1}$. We sum over the $\xi_{i_k}$ and $\xi_{i_k+1}$ variables, and estimate using the formula \fref{id:formuladegre2omega} and the bound \fref{bd:resolvantnonresonantdeg2}:
$$
\sum_{|\xi_{i_k}|,|\xi_{i_k+1}|\leq \epsilon^{-1}, \ |\sum_{i=k}^{n}\Omega_i'-a'|>1/2} \frac{1}{\left|a'- \sum_{i=k}^{n}\Omega_i'+\frac it \right|}\lesssim  \epsilon^{2-2d}\langle \log \epsilon \rangle^2 
$$
\item If $v_k$ is a degree zero vertex, we simply upper bound:
$$
\frac{1}{\left|\alpha'-\sum_{i=k}^{n}\Omega_i'+\frac{i}{t}\right|}\leq 2.
$$

\end{itemize}

\emph{End of the algorithm} We treat similarly the left subtree. At the end of the procedure, we estimate the contribution of the last free variable $\xi_{2n+1}$ by the support estimate $\# \{|\xi_{2n+1}|\leq \epsilon^{-1} \}\lesssim \epsilon^{-d}$. At this point we have summed the integrand over all free variables, and found, recalling $a=a'=0$:
\bee
&& \sum_{(\xi_i)_{1\leq i \leq 2n+1}\in \mathcal R^c_{\mathcal S,\mathcal S',a,a'}}  \Delta_{\ell,\ell',P}(\underline{k},\underline{k}',0)\prod_{k=1}^{n} \frac{1}{\left|\alpha-\sum_{i=k}^{n}\Omega_i+\frac{i}{t}\right|}\frac{1}{\left|\alpha'-\sum_{i=k}^{n}\Omega_i'+\frac{i}{t}\right|}\frac{1}{\left|\alpha+\frac{i}{t}\right|}\frac{1}{\left|\alpha'+\frac{i}{t}\right|} \\
&\lesssim &  \frac{1}{|\alpha-a|^{\# \mathcal S}}\frac{1}{|\alpha-a'|^{\# \mathcal S'}}\frac{1}{|\alpha+\frac it|} \frac{1}{|\alpha'+\frac it|}  \epsilon^{-d} \epsilon^{-n_1(d-1)} \epsilon^{-2n_2(d-1)}\langle \log \epsilon\rangle^{n_1+2n_2} \\
&=& \frac{\epsilon^{-2n(d-1)-d}\langle \log \epsilon\rangle^{2n}}{|\alpha-a+\frac it|^{\# \mathcal S+1}|\alpha'-a'+\frac it|^{\# \mathcal S'+1}} ,
\eee
where we used \fref{id:ni}. We then use Lemma \ref{lem:tech3} to evaluate the $\alpha$ and $\alpha'$ integrals:
$$
\int_{a-\frac 12}^{a+\frac 12}\int_{a'-\frac 12}^{a'+\frac 12}d\alpha d\alpha' \frac{\epsilon^{-2n(d-1)-d}\langle \log \epsilon\rangle^{2n}}{|\alpha-a+\frac it|^{\# \mathcal S+1}|\alpha'-a'+\frac it|^{\# \mathcal S'+1}} \lesssim t^{2n-1}\epsilon^{-2n(d-1)-d}\langle \log \epsilon\rangle^{2n+2}
$$
since $\# S+\# S'<2n$ from \fref{id:convention} and since $t\geq 1$.\\

\textbf{Case $a=0$ and $a'\neq 0$}. In this case, we perform the same algorithm as in the case $a=a'=0$. Except that at the moment of estimating the $d\alpha'$ integral, we bound the last fraction in \fref{id:localisedresolvent} that does not involve a cumulated sum of resonance moduli by $\frac{1}{|\alpha'+\frac it|}\leq \frac{1}{a'}$. We thus find the bound in this case:
\bee
\int_{a-\frac 12}^{a+\frac 12} d\alpha \int_{a'-\frac 12}^{a'+\frac 12} d\alpha' \sum_{(\xi_i)_{1\leq i \leq 2n+1}\in \mathcal R^c_{\mathcal S,\mathcal S',a,a'}}  \Delta_{\ell,\ell',P}(\underline{k},\underline{k}',0) &&\\
  \prod_{k=1}^{n} \frac{1}{\left|\alpha-\sum_{i=k}^{n}\Omega_i+\frac{i}{t}\right|}\frac{1}{\left|\alpha'-\sum_{i=k}^{n}\Omega_i'+\frac{i}{t}\right|}\frac{1}{\left|\alpha+\frac{i}{t}\right|}\frac{1}{\left|\alpha'+\frac{i}{t}\right|}& \lesssim & \frac{1}{|a'|} t^{2n-1}\epsilon^{-2n(d-1)-d}\langle \log \epsilon\rangle^{2n}.
 \eee

\textbf{Case $a,a'\neq 0$}. The term is bounded similarly by $\frac{1}{|a|} \frac{1}{|a'|} t^{2n-1}\epsilon^{-2n(d-1)-d}\langle \log \epsilon\rangle^{2n}$. 

\textbf{End of the proof} We sum all the contributions for \fref{id:localisedresolvent} found in the three cases above, using that  $\sum_{a=-\lceil \epsilon^{-K}\rceil}^{\lceil \epsilon^{-K}\rceil}\frac{1}{\langle a \rangle}\lesssim \langle \log \ep \rangle$, and find the bound for $II$:
$$
II\lesssim \lambda^{4n}\epsilon^{d(2n+1)}  t^{2n-1}\epsilon^{-2n(d-1)-d}\langle \log \epsilon\rangle^{2n+1} \lesssim \frac{C}{t} \left(\frac{t^2}{T_{kin}} \right)^n \langle \log \epsilon \rangle^{2n+1}.
$$
The previous bound \fref{bd:nonfullyresonantinter1} for I and the above bound for II prove the desired upper bound.

\end{proof}

\subsection{Two explicit examples} \label{subsubsec:example}

For the sake of clarity, let us detail an example of an interaction diagram and of pairings saturating the bound \fref{lem:tgeq1upper}. We consider the simplest interaction diagram $G=G^*$ of depth $n\geq 1$. A formula for $G^*$ is easy to find from the picture below, so we do not provide it explicitly. There is only one way to order the interaction vertices for this example. Hence, we identify $u_{G*}$ with $u_{\ell^*}$ where $\ell^*$ is the corresponding interaction history:
$$
\ell^*:=\left(2n-1,2n-3,...,3,1 \right).
$$
\begin{center}
\includegraphics[width=13cm]{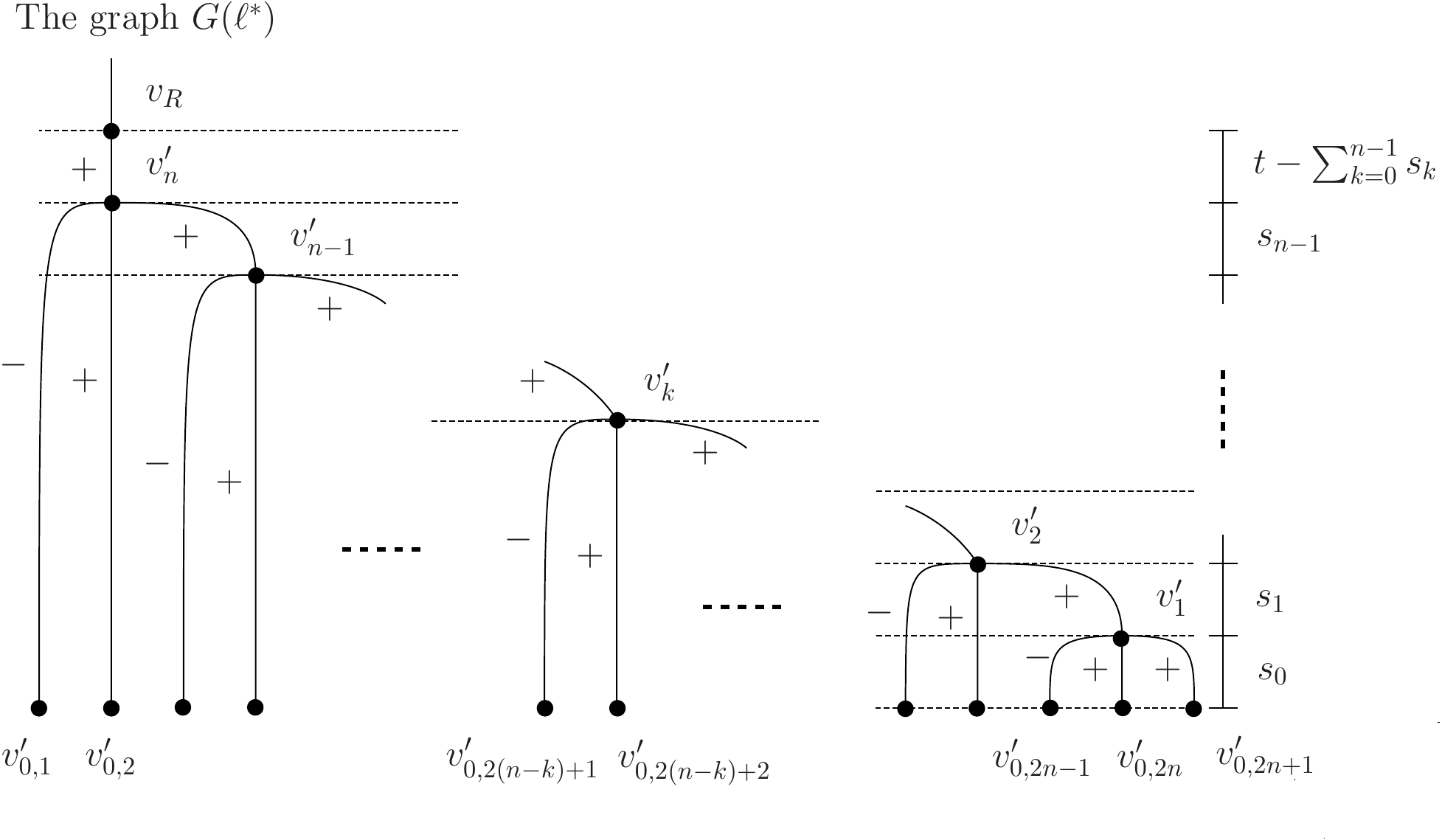}
\end{center}
There is only one interaction history for $\overline{u_{G^*}}$, and the corresponding interaction diagram is:
\begin{center}
\includegraphics[width=9cm]{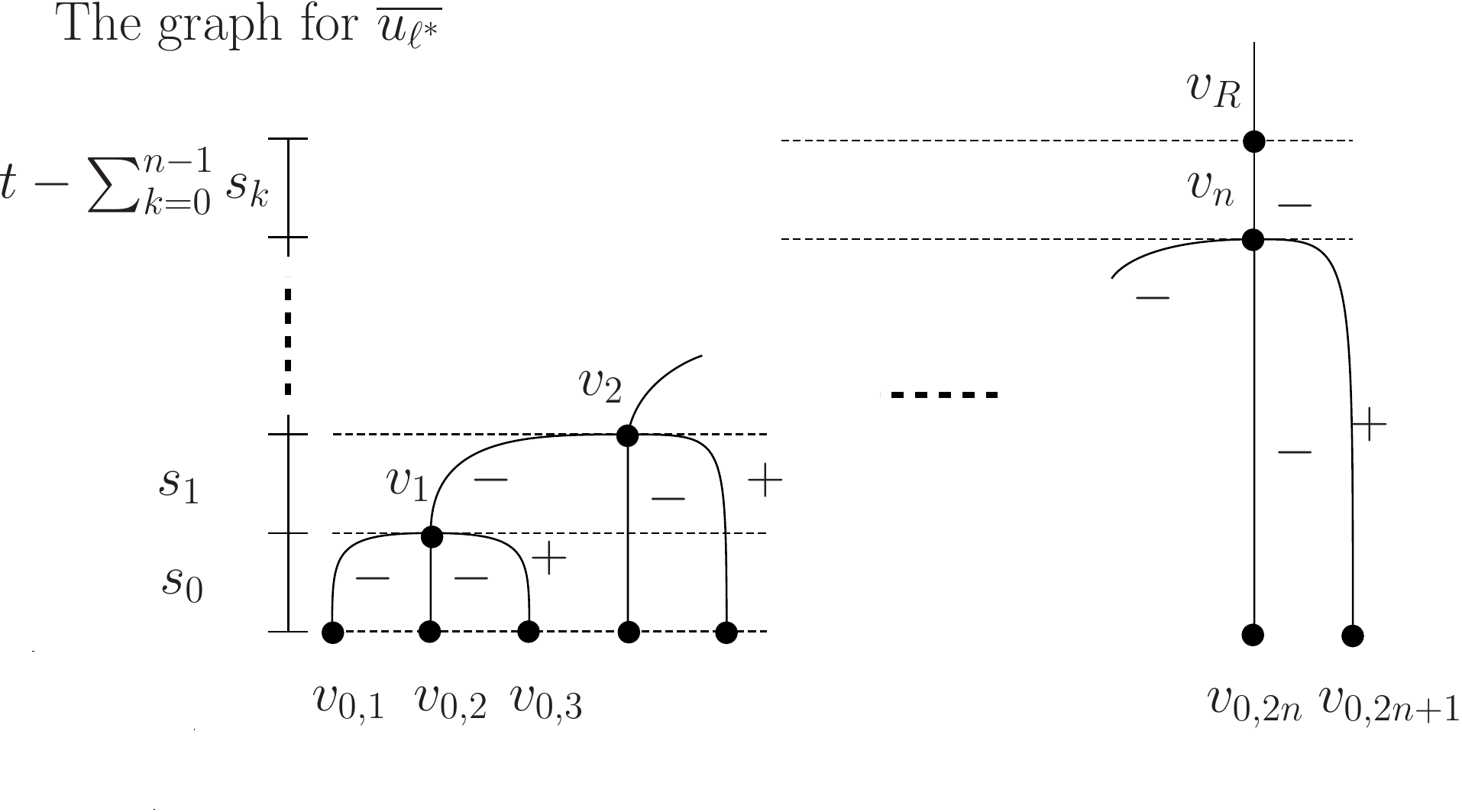}
\end{center}
We now give two very different pairings for the formula \fref{id:wickformula}. The first pairing is the so-called ladder pairing:
$$
P_{ladder}=\cup_{i=1,...,2n+1} \{i,2n+2-i'\}
$$
where the prime notation stands for the initial vertices of the right subtree, and the second one is the "belt" (due to the form of the graph, a long chain, see below):
\begin{eqnarray*}
P_{belt}&:=&\cup_{k=1,...,n-1}\{2k,2k+3\} \cup_{k=1,...,n-1} \{2k-1',2k+2' \} \cup \{1,2n+1'\}\cup \{ 3,2k-1'\} \cup \{2n,2'\}
\end{eqnarray*}
The corresponding paired diagrams, as described in Subsection \ref{subsec:paireddiagrams}, and once the algorithm of Theorem \ref{th:spanning} has been applied to identify the free vertices/momenta, are the following. For the ladder pairing:
\begin{center}
\includegraphics[width=16cm]{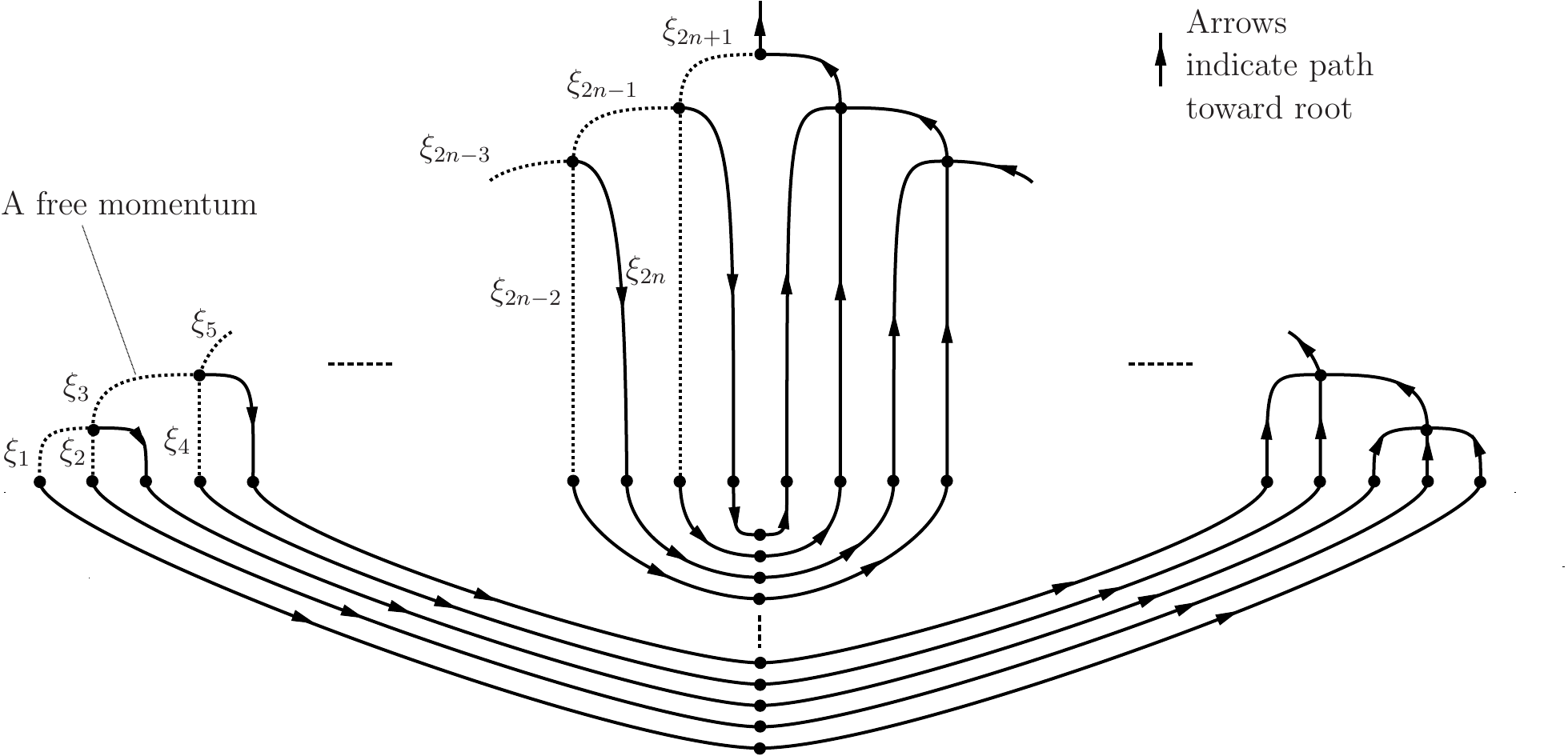}
\end{center}
where we applied the usual notation $( \xi_i )_{1\leq i \leq 2n+1}$ for the free momenta, and for the belt pairing:
\begin{center}
\includegraphics[width=16cm]{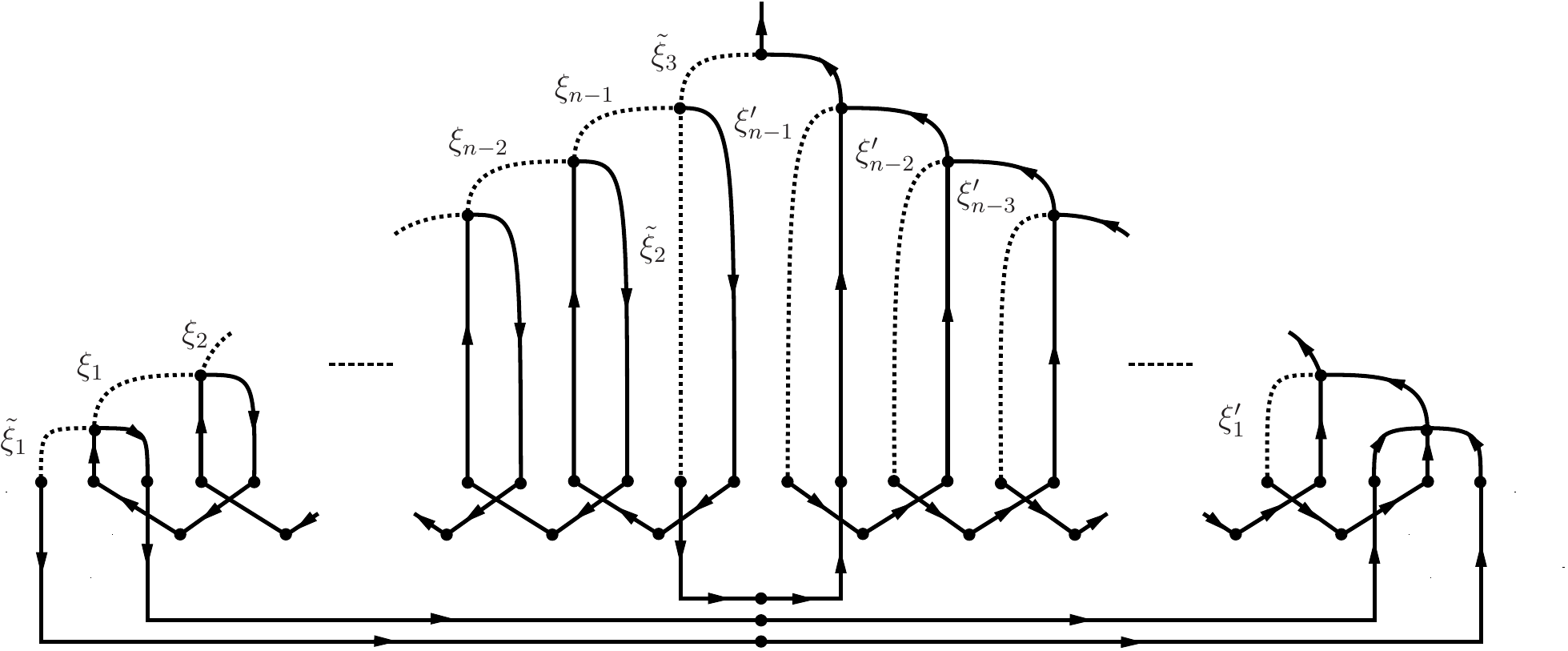}
\end{center}
where we set a specific notation for the free momenta $(\tilde \xi_1,\tilde \xi_2,\tilde \xi_3,\xi_1,...,\xi_{n-1},\xi_{1}',...,\xi_{n-1}')$. Both pairings degenerate for $t\geq 1$, saturating the main part of the bound \fref{bd:totallyresnonant} and \fref{bd:totallyresnonant2}, and one has:

\begin{proposition}[Equivalent bounds for specific examples] \label{pr:tgeq1examples}

For $A$ a cut-off function localising near the origin and for all $n\geq 1$, three constants $0<c<C$ and $C'>0$ exist such that: for all $t \geq C' \langle \log \epsilon \rangle^{n+2}$ for $d=2$:
\be \label{bd:exampleladder}
c \left(\frac{t^2}{T_{kin}}\right)^n \langle \log \epsilon \rangle^n\leq \mathcal F(\overline{\ell^*},\ell^*,P_{ladder}) \leq C \left(\frac{t^2}{T_{kin}}\right)^n \langle \log \epsilon \rangle^n
\ee
and for $d\geq 3$:
\be \label{bd:exampleladder3}
c \left(\frac{t^2}{T_{kin}}\right)^n \leq \mathcal F(\overline{\ell^*},\ell^*,P_{ladder}) \leq C \left(\frac{t^2}{T_{kin}}\right)^n
\ee
and for $t \geq C' \langle \log \epsilon \rangle^{2n+1}$ for all $d\geq 2$ (where $A\approx B$ means $cB\leq A \leq CB$):
\be \label{bd:exampleladder2}
F(\overline{\ell^*},\ell^*,P_{belt}) \approx  \left\{ \begin{array}{l l}  \left(\frac{t^2}{T_{kin}}\right)^n \epsilon^{2n-2}  \qquad \mbox{for }2n<d, \\  \left(\frac{t^2}{T_{kin}}\right)^n \epsilon^{d-2}   \langle \log \epsilon \rangle \qquad \mbox{for }2n=d,\\ \left(\frac{t^2}{T_{kin}}\right)^n \epsilon^{d-2}  \qquad \mbox{for }2n>d . \end{array} \right.
\ee

\end{proposition}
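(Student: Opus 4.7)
The overall plan is to decompose $\mathcal F(\overline{\ell^*},\ell^*,P) = \mathcal F_{\mathcal R}(\overline{\ell^*},\ell^*,P) + \mathcal F_{\mathcal R^c}(\overline{\ell^*},\ell^*,P)$ as in~\fref{id:decompositionmathcalF} for each $P\in\{P_{ladder},P_{belt}\}$. For each pairing I would evaluate $\mathcal F_{\mathcal R}$ sharply from both sides and then apply Lemma~\ref{lem:nonresonant} restricted to the single triple $(\overline{\ell^*},\ell^*,P)$, which gives $|\mathcal F_{\mathcal R^c}|\lesssim t^{-1}(t^2/T_{kin})^n\langle\log\epsilon\rangle^{2n+2}$. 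The assumed thresholds $t\geq C'\langle\log\epsilon\rangle^{n+2}$ (ladder) and $t\geq C'\langle\log\epsilon\rangle^{2n+1}$ (belt) are calibrated precisely so that this non-resonant remainder is dominated by the resonant contribution.

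For the ladder pairing, $(\overline{\ell^*},\ell^*,P_{ladder})$ is exactly the configuration used in the lower bound of Lemma~\ref{lem:tgeq1lower}. Theorem~\ref{th:spanning} applied to this paired diagram renders every interaction vertex of the left subtree of degree two and every interaction vertex of the right subtree of degree zero. At each degree two vertex, the change of variables $\eta=\xi_{2i+1}+\xi_{2i+2}$, $\eta'=\xi_{2i+1}-k_{top}$ together with the two-sided counts \fref{bd:degree2resonant} and \fref{bd:degree2resonantlower} produce sharp equalities $\asymp\epsilon^{2-2d}$ for $d\geq 3$ and $\asymp\epsilon^{-2}\langle\log\epsilon\rangle$ for $d=2$. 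The matching upper bound (from the algorithm of Lemma~\ref{lem:tgeq1upper}) and lower bound (from the explicit family of Lemma~\ref{lem:tgeq1lower}) therefore coincide up to constants for this single triple. Multiplying by $\lambda^{4n}\epsilon^{d(2n+1)}t^{2n}$ delivers \fref{bd:exampleladder} and \fref{bd:exampleladder3}.

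For the belt pairing, I would first unroll the algorithm of Theorem~\ref{th:spanning} on the paired diagram to identify the $2(n-1)$ chain variables $\xi_1,\dots,\xi_{n-1},\xi_1',\dots,\xi_{n-1}'$ and the three head variables $\tilde\xi_1,\tilde\xi_2,\tilde\xi_3$, classifying each interaction vertex by degree. The expected output, to be checked by direct inspection of the Kirchhoff laws along the chain, is that the chain variables are attached to degree one vertices of linear type \fref{id:degreeonelinear}, each contributing a count $\asymp\epsilon^{d-1}$ via the bound on $\{\xi\in\mathbb Z^d:H\zeta\cdot\xi=a,\ |\xi|\leq\epsilon^{-1}\}$, while the three head variables are constrained by a single additional resonance equation at one distinguished vertex. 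The total resonant count is then a product of the $2(n-1)$ chain contributions with the head contribution, whose dimension dependence is precisely the source of the case distinction in \fref{bd:exampleladder2}: the three regimes $2n<d$, $2n=d$, $2n>d$ correspond to the three ranges of the classical lattice-point count on the ellipsoidal level set $\{|\eta|_H^2=a\}\cap\{|\eta|\leq\epsilon^{-1}\}$, with the logarithmic loss at $2n=d$ being the usual two-dimensional divisor contribution.

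The main obstacle is the belt case in the degenerate range $2n\geq d$. Here both the upper and lower bounds on the head contribution depend on delicate lattice-point estimates on shrinking spherical shells, and the lower bound must be supplemented by an explicit parametrized family of resonant head configurations in the spirit of the $(\eta,\eta')$ trick of Lemma~\ref{lem:tgeq1lower}. The remaining bookkeeping—verifying that $\mathcal F_{\mathcal R}$ reduces on the resonant set to the positive time integral $Ct^{2n}$, and fitting the non-resonant part into Lemma~\ref{lem:nonresonant} applied to the single triple $(\overline{\ell^*},\ell^*,P)$—is routine and proceeds as in the proofs of Lemmas~\ref{lem:resonant} and~\ref{lem:nonresonant}.
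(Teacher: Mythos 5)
Your treatment of the ladder pairing is essentially the paper's: resonant two-sided count via Lemmas \ref{lem:tgeq1upper} and \ref{lem:tgeq1lower}, non-resonant remainder absorbed for $t$ above a polylogarithmic threshold (to get the stated threshold $\langle \log \epsilon\rangle^{n+2}$ for $d\geq 3$ you must re-run the proof of Lemma \ref{lem:nonresonant} for this specific graph, where $n_1=0$, $n_2=n$ lowers the log power, rather than quote \fref{bd:nonresonant} as a black box, but that is a minor recalibration). The belt part, however, contains a genuine gap: your proposed mechanism for \fref{bd:exampleladder2} is not the right one and cannot produce the stated sizes. After integrating Kirchhoff's laws (formulas \fref{id:formulaOmegabelt}), \emph{all} $2n$ resonance moduli of the belt diagram are linear in the free variables with the \emph{same} dual variable $\zeta=2(\tilde\xi_3-\tilde\xi_2)$. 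The resonant count at each such vertex is therefore not $\asymp\epsilon^{1-d}$ uniformly, but $\asymp \epsilon^{1-d}/|\zeta_v|$ by the lattice-density count \fref{bd:interortho}, and this $|\zeta_v|$-dependence is the whole point: at fixed $\zeta$ the fully resonant set has size $\approx(\epsilon^{-(d-1)}/|\zeta_v|)^{2n-1}$, and the trichotomy $2n<d$, $2n=d$, $2n>d$ (with the $\langle\log\epsilon\rangle$ at $2n=d$) comes from summing $|\zeta_v|^{-(2n-1)}$ over $|\zeta|\lesssim\epsilon^{-1}$ via a dyadic decomposition in visible points — not from lattice points on the ellipsoidal level set $\{|\eta|_H^2=a\}$ or any divisor-type contribution, and no "shrinking spherical shells" enter (the resonant constraints are exact orthogonality conditions, counted exactly). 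With your uniform $\epsilon^{1-d}$ per chain vertex you would recover the ladder size $(t^2/T_{kin})^n$, which is larger than \fref{bd:exampleladder2} by $\epsilon^{2-2n}$ or $\epsilon^{2-d}$; the claimed result would not follow.

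The same issue defeats your treatment of the belt non-resonant remainder. Quoting Lemma \ref{lem:nonresonant} for the single triple gives only $|\mathcal F_{\mathcal R^c}|\lesssim t^{-1}(t^2/T_{kin})^n\langle\log\epsilon\rangle^{2n+2}$, which for $t$ in the stated polylogarithmic range is enormously \emph{larger} than the belt main term $(t^2/T_{kin})^n\epsilon^{d-2}$ (or $\epsilon^{2n-2}$), so the remainder is not dominated and the two-sided estimate collapses. One must instead redo the non-resonant algorithm for this graph using the $\zeta_v$-weighted resolvent bound \fref{bd:resolvantnonresonantdeg12}, which extracts the same $1/|\zeta_v|^{2n-1}$ factor as in the resonant count and, after the dyadic summation over $\zeta$, yields a remainder smaller than the main term by a factor $t^{-1}\langle\log\epsilon\rangle^{2n+1}$ (whence the threshold $t\geq C'\langle\log\epsilon\rangle^{2n+1}$). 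In short: the belt estimate hinges on the shared dual variable and the visible-point summation, on both the resonant and non-resonant sides, and neither appears in your plan.
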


\begin{remark}

Ladder configurations are those expected to give the main contribution. Indeed, as $\Omega_k=\Omega_k'$ for all $1\leq k \leq n$, it is easier to find configurations for the frequencies such that all oscillatory phases are simultaneously resonant. For the belt configuration however, simultaneous resonances should appear less frequently, since all but one vertices have at least a free frequency attached to it. This argument however fails for $t\geq 1$ due to the very structure of the Laplacian. The Proposition suggests that the kinetic wave equation fails to predict the dynamics of $\mathbb E |\hat u(k)|^2$ for $t\geq 1$. In dimension $2$, the corrective factor in \fref{bd:exampleladder2} is not present $\epsilon^{d-2}=1$ and this term has the same size as the ladder configuration.  It is then unclear if an other effective equation holds since since very different pairings contribute with similar polynomial sizes.

\end{remark}

\begin{proof}[Proof of Proposition \ref{pr:tgeq1examples}]

The first bounds \fref{bd:exampleladder} and \fref{bd:exampleladder} are proved in a more general setting in the proof of Lemmas \ref{lem:resonant} and \ref{lem:nonresonant}. The possibility of taking $t\geq C'\langle \log \epsilon \rangle^{n+2} $ comes from a direct check of the proof of Lemma \ref{lem:resonant}, as the power for the log correction in \fref{bd:nonresonant} is $2+n_1+2n_2$ in dimension $2$, $2+n_1+n_2$ in dimension $d\geq 3$, and that for the ladder pairing $n_1=0$ and $n_2=n$.\\

\noindent We now turn to the proof of \fref{bd:exampleladder2}. We give additional details as the belt pairing will be examined later on in this paper.

\textbf{Step 1} \emph{Explicit formula} The resolution of the momenta constraints is the following. There are two specific zones to be considered separately, the middle zone and the extremes on the left and right, but in the middle of each graph a recursive property appears. First, for the middle part:
\begin{center}
\includegraphics[width=14cm]{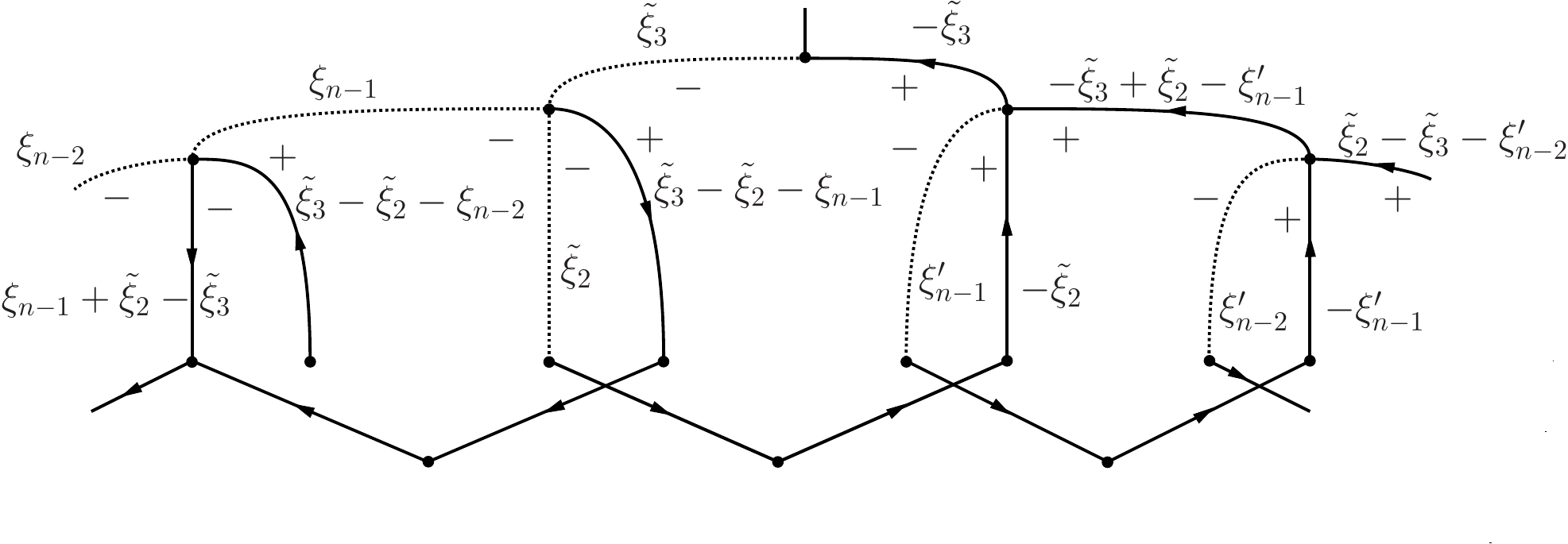}
\end{center}
For the center parts of the left and right subtrees:
\begin{center}
\begin{tabular}{l l}
\includegraphics[width=10cm]{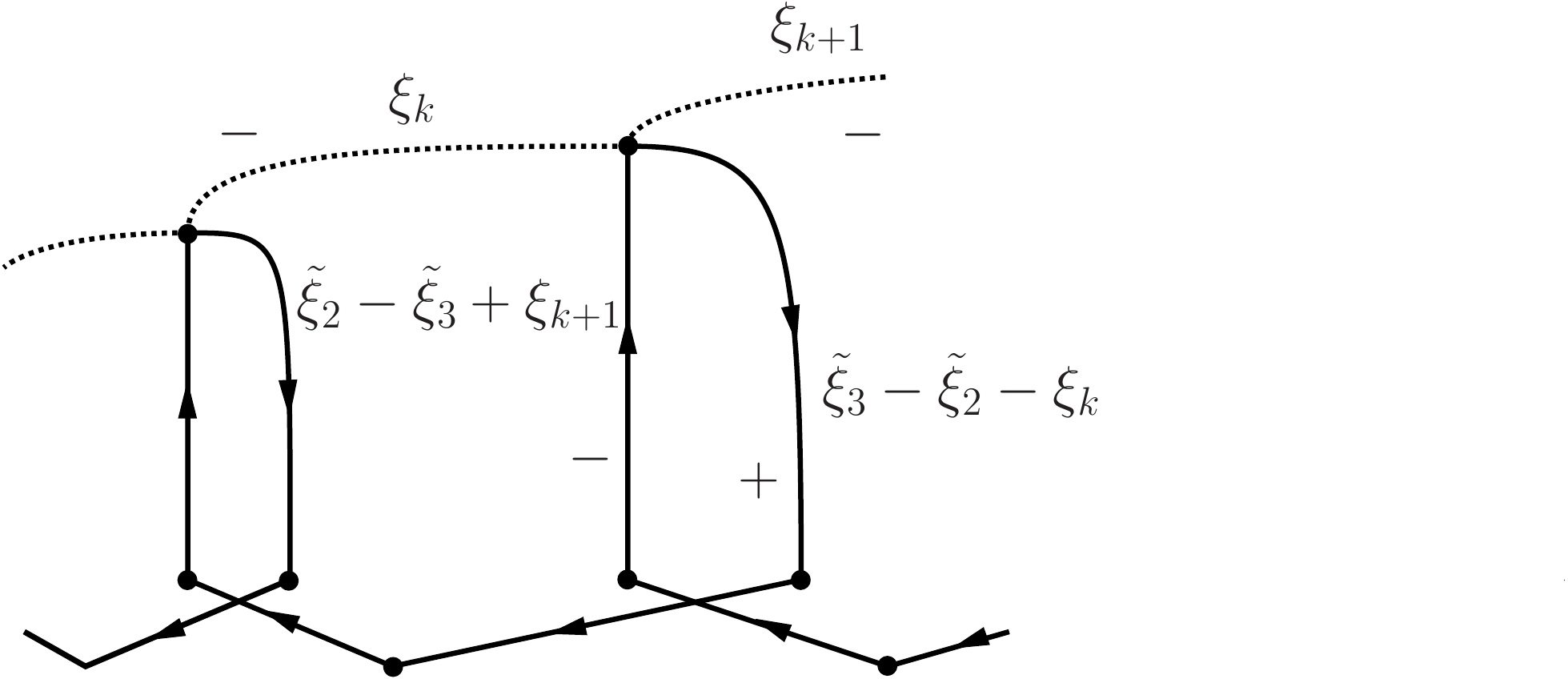} &\includegraphics[width=7.5cm]{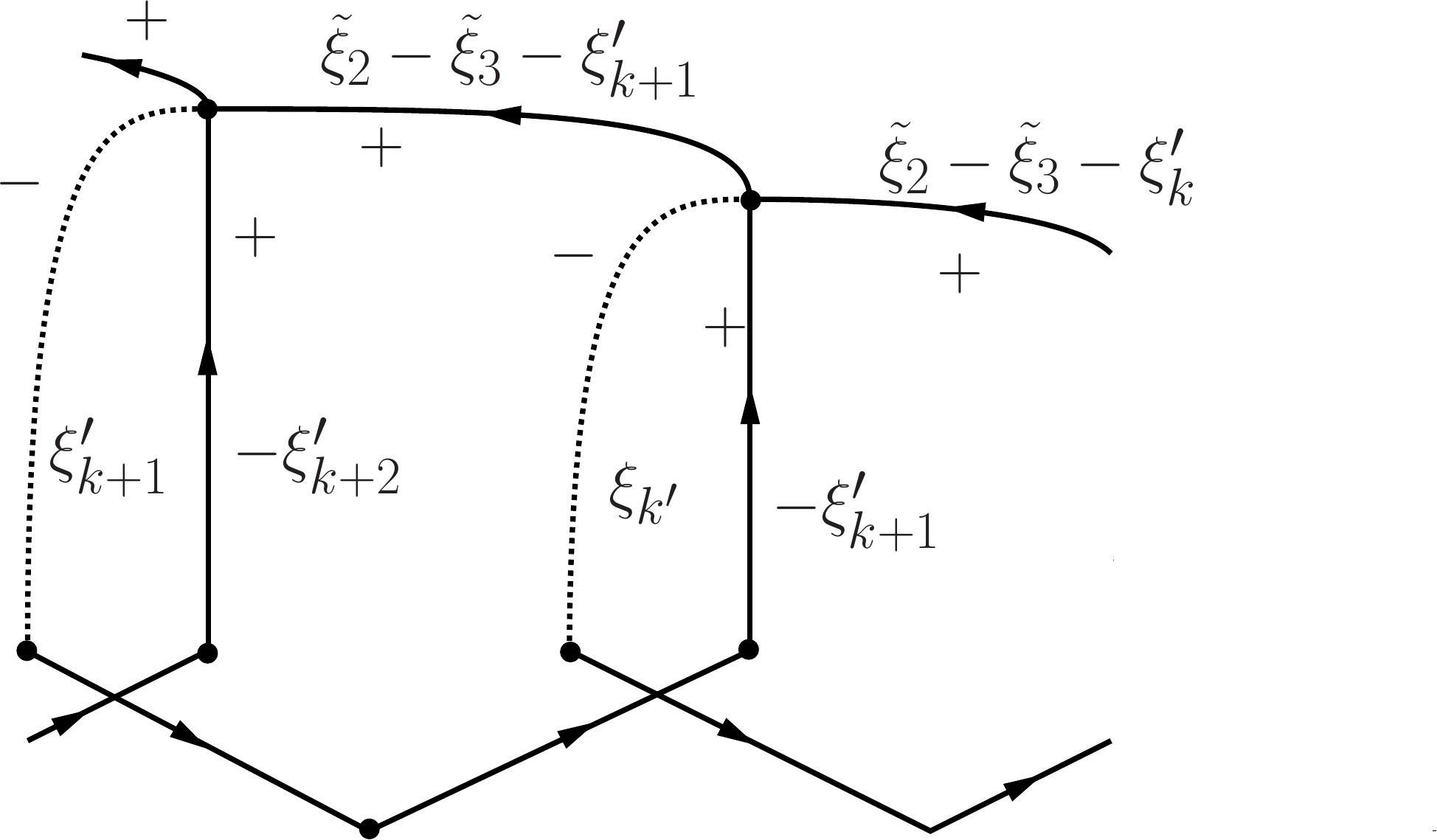}
\end{tabular}
\end{center}
For the extreme left and right parts:
\begin{center}
\includegraphics[width=13cm]{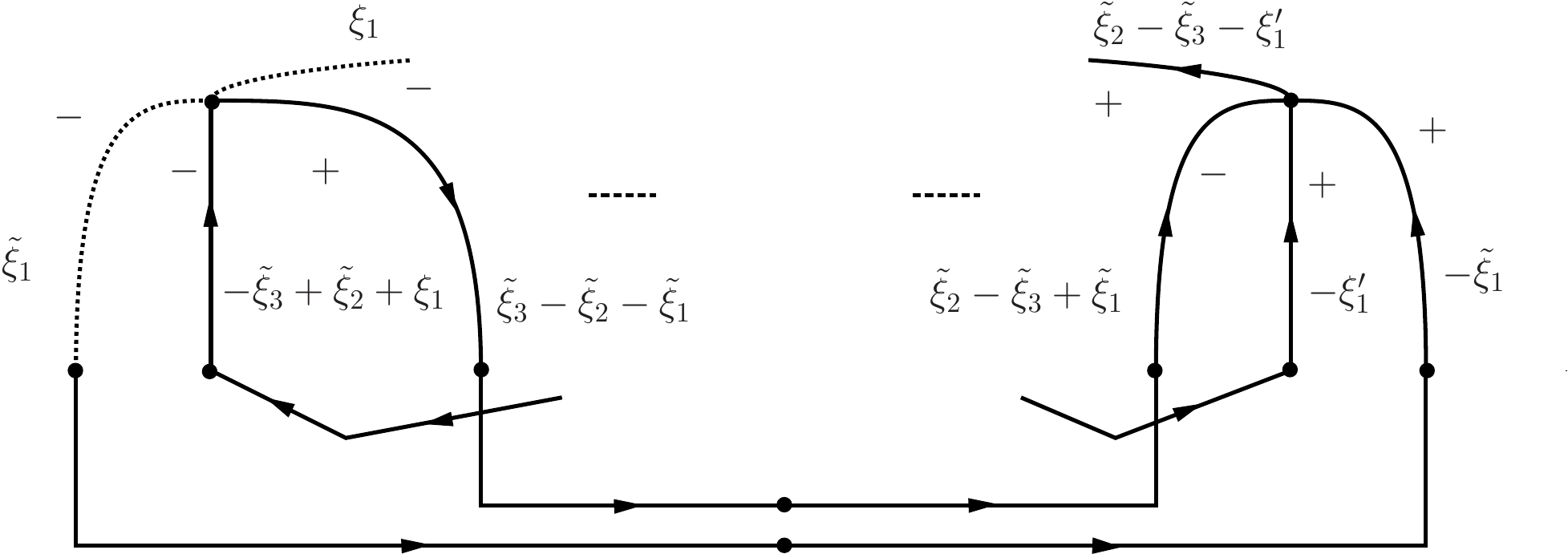}
\end{center}
In particular, expressing resonance moduli as a function of the free momenta one has:
\be \label{id:formulaOmegabelt}
\begin{split}
\Omega_n=2\left(\tilde \xi_3-\tilde \xi_2\right).\left(\tilde \xi_3- \xi_{n-1}\right), \qquad \Omega_n'=2\left(\tilde \xi_3-\tilde \xi_2\right).\left( \xi'_{n-1}-\tilde \xi_2\right),\\
\Omega_{k}=2\left(\tilde \xi_3-\tilde \xi_2\right).\left(\xi_{k}-\xi_{k-1} \right), \qquad \Omega_{k}'=2\left(\tilde \xi_3-\tilde \xi_2\right).\left(\xi_{k-1}'-\xi_{k}' \right),\\
\Omega_{1}=2\left(\tilde \xi_3-\tilde \xi_2\right).\left(\xi_{1}-\tilde \xi_1 \right), \qquad \Omega_1'=2\left(\tilde \xi_3-\tilde \xi_2\right).\left(\tilde \xi_2-\tilde \xi_3+\tilde \xi_1-\xi_1' \right).
\end{split}
\ee
Therefore, one has the following formulas for cumulative resonance moduli for all $1\leq n\leq N$:
$$
\sum_{i=k}^{n}\Omega_i=2\left(\tilde \xi_3-\tilde \xi_2\right).\left\{ \begin{array}{l l} (\tilde \xi_3-\tilde \xi_1) \quad \mbox{for }k=1,\\ (\tilde \xi_3-\xi_{k-1}) \quad \mbox{for } 2\leq k\leq n,  \end{array} \right.
$$
and
$$
\sum_{i=k}^{n}\Omega_i'= 2\left(\tilde \xi_3-\tilde \xi_2\right).\left\{ \begin{array}{l l} (\tilde \xi_1-\tilde \xi_{3}) \quad \mbox{for }k=1, \\ (\xi_{k-1}'-\tilde \xi_2) \quad \mbox{for } 2\leq k\leq n, \end{array} \right.
$$
We write $\zeta=2(\tilde \xi_3-\tilde \xi_2)$ in view of Definition \ref{def:degreeonecases}. Note that in the degenerate case $\zeta=0$, all resonance moduli are zero: $\Omega_k=0=\Omega'_k$ for $k=1,...,n$. In this case however, from Lemma \ref{lem:wicknondegeneracy}, on the support of $\Delta_{\overline{\ell^*},\ell^*}^W$ all free variables $\tilde \xi_1,\xi_1,...,\xi_{n-1},\xi'_{1},...,\xi'_{n-1}$ are fixed linear combinations of $\tilde \xi_2$ and $\tilde \xi_3$. Thus, one finally obtains that, using that $A$ is radially symmetric:
\bea
\nonumber \mathcal F(\overline{\ell^*},\ell^*,P_{belt} )&=& \left(\frac{\lambda^2}{(2\pi)^d}\right)^{2n}\ep^{d(2n+1)}\sum_{(\underline{\xi},\underline{\xi'},\tilde \xi_1,\tilde \xi_2,\tilde \xi_3)\in \mathbb{Z}^{2n+1}} \int_{\mathbb R_+^{2n+2}}e^{-i  2(\tilde \xi_3-\tilde \xi_2).(\tilde \xi_3-\tilde \xi_{1})(s_0-s_0')}\Delta_{\ell,\ell'}^W(\underline{k},\underline{k}')  \\
\nonumber&& \qquad  \prod_{k=1}^{n-1}e^{-i 2 (\tilde \xi_3-\tilde \xi_2).(\tilde \xi_3-\xi_{k})s_k} \prod_{k=1}^{n-1}e^{-i2(\tilde \xi_3-\tilde \xi_2).(\xi_{k}'-\tilde \xi_2) s_k' }A(\ep \tilde \xi_1)A(\ep \tilde \xi_2) A(\ep (\tilde \xi_3-\tilde \xi_2-\tilde \xi_1))  \\
\nonumber&& \qquad \prod_{k=1}^{n-1} A(\ep (\tilde \xi_3-\tilde \xi_2-\xi_k)) \prod_{k=1}^{n-1} A(\ep \xi'_k) \delta(t-\sum_{i=0}^{n}s_i) \delta(t-\sum_{i=0}^{n}s_i')d \underline{s}d \underline{s}'  \\
\label{id:formulaFPbelt} &=& \left(\frac{\lambda^2}{(2\pi)^d}\right)^{2n}\ep^{d(2n+1)}\sum_{(\tilde \xi_1,\tilde \xi_2,\tilde \xi_3)\in \mathbb{Z}^{3d}, \ \zeta\neq 0} A(\ep \tilde \xi_1)A(\ep \tilde \xi_2) A(\ep (\tilde \xi_3-\tilde \xi_2-\tilde \xi_1)) \\
\nonumber && \qquad \qquad \qquad \left| \sum_{\underline \xi \in \mathbb Z^{d(n-1)}} \int_{\mathbb R_+^{n+1}} e^{-i  \zeta \cdot(\tilde \xi_3-\tilde \xi_{1})s_0} \prod_{k=1}^{n-1}e^{-i  \zeta \cdot( \xi_{n}-\tilde \xi_2)s_k} A(\xi_k) \delta(t-\sum_{i=0}^{n}s_i)  \right|^2 \\
\nonumber && +O(\lambda^{4n}\ep^{d2n}t^{2n})
\eea

\textbf{Step 2} \emph{Proof of \fref{bd:exampleladder2}}. We first decompose between fully resonant and non fully resonant configurations as in \fref{id:decompositionmathcalF}. We write $ \tilde \xi_3-\tilde \xi_2=\zeta=d \zeta_v$ where $\zeta_v$ is a visible point from the origin. Note that from the estimate \fref{bd:interortho} and the formulas \fref{id:formulaOmegabelt}, at a fixed value of $\zeta \in B(0,\epsilon^{-1}/2)$:
$$
\# \left\{ \xi_1,...,\xi_{n-1},\xi_1',...,\xi_{n-1}',\tilde \xi_1, \quad (\underline \xi,\underline{\xi'},\tilde \xi_1,\tilde \xi_2,\tilde \xi_3)\in \mathcal R \cap B(0,\epsilon^{-1})\right\} \approx  \left(\frac{\epsilon^{-(d-1)} }{|\zeta_v|}\right)^{2n-1}.
$$
Note that given a visible point $\zeta_v$ with $|\zeta_v|\leq L$, there are $\approx \frac{L}{|\zeta_v|}$ points of the form $k\zeta $ with $k\in \mathbb Z$ in the ball of radius $L$. We thus estimate, using a dyadic partition into multiples of visible points, that for $L=2^i$, $i\geq 1$:
\bee
\sum_{|\zeta|\leq L} \left(\frac{L^{d-1}}{|\zeta_v|}\right)^{2n-1}&\approx& \sum_{j=1}^{i-1} \sum_{2^j\leq |\zeta|< 2^{j+1}, \ \zeta\mbox{ visible}} \frac{L}{2^j}\left(\frac{L^{d-1}}{2^j}  \right)^{2n-1}\approx \sum_{j=1}^{i-1} 2^{j(d-2n)}L^{(d-1)(2n-1)+1}\\
&\approx& \left\{ \begin{array}{l l} L^{2n(d-1)}L^{2-2n} \qquad \mbox{for }2n<d,\\ L^{2n(d-1)}L^{2-d}\langle \log L \rangle \qquad \mbox{for }2n=d,\\ L^{2n(d-1)}L^{2-d} \qquad \mbox{for }2n>d, \end{array} \right.
\eee
where we used that visible points have positive density in $\mathbb Z^d$. As a consequence:
\bee
&&\# \left\{ (\underline{\xi},\underline{\xi'},\tilde \xi_1,\tilde \xi_2, \tilde \xi_3)\in \mathcal R, \ A(\ep \tilde \xi_1)A(\ep \tilde \xi_2)A(\ep (\tilde \xi_3-\tilde \xi_2-\tilde \xi_1)) \prod_{k=1}^{n-1} A(\ep \xi_k) \prod_{k=0}^{n-2} A\xi_k') >0 \right\} \\
&\approx& \left\{ \begin{array}{l l} \epsilon^{-2n(d-1)+2n-2-d} \qquad \mbox{for }2n<d,\\ \epsilon^{-2n(d-1)-2} \langle \log \epsilon\rangle \qquad \mbox{for }2n=d,\\ \epsilon^{-2n(d-1)-2} \qquad \mbox{for }2n>d. \end{array} \right.
\eee
Note that on $\mathcal R$ the time integral is equal to $(\frac{t^n}{n!})^2$. Hence in the computation of $\mathcal F_{P_{belt}}$ for the fully resonant term in the decomposition \fref{id:decompositionmathcalF}:
$$
\mathcal F_{\mathcal R}(\overline{\ell^*},\ell^*,P_{belt} ) \approx   \left\{ \begin{array}{l l}  \left(\frac{t^2}{T_{kin}}\right)^n \epsilon^{2n-2}  \qquad \mbox{for }2n<d, \\  \left(\frac{t^2}{T_{kin}}\right)^n \epsilon^{d-2}   \langle \log \epsilon \rangle \qquad \mbox{for }2n=d,\\ \left(\frac{t^2}{T_{kin}}\right)^n \epsilon^{d-2}  \qquad \mbox{for }2n>d . \end{array} \right.
$$
The non fully resonant term in the decomposition \fref{id:decompositionmathcalF} is estimated as in the proof of Lemma \ref{lem:nonresonant}, with the use of the bound \fref{bd:resolvantnonresonantdeg12} to obtain a $\frac{1}{|\zeta_v|^{2n-1}}$ factor, that is then estimated as above. We do not provide the details here. This leads to the estimate:
$$
\mathcal F_{\mathcal R^c}(\overline{\ell^*},\ell^*,P_{belt} ) \lesssim   \left\{ \begin{array}{l l} \frac{1}{t} \left( \frac{t^2}{T_{kin}}\right)^n \epsilon^{2n-2} \langle \log \epsilon \rangle^{2n+1} \qquad \mbox{for }2n<d, \\ \frac{1}{t} \left(\frac{t^2}{T_{kin}}\right)^n  \epsilon^{d-2}\langle \log \epsilon \rangle^{2n+2}  \qquad \mbox{for }2n=d,\\ \frac{1}{t}\left(\frac{t^2}{T_{kin}}\right)^n \epsilon^{d-2} \langle \log \epsilon \rangle^{2n+1} \qquad \mbox{for }2n>d . \end{array} \right.
$$
The two above bounds imply the bound \fref{bd:exampleladder2}.

\end{proof}

\section{Failure of convergence for small kinetic time for any dispersion relation}

\label{sectionsmall}

This subsection is devoted to the proof of Proposition \ref{pr:tleq1}. The special graph we will consider is $G_n=G^*$ described in Subsubsection \ref{subsubsec:example}. There, only fully resonant configurations ($\Omega=0$ at each vertex) contributed to the norm of $u_G$. Here however, the contribution comes from nearly fully resonant configurations ($\Omega \approx 0$ at each vertex). Paired diagrams which maximise this number of configurations involve degree one vertices with special properties, linked to more refined topological properties of the graph. One therefore needs to refine further the algorithm used to compute $\mathcal F(\ell,\ell',P)$ in the previous Subsection.

Recall Definition \ref{def:degreeonecases} for the types of degree one vertices. Note in particular that from the identity \fref{id:formulaFPbelt}, the belt pairing is such that the corresponding paired graph has $2n-2$ degree one vertices of linear type with the \emph{same} dual variable $\zeta$, one degree zero vertex and one degree two vertex.

\subsection{Key contribution: the belt pairing}

Recall the identity \fref{LpLpexpression}. We claim that the worst contribution comes from the belt pairing already examined in Subsubsection \ref{subsubsec:example}. We refer to this Subsection for the notations, in particular to Step 1 of the proof of Proposition \ref{pr:tgeq1examples}.

\begin{lemma}

Let $d\geq 2$ and $A$ be any positive definite symmetric $d\times d$ and take the corresponding dispersion relation \fref{def:H}. Fix any $0<\nu \ll 1$ small, and let $\epsilon^{1-\nu }\leq t\leq \epsilon^{1-\frac 1d}$. Then for any $n\in \mathbb N$ with $2n\geq d+2$, there exist two constants $0<c<C$ such that for the only interaction histories $\ell^*$ and $\overline{\ell^*}$ associated with the interaction diagram $G^*$, and the belt pairing $P_{belt}$:
\be \label{bd:mathcalFPbelt}
\frac{c}{T_{kin}^{n}}\ep^{d-1} t \leq \mathcal F(\overline{\ell^*},\ell^*,P_{belt}) \leq \frac{C}{T_{kin}^{n}}\ep^{d-1} t.
\ee

\end{lemma}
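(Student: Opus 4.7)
The plan is to extend Step 1 of Subsubsection~\ref{subsubsec:example} to the general dispersion relation~\eqref{def:H}: the resonance-modulus formulas~\eqref{id:formulaOmegabelt} carry over verbatim after the replacement $\zeta\mapsto\zeta_H:=2H(\tilde\xi_3-\tilde\xi_2)$, producing the analogue of~\eqref{id:formulaFPbelt}. Swapping the $\xi_k$-sums with the time integration, and defining $\Psi(s,\zeta,\tilde\xi_2):=\sum_\xi A(\epsilon\xi)e^{-is\zeta_H\cdot(\xi-\tilde\xi_2)}$ and $\tilde\Omega_0:=\zeta_H\cdot(\tilde\xi_3-\tilde\xi_1)$, the inner object reduces to
\begin{equation*}
J=\int_{\Delta_n(t)}e^{-is_0\tilde\Omega_0}\prod_{k=1}^{n-1}\Psi(s_k,\zeta,\tilde\xi_2)\,d\underline s.
\end{equation*}
By Poisson summation, $\Psi(s,\zeta,\tilde\xi_2)=\epsilon^{-d}e^{is\zeta_H\cdot\tilde\xi_2}\sum_{m\in\mathbb Z^d}\hat A((s\zeta_H+2\pi m)/\epsilon)$. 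In the regime $|\zeta|\lesssim 1$ that will dominate, $t|\zeta_H|\lesssim t\ll 2\pi$, so only the $m=0$ peak survives, giving $\Psi(s,\zeta,\tilde\xi_2)\approx\epsilon^{-d}e^{is\zeta_H\cdot\tilde\xi_2}\hat A(s\zeta_H/\epsilon)$ up to Schwartz-negligible tails; the multi-peak regime $|\zeta|\gtrsim\epsilon^{1/d-1}$ will contribute to lower order through the convergent tail of the $\zeta$-sum.

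For the upper bound, $|\Psi|\lesssim\epsilon^{-d}|\hat A(s\zeta_H/\epsilon)|$ is effectively supported on $s\lesssim\epsilon/|\zeta_H|\ll t$, decoupling the $s_k$-integrations. The change of variables $u_k=s_k|\zeta_H|/\epsilon$ yields
\begin{equation*}
|J|^2\lesssim\epsilon^{-2(d-1)(n-1)}|\zeta_H|^{-2(n-1)}\min(t^2,4/\tilde\Omega_0^2).
\end{equation*}
Summing over $\tilde\xi_1$ via the elementary slab count $\#\{|\tilde\xi_1|\leq\epsilon^{-1},\,|\tilde\Omega_0|\leq 1/t\}\lesssim\epsilon^{-(d-1)}/(|\zeta_H|t)$ combined with a dyadic decomposition in $|\tilde\Omega_0|$ (controlled by the convergent tail $\sum_k k^{-2}$) yields a $\tilde\xi_1$-contribution of order $\epsilon^{-(d-1)}t/|\zeta_H|$. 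The $\tilde\xi_2$-sum contributes $\epsilon^{-d}$, and summing the series $\sum_{\zeta\neq 0}|\zeta|^{-(2n-1)}$ gives $O(1)$ precisely because $2n\geq d+2$. Multiplied by the prefactor $\lambda^{4n}\epsilon^{d(2n+1)}$, this gives $\mathcal F\lesssim\lambda^{4n}\epsilon^{2n+d-1}t=T_{kin}^{-n}\epsilon^{d-1}t$. The $\zeta=0$ error term $O(\lambda^{4n}\epsilon^{2dn}t^{2n})$ from~\eqref{id:formulaFPbelt} is smaller than the main term by the factor $(\epsilon^{d-1}t)^{2n-1}$, which vanishes since $t\leq\epsilon^{1-1/d}$ and $d\geq 2$.

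For the lower bound, restrict the sum to the configuration set $\{|\zeta|\in[\zeta_0,2\zeta_0],\ |\tilde\xi_2|\leq\delta\epsilon^{-1},\ |\tilde\Omega_0|\leq 1/(2t),\ A(\epsilon(\tilde\xi_3-\tilde\xi_2-\tilde\xi_1))\geq c_0\}$ for a fixed $\zeta_0\sim 1$ and small $\delta>0$. Writing $\nu:=\zeta_H\cdot\tilde\xi_2$ and $h(\lambda):=\int_0^\infty e^{iu\lambda}\hat A(ue_\zeta)\,du$, one has $h(0)>0$ for the radial cutoff~\eqref{id:assumptionA}; the $\tilde\xi_2$-constraint forces $|\nu\epsilon/|\zeta_H||\leq C\delta$, so that $|h(\nu\epsilon/|\zeta_H|)|\geq h(0)/2$ by continuity. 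Combined with the $s_0$-integral being at least $t/2$ under $|\tilde\Omega_0|\leq 1/(2t)$, this produces a pointwise lower bound $|J|^2\gtrsim\epsilon^{-2(d-1)(n-1)}|\zeta_H|^{-2(n-1)}t^2$ of definite sign, and the cardinalities of the good set ($\sim\delta\epsilon^{-d}$ for $\tilde\xi_2$ and $\sim\epsilon^{-(d-1)}/(|\zeta_H|t)$ for $\tilde\xi_1$) yield the matching lower bound.

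The main obstacle is the non-cancellation argument in the lower bound: the positivity of $h$ must propagate through the $n-1$ coupled oscillatory $s_k$-integrations, and one must verify that the higher $m$-peaks in the Poisson decomposition do not interfere with the leading $m=0$ contribution. The restriction $|\tilde\xi_2|\leq\delta\epsilon^{-1}$ ensures the former, while the choice $|\zeta|\sim 1$ places us in the single-peak regime $t|\zeta_H|\ll 2\pi$ and ensures the latter. The hypothesis $2n\geq d+2$ is sharp, as a logarithmic divergence in $\sum|\zeta|^{-(2n-1)}$ would appear at $2n=d+1$.
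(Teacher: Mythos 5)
Your analysis of the dominant region is essentially the paper's: for $|\zeta|\lesssim t^{-1}$ the single-peak Poisson/continuum approximation is valid (this is the paper's Step 1, which splits at $|\zeta|\leq \ep^{\kappa}t^{-1}$), the $s_0$-integration produces the $1/|\zeta_H\cdot(\tilde\xi_3-\tilde\xi_1)|$ factor, the $\tilde\Omega_0$-slab of width $\sim 1/t$ yields the factor $t\,\ep^{-(d-1)}/|\zeta_H|$, and the convergence of $\sum_{\zeta\neq 0}|\zeta|^{1-2n}$ is exactly where $2n\geq d+2$ enters; your bookkeeping correctly reproduces $T_{kin}^{-n}\ep^{d-1}t$, and your lower bound by restriction to a good configuration set is legitimate since every $\zeta\neq 0$ summand in \fref{id:formulaFPbelt} is nonnegative (the paper instead extracts the full asymptotics via the $\sin^2$-to-delta limit, but the two are interchangeable here). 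The $\zeta=0$ error comparison is also correct.

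The genuine gap is your treatment of the complementary region $t^{-1}\lesssim|\zeta|\lesssim\ep^{-1}$. The per-$\zeta$ bound $|J|^2\lesssim \ep^{-2(d-1)(n-1)}|\zeta_H|^{-2(n-1)}\min(t^2,4/\tilde\Omega_0^2)$, which you then feed into ``the convergent tail of the $\zeta$-sum,'' rests on $\Psi$ being effectively supported on $s\lesssim\ep/|\zeta_H|$, i.e.\ on keeping only the $m=0$ Poisson peak --- and, as you yourself note, this fails precisely when $t|\zeta_H|\gtrsim 1$. In that regime (take $H=\Id$ and $\zeta$ along a coordinate axis with $|\zeta|\sim\ep^{-1}$, so $t|\zeta_H|\sim\ep^{-1/d}$) the ray $s\zeta_H$ returns near $2\pi\mathbb Z^d$ of order $t|\zeta_H|$ times on $[0,t]$, so $\int_0^t|\Psi|\,ds\sim \ep^{-d}(\ep/|\zeta_H|+t\ep)$ rather than $\ep^{1-d}/|\zeta_H|$, inflating your bound by a factor up to $(t|\zeta_H|)^{n-1}$; moreover for such $\zeta$ the slab count $\#\{|\tilde\Omega_0|\leq 1/t\}$ is no longer $\ep^{-(d-1)}/(t|\zeta_H|)$ but is governed by the sublattice orthogonal to $\zeta$. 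This region is not disposable by soft decay in $|\zeta|$: its true size is of order $T_{kin}^{-n}t^{2n-1}$ (up to $\ep^{-C\kappa}$), which beats the main term $T_{kin}^{-n}\ep^{d-1}t$ only because $2n\geq d+2$ and $t\leq\ep^{1-1/d}$ --- it is exactly where these hypotheses are consumed. The paper closes it by a separate argument (its Step 2): it abandons the continuum approximation there, returns to the resolvent representation \fref{id:FresolvantmathcalRc}, and runs the vertex-by-vertex algorithm with the weighted counting bounds \fref{bd:degre1lineaire} (using $\max(1/|\zeta|,t)\leq\ep^{-\kappa}t$ on that region) and \fref{bd:degre2}, which are uniform in the arithmetic of $\zeta$. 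You need an argument of this type, or an honest multi-peak estimate quantifying how often $s\zeta_H$ approaches $2\pi\mathbb Z^d$, to make your upper bound complete.
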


\begin{proof}

Fix $0<\kappa \ll 1$. The identity \fref{id:formulaFPbelt} was derived for the Laplacian dispersion relation $H=\Id$, but easily adapts for general dispersion relations \fref{def:H}: the variable $\zeta$ is now defined as $\zeta=2H(\tilde \xi_3-\tilde \xi_2)\in 2H\mathbb Z^d$. We decompose \fref{id:formulaFPbelt} according to the size of $\zeta$:
\be \label{id:decompositionbelt}
\mathcal F_{P_{belt}} = \underbrace{\left(\frac{\lambda^2}{(2\pi)^d}\right)^{2n}\ep^{d(2n+1)}\sum_{|\zeta|\leq \ep^{\kappa}t^{-1}}[...]}_{I} \qquad+\qquad \underbrace{\left(\frac{\lambda^2}{(2\pi)^d}\right)^{2n}\ep^{d(2n+1)}\sum_{|\zeta|\geq \ep^{\kappa}t^{-1}}[...]}_{II}+O(\lambda^{4n}\ep^{d2n}t^{2n})
\ee

\textbf{Step 1} \emph{$|\zeta| \epsilon^\kappa t^{-1} )$ contribution.} We fix $1\leq |\zeta|\leq \epsilon^\kappa t^{-1} $ in \fref{id:formulaFPbelt}. We first use continuum approximation to transform the sums into integrals. For fixed $\tilde \xi_2$ and $s\leq t$, by Poisson summation formula, denoting by $a$ the inverse Fourier transform of $A$:
\bee
\sum_{\xi \in \mathbb Z^d} e^{-i  \zeta \cdot( \xi-\tilde \xi_2)s} A(\xi) &=&e^{i\zeta \cdot\tilde \xi_2 s}  \frac{(2\pi)^{\frac d2}}{\ep^d}a\left(\frac{\zeta }{\ep}s\right)+  e^{i\zeta \cdot\tilde \xi_2 s} \sum_{y\in \mathbb (2\pi \mathbb Z^d)\backslash\{0\}} \frac{(2\pi)^{\frac d2}}{\ep^d}a\left(\frac{\zeta}{\ep}s+\frac{y}{\ep}\right)\\
&=& \int_{\xi \in \mathbb R^d} e^{-i  \zeta \cdot( \xi-\tilde \xi_2)s} A(\ep \xi)+O(\ep^{\infty}),
\eee
where we used the fact that $|s\tilde \xi |\leq \epsilon^\kappa$ and that $a$ is Schwartz. We integrate in time with respect to the $s_0$ variable, yielding
\bee
&& \int_{\mathbb R_+^{n-1}}\int_0^{t-\sum_{k=1}^{n-1}s_k} e^{-i\zeta \cdot(\tilde \xi_3-\tilde \xi_1)s_0}ds_0 \prod_{k=1}^{n-1} e^{-i\zeta \cdot(\xi_k-\tilde \xi_2)s_k} {\bf 1}(\sum_{k=1}^{n-1}s_n\leq t)ds_1...ds_{n-1} \\
&=&\frac{1}{-i\zeta \cdot(\tilde \xi_3-\tilde \xi_1)}\Bigl[ e^{-it\zeta \cdot(\tilde \xi_3-\tilde \xi_1)}\int_{\mathbb R_+^{n-1}}  \prod_{k=1}^{n-1} e^{-i\zeta \cdot(\xi_k-\tilde \xi_2+\tilde \xi_1-\tilde \xi_3)s_k} {\bf 1}(\sum_{k=1}^{n-1}s_n\leq t)ds_1...ds_{n-1}\\
&& \qquad \qquad \qquad \qquad \qquad \qquad \qquad  -\int_{\mathbb R^{n-1}_+}  \prod_{k=1}^{n-1} e^{-i\zeta \cdot(\xi_k-\tilde \xi_2)s_k} {\bf 1}(\sum_{k=1}^{n-1}s_k\leq t)ds_1...ds_{n-1}\Bigr].
\eee
Applying the above identity and Fourier transformation:
\bee
&& \int_{\underline \xi \in \mathbb Z^{d(n-1)}} \int_{\mathbb R_+^{n+1}} e^{-i  \zeta \cdot(\tilde \xi_3-\tilde \xi_{1})s_0} \prod_{k=1}^{n-1}e^{-i  \zeta \cdot( \xi_{k}-\tilde \xi_2)s_k} A(\ep \xi_n) \delta(t-\sum_{i=0}^{n}s_i) \,d\underline{\xi}\\
 &=&\frac{(2\pi)^{\frac d2(n-1)}\ep^{-d(n-1)}}{-i\zeta \cdot(\tilde \xi_3-\tilde \xi_1)}\Bigl[ e^{-it\zeta \cdot(\tilde \xi_3-\tilde \xi_1)}\int_{\mathbb R^{n-1}_+}  \prod_{k=1}^{n-1} e^{-i\zeta \cdot(-\tilde \xi_2+\tilde \xi_1-\tilde \xi_3)s_k}a\left(\frac{\zeta s_k}{\ep}\right) {\bf 1}(\sum_{k=1}^{n-1}s_n\leq t)d\underline{s}\\
&& \qquad \qquad \qquad \qquad \qquad \qquad \qquad \qquad - \int_{\mathbb R^{n-1}_+}  \prod_{k=1}^{n-1} e^{-i\zeta \cdot(-\tilde \xi_2)s_k}a\left(\frac{\zeta s_k}{\ep}\right) {\bf 1}(\sum_{k=1}^{n-1}s_k\leq t)d\underline{s}\Bigr].
\eee
Note that if $s_1,...,s_{n-1}$ are such that $0\leq s_1,...,s_{n-1} \leq t$ and $s_1+...+s_{n-1}>t$, then at least one of the $s_k$'s is such that $s_k\gtrsim t$, implying $a(\zeta \ep^{-1}s_k)=O(\ep^{\infty})$ as $\ep t\geq \ep^{-\nu}\gg 1$ and $a$ is Schwartz. We combine the previous sum/integral approximation relying on Poisson summation, the above partial integration with $s_0$, and this remark to simplify the time integration domain:
\bee
&& \sum_{\underline \xi \in \mathbb Z^{d(n-1)}} \int_{\mathbb R_+^{n+1}} e^{-i  \zeta \cdot(\tilde \xi_3-\tilde \xi_{1})s_0} \prod_{k=1}^{n-1}e^{-i  \zeta \cdot( \xi_{k}-\tilde \xi_2)s_k} A(\ep \xi_k) \delta(t-\sum_{i=0}^{n}s_i)\\
 &=&\frac{(2\pi)^{\frac d2(n-1)}\ep^{-d(n-1)}}{-i\zeta \cdot(\tilde \xi_3-\tilde \xi_1)}\Bigl[ e^{-it\zeta \cdot(\tilde \xi_3-\tilde \xi_1)}\int_{\mathbb R^{n-1}}  \prod_{k=1}^{n-1} e^{-i\zeta \cdot(-\tilde \xi_2+\tilde \xi_1-\tilde \xi_3)s_k}a\left(\frac{\zeta s_k}{\ep}\right) {\bf 1}( s_1,...,s_{n-1}\geq 0)d\underline{s}\\
&& \qquad \qquad \qquad \qquad  \qquad \qquad- \int_{\mathbb R^{n-1}}  \prod_{n=1}^{n-1} e^{-i\zeta \cdot(-\tilde \xi_2)s_k}a\left(\frac{\zeta s_k}{\ep}\right) {\bf 1}(s_1,...,s_{n-1}\geq 0)d\underline{s}\Bigr]+O(\ep^{\infty})
\eee
We introduce the following function:
$$
F[\theta](\xi) =\left(\int_0^\infty e^{i\theta \cdot \xi s} a(\theta s)ds\right)^{N-1}
$$
(which given $|\theta|=1$ as a parameter is a smooth function of $\xi$). The above identity becomes, by Fubini, $s$ time renormalisation, introducing $\bar \xi_1=\tilde \xi_3-\tilde \xi_1$ and denoting $\theta=\frac{\zeta}{|\zeta|}$:
$$
...=\left(\frac{(2\pi)^{\frac d2}}{|\zeta|\ep^{d-1}}\right)^{n-1}\frac{1}{-i\zeta \cdot\bar \xi_1}\left[ e^{-it\zeta \cdot\bar \xi_1}F[\theta](\ep(\tilde \xi_2+\bar \xi_1))-F[\theta](\ep \tilde \xi_2)\right] +O(\ep^{\infty})
$$
Therefore we obtain the following expression for the first quantity in \fref{id:decompositionbelt}:
\bea
\nonumber I &= \frac{\lambda^{4n}}{(2\pi)^d}\ep^{2n+3d-2}\sum_{\zeta \in 2A\mathbb Z^d, \ |\zeta|\leq \ep^{\kappa}t^{-1}}\sum_{(\bar \xi_1,\tilde \xi_2)\in \mathbb{Z}^{2d}} \frac{1}{|\zeta|^{2(n-1)}}A( \frac{\ep}{2}A^{-1}\zeta+\ep \tilde \xi_2-\ep \bar \xi_1)A(\ep \tilde \xi_2) A(-\ep \tilde \xi_2+\ep \bar \xi_1)) \\
\label{id:intercalculbelt} & \qquad \qquad\qquad \qquad \frac{1}{|\zeta \cdot\bar \xi_1|^2}\left| e^{-it\zeta \cdot\bar \xi_1}F[\theta](\ep(\tilde \xi_2+\bar \xi_1))-F[\theta](\ep \tilde \xi_2)\right|^2 +O(\ep^{\infty})
\eea
We decompose:
$$
e^{-it\zeta \cdot\bar \xi_1}F[\theta](\ep(\tilde \xi_2+\bar \xi_1))-F[\theta](\ep \tilde \xi_2)=(e^{-it\zeta \cdot\bar \xi_1}-1)F[\theta](\ep(\tilde \xi_2+\bar \xi_1))+F[\theta](\ep(\tilde \xi_2+\bar \xi_1)-F[\theta](\ep \tilde \xi_2).
$$
We estimate that, uniformly in $\tilde \xi_2$ and $\zeta$:
\be \label{bd:intercalculbelt}
\sum_{\bar \xi_1\in \mathbb{Z}^{d}} A( \frac \ep2 A^{-1} \zeta+\ep \tilde \xi_2-\ep \bar \xi_1) A(\ep \tilde \xi_2) A(-\ep \tilde \xi_2+\ep \bar \xi_1)) \left|\frac{F[\theta](\ep(\tilde \xi_2+\bar \xi_1)]-F[\theta](\ep \tilde \xi_2)]}{\zeta \cdot\bar \xi_1}\right|^2 =O\left(\frac{\ep^{2-d}}{|\zeta|^2}\right),
\ee
which is because $\left|F[\theta](\ep(\tilde \xi_2+\bar \xi_1)]-F[\theta](\ep \tilde \xi_2)]\right|\lesssim \frac{\ep}{|\zeta|} |\bar \xi_1.\zeta|$ from the smoothness of $F$, and that the integrand has support in the zone $|\bar \xi_1|\lesssim \ep^{-1}$. We also estimate that, using $|e^{i\omega}-1|^2=4\sin^2(\omega/2)$:
\bee
&& \sum_{\bar \xi_1\in \mathbb{Z}^{d}} A\left( \frac \ep 2 A^{-1}\zeta+\ep \tilde \xi_2-\ep \bar \xi_1\right) A(\ep \tilde \xi_2) A(-\ep \tilde \xi_2+\ep \bar \xi_1) \left|\frac{e^{-it\zeta \cdot\bar \xi_1}F[\theta](\ep(\tilde \xi_2+\bar \xi_1)]-F[\theta](\ep(\tilde \xi_2+\bar \xi_1)]}{\zeta \cdot\bar \xi_1}\right|^2\\
&=&  \int_{\bar \xi_1\in \mathbb{Z}^{d}} A\left( \frac \ep 2 A^{-1} \zeta+\ep \tilde \xi_2-\ep \bar \xi_1 \right) A(\ep \tilde \xi_2) A(-\ep \tilde \xi_2+\ep \bar \xi_1)  \frac{4\sin^2\left( \frac{t\zeta \cdot\bar \xi_1}{2} \right)}{|\zeta \cdot\bar \xi_1|^2}\left| F[\theta](\ep(\tilde \xi_2+\bar \xi_1)] \right|^2+O(t^2\ep^{1-d})\\
\eee
where the second equality approximates the Riemann sum by an integral (recall that $t\ep \gg 1$). We renormalise the variable $\bar \xi_1$, use the convergence of $\lambda \frac{\sin(\frac{\omega}{\lambda})^2}{\omega^2}=\pi \delta(\omega=0)+O(\lambda)$ as $\lambda \rightarrow 0$ as a distribution to obtain:
$$
...= \frac{2\pi t\ep^{1-d}}{|\zeta|} \int_{\bar \xi_1\in \mathbb{R}^{d}} A\left( \frac \ep 2 A^{-1} \zeta+\ep \tilde \xi_2- \bar \xi_1\right) A(\ep \tilde \xi_2) A(-\ep \tilde \xi_2+ \bar \xi_1) \left| F[\theta](\ep\tilde \xi_2+\bar \xi_1) \right|^2\delta(\theta.\bar \xi_1=0) +O(\ep^{2-d})
$$
where we used $\ep^{1-\nu}\leq t \leq \ep^{1/2}$. We inject the above equality and the inequality \fref{bd:intercalculbelt} in \fref{id:intercalculbelt} using Cauchy-Schwarz and $t\gg \ep$, and obtain:
\begin{align*}
I &= \frac{\lambda^{4n}}{(2\pi)^d}\ep^{2n+2d}\sum_{|\zeta|\leq \ep^{\kappa}t^{-1}} \sum_{\tilde \xi_2\in \mathbb Z^d}\frac{1}{|\zeta|^{2(n-1)}} \\
& \left(\frac{2\pi t\ep^{-1}}{|\zeta|} \int_{\bar \xi_1 \in \mathbb{R}^{d}} A( \frac \ep 2 A^{-1}\zeta+\ep \tilde \xi_2- \bar \xi_1)A(\ep \tilde \xi_2) A(-\ep \tilde \xi_2+ \bar \xi_1)\left| F[\theta](\ep(\tilde \xi_2+\bar \xi_1)] \right|^2\delta(\theta.\bar \xi_1=0)+O\left(t^{\frac 12}\ep^{-\frac 12}\right) \right) \\
&= \frac{\lambda^{4n}}{8\pi^3}\ep^{2n+d-1}t \underset{|\zeta|\leq \ep^\kappa t^{-1}}{\sum}\underset{(\bar \xi_1,\tilde \xi_2)\in \mathbb{R}^{2d}}{\int} \frac{1}{| \zeta|^{2n-1}}A\left( \frac \ep2 A^{-1}\zeta+ \tilde \xi_2- \bar \xi_1\right)A( \tilde \xi_2) A(- \tilde \xi_2+ \bar \xi_1) \left| F^{n-1}[\theta](\tilde \xi_2) \right|^2\delta(\theta.\bar \xi_1=0) \\
& +O( \lambda^{4n}\ep^{2n+d-\frac 12}t^{\frac 12} ) \\
& \approx   T_{kin}^{-n}\ep t.
\end{align*}

\textbf{Step 2} \emph{The $\zeta \gtrsim t^{-1}$ contribution}. To treat the $II$ term in , we apply first the resolvant identity \fref{id:FresolvantmathcalRc}, and then the algorithm described in the proof of Lemma \ref{lem:nonresonant} to estimate iteratively the sums of $\frac{1}{\left|\alpha-\sum_{i=k}^{n}\Omega_i+\frac{i}{t}\right|}$ terms (or involving $\alpha'$). At the first degree zero vertex we bound $\frac{1}{\left|\alpha'-\sum_{i=1}^{n}\Omega_i+\frac{i}{t}\right|}\leq t$. Then at each linear degree one vertex we use the estimate \fref{bd:degre1lineaire} with $a=0$ and with the fact that $\max (\frac{1}{|\zeta|},t)\leq \epsilon^{-\kappa}t$. Finally, at the last degree two vertex with use the estimate \fref{bd:degre2} with $a=0$. This implies that:
$$
II\lesssim \ep^{-C\kappa }\lambda^{4n}\ep^{d(2n+1)} t(\ep^{-(d-1)-\kappa}t)^{2n-2}\ep^{-2(d-1)}\ep^{-d}=\lambda^{4n}\ep^{2n}t^{2n-1} \ep^{-C(n)\kappa} \ll T_{kin}^{-n}\ep^{d-1} t
$$
as $2n\geq d+2$ and $t\leq \epsilon^{1-\frac 1d}$ and $n\geq 3$. The above estimate, together with the estimate for I found in Step 1, yield the desired estimate.

\end{proof}

\subsection{Lower order terms}

In the identity \fref{LpLpexpression} we found a sharp bound for the $\mathcal F(\overline{\ell^*},\ell^*,P_{belt})$ term in the previous subsubsection. The graph for $\mathcal F(\overline{\ell^*},\ell^*,P_{belt})$ has a very specific structure: it involves the maximal number of degree one vertices, which are all of linear type, and which all have the same dual variable. A byproduct of the proof of the Lemma below is that this is the only such pairing. By using the fact that for other pairing more degree two or quadratic degree one vertices appear, or that linear degree one vertices may have different dual variables, we are able to improve their upper bound. The key new idea in the proof is that estimating the contribution of a free variable may involve an upper bound with a factor that depends on another free variable, and this factor is converted into a gain factor once one estimates the contribution of this other free variable. This sees the very structure of the paired graph, and improves on the previous techniques used in \cite{CG} which involved universal bounds at a vertex which were independent of the other free variables.

\begin{lemma} \

Pick any $1-\frac{1}{2d+1} < c_2<c_1<1$ and $n\geq d+1$. There exists a constant $c>0$ such that for all pairings $P\neq P_{belt}$, for all
\be \label{id:hptbeltpairing}
\ep^{c_1}\leq t \leq \ep^{c_2}
\ee
there holds:
\be \label{bd:FPbetterthanFPbelt}
|\mathcal F(\overline{\ell^*},\ell^*,P)(t)|\lesssim \frac{1}{T_{kin}^n}t\epsilon^{d-1+c} \ll \frac{1}{T_{kin}^n}t\epsilon^{d-1}.
\ee

\end{lemma}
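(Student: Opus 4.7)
My plan is to classify each pairing $P$ by the signature $(n_0, n_1^l, n_1^q, n_2)$ of its paired diagram (Definition \ref{def:degreeonecases}) together with the multiset of dual variables attached to the linear degree one vertices. From \fref{id:ni} and the structure of $G^*$, the belt signature is $n_0 = n_2 = 1$, $n_1^l = 2n-2$, $n_1^q = 0$, and, as observed in the proof of the previous lemma, all $2n-2$ linear dual variables coincide with the single $\zeta = 2H(\tilde\xi_3 - \tilde\xi_2)$. The first key step is the combinatorial claim that this joint signature with all dual variables equal characterizes $P_{belt}$ uniquely among the pairings of $G^*$. I would prove this by induction on the depth $n$, tracking how a pairing of the bottom-most initial vertices forces, via Kirchhoff's rules and the spanning construction of Theorem \ref{th:spanning}, the shape of the remaining pairing.

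Next I would apply the resolvent identity \fref{bd:resolventidentity} together with the iterative summation algorithm of Lemma \ref{lem:nonresonant}, visiting vertices in the natural time ordering. In the regime \fref{id:hptbeltpairing} the resolvent bounds produce a factor of $t$ at each degree zero vertex, of $\epsilon^{1-d-\kappa}\max(t,1/|\zeta|)$ at each linear degree one vertex with dual $\zeta$, of $\epsilon^{1-d-\kappa}\sqrt{t}$ at each quadratic degree one vertex (via the $|\xi|_H^2$ level-set count of Theorem \ref{theoremNT}), and of $\epsilon^{2-2d-\kappa}$ at each degree two vertex. Assembled with the prefactor $\lambda^{4n}\epsilon^{d(2n+1)}$ and the support bound $|\xi_{2n+1}|\lesssim \epsilon^{-1}$, these yield a vertex-by-vertex upper bound on $|\mathcal F(\overline{\ell^*},\ell^*,P)|$.

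For $P\neq P_{belt}$ the signature differs from the belt one in at least one of three ways: (a) $n_1^q \geq 1$; (b) $n_2 \geq 2$ and correspondingly $n_0 \geq 2$; or (c) the belt signature holds but the linear dual variables are not all equal. In case (a), each quadratic vertex trades $\max(t,1/|\zeta|)$ for $\sqrt{t}$; in the dominant regime $|\zeta|\sim t^{-1}$ this costs $t^{3/2}\leq \epsilon^{3c_2/2}$. In case (b), an extra degree two vertex replaces two linear vertices; the trade is $\epsilon^{2-2d}$ against $\epsilon^{2-2d}/|\zeta|^2\sim \epsilon^{2-2d}t^2$, costing a polynomial factor $t^{2}\leq \epsilon^{2c_2}$ after $\zeta$-summation. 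In case (c), I would adapt the visible-points decomposition used in the proof of \fref{bd:exampleladder2}: when dual variables $\zeta_1,\zeta_2$ are independent each summation contributes on its own scale rather than being concentrated on the single scale $|\zeta|\sim t^{-1}$, and the resulting product is smaller than $(1/|\zeta|)^{2n-2}$ by a polynomial factor $t^{d-1}$. The hypothesis $c_2 > 1 - \frac{1}{2d+1}$ is calibrated precisely so that each of these polynomial gains beats the $\epsilon^{-\kappa}$ and $\langle\log\epsilon\rangle$ losses accumulated along the algorithm, yielding the desired $\epsilon^c$ improvement over the belt bound $T_{kin}^{-n}\epsilon^{d-1}t$.

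The main obstacle I anticipate is the uniqueness statement of case (c), namely proving that every paired diagram built from $G^*$ with belt signature but distinct dual variables is genuinely suboptimal. The difficulty is that many pairings can produce the signature $(1,2n-2,0,1)$, and one must rule out hidden cancellations that might restore the belt-type concentration. I expect to handle this by a direct identification of the dual variable $\zeta_k$ at each linear vertex as a specific integer combination of the free momenta $(\xi_{k'})_{k'>k}$ produced by Theorem \ref{th:spanning}; two such combinations are equal for all $k$ only if the paired diagram has the same chain topology as $P_{belt}$, which together with the combinatorial step above forces $P = P_{belt}$.
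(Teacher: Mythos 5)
Your overall architecture (classify pairings by the vertex-degree signature $(n_0,n_1^l,n_1^q,n_2)$, run the iterated resolvent algorithm, and show that any non-belt pairing gains polynomial factors of $t$) is the same as the paper's, but the quantitative inputs and the accounting have genuine gaps. First, your factor $\ep^{1-d-\kappa}\sqrt t$ at a quadratic degree one vertex, justified ``via the level-set count of Theorem \ref{theoremNT}'', is not available here: that theorem holds only for \emph{generic} $H$ and for widths $\delta<1$, whereas this lemma must hold for every symmetric positive definite $H$ (Proposition \ref{pr:tleq1} includes $H=\Id$) and the relevant width is $1/t\gg 1$; the correct input, valid for all such $H$, is $t\,\ep^{1-d}$ up to logarithms, i.e.\ \fref{bd:degre1quadratic2} with $a=0$. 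Second, and more seriously, your cost accounting in cases (a) and (b) compares vertex factors of $P$ with vertex factors of $P_{belt}$, but the sharp belt size $T_{kin}^{-n}\ep^{d-1}t$ is far \emph{below} the naive product of vertex factors for the belt signature, which by \fref{id:ni} is of order $T_{kin}^{-n}\ep^{-1}t^{2}$ in this regime; showing that $P$ loses a factor $t^{3/2}$ or $t^{2}$ relative to that naive product therefore does not give \fref{bd:FPbetterthanFPbelt}. Closing this gap is exactly what the weighted resolvent estimates of Corollary \ref{cor:weightedvertices} are for: each linear degree one vertex produces a weight $\max(1/\langle\zeta\rangle,t)$ which is converted into an extra factor $t$ at the first later vertex whose free variable enters the decomposition of $\zeta$ (with at most $d-1$ conversions at a single vertex), together with the special treatment of the degenerate cases $n_0=n_2=0$ (via \fref{id:nondegenn0} and Lemma \ref{lem:wicknondegeneracy}) and of the $a=d-1$ quadratic case. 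None of this mechanism appears in your proposal, and your trichotomy (a)/(b)/(c) is not even exhaustive: it misses, e.g., the signature $n_0=n_2=0$, $n_1=2n$.

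The crux, your case (c), is left essentially unproved, and the criterion you propose is not the operative one. What matters in the paper is not whether the dual variables of the linear vertices are all equal, but whether the dual variable at such a vertex involves a free variable that is summed at a \emph{strictly later} vertex of the algorithm; only then can the weight be cashed into an extra power of $t$. The paper's Steps 2--3 carry out an explicit pairing-by-pairing analysis of the belt-signature class and show that every pairing other than $P_{belt}$ admits at least two such conversions, which (combined with the range \fref{id:hptbeltpairing}) yields the $\ep^{c}$ gain. Your substitute -- that ``independent'' dual variables live on independent scales and a visible-points decomposition as in the proof of \fref{bd:exampleladder2} then gains $t^{d-1}$ -- is asserted, not proved; the visible-points argument there counts exact resonances for $t\gg1$ and does not transfer to the weighted resolvent sums in the regime $t\ll1$ without a new argument, and in any case a gain of $t^{d-1}$ over the naive product $\ep^{-1}t^{2}$ is not sufficient to reach $T_{kin}^{-n}t\,\ep^{d-1+c}$ when $t\geq\ep^{c_1}$ with $c_1<1$. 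Likewise your combinatorial claim that equal dual variables characterize $P_{belt}$ is only sketched (``induction on the depth''), while the identification of the belt pairing is precisely the content of the explicit Kirchhoff-law analysis in the paper. As it stands, the proposal reduces the lemma to the very points it was supposed to establish.
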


\begin{proof}

\textbf{Step 1:} \emph{Reduction to a special case}. Recall Definition \ref{def:degreeonecases}. We claim that if the following condition: $n_1^q=0$, $n_1=n_1^l=2n-2$, $n_0=n_2=1$, is not satisfied then the bound \fref{bd:FPbetterthanFPbelt} holds true.

To prove it, we refine the algorithm previously used in the proof of Proposition \ref{pr:L2expansion}, in order to upper bound $|\mathcal F(\overline{\ell^*},\ell^*,P)(t)|)$. We refer to this proof for the description of the algorithm.

\underline{The basic estimate} At each vertex, we use the following estimates with parameter $a=0$:
\begin{itemize}
\item \emph{For degree two vertices} we use \fref{bd:degre22} which gives a factor $t\ep^{1-2d}$.
\item \emph{For degree one vertices of linear type} we use \fref{bd:degre1lineaire2} which gives a factor $\ep^{1-d}$.
\item \emph{For degree one vertices of quadratic type} \fref{bd:degre1quadratic2} gives a factor $t\ep^{1-d}$.
\item \emph{For degree zero vertices} we simply bound $|\alpha+\sum_{i=k}^n\Omega_i+\frac{i}{t}|^{-1}\leq t$.
\end{itemize}
This produces using \fref{id:ni}:
\be \label{bd:interbeltbasicestimate}
\mathcal F_{P}(t)\lesssim \lambda^{4n}\ep^{d(2n+1)} t^{n_2}\ep^{(1-2d)n_2} \ep^{(1-d)n_1^l} t^{n_1^q}\ep^{(1-d)n_1^q} t^{n_0}\ep^{-\kappa}=\frac{1}{T_{kin}^n}\ep^{-n_2}t^{n_0+n_1^q+n_2}\ep^{-\kappa}.
\ee

\underline{The improved estimate}. In the previous algorithm, at all vertices we took $a=0$ in all bounds \fref{bd:degre1lineaire2}, \fref{bd:degre1quadratic2} and \fref{bd:degre22}. However, by incorporating a product of $a$ weights of the form $\max \left(\frac{1}{\langle \cdot-\tilde \xi \rangle},t\right)$ as displayed in these estimates, they are all improved by a factor $t^a$ (unless for one degenerate case considered later separately). These weights appear the following way.

Each time a degree one vertex of linear type is estimated, the identity \fref{id:degreeonelinear} and the estimate \fref{bd:degre1lineaire2} pull out a factor $\max \left(\frac{1}{\langle \zeta \rangle},t\right)$. The dual variable $\zeta$ at this vertex depends solely on free variables appearing after in the algorithm. This factor $\max \left(\frac{1}{\langle \zeta \rangle},t\right)$ is kept as is when estimating vertices appearing after whose free variables do not enter in the decomposition of $\zeta$. When reaching the first vertex at which a free variable is attached that enters in the decomposition of $\zeta$, then this weight $\max \left(\frac{1}{\langle \zeta \rangle},t\right)$ can be incorporated in any of the estimates \fref{bd:degre1lineaire2}, \fref{bd:degre1quadratic2} and \fref{bd:degre22}, improving the bound by a factor $t$. 

 Since free variables may appear in the decomposition of multiple dual variables of degree one vertices appearing before them, a product of multiple such weights may appear, in which case $a\geq 2$. Corollary \ref{cor:weightedvertices} allows for up to $a=d-1$ weights to be incorporated. Hence if there are $d-1$ or less degree one vertices of linear type, we gain a factor $t^{n_1^l}$ as each weight could be transformed into a $t$ gain. If there are $d-1$ or more degree one vertices of linear type, at least a gain factor $t^{d-1}$ is allowed (but maybe no better estimate is possible, in the case all such weights are estimated at a single vertex which precisely happens for the belt pairing). Therefore, there holds the following improved estimate in comparison to the basic estimate \fref{bd:interbeltbasicestimate} (except in the degenerate case considered later separately):
 $$
\mathcal F_{P}(t)\lesssim \frac{1}{T_{kin}^n}\ep^{-n_2}t^{n_0+\min (d-1,n_1^l)+n_1^q+n_2}\ep^{-\kappa}.
$$
We now consider the above estimate in various cases. Assume first that $n_1^l\leq d-1$. Then since $n_0+n^1_q+n_1^l+n_2=2n$ and $n_2\leq n$ from \fref{id:ni}, from \fref{id:hptbeltpairing} and $n\geq d+1$:
$$
\mathcal F_{P}(t)\lesssim\frac{1}{T_{kin}^n}\ep^{-n_2}t^{n_0+n_1^l+n_1^q+n_2}\ep^{-\kappa}\lesssim  \frac{1}{T_{kin}^n} \ep^{-n}t^{2n}\ep^{-\kappa}\lesssim  \frac{1}{T_{kin}^n} \ep^{-(d+1)}t^{2(d+1)}\ep^{-\kappa}\lesssim  \frac{1}{T_{kin}^n} t \ep^{d-1+c}
$$
for some $c>0$ so the desired bound \fref{bd:FPbetterthanFPbelt} is proved. Assume next that $n_1^l\geq d-1$ and that $n_1^q\geq 1$ and $n_2\geq 1$. Then we get in this case too an improvement since $n_0=n_2$ from \fref{id:ni} and from \fref{id:hptbeltpairing}:
$$
\mathcal F_{P}(t)\lesssim  \frac{1}{T_{kin}^n}\ep^{-n_2}t^{n_0+(d-1)+n_1^q+n_2}\ep^{-\kappa} \lesssim  \frac{1}{T_{kin}^n} t^{d-1}t^{n_1^q}(\ep^{-1}t^2)^{n_2}\ep^{-\kappa}\lesssim   \frac{1}{T_{kin}^n} t^{d+2} \ep^{-1}\ep^{-\kappa} \lesssim  \frac{1}{T_{kin}^n} t \ep^{d-1+c}.
$$
Assume now that $n_1^l\geq d-1$ and that $n_1^q=0$ and $n_2\geq 2$. Then the estimate is improved similarly:
$$
\mathcal F_{P}(t)\lesssim  \frac{1}{T_{kin}^n}\ep^{-n_2}t^{n_0+(d-1)+n_2}\ep^{-\kappa} \lesssim  \frac{1}{T_{kin}^n} t^{d-1}(\ep^{-1}t^2)^{n_2}\ep^{-\kappa}\lesssim   \frac{1}{T_{kin}^n} t^{d+3} \ep^{-2}\ep^{-\kappa} \lesssim  \frac{1}{T_{kin}^n} t \ep^{d-1+c}.
$$
Assume now that $n_1^l\geq d-1$, and that $n_0=n_2= 0$. Then we use \fref{id:nondegenn0}. Instead of using the estimate \fref{bd:degre1lineaire2} at the vertex $v_1'$, we use that on the support of $|\Delta_{\overline{\ell^*},\ell^*,P}|$, the corresponding sum in \fref{bd:degre1lineaire2} contains only one element from \fref{id:nondegenn0} hence is $\lesssim t$. This improves \fref{bd:degre1lineaire2} by a factor $\ep^{d-1}$ so that:
$$
\mathcal F_{P}(t)\lesssim  \frac{1}{T_{kin}^n} t^{(d-1)}\ep^{d-1}\ep^{-\kappa} \lesssim  \frac{1}{T_{kin}^n} t \ep^{d-1+c}.
$$
Consequently, gathering the bounds for all the above cases, we have proved the desired estimate whenever $n_1^q+n_0\geq 2$ or $n_0=0$. Hence, from \fref{id:ni}, the only remaining case corresponds to $n_1^q=0$, $n_2=n_0=1$ and $n_1^l=2n-2$, which is precisely what we claimed in Step 1.

We finish by considering the degenerate case in which the $t^{\min (d-1,a)}$ gain did not hold. This corresponds in the case $a=d-1$ in estimate \fref{bd:degre1quadratic2}. In this case the gain factor is only $\ep^{d-1}t^{-1/2}$ instead of $t^{d-1}$. However, since this happens at a quadratic degree one vertex and that $n_0\geq 1$ from \fref{id:ni}, we still find the desired bound:
$$
\mathcal F_{P}(t)\lesssim \frac{1}{T_{kin}^n}\ep^{-n_2}t^{n_0+n_1^q+n_2}\ep^{d-1}t^{-\frac 12}\ep^{-\kappa} \lesssim \frac{1}{T_{kin}^n}(\ep^{-1}t^2)^{n_2}t^{n_1^q-\frac 12}\ep^{d-1}\ep^{-\kappa}\lesssim \frac{1}{T_{kin}^n}\ep^{d-2}t^{\frac 52}\ep^{-\kappa}  \lesssim \frac{1}{T_{kin}^n} t\ep^{d-1+c}
$$
which shows \fref{bd:FPbetterthanFPbelt}.\\

\noindent \textbf{Step 2:} \emph{Identification of a special graph in a special case}. From step $1$, there remains to consider paired graphs for which $n_1^l=2n-2$, $n_1^q=0$ and $n_0=n_2=1$. These corresponds to paired graphs for which the bottom interaction vertex of the right subtree is of degree zero, the top interaction vertex of the left subtree is of degree two, and all other remaining interaction vertices are of degree one of linear type. We shall thus use the notation from Subsection \fref{subsubsec:example} for the name of the free variables: $(\tilde \xi_1,\tilde \xi_2,\tilde \xi_3,\xi_1,...,\xi_{n-1},\xi_1',...,\xi_{n-1}')$.

We describe the case $d=2$ and $n=3$ which retains all the difficulty of the other cases up solely to notational complexity. From Step 1 there remains to study the case $n_1=n_1^l=2n-2=4$ and $n_0=n_2=1$. We consider the right subtree, denoting its interaction vertices $v_1'$, $v_2'$, $v_3'$ as first, second, and third, from bottom to top. $v_1'$ has three initial vertices $v_{0,5}'$, $v_{0,6}'$, and $v_{0,7}'$ below, with parities of $-$, $+$ and $+$ respectively, $v_2'$ has $v_{0,3}'$ and $v_{0,4}'$ below it with parities $-$ and $+$, and $v_3'$ has $v_{0,1}'$ and $v_{0,2}'$ below with parities $-$ and $+$. Our first claim is that unless $v_{0,6}'$ (or $v_{0,7}'$ as these two vertices play a symmetric role) is paired with $v_{0,3}'$, and $v_{0,4}'$ is paired with $v_{0,1}'$, then \fref{bd:FPbetterthanFPbelt} holds true.

To show this, we go through the construction of the minimal spanning tree, see \cite{CG}. As $v_2'$ is of degree one, exactly one initial vertex below $v_2'$ is paired with one below $v_1'$. As $v_2'$ is of linear type, this pairing has to be between $v_{0,3}'$ and either $v_{0,6}'$ or $v_{0,7}'$ (indeed, if $v_{0,4}'$ was paired with $v_{0,5}'$ this would make $v_2'$ be of quadratic type from the integration of Kirchhoff's laws in the graph). We thus assume $v_{0,3'}$ is paired with $v_{0,6}'$ without loss of generality and denote by $\xi'_1$ the corresponding free variable. As $v_3'$ is of degree one, exactly one initial vertex below $v_3'$ has to be paired with one below $v_1'$ or one below $v_2'$. As $v_3'$ is of linear type, this paired vertex under it has to be $v_{0,1}'$ and we call $\xi'_2$ the free variable. Two cases are possible, either a) $v_{0,1}'$ is paired with $v_{0,7}'$, or b) it is paired with $v_{0,4}'$. Let us consider the case a) which is illustrated as follows:
\begin{center}
\includegraphics[width=10cm]{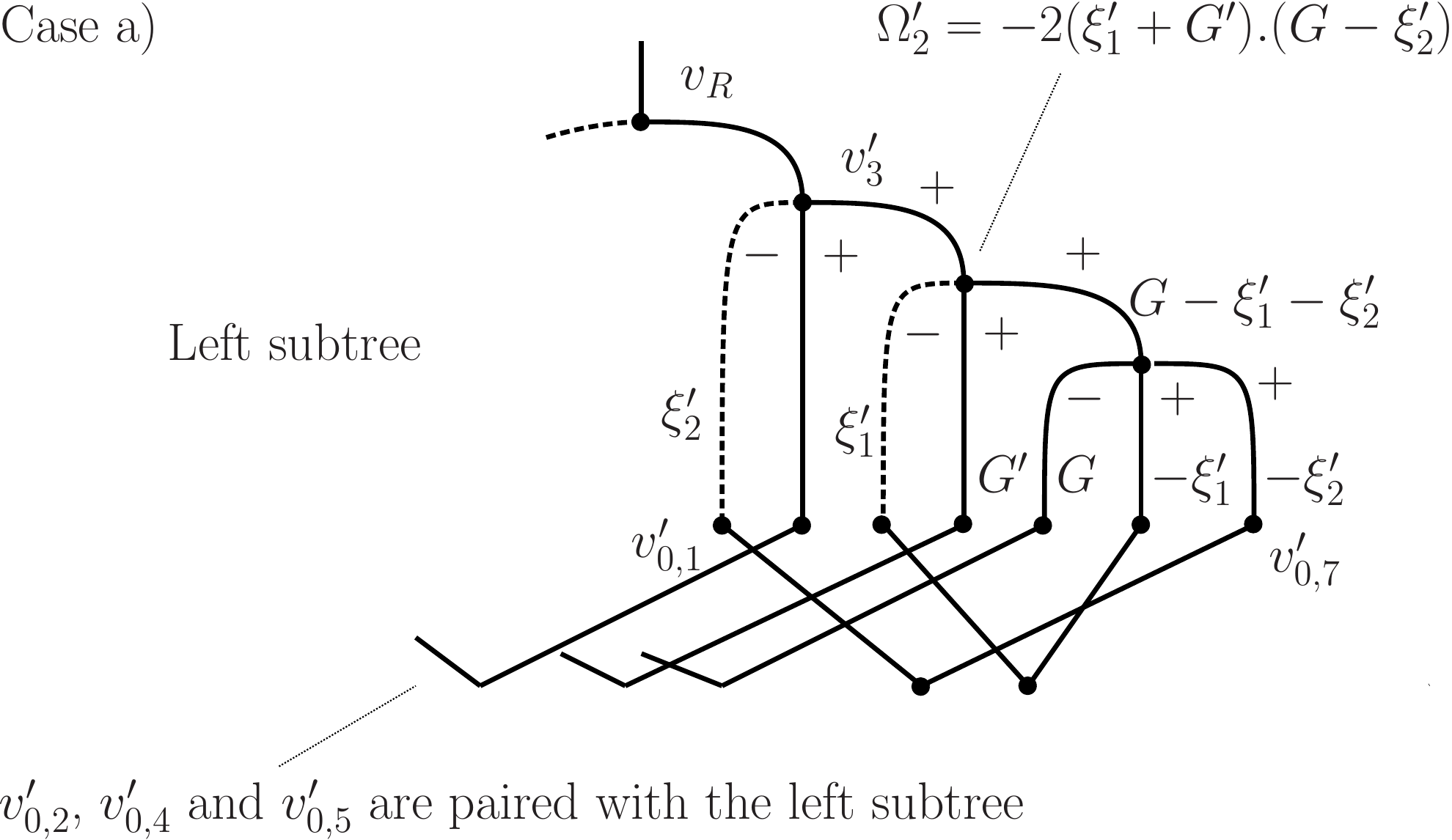}
\end{center}
where $G$ and $G'$ are independent of $\xi_1'$ and $\xi_2'$. We have in particular:
$$
\Omega'_2=-2(\xi_1'+G').(G- \xi_2').
$$
In view of Definition \ref{def:degreeonecases} and \fref{id:degreeonelinear}, the dual variable at $v_2'$ is $\zeta=-2(G'- \xi_2')$ which depends on $\xi_2'$. We perform the same algorithm as Case 2 and Case 3 of Step 1. At the vertex $v_2'$ we use \fref{bd:degre1lineaire} with $a=0$ which pulls out a $\max (\frac{1}{|G-\xi_2'|},t)$ factor. At the vertex $v_3'$ we use \fref{bd:degre1lineaire} with $a=1$, transforming this $\max (\frac{1}{|G-\xi_2'|},t)$ factor into a factor $t$ gain in comparison with the $a=0$ estimate. Next, the algorithm goes through the left subtree which contains at least one degree one vertex of linear type, whose dual variable $\tilde \zeta$ depends solely on free variables appearing after in the left subtree. Similarly, we apply the estimate \fref{bd:degre1lineaire} with $a=0$ at this vertex producing a $\max (\frac{1}{\tilde \zeta},t)$ factor which is then converted in a factor $t$ gain at the first vertex having a free variable entering in the decomposition of $\tilde \zeta$. The total gain factor is thus $t^2$. Hence:
$$
|\mathcal F_{P}(t)|\lesssim \frac{1}{T_{kin}^n}\ep^{-n_2}t^{n_0+n_1^q+n_2}\ep^{-\kappa}t^2=\frac{1}{T_{kin}^n}\ep^{-1}t^2 \ep^{-\kappa}t^{2}\lesssim \frac{t \ep }{T_{kin}^n} \ep^c
$$
as $n_0=1=n_2$ and $n_1^q=0$, and from \fref{id:hptbeltpairing}, so \fref{bd:FPbetterthanFPbelt} holds true. This proves the first claim.\\

\noindent We next obtain an analogue property for the left subtree by symmetry. Indeed, the algorithm to construct the spanning tree starts with the right subtree only by convention. By performing the very same argument in the case where the algorithm starts with examining the left subtree, we obtain the analogue of our first claim but for the left subtree: unless $v_{0,1}$ or $v_{0,2}$ is paired with $v_{0,5}$, and $v_{0,4}$ is paired with $v_{0,7}$, then \fref{bd:FPbetterthanFPbelt} holds true.\\

\noindent Therefore, there remains to examine pairings such that, without loss of generality, $v_{0,2}$ is paired with $v_{0,5}$, and $v_{0,4}$ is paired with $v_{0,7}$,  $v_{0,2}'$ is paired with $v_{0,5}'$, $v_{0,4}'$ is paired with $v_{0,7}'$, and $(v_{0,1},v_{0,3},v_{0,6})$ are paired with $(v_{0,2}',v_{0,5}',v_{0,7}')$. We recall that an initial vertex of parity $\sigma$ can only be paired with one of opposite parity $-\sigma$. Hence there are only two ways to pair $(v_{0,1},v_{0,3},v_{0,6})$ with $(v_{0,2}',v_{0,5}',v_{0,7}')$. First, in A) $v_{0,i}$ is paired with $v_{0,8-i}'$ for $i=1,3,6$, and then in B) $v_{0,1}$ is paired with $v_{0,2}'$, $v_{0,3}$ is paired with $v_{0,5}'$ and $v_{0,6}$ is paired with $v_{0,7}'$. Case B) is illustrated below. Since the free edges are the same as those of the belt pairing, see Step 1 in the proof of Proposition \ref{pr:tgeq1examples}, we keep the same notation for them:
\begin{center}
\includegraphics[width=9cm]{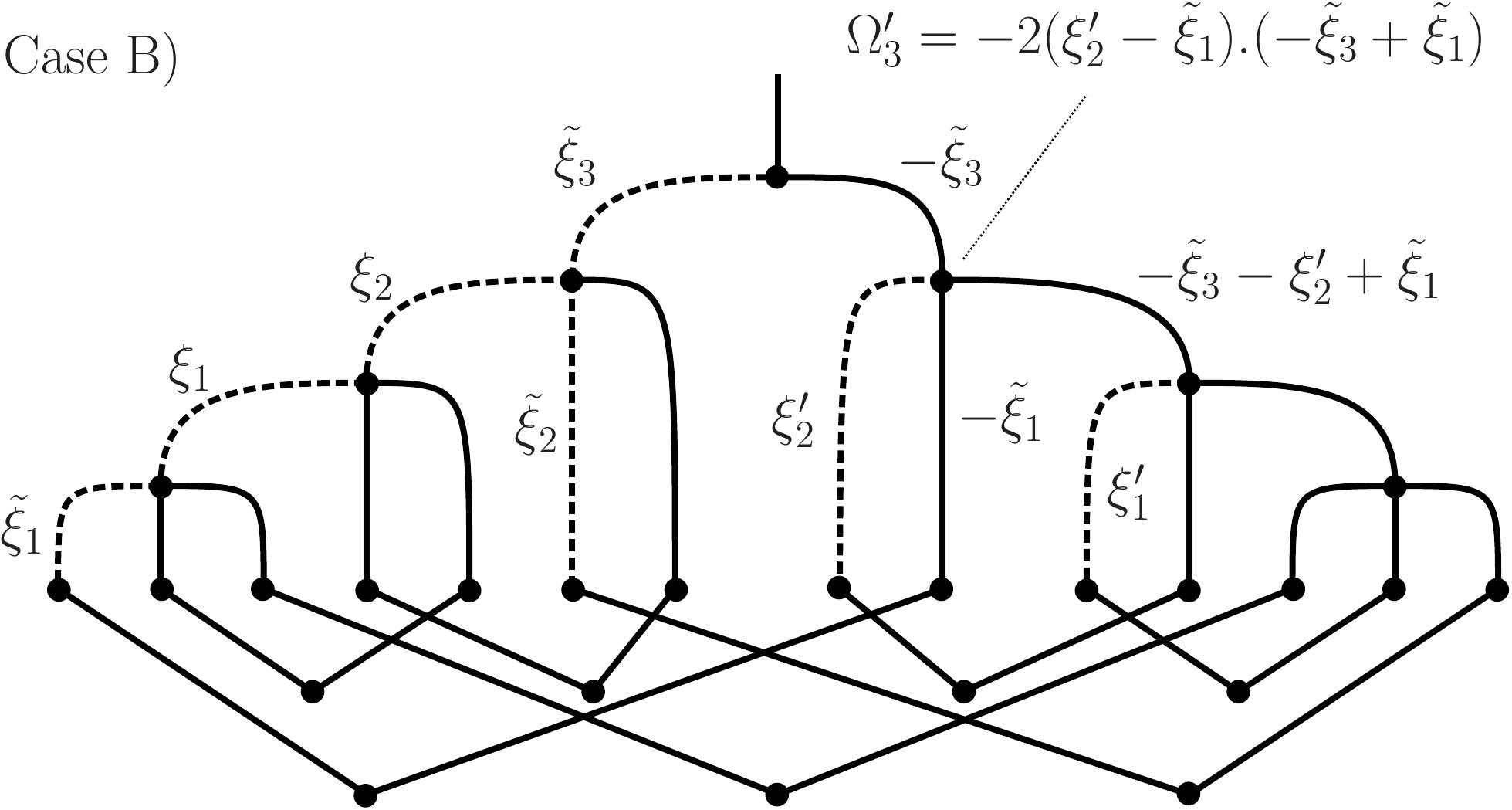}
\end{center}
and an integration of Kirchhoff's laws give that:
$$
\Omega_3'=-2(\xi_2'-\tilde \xi_1).(-\tilde \xi_3+\tilde \xi_1).
$$
Hence, $v_3'$ is a degree one vertex of linear type, whose dual variable is $-2(-\tilde \xi_3+\tilde \xi_1)$, which depends on $\tilde \xi_1$. Therefore, as in the previous case a), one can gain two times a factor $t$, producing a $t^2$ gain, and the bound is improved. Consequently, only case A) could possibly yield an estimate which does not satisfy \fref{bd:FPbetterthanFPbelt}. The proof is finished, as case A) is precisely the belt pairing.\\

\noindent \textbf{Step 3:} \emph{Identification of a special graph for $d\geq 3$, and $d\geq 2$ and $n\geq 4$}. The very same reasoning as in Step 2 works in these cases, so we just sketch it. 

First, if the conditions
\be \label{id:condition1}
\left\{ \begin{array}{l l} v_{0,2n-3}'\mbox{ is paired with } v_{0,2n} \mbox{ or } v_{0,2n+1}'\\
v_{0,2(n-k)-1}' \mbox{ is paired with } v_{0,2(n-k)+2}' \mbox{ for } k=2,...,n-1,\\
\mbox{and the remaining vertices are paired with he left subtree,} \end{array} \right.
\ee
are not satisfied, then there are either two or more degree two vertices in the whole paired graph, or there is at least one quadratic degree one vertex in the right subtree, or there is one linear degree one vertex in the right subtree whose dual variable depends on a free variable of the right subtree. In all cases, the bound \fref{bd:FPbetterthanFPbelt} holds true.

By a symmetric argument, the analogue of \fref{id:condition1} holds for the left subtree. Namely, if the conditions
\be \label{id:condition2}
\left\{ \begin{array}{l l}   v_{0,5}\mbox{ is paired with } v_{0,1} \mbox{ or } v_{0,2} \\
 v_{0,2k} \mbox{ is paired with } v_{0,2k+3} \mbox{ for } k=2,...,n-1,\\
 \mbox{and the remaining vertices are paired with he left subtree,} \end{array} \right.
\ee
are not satisfied, then the bound \fref{bd:FPbetterthanFPbelt} holds true.

Hence there remains to study pairings satisfying \fref{id:condition1} and \fref{id:condition2}. For such pairings, we only need to determine how $(v_{0,1},v_{0,3},v_{0,2n})$ are paired with $(v_{0,2}',v_{0,2n-1}',v_{0,2n+1}')$. In the case where $v_{0,1}$ is paired with $v_{0,2}'$, $v_{0,3}$ with $v_{0,2n-1}'$ and $v_{0,2n}$ with $v_{0,2n+1}'$ then $v_{n'}$ is a linear degree one vertex whose dual variable depends on the free variable attached to $v_1$, and the bound \fref{bd:FPbetterthanFPbelt} holds true. The only other possible pairing is the belt pairing, which ends the proof.

\end{proof}

\subsubsection{Proof of Proposition \ref{pr:tleq1}}

This proposition is easily proved using the two previous lemmas.

\begin{proof}[Proof of Proposition \ref{pr:tleq1}]

It is a direct consequence of the identity \fref{LpLpexpression}, and of the corresponding bounds \fref{bd:mathcalFPbelt} and \fref{bd:FPbetterthanFPbelt}.

\end{proof}

\section{Number theory results for generic dispersion relations and large times} \label{sec:numbergeneric}

This Section is devoted to the proof of the following theorem.

\begin{theorem} \label{theoremNT}
For almost all symmetric matrices close to the identity, and $L^{2-d+\kappa} < \delta <1$,
\begin{align}
\label{NT1} & \# \{ \xi, \; |\xi| \leq L, \; |H\zeta \cdot \xi| < \delta \} \lesssim_{H} L^{d-1} \sqrt{\delta} \qquad \mbox{if } 1\leq |\zeta |\leq \ep^{-1}\\
\label{NT2} & \# \{ \eta, \; |\eta| < L , \; ||\eta|_H^2 - a| < \delta \} \lesssim_{H} L^{d-1} \sqrt{\delta} \\
\label{NT3} & \# \left\{\eta,\xi \in \mathbb Z^d \mbox{ with } |\eta|,|\xi|\leq L, \quad | H\xi \cdot \eta- a|\leq \delta \right\} \lesssim_{H} L^{2d-2} \delta.
\end{align}
\end{theorem}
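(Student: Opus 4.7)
Plan. The three counting estimates share a common structure and I would attack them by the same three-step strategy: (i) establish an average bound by integrating the count against Lebesgue measure on the space of symmetric matrices $H$ in a neighborhood $U$ of $\mathrm{Id}$; (ii) promote this average bound to a pointwise bound for $H$ outside an exceptional set via a Chebyshev-type or variance argument; (iii) handle the union of exceptional sets over the discrete parameter ($\zeta$ in \eqref{NT1}) or over a discretization of the continuous parameter ($a$ in \eqref{NT2}, \eqref{NT3}) using Borel--Cantelli, so that a single full-measure set of $H$'s works for all parameters, with the implicit constant allowed to depend on $H$.

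The averaging step rests on the observation that $H \mapsto H\zeta\cdot\xi$, $H \mapsto |\eta|_H^2$ and $H \mapsto H\xi\cdot\eta$ are all linear functionals on the space of symmetric matrices, whose ``normal vectors'' have magnitudes of order $|\xi||\zeta|$, $|\eta|^2$ and $|\xi||\eta|$ respectively. Hence for a single fixed lattice point (or pair), the measure of $H\in U$ satisfying the constraint is bounded by $\delta/(|\xi||\zeta|)$, $\delta/|\eta|^2$, and $\delta/(|\xi||\eta|)$ respectively. Summing over $\xi,\eta\in\mathbb{Z}^d\cap B(0,L)$ produces averaged counts bounded by $L^{d-1}\delta/|\zeta|$, $L^{d-2}\delta$, and $L^{2d-2}\delta$ respectively.

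For \eqref{NT1} and \eqref{NT2}, the averaged bound is strictly smaller than the claimed bound $L^{d-1}\sqrt{\delta}$ whenever $\delta<1$, so Markov's inequality immediately provides an exceptional set of small measure. To obtain uniformity in $\zeta$ for \eqref{NT1}, I would run higher-moment estimates of $\int_U N_\zeta^p\, dH$ in order to obtain polynomial decay in $|\zeta|$ of the measure of the exceptional set, summable over $\zeta \in \mathbb{Z}^d\setminus\{0\}$, and then invoke Borel--Cantelli. For the continuous parameter $a$ in \eqref{NT2} (and also \eqref{NT3}), a discretization on the scale $\delta$ suffices, since the count changes by at most $O(1)$ as $a$ crosses a critical level.

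The main obstacle is \eqref{NT3}: here the averaged bound matches the claim exactly, so Markov on the first moment is useless. I would attack this through a second-moment analysis, expanding $\int_U N^2\, dH$ as a sum over pairs $(\xi_1,\eta_1), (\xi_2,\eta_2)$, with the measure of the two-constraint slab bounded by $\delta^2/(|\xi_1||\eta_1||\xi_2||\eta_2|)$ on the generic stratum where $\xi_1\otimes\eta_1+\eta_1\otimes\xi_1$ and $\xi_2\otimes\eta_2+\eta_2\otimes\xi_2$ are linearly independent in the space of symmetric matrices, and a separate analysis on the degenerate stratum where these are nearly parallel, using \eqref{NT1} for the inner sum. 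The hard part will be showing that the degenerate contribution is of lower order, which morally reduces to a divisor-type counting problem; this is the analytic number theory (circle method) input that replaces the purely probabilistic variance bound and matches the hint in the introduction that the proof combines the circle method with geometry of numbers and averaging over $H$.
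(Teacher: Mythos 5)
Your overall architecture (average the count over $H$ in a neighborhood of the identity, apply a Chebyshev-type bound, take unions over dyadic scales and parameters) is indeed the paper's, but your treatment of \eqref{NT2} contains a genuine gap. The averaged count is $\int_U \#\{\eta\}\,dH \lesssim 1+\delta L^{d-2}$, and Markov at the threshold $L^{d-1}\sqrt{\delta}$ gives, for a \emph{single} value of $a$, an exceptional set of measure about $(1+\delta L^{d-2})/(L^{d-1}\sqrt{\delta})$. But you must then take a union over roughly $L^2/\delta$ values of $a$ (the count is supported in $|a|\lesssim L^2$ and you discretize at scale $\delta$), and
$$
\frac{L^2}{\delta}\cdot\frac{1+\delta L^{d-2}}{L^{d-1}\sqrt{\delta}} \;\gtrsim\; \frac{L}{\sqrt{\delta}},
$$
which is enormous throughout the admissible range; no small exceptional set survives, and no choice of $\delta\in(L^{2-d+\kappa},1)$ rescues it. (Also, the count does not change by $O(1)$ as $a$ moves by $\delta$; the correct covering trick is $N(a,\delta)\le N(a',2\delta)$ for $|a-a'|\le\delta$, but that does not reduce the size of the union.) This is exactly why the paper does not argue via a first moment for \eqref{NT2}: it runs the circle method, bounds the major arc pointwise in $H$ by $L^{d-2}\delta$, and after a Cauchy--Schwarz step the minor-arc contribution becomes an exponential sum in the bilinear phase $H\alpha\cdot\beta$ whose average in $H$ is controlled by \eqref{averagedbound}; crucially, this minor-arc majorant is independent of $a$ (the factor $e^{-2\pi i a\tau}$ has modulus one), so a single Chebyshev application covers all $a$ simultaneously and the problematic union over $a$ never arises.

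For \eqref{NT3} and \eqref{NT1} your plan is plausible but the decisive steps are precisely the ones left open. For \eqref{NT3} the paper again avoids pair correlations: it splits off the major arc (bounded pointwise by $L^{2d-2}\delta$) and bounds the $a$-independent minor-arc term in average over $H$ via the Weyl-type product $\prod_j\min\bigl(L,\|\tau H_j\cdot\xi\|^{-1}\bigr)$ and a column-by-column integration over the entries of $H$. Your second-moment route would have to show that the near-parallel stratum of pairs (when $\xi_1\otimes\eta_1+\eta_1\otimes\xi_1$ and $\xi_2\otimes\eta_2+\eta_2\otimes\xi_2$ are nearly proportional) contributes only a Poisson-size variance, uniformly in $a$; this is exactly the number-theoretic core you defer, and without a variance of the order of the mean (up to logarithms) the Chebyshev bound is again too weak to survive the union over the $a$-grid. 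Similarly, for \eqref{NT1} the phrase ``run higher-moment estimates to obtain decay in $|\zeta|$'' is where all the work lies: the heavy tails of the count come from $H\zeta$ being nearly orthogonal to a low-height sublattice, and the paper handles this by a deterministic geometry-of-numbers lemma that stratifies the bad directions according to the dimension $r$ and an almost-orthogonal basis $q_1,\dots,q_r$ of the responsible sublattice, bounds the measure of bad directions by $\sum_{r=1}^{d-1}\delta^r R^{r(d-1)+}M^{-d}$, and then sums over $\zeta$ and dyadic scales. Any successful moment computation would have to reproduce this stratification; as written, your argument for \eqref{NT1} and \eqref{NT3} is a plan rather than a proof, and your argument for \eqref{NT2} would fail as stated.
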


\subsection{Proof of the bound~\eqref{NT1} for linear degree one vertices} 

\begin{proposition}
\label{herisson}
For almost all (in the Lebesgue sense) symmetric matrices $H$ close to the identity, for all $L\geq 1$, $\kappa>0$, and $\delta > L^{-100 d}$, there holds, for any $|\xi_0| \leq L$, 
$$
\# \{ \xi, \; |\xi| \leq L, \; |H\xi_0 \cdot \xi| < \delta \} \lesssim_{H,\kappa} L^{d-1} \delta + L^{2 - \frac{2}{d}+\kappa} \delta^{1/d} + L^{d-2+\frac{2}{d}+\kappa} \delta^{1-\frac{1}{d} }.
$$
In particular, under the above conditions,
$$
\# \{ \xi, \; |\xi| \leq L, \; |H\xi_0 \cdot \xi| < \delta \} \lesssim_{H,\kappa} L^{d-1} \sqrt{\delta} \qquad \mbox{if $L^{2-2d+\kappa} < \delta < 1$}.
$$
\end{proposition}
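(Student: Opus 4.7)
Write $v = H\xi_0 \in \mathbb R^d$; since $H$ is near the identity and $\xi_0\in\mathbb Z^d$ with $|\xi_0|\geq 1$ (implicit from the setting of Theorem~\ref{theoremNT}), one has $|v|\asymp |\xi_0|$. The quantity to bound is the number $N$ of lattice points in the symmetric convex body $K:= \{x\in\mathbb R^d : |x|\leq L,\ |v\cdot x|\leq \delta\}$, whose volume is $\asymp L^{d-1}\delta/|v|$. The crude volumetric bound already produces the first term $L^{d-1}\delta$ claimed; the additional two terms account for abnormal lattice point accumulation along low-dimensional subspaces on which $v$ is quasi-vanishing. My plan is to classify such accumulations via Minkowski's second theorem, and then to remove, via a Borel--Cantelli argument, a Lebesgue-null set of symmetric matrices $H$ for which the accumulation is too large.

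Let $0<\lambda_1\leq\cdots\leq \lambda_d$ denote the successive minima of $K$ with respect to $\mathbb Z^d$. Minkowski's second theorem gives $\prod_i \lambda_i \asymp_d |v|/(L^{d-1}\delta)$, and the standard lattice-point estimate $N \lesssim_d \prod_{i=1}^d \max(1,\lambda_i^{-1})$ reduces the problem to lower bounding the small $\lambda_i$'s. Each $\lambda_i<1$ is realised by a primitive $\eta_i\in \mathbb Z^d$ with $|\eta_i|\leq \lambda_i L$ and $|v\cdot \eta_i|\leq \lambda_i \delta$, and such $\eta_i$ are automatically linearly independent. The key measure-theoretic input is that for any primitive $\eta\in\mathbb Z^d$ with $|\eta|\leq A$, the set $\{H:|H\xi_0\cdot\eta|\leq\rho\}$ is a codimension-one slab in the $\tfrac{d(d+1)}{2}$-dimensional space of symmetric matrices, of thickness $\lesssim \rho/(|\xi_0|\,|\eta|)$, hence of Lebesgue measure $\lesssim \rho/A$ in a bounded neighbourhood of the identity. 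Summing over the $\lesssim A^d$ primitive $\eta$'s with $|\eta|\sim A$ and iterating for $k$-tuples $(\eta_1,\ldots,\eta_k)$ that realise the first $k$ minima (using the linear independence to prevent overcounting), the Lebesgue measure of matrices for which these minima simultaneously lie below prescribed thresholds $\lambda_i^*$ is bounded by a product that remains summable after dyadic decomposition in $(A_i,\lambda_i^*,L,\delta,|\xi_0|)$, with an arbitrarily small loss $L^\kappa$.

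The three terms of the proposition then emerge from optimising over $k$ and the common scale of the small minima: $k=0$ reproduces the volumetric term $L^{d-1}\delta$; the balanced choice $\lambda_1\asymp\cdots\asymp\lambda_{d-1}\ll 1$ produces $L^{2-2/d+\kappa}\delta^{1/d}$; and the intermediate regime where only some of the $\lambda_i$ collapse yields $L^{d-2+2/d+\kappa}\delta^{1-1/d}$. The principal obstacle will be carrying out the union bound uniformly: the exceptional null set of $H$'s must be chosen independently of $\xi_0$, $L$, and $\delta$, yet the conclusion must hold for all such parameters simultaneously down to $\delta>L^{-100d}$. This forces a Borel--Cantelli argument over dyadic $L$ and $\delta$ with probability summable after a loss of $L^\kappa \delta^\kappa$, which in turn is the mechanism producing the $\kappa$ losses in the exponents of the final two terms of the bound.
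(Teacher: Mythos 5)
Your overall scheme coincides with the paper's: the enemy is lattice accumulation in a proper subspace nearly orthogonal to $v=H\xi_0$, one excises for each dyadic choice of the parameters a small set of matrices $H$, and one concludes by a Borel--Cantelli argument over dyadic $L,\delta,|\xi_0|$. The exceptional-measure bound you aim for, of the shape $S^{d-k}(L^{d-1}\delta)^k M^{-d}$ when $k$ successive minima are small, is exactly what the paper's key lemma yields (with $k$ playing the role of $r=\dim\operatorname{span}$ of the counted set), and optimizing it over $k\in\{1,\dots,d-1\}$ and $S\leq L$ does produce the three stated terms; the packaging via Minkowski's second theorem and $N\lesssim\prod_i\max(1,\lambda_i^{-1})$ versus the paper's direct volume computation in the span, using an almost orthogonal basis of its intersection lattice, is a difference of presentation rather than of substance.

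The genuine gap is the measure estimate for tuples, $k\geq 2$. For a single primitive $\eta$ the slab bound $\operatorname{meas}\{H:\,|H\xi_0\cdot\eta|\leq\rho\}\lesssim \rho/(|\xi_0||\eta|)$ is correct, but for a tuple $(\eta_1,\dots,\eta_k)$ the measure of the intersection of the slabs is $\lesssim \prod_i\rho_i\big/\bigl(|\xi_0|^{k}\sqrt{\det(\eta_i\cdot\eta_j)}\bigr)$, and linear independence only guarantees $\sqrt{\det(\eta_i\cdot\eta_j)}\geq 1$, not $\gtrsim\prod_i|\eta_i|$: for skew tuples the $k$ events are strongly correlated, so the product-of-slab-measures bound you invoke fails tuple by tuple, while summing the correct bound over all $\lesssim\prod_i A_i^d$ tuples using only $\det\geq 1$ is far too lossy. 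To close this you must either stratify the union bound by the covolume $D=\sqrt{\det(\eta_i\cdot\eta_j)}$ and count tuples with $|\eta_i|\sim A_i$ and covolume $\sim D$ (checking the sum is dominated by $D\sim\prod_i|\eta_i|$), or replace the minima-realizing vectors by an almost orthogonal basis of the saturated lattice $\operatorname{span}(\eta_i)\cap\mathbb Z^d$ — which is what the paper does: its exceptional set is indexed by such bases $q$ with $\prod|q_i|<R^r/M$, the factors $\prod|q_i|$ are tracked through the change of variables, and the sum over bases is controlled by $\#\{q:\prod|q_i|\sim N\}\lesssim N^{d+\kappa}$. This bookkeeping is precisely where the $\kappa$-losses and the shape of the two non-volumetric terms come from, so it cannot be dismissed as "linear independence prevents overcounting". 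A minor further slip: your attribution of regimes is swapped — a single small minimum ($k=1$) produces $L^{2-\frac2d+\kappa}\delta^{1/d}$, while $k=d-1$ small minima produce $L^{d-2+\frac2d+\kappa}\delta^{1-\frac1d}$ — though the optimization over $k$, carried out correctly, gives the stated bound.
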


The following lemma will be the key to the proof of the above proposition.

\begin{lemma}
For $v \in \mathbb{R}^d$, $\frac{1}{2} \leq |v| \leq 1$, let
$$
E_{R,\delta}(v) = \{ \xi \in \mathbb{Z}^d, \; | \xi | \sim R, \; |v \cdot \xi| < \delta \}.
$$
For $M > R^{d-1} \delta$, there exists a set $\mathcal{E}_{R,\delta,M}$ with volume
$$
\operatorname{Vol} \mathcal{E}_{R,\delta,M} \lesssim \sum_{r=1}^{d-1} \frac{\delta^rR^{r(d-1)+}}{M^d}
$$
such that: if $v \notin \mathcal{E}_{R,\delta,M}$, then
\begin{equation}
\label{ecureuil}
\# E_{R,\delta}(v) < M.
\end{equation}
\end{lemma}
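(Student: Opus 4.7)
My plan is to apply Markov's inequality to the $d$-th moment of the counting function. Since $\mathcal{E}_{R,\delta,M} = \{v \in B(0,1) : \# E_{R,\delta}(v) \geq M\}$, expanding and interchanging sum with integral gives
\[
M^d \operatorname{Vol}(\mathcal{E}_{R,\delta,M}) \;\leq\; \int_{B(0,1)} (\# E_{R,\delta}(v))^d \, dv \;=\; \sum_{\substack{\xi_1,\dots,\xi_d \in \mathbb Z^d \\ |\xi_i|\sim R}} \operatorname{Vol}\bigl(A_{\xi_1} \cap \cdots \cap A_{\xi_d}\bigr),
\]
where $A_\xi = \{v \in B(0,1): |v\cdot\xi|<\delta\}$. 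I would then partition the right-hand side according to the $\mathbb R$-rank $r$ of the tuple $(\xi_1,\dots,\xi_d)$, writing $M^d \operatorname{Vol}(\mathcal{E}_{R,\delta,M}) \leq \sum_{r=1}^{d} S_r$.

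For a rank-$r$ tuple, selecting $r$ linearly independent vectors $\xi_{i_1},\dots,\xi_{i_r}$ from it yields the geometric bound
\[
\operatorname{Vol}\bigl(A_{\xi_1}\cap\cdots\cap A_{\xi_d}\bigr) \;\lesssim\; \frac{\delta^r}{V_r(\xi_1,\dots,\xi_d)},
\]
with $V_r$ the largest $r$-dimensional parallelepiped volume over $r$-subsets of the tuple: the chosen $r$ slabs carve out a parallelepiped of measure $\delta^r/V_r$ in $\operatorname{span}(\xi_{i_1},\dots,\xi_{i_r})$, while $B(0,1)$ contributes $O(1)$ in the transverse directions. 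To bound $S_r$, I would group rank-$r$ tuples by the primitive rank-$r$ sublattice $\Lambda \subset \mathbb Z^d$ whose $\mathbb R$-span equals that of the tuple. Minkowski's successive minima $\lambda_1 \leq \cdots \leq \lambda_r$ of $\Lambda$ (with $\prod \lambda_i \asymp \operatorname{covol}(\Lambda)$) control both the point count $\#(\Lambda \cap B(0,R)) \sim \prod_i \max(1, R/\lambda_i)$ and, via pigeonholing, the typical value of $V_r$ for $r$-subsets drawn from this set. A dyadic summation over the successive minima, combined with the classical polynomial count of primitive rank-$r$ sublattices of $\mathbb Z^d$ of bounded covolume, should then yield $S_r \lesssim \delta^r R^{r(d-1)+\kappa}$ for each $r = 1,\dots,d-1$.

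Finally, the full-rank case $r = d$ reduces to estimating $\sum_M 1/|\det M|$ over integer $d\times d$ matrices with entries of size $\sim R$; a direct computation (Eisenstein/Siegel style) gives $S_d \lesssim \delta^d R^{d(d-1)+\kappa}$, and under the hypothesis $M > R^{d-1}\delta$ this contribution is dominated by the $r = d-1$ term. Summing over $r = 1,\dots,d-1$ then produces the claimed bound. The main technical obstacle lies in the weighted lattice-counting step: controlling $\sum 1/V_r$ over rank-$r$ tuples with $|\xi_i|\sim R$ requires juggling successive minima and the distribution of primitive sublattices of prescribed covolume, and keeping the accumulated loss down to a harmless $R^\kappa$ factor through a careful dyadic decomposition. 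The geometric intersection-of-slabs input is elementary; essentially all the work is in the lattice-counting bookkeeping.
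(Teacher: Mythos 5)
Your reduction via Markov applied to the $d$-th moment has a genuine gap at the full-rank term, and it occurs precisely in the regime where the lemma is needed. Your $S_d$ is of size $\delta^d R^{d(d-1)+}$ (your determinant count is fine), while the $r=d-1$ term is $\delta^{d-1}R^{(d-1)^2+}$; the ratio is $\delta R^{d-1}$, and since both terms get divided by the same $M^d$, the hypothesis $M>R^{d-1}\delta$ is irrelevant to this comparison and does not make $S_d$ dominated. When $\delta R^{d-1}\gg 1$ (allowed: $\delta$ can be of order $1$ with $R$ large, and this range is used in the application), the obstruction is structural, not bookkeeping: the typical value of $\# E_{R,\delta}(v)$ is already $\sim R^{d-1}\delta$, so $\int(\# E)^d\,dv\gtrsim (R^{d-1}\delta)^d$ and Chebyshev can never give a volume bound better than $(R^{d-1}\delta/M)^d$, which misses the claimed bound by the factor $\delta R^{d-1}$.

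The missing idea is the dichotomy on the dimension of the span of $E_{R,\delta}(v)$, which is how the paper proceeds. If $E_{R,\delta}(v)$ spans $\mathbb{R}^d$, then the symmetric convex body $\{|x|\lesssim R,\ |v\cdot x|<\delta\}$ contains a $d$-dimensional set of lattice points, so its lattice-point count is $\lesssim$ its volume $\sim R^{d-1}\delta<M$, and no exceptional set is needed at all; thus only those $v$ for which $E_{R,\delta}(v)$ is trapped in a proper rational subspace can be bad. The paper then parametrizes the possible $r$-dimensional spans by almost orthogonal integer bases $(q_1,\dots,q_r)$, bounds the count inside the subspace again by a convexity/volume argument, and excises explicit small slabs in the $v$-variable for each basis, summing over bases with $\prod|q_i|<R^r/M$ — no moments are used. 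You could graft this dichotomy onto your scheme by restricting the moment sum to tuples of rank $\le d-1$ (legitimate, since for the remaining bad $v$ every $d$-tuple from $E$ has rank $\le d-1$), which removes $S_d$; your main-term computation for $r\le d-1$ then matches the claimed exponents, but the weighted count $\sum 1/V_r$ over degenerate configurations inside a sublattice (successive minima, skewed tuples with small sub-determinants) is exactly the part you leave unproved, and it is not routine. As written, the proof does not establish the lemma.
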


\begin{proof} Since $M > R^{d-1} \delta$, the inequality~\eqref{ecureuil} is immediately satisfied if $\dim E_{R,\delta}(v) = d$. Indeed, it is a classical result in convex geometry that if a convex body $K \subset \mathbb{R}^d$ is symmetric with respect to the origin and contains a subset of $\mathbb{Z}^d$ of dimension $d$, then $\# (K \cap \mathbb{Z}^d) \lesssim_d \operatorname{Vol} K$.

We now turn to the case where $\dim E_{R,\delta}(v) = r$, where $r \in \{ 1,\dots,d-1 \}$. Our aim is to find which $v$ have to be excluded in order to ensure that~\eqref{ecureuil} holds when $E_{R,\delta}(v)$ has dimension $r$.

We start with a linear subspace $H$ of dimension $r$, which we think of as the span of $E_{R,\delta}(v)$. 
Let $q_1, \dots, q_r$ be an almost orthogonal basis (over $\mathbb{Z}^d$) of $H \cap \mathbb{Z}^d$. Decompose $v$ into
$$
v = n + \sum_{i=1}^{r} \beta_i q_i,
$$
where $n$ is orthogonal to $H$, and similarly, any $\xi \in H \cap \mathbb{Z}^d$ into
$$
\xi = \sum_{i=1}^r \gamma_i q_i.
$$
Then
$$
v \cdot \xi = \beta^t Q \gamma,
$$
where ${\beta}$ and $\gamma$ denote the vectors with coordinates $\beta_i$ and $\gamma_i$ respectively, and $Q$ is the symmetric matrix $( q_i \cdot q_j )_{i,j}$. We want to bound
$$
\# \{ \gamma, \; |\gamma_i| \leq R / |q_i|, \; |\beta^t Q \gamma| < \delta \},
$$
assuming that this set is $r$-dimensional. Therefore, by the convexity argument mentioned at the very beginning of the proof it is
$$
\lesssim \operatorname{Vol}  \{ \gamma, \; |\gamma_i| \leq R / |q_i|, \; |\beta^t Q \gamma| < \delta \}.
$$
Changing variables to $\zeta_i = \frac{|q_i|}{R} \gamma_i$, and letting $\beta' = Q \beta$, this is
$$
=  \frac{R^r}{\prod_{i=1}^r |q_i|} \operatorname{Vol} \left\{ \zeta, \; |\zeta_i| \leq 1, \; \left| \sum_{i=1}^r \zeta_i \frac{R \beta_i'}{|q_i|} \right| < \delta \right\} \sim \frac{R^r}{\prod_{i=1}^r |q_i|} \min \left( 1 , \delta \left( \sum_{i=1}^r R \frac{|\beta'_i}{|q_i|} \right)^{-1} \right).
$$
This is $\lesssim M$ if
$$
\mbox{either}  \qquad \prod |q_i| \geq \frac{R^r}{M} \qquad \mbox{or} \qquad \sum \frac{|\beta'_i|}{|q_i|} \geq \frac{\delta R^{r-1}}{M \prod |q_i|}.
$$
Therefore, if the first inequality above is not satisfied, it is natural to choose the exceptional set 
$$
\mathcal{F}_{q,R,M,\delta}^r = \left\{ v\,, |\beta'_i| \leq \frac{\delta R^{r-1} |q_i|}{M \prod |q_i|}\right\}.
$$
Since the basis $(q_i)$ is almost orthogonal, we must add a factor $\sim \prod |q_i|^{-2}$ to deduce the volume in $\beta$ from the volume in $\beta'$; and a factor $\sim \prod |q_i|$ to deduce the volume in $v$ from the volume in $\beta$. Therefore,
$$
\operatorname{Vol} \mathcal{F}_{q,R,M,\delta}^r \lesssim \frac{\delta^r R^{r(r-1)}}{M^r \prod |q_i|^{r}}.
$$
Setting
$$
\mathcal{E}_{R,M,\delta} = \cup_{\substack{r=1,\dots,d-1 \\ |q_i| < R \\ \prod|q_i| < R^r / M }} \mathcal{F}_{q,R,M,\delta}^r,
$$
its volume can be bounded by
$$
\operatorname{Vol}  \mathcal{E}_{R,M,\delta} \leq \sum_{ \substack{r=1,\dots,d-1 \\ |q_i| < R \\ \prod|q_i| < R^r / M }} \frac{\delta^r R^{r(r-1)}}{M^r \prod |q_i|^{r}}.
$$
In order to estimate this sum, we use the fact that
$$
\# \{ q \in \mathbb{Z}^d, \; \prod |q_i| \sim N \} \lesssim N^{d+}
$$
(indeed, assuming that $|q_i| \sim 2^j_i$ leaves $2^{dj_i}$ choices for $q_i$, and $\prod |q_i| \sim N$ if and only if $2^{j_1+...+j_r} \sim N$) which implies that
$$
\operatorname{Vol}  \mathcal{E}_{R,M,\delta}\lesssim  \sum_{r=1}^{d-1} \frac{\delta^rR^{r(d-1)+}}{M^d}.
$$
\end{proof}

We can now turn to the proof of the proposition

\begin{proof}[Proof of Proposition~\ref{herisson}]
We now want to ensure that, for any $\zeta$ with $|\zeta| \sim S$, there holds
\begin{equation}
\label{scarabee}
\# \{ \xi, \; |\xi| \sim R, \;|(H\zeta) \cdot \xi| < \delta \} \lesssim N
\end{equation}
(where the value of $N$ has not been set yet), which can also be written 
$$
\# \left\{ \xi, \; |\xi| \sim R, \;|(H S^{-1} \zeta ) \cdot \xi| < {\delta / S} \right\} \lesssim N.
$$
Choosing $N \geq R^{d-1} \delta /S$, and setting $v = H S^{-1} \zeta$, we learn from the previous result that the above holds as long as $v \notin \mathcal{E}_{R,N,\delta/S}$, with
$$
\operatorname{Vol}  \mathcal{E}_{R,N,\delta /S} \lesssim \sum_{r=1}^{d-1} \frac{\delta^rR^{r(d-1)+}}{N^dS^r}.
$$
Thus, one can choose an exceptional set in the (matrix valued) variable $H$ of comparable size. Summing over all possible values of $\zeta$, we just proved that we can pick a set $\mathcal{D}_{R,S}$ such that 
$$
\operatorname{Vol} \mathcal{D}_{R,S,N} \lesssim S^d \sum_{r=1}^{d-1} \frac{\delta^r R^{r(d-1)+}}{N^dS^r} \sim S (R^{d-1+} \delta)^{d-1} N^{-d} + S^{d-1} R^{(d-1)+} \delta N^{-d},
$$
and, for any $H \notin \mathcal{D}_{R,S,N}$, and for any $\zeta$ with $|\zeta| \sim S$,~\eqref{scarabee} holds.  Choosing
$$
N = N(L,\nu) = L^{d-1} \delta + \frac{1}{\nu} L^{2 - \frac{2}{d}+} \delta^{1/d} + \frac{1}{\nu} L^{d-2+ \frac{2}{d}+} \delta^{1-\frac{1}{d} } , 
$$
and letting
$$
\mathcal{D}_{L,\nu} = \cup_{\substack{R,S \leq L \\ \delta > L^{-100 d}}} \mathcal{D}_{R,S,N(L,\nu)}, \qquad \mathcal{D}_\nu = \cup_L \mathcal{D}_{L,\nu}
$$
(where $R,S,L,\delta$ range over dyadic values $2^{\mathbb{Z}}$), we obtain
$$
\operatorname{Vol} \mathcal{D}_{L,\nu} \lesssim L^- \nu^d, \qquad \operatorname{Vol} \mathcal{D}_\nu \lesssim \nu^d.
$$
There remains to set
$$
\mathcal{D} = \cap_\nu \mathcal{D}_\nu,
$$
which has measure zero, as the exceptional set, outside of which the desired property holds.
\end{proof}

\subsection{Proof of the bound~\eqref{NT2} for quadratic degree one vertices}
\begin{proposition}
For almost all (in the Lebesgue sense) symmetric matrices $H$ close to the identity, for all $a \in \mathbb{R}$, $L\geq 1$, $\kappa>0$, and $1>\delta > L^{-100 d}$,
\be \label{bd:degree1quadra}
\# \{ \eta, \; |\eta| < L , \; ||\eta|_H^2 - a| < \delta \} \lesssim_{A,\kappa} L^{d-2} \delta +  L^{\frac{d-1}{2}+\kappa} \sqrt \delta + L^{\frac{d}{2}+\kappa}.
\ee
In particular,
$$
\# \{ \eta, \; |\eta| < L , \; ||\eta|_H^2 - a| < \delta \} \lesssim_{A,\kappa} L^{d-1} \sqrt{\delta} \qquad \mbox{if $\delta > L^{2-d+\kappa}$}.
$$
\end{proposition}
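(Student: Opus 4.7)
The approach combines a smooth Fourier decomposition of the count with an averaging argument over the matrix $H$, in the spirit of Proposition~\ref{herisson}. I would first dominate the count by the smoothed sum
\[
\tilde N_H := \sum_{\eta\in \mathbb Z^d} \chi(\eta/L)\,\psi\!\left(\frac{|\eta|_H^2-a}{\delta}\right),
\]
where $\chi\in C^\infty_c$ majorises $\mathbf{1}_{B(0,1)}$ and $\psi$ is a Schwartz majorant of $\mathbf{1}_{[-1,1]}$. Fourier inversion on $\psi$ converts this into
\[
\tilde N_H = \delta\int_{\mathbb R}\hat\psi(\delta s)\,e^{-2\pi i s a}\,W_H(s)\,ds, \qquad W_H(s):=\sum_{\eta}\chi(\eta/L)\,e^{2\pi i s|\eta|_H^2},
\]
reducing matters to pointwise bounds on the exponential sum $W_H(s)$, integrated on the effective range $|s|\lesssim \delta^{-1+\kappa}$ dictated by the Schwartz decay of $\hat\psi$.

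For $|s|\lesssim L^{-2}$ the trivial estimate $|W_H(s)|\lesssim L^d$ gives, after integration against $\delta\hat\psi(\delta s)$, the main term $L^{d-2}\delta$. For $|s|\gtrsim L^{-2}$, I would apply Poisson summation in $\eta$ and stationary phase (using the substitution $\eta = L\zeta$) to obtain
\[
W_H(s) \approx |s|^{-d/2}\sum_{|k|\lesssim |s|L}\chi\!\left(\tfrac{H^{-1}k}{2sL}\right)e^{-\pi i\, H^{-1}k\cdot k/(2s)} + (\text{rapidly decaying off-stationary tails}).
\]
For generic $H$, one expects square-root cancellation in this quadratic exponential sum, yielding $|W_H(s)|\lesssim_{H,\kappa} L^{d/2+\kappa}$. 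Splitting the $s$-integration into $|s|\lesssim L^{-2}$, $L^{-2}\lesssim |s|\lesssim 1$, and $1\lesssim |s|\lesssim \delta^{-1}$ (where the Schwartz decay of $\hat\psi(\delta s)$ activates), and combining the $|s|^{-d/2}$ decay from stationary phase with the cancellation in the Gauss-like sum, should then produce the three terms of the stated bound: the intermediate $L^{(d-1)/2+\kappa}\sqrt\delta$ arises from the interpolation on the dyadic range $1\lesssim |s|\lesssim \delta^{-1}$ where both decay mechanisms are active simultaneously.

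The main obstacle is to establish the square-root cancellation for the quadratic Gauss-like sum $\mathcal S_H(s,R):=\sum_{|k|\lesssim R} \chi(H^{-1}k/R)\,e^{-\pi i H^{-1}k\cdot k/(2s)}$ uniformly in $s$ and $R$, for almost every $H$. The route I would take is to compute the $L^2(dH)$-mean of $|\mathcal S_H|^2$ over a small neighbourhood of the identity: diagonal pairs $(k,k)$ contribute the natural $R^d$, while off-diagonal pairs yield oscillatory integrals in $H$ whose smallness hinges on the symmetric matrix $k\otimes k - k'\otimes k'$ lying away from the rank-drop locus---a generic condition on $H$. A Chebyshev--Borel--Cantelli argument over a countable mesh in the parameters (discretising $a$ at scale $\delta$ and $L,\delta,s,R$ dyadically) then extracts a Lebesgue-null exceptional set in $H$. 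The delicate combinatorial step, paralleling but more intricate than the hyperplane analysis of Proposition~\ref{herisson}, is the stratification of $H$ according to the rank-drop locus of the family $\{k\otimes k\}_{|k|\leq R}$ of symmetric matrices, since the phase is now quadratic rather than linear in the summation variable.
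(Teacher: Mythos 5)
Your major-arc treatment ($|s|\lesssim L^{-2}$, trivial bound, yielding $L^{d-2}\delta$) matches the paper's, but on the complementary range you take a genuinely different and, as written, incomplete route. The paper never attempts cancellation in the quadratic exponential sum itself: it applies Cauchy--Schwarz in the $\tau$-integral, expands $|K(L,\tau)|^2$ and symmetrizes via $\alpha=\eta-\eta'$, $\beta=\eta+\eta'$, so that the phase becomes the bilinear form $\tau\,H\alpha\cdot\beta$, \emph{linear} in $\beta$; Abel summation then reduces everything to $\prod_j\min\bigl(L,\|\tau H_j\cdot\alpha\|^{-1}\bigr)$, whose first moment in $H$ is an elementary integral already controlled by \eqref{averagedbound} from Proposition~\ref{colibri}, and the $L^{(d-1)/2}\sqrt\delta+L^{d/2}$ terms drop out directly (with the same averaged-to-pointwise Chebyshev/Borel--Cantelli conclusion). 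Your plan instead passes through Poisson summation and stationary phase and then requires square-root cancellation in the Gauss-type sum $\mathcal S_H(s,R)$ for almost every $H$, \emph{uniformly} in the continuous parameter $s$ over a range of length up to $\delta^{-1}\le L^{100d}$ and in $R\lesssim |s|L$.

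That uniform cancellation estimate is the genuine gap. A second moment in $H$ plus Chebyshev only controls a mesh of parameter values, and since $\partial_s\mathcal S_H$ is of size $R^{d+2}$ the mesh must have polynomially many points; to survive the union bound you need the off-diagonal part of $\int|\mathcal S_H|^2\,dH$ to be smaller than the diagonal $R^d$ by a \emph{fixed power} of $L$, not merely by $L^{\kappa}$. But the naive oscillatory gain in $H$ for a pair $(k,k')$ is of order $\bigl(\tfrac{L}{R}\,|k\otimes k-k'\otimes k'|\bigr)^{-1}$ per exploited direction, and with a single direction the off-diagonal total is $\sim R^{2d-1}/L$, which beats $R^d$ only for $R\ll L^{1/(d-1)}$ while $R$ can reach $L^{100d}$; extracting more directions forces exactly the stratification of near-degenerate configurations ($k'=\pm k$, collinear pairs, low ``rank-drop'' differences $k\otimes k-k'\otimes k'$) that you defer as ``the delicate combinatorial step.'' This is not a routine adaptation of the hyperplane analysis in Proposition~\ref{herisson} (where the phase is linear in the lattice variable); it is essentially the hard quantitative-equidistribution problem for generic quadratic forms that the paper explicitly avoids. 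As it stands, the argument is a plausible program rather than a proof; to repair it along the paper's lines, replace the stationary-phase/Gauss-sum step by Cauchy--Schwarz in $s$ followed by Weyl differencing, which linearizes the phase and lets the already-available first-moment bound \eqref{averagedbound} in $H$ do all the work.
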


\begin{proof} Let $\chi$ be a one-dimensionals smooth cutoff function with compact support; slighlty abusing notations, we also denote $\chi$ its tensorization
$$
\chi(x_1,\dots,x_d) = \chi(x_1) \dots \chi(x_d).
$$
Applying the circle method, we want to estimate
$$
\delta \int_{-\infty}^{\infty} K(L,\tau) e^{-2\pi i a \tau}\widehat \chi \left( \delta \tau  \right)\,d\tau
$$
where
$$
K(L,\tau) = \sum \chi \left( \frac{\eta}{L} \right) e^{2\pi i \tau |\eta|^2_H}.
$$
We now split
\begin{align*}
 \delta \int_{-\infty}^{\infty} K(L,\tau) e^{-2\pi i a  \tau} \widehat \chi \left( \delta \tau  \right)\,d\tau \leq \underbrace{\delta  \left|\sum_{\eta}\int_{|\tau|\leq \frac{1}{L}} \dots \,d\tau \right|}_{I}+\underbrace{\delta \left| \sum_{\eta}\int_{|\tau|\geq \frac{1}{L}} \dots \,d\tau \right|}_{II}.
\end{align*}
We can bound, by Poisson summation and the stationary phase lemma,
$$
|K(L,\tau)| \lesssim \left( \frac{L}{\tau L + \frac{1}{L}} \right)^{d/2}.
$$
As a consequence,
$$
I \lesssim \delta \int_{-1/L}^{1/L} \left( \frac{L}{\tau L + \frac{1}{L}} \right)^{d/2} \,d\tau \lesssim L^{d-2} \delta.
$$
Turning to $II$, we bound it by the Cauchy-Schwarz inequality
\begin{align*}
II & \lesssim \sqrt{\delta} \left( \int_{ |\tau| > \frac{1}{L}} \left| \sum_{\eta} \chi \left( \frac{\eta}{L} \right) e^{2\pi i (|\eta|^2_H - a) \tau} \right|^2 |\widehat{\chi}(\delta \tau)| \,d\tau \right)^{1/2}
\end{align*}
We now use that $ \left| \sum_{\eta} \chi \left( \frac{\eta}{L} \right) e^{2\pi i (|\eta|^2_H - a) \tau} \right|^2 = \sum_{\eta,\eta'} \chi \left( \frac{\eta}{L} \right)\chi \left( \frac{\eta'}{L} \right) e^{2\pi i (|\eta|_H^2 - |\eta'|_H^2)}$. Changing the summation variables to $\alpha = \eta-\eta'$ and $\beta = \eta + \eta'$, and restricting implicitly the summation over $\alpha$ and $\beta$ to all $\alpha,\beta$ which have the same parity, we obtain that
\begin{align*}
II \leq \sqrt{\delta} \left( \int_{  |\tau| > \frac{1}{L}}  \sum_{\alpha, \beta} \chi\left(\frac {\alpha + \beta}{2L} \right) \chi\left(\frac {\alpha - \beta}{2L} \right)e^{2\pi i (H \alpha \cdot \beta) \tau}| \widehat{\chi}(\delta \tau)| \, d\tau \right)^{1/2}.
\end{align*}
Next, we average over $H$ and use once again the Cauchy-Schwarz inequality:
\begin{align*}
\int II\,dH & \lesssim \sqrt \delta \left( \int  \int_{  |\tau| > \frac{1}{L}}  \sum_{\alpha, \beta}\chi\left(\frac {\alpha + \beta}{2L} \right) \chi\left(\frac {\alpha - \beta}{2L} \right) e^{2\pi i (H \alpha \cdot \beta) \tau}| \widehat{\chi}(\delta \tau)|\, d\tau \,dH \right)^{1/2} \\
& \leq \sqrt \delta \left( \sum_\alpha \int \int_{  |\tau| > \frac{1}{L}}  \left| \sum_{\beta} \chi\left(\frac {\alpha + \beta}{2L} \right) \chi\left(\frac {\alpha - \beta}{2L} \right)e^{2\pi i (H \alpha \cdot \beta) \tau} \right| | \widehat{\chi}(\delta \tau)| \,d\tau \,dH \right)^{1/2}.
\end{align*}
By Abel summation, this is
\begin{align*}
\dots & \lesssim \sqrt \delta \left( \sum_{|\alpha| \lesssim L} \int \int_{ |\tau| > \frac{1}{L}} \prod_{j=1}^d \min \left( L , \frac{1}{\| \tau H \cdot \alpha_j \|} \right)| \widehat{\chi}(\delta \tau)| \,d\tau \, dH \right)^{1/2},
\end{align*}
and inequality~\eqref{averagedbound} gives
\begin{align*}
\dots & \leq \sqrt \delta \left( (\ln L)^d \sum_\alpha \left[ \int_{ \frac{1}{L} < |\tau| < \frac{1}{|\alpha|}} \frac{1}{\tau^d |\alpha|^d} \,d\tau + \int_{ |\tau| > \frac{1}{|\alpha|}} | \widehat{\chi}(\delta \tau)| \,d\tau \right] \right)^{1/2} \\
& \lesssim  L^{\frac{d-1}{2}} (\ln L)^{\frac{d+1}{2}} \sqrt \delta + L^{d/2} (\ln L)^{d/2}.
\end{align*}
The end of the proof is now the same as in Proposition~\eqref{colibri}, and will be omitted.
\end{proof}

\subsection{Proof of the bound~\eqref{NT3} for degree two vertices} 

\begin{proposition} \label{colibri}
For almost all (in the Lebesgue sense) symmetric matrices $H$ close to the identity, for all $a \in \mathbb{R}$, $L\geq 1$, $\kappa>0$, and $1>\delta > L^{-100 d}$,
\be \label{bd:degree2}
\# \left\{\eta,\xi \in \mathbb Z^d \mbox{ with } |\eta|,|\xi|\leq L, \quad | H\xi \cdot \eta- a|\leq \delta \right\} \lesssim_{H,\kappa} L^{2d-2} \delta + L^{d+\kappa}.
\ee
In particular, under the above conditions,
$$
\# \left\{\eta,\xi \in \mathbb Z^d \mbox{ with } |\eta|,|\xi|\leq L, \quad | H\xi \cdot \eta- a|\leq \delta \right\} \lesssim_{A,\kappa} L^{2d-2} \delta \qquad \mbox{if $\delta > L^{2-d}$}.
$$
\end{proposition}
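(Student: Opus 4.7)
The proof will follow the circle-method blueprint of Proposition~\ref{colibri}, adapted to the bilinear phase $H\xi\cdot\eta$. With a tensorized Schwartz cutoff $\chi \geq \mathbf{1}_{[-1,1]^d}$ and a Schwartz majorant $\psi \geq \mathbf{1}_{[-1,1]}$, one writes
$$
N \leq \sum_{\eta,\xi} \chi(\eta/L) \chi(\xi/L)\, \psi\!\left( \frac{H\xi\cdot\eta - a}{\delta} \right) = \delta \int \widehat\psi(\delta\tau)\, e^{-2\pi i a \tau} S(L,\tau)\,d\tau,
$$
where $S(L,\tau) = \sum_{\eta,\xi} \chi(\eta/L)\chi(\xi/L) e^{2\pi i \tau H\xi\cdot\eta}$, and splits at $|\tau|=1/L$ as $N \leq I+II$.

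For the low-frequency part $I$, joint Poisson summation in $\eta$ and $\xi$ kills all nonzero Poisson modes whenever $|\tau|\leq 1/L$ and leaves $S(L,\tau) \approx L^{2d} F(\tau L^2)$ with $F(s) = \iint \chi(u)\chi(v) e^{2\pi i s Hu\cdot v}\,du\,dv$. Since $\int \chi(v) e^{2\pi i s Hu\cdot v}\,dv = \widehat\chi(-sHu)$ and $\widehat\chi$ is Schwartz, a direct radial integration gives $|F(s)| \lesssim \min(1, |s|^{-d})$, yielding
$$
I \lesssim \delta \int_{|\tau|\leq 1/L^2} L^{2d}\,d\tau + \delta \int_{1/L^2\leq |\tau|\leq 1/L} |\tau|^{-d}\,d\tau \lesssim L^{2d-2}\delta.
$$

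For the high-frequency part $II$, Abel summation in each coordinate of $\eta$ gives the pointwise bound
$$
|S(L,\tau)| \leq \Phi(\tau) := \sum_\xi \chi(\xi/L) \prod_{j=1}^d \min\!\left(L, \frac{1}{\|\tau (H\xi)_j\|}\right),
$$
and I would average over $H$ ranging in a small neighborhood of the identity in $\mathrm{Sym}_d$. For each $\xi\neq 0$, after an orthogonal change of basis aligning $\xi$ with $e_1$, the phase depends only on the first column $(H_{j1})_{j=1}^d$ of $H$, since $(H\xi)_j = |\xi| H_{j1}$, while the remaining $d(d-1)/2$ components of $H$ are free. A direct one-variable computation (via the substitution $y = \tau|\xi| H_{j1}$) then yields, for $|\tau|>1/L$,
$$
\int_H \prod_{j=1}^d \min\!\left(L, \frac{1}{\|\tau(H\xi)_j\|}\right) dH \lesssim \begin{cases} (\log L)^d & \text{if } |\tau||\xi|\geq 1, \\ (\log L)^d /(|\tau||\xi|)^d & \text{if } 1/L \leq |\tau||\xi|<1, \end{cases}
$$
plus the trivial $L^d$ contribution at $\xi=0$. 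Summing over $|\xi|\leq L$ produces $\int_H \Phi(\tau)\,dH \lesssim L^d(\log L)^d + (\log L)^{d+1}/|\tau|^d$; integrating in $\tau$ against $|\widehat\psi(\delta\tau)|$ and using $\int |\widehat\psi(\delta\tau)|\,d\tau \lesssim 1/\delta$ then yields $\int_H |II|\,dH \lesssim L^{d+\kappa}$.

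The proof is completed by a Markov/Borel--Cantelli argument over dyadic values of $L$ and $\delta$ together with a $\delta$-net in $a\in [-CL^2, CL^2]$ of polynomial cardinality, converting the averaged bound into the claimed almost-sure pointwise bound and producing the null exceptional set. The main obstacle is the $H$-averaging of $\Phi$: unlike the quadratic phase of Proposition~\ref{colibri}, here the phase $\tau\, H\xi\cdot\eta = \sum_j \tau (H\xi)_j \eta_j$ factors across coordinates, which enables coordinate-wise Abel summation but forces the parametrization of $H$ in terms of the column $H\xi$ and a delicate analysis of the resulting one-dimensional Diophantine integrals in the two regimes $|\tau||\xi|\gtrless 1$. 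It is precisely this step that upgrades the naive $L^d$-per-$\xi$ bound to the averaged $(\log L)^d$-per-$\xi$ bound and produces the desired $L^{d+\kappa}$ error rather than an unacceptable polynomial loss.
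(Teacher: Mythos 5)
Your proof is correct and follows essentially the same route as the paper: a circle-method split at $|\tau|\sim 1/L$, coordinate-wise Dirichlet-kernel (Abel summation) bounds on the $\eta$-sum, averaging over $H$ through the column $H\xi$ (the paper's rigorous version of your rotation remark is to integrate over the column of $H$ attached to the largest coordinate of $\xi$, since the coordinates $(H\xi)_j$ are tied to the standard lattice basis and cannot literally be rotated), and a Chebyshev/Borel--Cantelli argument over dyadic values of $L$ and $\delta$. The only cosmetic difference is your $\delta$-net in $a$, which is unnecessary: after taking absolute values on the minor arcs the bound is already uniform in $a$, exactly as exploited in the paper.
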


\begin{proof}
\emph{Step 0: Preliminary notations} 
We denote, for $\delta>0$, $L \leq 1$, $a \in \mathbb R$, $H\in \mathbb R^{d\times d}$ a positive definite symmetric matrix 
$$
N(H,a,L,\delta):= \# \left\{\eta,\xi \in \mathbb Z^d \mbox{ with } |\eta|,|\xi|\leq L, \quad | H\xi \cdot \eta- a|\leq \delta \right\}.
$$
We write a symmetric matrix close to the identity as:
$$
H=\begin{pmatrix} 1+h_{1,1} & h_{2,1}&h_{3,1} & ... & h_{d,1} \\ h_{2,1} & 1+h_{2,2} &... &...&  h_{d,2} \\ ... &... &...&...&... \\ h_{d,1} & h_{d,2} &...& ...& 1+h_{d,d} \end{pmatrix},
$$
and denote by $\mathcal H$ the following fixed neighbourhood of the identity: $|a_{i,j}|\leq r $ for $1\leq j\leq i \leq d$ for some fixed $0<r\ll 1$. We equip $\mathcal A$ with the measure $dH=\prod_{1\leq j \leq i \leq d} dh_{i,j}$.\\

\bigskip
\noindent \emph{Step 1: major and minor arcs}. Fix $L\geq 1$. We claim that:
\be \label{bd:degree2inter}
N(H,a,L,\delta)dH \lesssim L^{2d-2}\delta + P(H,L,\delta),
\ee
with
$$
\int_{\mathcal H} P(H,L,\delta) \,dH \lesssim  L^{d} \langle \ln L \rangle^{d},
$$
and now prove this claim. Let $\chi$ be a Schwartz class function with $\chi(x)=1$ for $|x|\leq 1$, and with $\hat \chi\geq 0$. We apply the circle method and isolate the oscillating and non-oscillating part:
\begin{align*}
N&\leq \sum_{|\eta|,|\xi|\leq L} \chi \left(\frac{H\xi \cdot \eta-a}{\delta} \right)= \delta  \sum_{|\eta|,|\xi|\leq L}\int_{-\infty}^{\infty} e^{2\pi i (H\xi \cdot \eta- a) \tau}\widehat \chi \left( \delta \tau  \right)\,d\tau \\
& \leq \underbrace{\delta  \int_{|\tau|\leq \frac{1}{L}}  \left| \sum_{|\xi|, |\eta|\leq L}[...]\right|\,d\tau }_{I}+\underbrace{\delta \sum_{|\xi|\leq L}\int_{|\tau|\geq \frac{1}{L|\xi|}}  \left| \sum_{|\eta|\leq L}[...]\right|\,d\tau}_{II}.
\end{align*}
The integrand of $I$ can be bounded by
$$
\left| \sum_{|\xi|, |\eta|\leq L}e^{2\pi i \tau (H\xi \cdot \eta)} \right|  \lesssim \left( \frac{L}{\tau L + \frac{1}{L}} \right)^d,
$$
so that
$$
I \lesssim L^{2d-2} \delta.
$$

\bigskip

\noindent \emph{Step 2: bounding the minor arcs in average} Applying the identity
$$
\left|\sum_{-n}^n e^{i2\pi r n}\right|= \left|\frac{e^{i2\pi r (n+1)}-e^{-i2\pi r n}}{e^{i2\pi r}-1}\right|\lesssim \frac{1}{\| r\|},
$$
where $\| r\|$ stands for the distance to the nearest integer, we get
$$
II \lesssim  \delta \sum_{|\xi|\leq L}\int_{|\tau|\geq \frac{1}{L}} \hat \chi (\delta \tau ) \prod_{j=1}^d \min \left(L,\frac{1}{\| \tau H_j \cdot \xi\|}\right)d\tau ,
$$
where $H_j$ is the $j$-th line of the matrix $H$. We now average the integrand in II with respect to the matrix $H$. We assume without loss of generality that $\xi$ satisfies $\xi_1\geq |\xi_i|$ for $1\leq i \leq d$, so that $\xi_1\gtrsim |\xi|$. Noting that for $1<j\leq d$, $H_j \cdot \xi$ depends on $h_{j,1}$ but is independent of $h_{i,1}$ for $i\neq j$, we integrate first over the first column of $H$ from top to bottom, followed by an integration over the remaining variables (irrelevant, see below):
\bee
&&\int_{\mathcal H}  \prod_{j=1}^d \min \left(L,\frac{1}{\| \tau H_j \cdot \xi\|}\right)dH\\
&& \quad \quad \leq \int_{[-r,r]^{\frac{(d-1)d}{2}}} \prod_{1<j\leq i\leq d}dh_{i,j} \int_{-r}^r \min \left(L,\frac{1}{\| \tau H_d \cdot \xi\|}\right)dh_{d,1} \times ... \\
&&  \quad \quad \quad \quad \quad \quad\quad \quad \quad \quad \quad \quad \quad \quad \quad \quad \quad \quad ...\times \int_{-r}^r  \min \left(L,\frac{1}{\| \tau H_1 \cdot \xi\|}\right)dh_{1,1}.
\eee
We evaluate the last integral first. First, note that $\tau H_1 \cdot \xi=\tau h_{1,1}\xi_1+\tau \xi_1+\sum_{j=2}^d\tau h_{1,j}\xi_j=\lambda h_{1,1}+C_1$ where $\lambda =\tau \xi_1$ and $C_1=\tau \xi_1+\sum_{j=2}^d\tau h_{1,j}\xi_j$. Second, note that $\| \cdot \|$ is an even periodic function with period one. We then evaluate the integral in two regimes. If $\lambda r \geq 1$, then this function has over $[-r,r]$ more than one period but less than $C\lambda r$ so that
$$
\int_{-r}^r  \min \left(L,\frac{1}{\| \lambda h_{1,1} +C_1\|}\right)dh_{1,1} \lesssim  \lambda \int_0^{\frac{1}{2\lambda}}  \min \left(L,\frac{1}{|\lambda h_{1,1} |}\right) \lesssim  \lambda \int_{0}^{\frac{1}{L \lambda}}Ldh_{1,1} + \lambda \int_{\frac{1}{L\lambda}}^{\frac 1 \lambda} \frac{dh_{1,1}}{\lambda h_{1,1}} \lesssim \langle \ln L \rangle.
$$
If $\lambda r \leq 1$ however, the function has no more than two periods over $[-r,r]$ and we estimate:
\begin{align*}
& \int_{-r}^r  \min \left(L,\frac{1}{\| \lambda h_{1,1} +C_1 \|}\right)dh_{1,1} \lesssim  \int_0^{r}  \min \left(L,\frac{1}{|\lambda h_{1,1} |}\right) \,dh_{1,1}\\
 & \qquad \qquad \qquad \lesssim \int_{0}^{\frac 1 {L \lambda}}L \,dh_{1,1}+ \int_{\frac{1}{L \lambda}}^{r} \frac{dh_{1,1}}{\lambda h_{1,1}} \lesssim \frac{\langle \ln L \rangle +\langle \ln \lambda \rangle}{\lambda} \leq \frac{\langle \ln L \rangle }{|\tau||\xi|}.
\end{align*}
The other remaining integrals, over the variables $h_{1,2}, \dots,h_{1,d}$, can be estimated in the very same way. After, the integration over the remaining variables produces an irrelevant factor $1$ contribution. Hence, as a result we can bound:
\begin{equation}
\label{averagedbound}
\int_{\mathcal H}  \prod_{j=1}^d \min \left(L , \frac{1}{\| \tau H_j \cdot \xi\|}\right)dH\lesssim \left\{ \begin{array}{l l l} \displaystyle \frac{\langle \ln L \rangle^d}{|\tau|^d |\xi|^d} && \mbox{if } \frac{1}{L|\xi|}\leq \tau \leq \frac{1}{|\xi|},\\ \langle \ln L \rangle^d  && \mbox{if } \frac{1}{|\xi|}\leq \tau,  \end{array} \right.
\end{equation}
which finally leads us to the upper bound:
\begin{align*}
\nonumber \int_{\mathcal H}II dH  &\lesssim \delta  \sum_{|\xi|\leq L} \int_{\frac{1}{L}\leq \tau \leq \frac{1}{|\xi|}} \hat \chi (\delta \tau ) \frac{\langle \ln L \rangle^d}{|\tau|^d |\xi|^d}d\tau +  \delta \sum_{|\xi|\leq L} \int_{\frac{1}{|\xi|}\leq \tau} \hat \chi (\delta \tau ) \langle \ln L \rangle^d \, d\tau \\
\label{bd:degre2inter3}&\lesssim  \delta L^{d-1} \langle \ln L \rangle^{d+1} + L^d \langle \ln L\rangle^d \\
& \lesssim  L^d \langle \ln L\rangle^d.
\end{align*}
After setting $P(H,L,\delta) = II$, this leads to the estimate \fref{bd:degree2inter}.

\bigskip \noindent
\emph{Step 4: From averaged to pointwise}. For any $i,j\in \mathbb N$, with $1\leq j \leq di$, applying the Bienaym\'e-Chebychev inequality to the bound \fref{bd:degree2inter} with dyadic decomposition $L=2^i$ and $\delta=2^{-j}$ gives that
$$
P(H,2^i,2^{-j})\lesssim 2^{id} \langle i \rangle^{d+3}.
$$
outside a measurable set $\mathcal H_{\nu,i,j}$ of measure $C\nu i^{-3}$. The set $\mathcal H_\nu =\cap_{i,j} \mathcal H_{\nu,i,j}$ then has measure
$$
|\mathcal H_\nu|\lesssim \sum_{i\in \mathbb N} \sum_{j=1}^{di} \nu i^{-3}\lesssim \nu.
$$
Hence the set $\mathcal H_{\infty}=\cap_{n\in \mathbb N}\mathcal H_{1/n}$ has measure 0, and on its complement the desired bound \fref{bd:degree2} holds true by the very definition of $\mathcal H_{\nu,i,j}$.
\end{proof}

\section{Number theory results for small times, and large times for Laplacian}

The first results of this section aim at finding weighted resolvent bounds for small times $t\leq 1$ and for any dispersion relation. We start by a technical result for weighted sums on particular sets. Consider for $|u|=1$, $p \in \mathbb{R}$, $l>0$ and $L\geq \max(1,l)$ the strip of size $(l,L)$:
$$
S_{u,p,l,L}=\left\{\xi \in \mathbb Z^d, \qquad |\xi|\leq L, \ |\xi \cdot u - p|\leq l\right\},
$$
as well as the annulus of size $(r,R)$ for $r>0$ and $R\geq (\max{1,r})$ and $H$ symmetric close to $\Id$:
$$
A_{r,R}=\left\{\xi \in \mathbb Z^d, \qquad R-r\leq |\xi|_H\leq R \right\}.
$$
We shall make the slight abuse of notation in the sequel that a strip of size $(l,L)$ will refer to a strip of size $(\tilde l,\tilde L)$ with $\tilde l\approx l$ and $\tilde L\approx L$ with universal constants in these inequalities.

\begin{lemma}[Weighted sums on particular sets]

Take $\xi_1,...,\xi_a\in \mathbb R^d$ for $a=0,...,d-1$. Then:
\be \label{bd:weightedstrip}
\sum_{S_{u,p,l,L}} \frac{1}{\langle \xi-\xi_1\rangle}...\frac{1}{\langle \xi-\xi_a\rangle}  \lesssim  \left\{\begin{array}{l l} L^{d-1-a} \max(1,\ell) \quad \mbox{for } a<d-1,\\  \langle \log L \rangle \max(1,\ell) \quad \mbox{for } a=d-1,\end{array} \right.
\ee
\be \label{bd:weightedannulus}
\sum_{A_{r,R}}  \frac{1}{\langle \xi-\xi_1\rangle}...\frac{1}{\langle \xi-\xi_a\rangle} \lesssim  \left\{\begin{array}{l l} R^{d-1-a} \max(1,r) \quad \mbox{for } a<d-1,\\  \langle \log R \rangle\max (1,r) \quad \mbox{for } a=d-1\end{array} \right.
\ee

\end{lemma}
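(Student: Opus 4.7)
I will prove both \eqref{bd:weightedstrip} and \eqref{bd:weightedannulus} simultaneously by first reducing to a single-weight sum. Applying the arithmetic--geometric inequality to $y_i = \langle \xi - \xi_i\rangle^{-1}$, one has $\prod_{i=1}^a y_i \leq a^{-1} \sum_{i=1}^a y_i^a$, so
\[
 \sum_{\xi \in E} \prod_{i=1}^a \frac{1}{\langle \xi - \xi_i\rangle} \;\leq\; \max_{1 \leq i \leq a} \sum_{\xi \in E} \frac{1}{\langle \xi - \xi_i\rangle^a}.
\]
It thus suffices to show, for any reference point $\xi_0 \in \mathbb R^d$, that
\[
 \sum_{\xi \in E} \frac{1}{\langle \xi - \xi_0\rangle^a} \;\lesssim\; M_1^{d-1-a}\, M_2 \qquad (a < d-1),
\]
and $\lesssim M_2 \langle \log M_1 \rangle$ when $a = d-1$, where $(E, M_1, M_2) = (S_{u,p,l,L}, L, \max(1,l))$ for the strip and $(A_{r,R}, R, \max(1,r))$ for the annulus.

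\textbf{Dyadic decomposition and lattice-point count.} Next I split the sum into dyadic shells $\{\langle \xi - \xi_0\rangle \sim 2^j\}$, $j \geq 0$, which gives
\[
 \sum_{\xi \in E} \frac{1}{\langle \xi - \xi_0\rangle^a} \;\lesssim\; \sum_{j \geq 0} 2^{-ja}\, \#\bigl(E \cap B(\xi_0, 2^j)\bigr),
\]
and the key geometric lemma I need is
\[
 \#\bigl(E \cap B(\xi_0, M)\bigr) \;\lesssim\; \min(M, M_1)^{d-1}\, \min(M, M_2), \qquad M \geq 1.
\]
For $E = S_{u,p,l,L}$ this is a straightforward volume comparison: the slab is contained in the product of a $(d-1)$-dimensional ball of radius $\lesssim L$ in the hyperplane $u^\perp$ and an interval of length $\lesssim \max(1,l)$ in the direction $u$, so its intersection with $B(\xi_0, M)$ has Euclidean volume at most $\min(M,L)^{d-1}\min(M,\max(1,l))$, which controls the lattice-point count. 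For $E = A_{r,R}$ with $r \geq 1$ I work in spherical coordinates centered at $0$, using the uniform equivalence of $|\cdot|$ and $|\cdot|_H$ near $H=\Id$: the ball $B(\xi_0,M)$ lies in an angular cone of solid angle $\lesssim (M/R)^{d-1}$ and in a radial interval of length $\lesssim M$, and intersecting with the shell $\{R - r \leq |\xi|_H \leq R\}$ yields a region of radial extent $\lesssim \min(M,r)$ and angular extent $\lesssim M/R$, hence volume $\lesssim M^{d-1}\min(M,r)$ for $M \leq R$ and $\lesssim R^{d-1} r$ otherwise. The case $r < 1$ is handled by the inclusion $A_{r,R} \subset A_{1,R}$, which yields the stated bound with $\max(1,r) = 1$.

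\textbf{Summing the series.} It remains to evaluate $\sum_{j\geq 0} 2^{-ja}\min(2^j,M_1)^{d-1}\min(2^j,M_2)$, which splits into the three regimes $2^j \leq M_2$, $M_2 \leq 2^j \leq M_1$, and $2^j \geq M_1$. The three contributions are, respectively, $O(M_2^{d-a})$, $O\bigl(M_2 \sum_{M_2 \leq 2^j \leq M_1} 2^{j(d-1-a)}\bigr)$, and $O(M_1^{d-1-a} M_2)$. For $a < d-1$ these sum to $O(M_1^{d-1-a} M_2)$ (using $M_2 \leq M_1$), which is the polynomial bound claimed. For $a = d-1$ the middle regime is a borderline geometric sum that produces a factor $\langle \log(M_1/M_2)\rangle$, giving the overall bound $M_2 \langle \log M_1\rangle$ as claimed. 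The main obstacle in carrying out this plan is the geometric lattice-point bound for the annulus: unlike the flat slab, the intersection of a ball with a curved shell depends non-trivially on both the curvature of the $H$-sphere and the position of $\xi_0$, and one must use the proximity of $H$ to the identity to ensure that the spherical-coordinate estimate is uniform and that the constants do not degenerate.
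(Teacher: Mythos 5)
Your proof is correct, and it is organized differently from the paper's. The paper keeps the product of weights, reduces to the worst case where the reference points $\xi_1,\dots,\xi_a$ lie in the strip, and runs a joint dyadic decomposition in all $a$ distances at once, using only the minimal dyadic scale $2^{\min j_i}$ together with the dichotomy that the corresponding piece is contained either in a ball of radius $2^{\min j_i}$ (when this is $\lesssim \max(1,\ell)$) or in a strip of size $(\ell,2^{\min j_i})$; the annulus case is then declared analogous and omitted. You instead collapse the product to a single weight via AM--GM, $\prod_i\langle\xi-\xi_i\rangle^{-1}\leq\max_i\sum_{\xi}\langle\xi-\xi_i\rangle^{-a}$ after summation, and then run a single dyadic decomposition against the unified two-scale count $\#\bigl(E\cap B(\xi_0,M)\bigr)\lesssim\min(M,M_1)^{d-1}\min(M,M_2)$; your three summation regimes are exactly the paper's ball/strip dichotomy plus the tail, and the borderline $a=d-1$ logarithm appears transparently as the critical geometric series. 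What your route buys: it is uniform in the positions of the $\xi_i$ (no "worst case" reduction needed), modular, and it actually spells out the annulus geometry (cap of angular extent $M/R$ times radial extent $\min(M,r)$, with the curvature handled because the thickness is normalized to be at least $1$), which the paper leaves implicit; what the paper's version buys is that it avoids AM--GM and directly matches the multi-weight bookkeeping used elsewhere in its vertex estimates. Two small loose ends in your write-up, neither fatal: for $a=0$ your dyadic tail $\sum_{2^j\geq M_1}2^{-ja}$ does not converge, but that case is just the plain lattice count of the strip or annulus (or truncate the dyadic sum once the ball swallows $E$); and the step "volume controls the lattice-point count" should be phrased via unit neighborhoods or containment in boxes with all side lengths $\geq 1$, which is exactly what your replacement of $\ell,r$ by $\max(1,\ell),\max(1,r)$ (and $A_{r,R}\subset A_{1,R}$) guarantees.
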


\begin{remark}

The bounds of this lemma are responsible for the time interval limitation $[\ep,\ep^{1-\frac{1}{2d+1}}]$ in Proposition \ref{pr:tleq1}. Improving the time interval further than $\ep^{1-\frac{1}{2d+1}}$ would require to improve the bounds \fref{bd:weightedstrip} and \fref{bd:weightedannulus} in the case where the strip and annuli have width $\ll 1$ and with weights involving the visible part of the lattice points $\xi-\xi_k$, which would require a much more delicate number theoretical analysis.

\end{remark}

\begin{proof}

Optimising the distances, the worst case happen when $\xi_1,...,\xi_a\in S_{u,p,l,L}$. In that case, $\max(\langle \xi-\xi_1\rangle,...,\langle\xi-\xi_a\rangle)\lesssim L$. We perform a dyadic decomposition for $j_1,...,j_k$ with $1\leq 2^{j_i}\lesssim L$ for $i=1,...,a$ and define:
$$
E_{j_1,...,j_k}:=\left\{\xi \in S_{u,p,l,L}, \ 2^{j_1}\leq \langle \xi-\xi_1\rangle <2^{j_1+1}, \ ..., \ 2^{j_a}\leq \langle \xi-\xi_a\rangle <2^{j_a+1}\right\}.
$$
If $\min (2^{j_1},...,2^{j_a})\lesssim \max(1,\ell)$ then $E_{j_1,...,j_k}$ is contained in a ball of size $\lesssim 2^{\min(j_1,...,j_a)}$ so that:
\bee
\sum_{j_1,...,j_a, \ \min (2^{j_1},...,2^{j_a})\lesssim \max(1,\ell)} \frac{1}{\langle \xi-\xi_1\rangle}...\frac{1}{\langle \xi-\xi_a\rangle}&\lesssim &\sum_{j_1,...,j_a, \ \min (2^{j_1},...,2^{j_a})\lesssim \max(1,\ell) }2^{-j_1-...-j_a+d\min(j_1,...,j_a)}\\
&\lesssim & (\max{1, \ell})^{d-a}
\eee
If $\min (2^{j_1},...,2^{j_a})\gtrsim \max(1,\ell)$ then $E_{j_1,...,j_k}$ is contained in a strip of size $(\ell,2^{\min(j_1,...,j_a)})$ so that:
\bee
&&\sum_{j_1,...,j_a, \ \min (2^{j_1},...,2^{j_a})\gtrsim \max(1,\ell)} \frac{1}{\langle \xi-\xi_1\rangle}...\frac{1}{\langle \xi-\xi_a\rangle}\\
&\lesssim &\sum_{j_1,...,j_a, \ \min (2^{j_1},...,2^{j_a})\gtrsim \max(1,\ell) }2^{-j_1-...-j_a+(d-1)\min(j_1,...,j_a)} \max(1,\ell)\\
&\lesssim & \left\{ \begin{array}{l l} (\max{1, \ell}) L^{d-1-a} \max(1,\ell) \qquad \mbox{if }a<d-1,\\ \langle \log L \rangle (\max{1, \ell})\qquad \mbox{if }a=d-1.\end{array} \right.
\eee
The two above estimates imply \fref{bd:weightedstrip}. The proof of the other estimate \fref{bd:weightedannulus} is analogous and we omit it. Notice that since the estimates only needs to be proved for $r\geq 1$ there is no difference between the estimates for $H=\Id$ and other symmetric matrices close to $\Id$.

\end{proof}

We then employ the weighted estimates of the previous Lemma to find weighted resolvent bounds.

\begin{lemma}[Weighted estimates for sums at vertices] \label{lem:weightedvertices}

Let $\ep^{1-}\leq t\leq 1$, $|\zeta|,|\eta|\lesssim \epsilon^{-1}$, $\alpha \in \mathbb R$, and $H$ a symmetric matrix close to the identity. Then for any $a=0,...,d-1$ and $\xi_1,...,\xi_a\in \mathbb Z^d$:
\be \label{bd:degre1lineaire}
\sum_{|\xi|\lesssim \ep^{-1}}  \frac{1}{\langle \xi-\xi_1\rangle}... \frac{1}{\langle \xi-\xi_a\rangle} \frac{|1-\delta(\zeta)-\delta(\xi+\eta)|}{|2H\zeta \cdot \xi+\alpha+\frac{i}{t}|} \lesssim \langle \log \ep \rangle^2 \ep^{1-d+a}\max \left(\frac{1}{\langle \zeta \rangle},t \right)
\ee
\be \label{bd:degre1quadratic}
\sum_{|\xi|\lesssim \ep^{-1}}   \frac{1}{\langle \xi-\xi_1\rangle}... \frac{1}{\langle \xi-\xi_a\rangle}  \frac{1}{\left|2\left|\xi+\frac{\zeta}{2}\right|^2_H+\alpha+\frac{i}{t}\right|} \lesssim \left\{ \begin{array}{l l}   \langle \log \ep \rangle^2 t \ep^{1-d+a}\quad \mbox{for }0\leq a\leq d-2,\\  \langle \log \ep \rangle^2 \sqrt{t} \quad \mbox{for }a=d-1. \end{array} \right.
\ee
and for $\sigma,\sigma'\in \{-1,1\}$ with $(\sigma,\sigma')\neq (1,1)$ and $(\iota_{i},\iota_i')\in \{-1,0,1\}\times \{-1,0,1\}\backslash\{0,0\}$ for $i=1,...,a$:
\bea 
\nonumber &&\sum_{|\xi|,|\xi'|\lesssim \ep^{-1}}   \frac{1}{\langle \iota_1 \xi+\iota_1'\xi'-\xi_1\rangle}... \frac{1}{\langle \iota_a\xi+\iota_a'\xi'-\xi_a\rangle}  \frac{1}{||\xi|^2_H+\sigma'|\xi'|^2_H+\sigma |\xi+\xi'-\tilde \xi|^2_H+\alpha+\frac{i}{t}|} \\
\label{bd:degre2} &\lesssim & \left\{ \begin{array}{l l} \ep^{2-2d-\kappa}\quad \mbox{for }a=0,\\  \langle \log \epsilon \rangle^2 t \ep^{1-2d+a} \quad \mbox{for }1\leq a\leq d-1. \end{array} \right.
\eea

\end{lemma}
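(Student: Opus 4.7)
The plan is to treat the three bounds in turn, each by a dyadic decomposition of the modulus of the denominator and an application of the lattice-count estimates \fref{bd:weightedstrip} and \fref{bd:weightedannulus}. Throughout the write-up, set $L=\lfloor \ep^{-1}\rfloor$, and let $2^j$ range dyadically over $[1/t, \ep^{-2}]$ (values outside that range contribute negligibly).

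For \fref{bd:degre1lineaire}, if $\zeta=0$ the Wick factor forces $\xi=-\eta$, producing a single term bounded by $t\leq 1$, compatible with the claim. For $\zeta\neq 0$, the set $\{\xi:|\xi|\leq L,\ |2H\zeta\cdot\xi+\alpha|\in [2^{j-1},2^j]\}$ is a strip perpendicular to $H\zeta$ of length $\sim L$ and width $\ell_j\sim 2^j/|\zeta|$. Apply \fref{bd:weightedstrip} to the weighted count on this strip, getting $L^{d-1-a}\max(1,\ell_j)$ (with a $\langle\log L\rangle$ in place of $L^{d-1-a}$ when $a=d-1$). Summing $2^{-j}$ times this estimate: the portion with $2^j\leq |\zeta|$ (i.e.\ $\ell_j\leq 1$) contributes $L^{d-1-a}\max(t,1/|\zeta|)$, while the portion $2^j>|\zeta|$ yields $\langle\log \ep\rangle L^{d-1-a}/|\zeta|$. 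Combined, this gives the claimed $\langle\log \ep\rangle^2 \ep^{1-d+a}\max(1/\langle \zeta\rangle,t)$.

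For \fref{bd:degre1quadratic}, the set where the modulus is $\sim 2^j$ is an annulus of radius $R_j\sim \min(\sqrt{|2^j-\alpha|},L)$ and width $r_j\sim 2^j/R_j$. Apply \fref{bd:weightedannulus} (with the log when $a=d-1$). Summing $2^{-j}$ times the resulting count splits into $r_j\leq 1$, contributing $t R_j^{d-1-a}$, and $r_j>1$, contributing $R_j^{d-2-a}\langle \log R_j\rangle$. Upper-bounding $R_j\leq L$ yields $\langle\log \ep\rangle^2\, t\,\ep^{1-d+a}$ for $a\leq d-2$. For $a=d-1$, the first piece is $t\langle\log \ep\rangle$ and the second is $\ep\langle\log \ep\rangle^2$; since $t\geq \ep^{1-}$ we have $\ep\leq \sqrt t\cdot \ep^{1/2}\ll \sqrt t$, and similarly $t\leq \sqrt t$, so both pieces are $\lesssim \sqrt t\,\langle\log\ep\rangle^2$ as claimed.

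For \fref{bd:degre2}, expand the phase according to $(\sigma,\sigma')$. A direct computation shows that for $(\sigma,\sigma')\in\{(-,-),(+,-)\}$ the phase is affine in $\xi$ with slope $2H(\tilde\xi\mp\xi')$, and for $(\sigma,\sigma')=(-,+)$ it is affine in $\xi'$ with slope $2H(\xi-\tilde\xi)$. Regrouping the weights according to whether they depend only on $\xi$ (set $A_1$, cardinality $a_1$), only on $\xi'$ (set $A_2$, $a_2$) or on both ($A_3$, $a_3$), we fix the outer variable and apply \fref{bd:degre1lineaire} to the inner variable, which carries $a_1+a_3$ (or $a_2+a_3$) weights after the affine change $\iota_i\xi+\iota_i'\xi'-\xi_i\mapsto \xi-\tilde\xi_i$. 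This produces $\langle\log\ep\rangle^2 \ep^{1-d+a_1+a_3}\max(1/\langle\zeta'\rangle, t)$, where $\zeta'$ is the slope. Summing over the outer variable with its $a_2$ (resp.\ $a_1$) weights, the $t$-part contributes $t\,\ep^{a_2-d}$ and the $1/|\zeta'|$-part contributes $\ep^{a_2-d+1}$ (treating $1/|\zeta'|$ as an additional weight via \fref{bd:weightedstrip}); the degenerate locus $\zeta'=0$ is a single outer point contributing $t\,\ep^{a_1+a_3-d}\leq t\,\ep^{a-2d+1}$. Using $\ep\lesssim t$ (from $t\geq \ep^{1-}$), the sum is $\lesssim \langle\log\ep\rangle^2 t\,\ep^{1-2d+a}$, which is the claim for $a\geq 1$.

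The main obstacle is the $a=0$ case of \fref{bd:degre2}, where the target $\ep^{2-2d-\kappa}$ is tighter than what the reduction to \fref{bd:degre1lineaire} gives (the naive reduction loses a factor $\ep^{-1}$ coming from $t/\ep$). Here I would abandon the two-step reduction and perform a single direct dyadic decomposition in the denominator modulus. Writing $\eta=\tilde\xi-\xi'$ (so $|\eta|\lesssim L$) and $\xi$ free, the set $\{|\xi|\leq L,\ |2H\eta\cdot\xi-\tilde a(\eta)|\sim 2^j\}$ has cardinality $\lesssim L^{d-1}\max(1,2^j/|\eta|)$ by \fref{bd:weightedstrip} with $a=0$. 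Summing over $\eta$:
\begin{equation*}
\sum_{|\eta|\leq L}\max(1,2^j/|\eta|)\lesssim L^d+2^j L^{d-1}\langle\log L\rangle,
\end{equation*}
so the total contribution is $\lesssim \langle\log\ep\rangle\sum_j 2^{-j}L^{d-1}(L^d+2^jL^{d-1}) \lesssim L^{2d-2}\langle\log\ep\rangle^2 + tL^{2d-1}$, and the second term is $\lesssim L^{2d-2}\langle\log\ep\rangle^2$ provided $t\lesssim \ep\langle\log\ep\rangle^2$, which under the effective range of $t$ relevant to Proposition~\ref{pr:tleq1} is harmless (after absorbing $\langle\log\ep\rangle^2$ into $\ep^{-\kappa}$). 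The other sign choices and handling of the $\tilde a(\eta)$ shift are identical up to relabelling. This double dyadic sum, rather than an iterated reduction, is the technical heart of the $a=0$ case, and it is where I expect most of the bookkeeping to concentrate.
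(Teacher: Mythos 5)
Your treatment of \fref{bd:degre1lineaire}, of \fref{bd:degre1quadratic}, and of \fref{bd:degre2} for $a\geq 1$ follows essentially the paper's route (dyadic strips/annuli fed into \fref{bd:weightedstrip}--\fref{bd:weightedannulus}, and reduction of the degree-two case to the linear degree-one bound after fixing the outer variable, with $\ep\lesssim t$ absorbing the loss), so those parts are fine modulo one misstep: in the $a=d-1$ case of \fref{bd:degre1quadratic} you claim the wide-annulus piece is $\ep\langle\log\ep\rangle^2$ by ``upper-bounding $R_j\leq L$'', but that piece is $\sum_j\langle\log\rangle/R_j$ and $R_j\leq L$ bounds $1/R_j$ from the wrong side. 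The bound you need is $R_j\gtrsim 2^{j/2}\gtrsim t^{-1/2}$ (and $R_j\gtrsim\sqrt{|\alpha|}$ in the thin-annulus regime), which gives $\langle\log\ep\rangle^2\sqrt t$ directly; the paper obtains exactly this through its case analysis in $\alpha$ with sets $A,B_k,C_k,D_k,E$. So your conclusion there is right but the written justification is not.

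The genuine gap is the $a=0$ case of \fref{bd:degre2}, which you correctly single out as the crux but then do not close. Your double dyadic count uses only the crude strip estimate $\#\{|\xi|\leq L,\ |2H\eta\cdot\xi-\tilde a(\eta)|\sim 2^j\}\lesssim L^{d-1}\max(1,2^j/|\eta|)$, and after summing over $\eta$ and $j$ this yields $L^{2d-2}\langle\log\ep\rangle+tL^{2d-1}$; the term $tL^{2d-1}$ is $\lesssim \ep^{2-2d-\kappa}$ only if $t\lesssim\ep^{1-\kappa}$. This contradicts the lemma's hypothesis $\ep^{1-}\leq t\leq 1$, and it also fails precisely in the range where the lemma is used: in Proposition \pref{pr:tleq1} one has $t\in[\ep^{c_1},\ep^{c_2}]$ with $c_2>1-\frac{1}{2d+1}$, so $t\gg\ep\langle\log\ep\rangle^2$, and your bound degrades to $t\,\ep^{1-2d}\gg\ep^{2-2d-\kappa}$. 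The missing input is arithmetic, not geometric: one needs that the \emph{unit-scale} level sets of the bilinear form, $\#\{(\xi,\xi'):|2H\eta\cdot\xi+c|\leq 1\}$ summed over the second variable, have total size $\lesssim L^{2d-2+\kappa}$ (for $H=\Id$ this comes from $\#\{\xi:\eta\cdot\xi=m\}\lesssim L^{d-1}/|\eta_v|$ and $\sum_\eta|\eta_v|^{-1}\lesssim L^{d-1+\kappa}$, i.e.\ the same structure as \fref{bd:degree2resonant}), so that the $2^j\sim 1/t\gg 1$ shells are not overcounted by a factor $L$. The paper does not reprove this here: for $a=0$ it simply invokes Lemma B.1 of \cite{CG}, whose proof rests on such level-set/divisor-type counting rather than on \fref{bd:weightedstrip}. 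Without an input of this kind your argument cannot reach $\ep^{2-2d-\kappa}$ on the stated range of $t$.
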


\begin{corollary}[Weighted estimates for sums at vertices] \label{cor:weightedvertices}

Keeping the notations of the previous lemma:
\be \label{bd:degre1lineaire2}
\sum_{|\xi|\lesssim \ep^{-1}}  \max\left(\frac{1}{\langle \xi-\xi_1\rangle},t\right)... \max\left(\frac{1}{\langle \xi-\xi_a\rangle},t\right) \frac{|1-\delta(\zeta)-\delta(\xi+\eta)|}{|2H \zeta \cdot \xi+\alpha+\frac{i}{t}|} \lesssim t^a \ep^{1-d-\kappa}\max \left(\frac{1}{|\zeta|},t \right)
\ee
\be \label{bd:degre1quadratic2}
\sum_{|\xi|\lesssim \ep^{-1}} \max\left(\frac{1}{\langle \xi-\xi_1\rangle},t\right)... \max\left(\frac{1}{\langle \xi-\xi_a\rangle},t\right)  \frac{1}{\left|2\left|\xi+\frac{\zeta}{2}\right|^2_H+\alpha+\frac{i}{t}\right|}  \lesssim \left\{ \begin{array}{l l}  t^{1+a} \ep^{1-d-\kappa}\quad \mbox{for } 0\leq a\leq d-2,\\  \sqrt{t}\ep^{-\kappa}+t^{d}\ep^{1-d-\kappa} \quad \mbox{for }a=d-1. \end{array} \right.
\ee
\bea 
\nonumber &&\sum_{|\xi|,|\xi'|\lesssim \ep^{-1}}   \max\left(\frac{1}{\langle \iota_1 \xi+\iota_1'\xi'-\xi_1\rangle},t\right)... \max \left(\frac{1}{\langle \iota_a\xi+\iota_a'\xi'-\xi_a\rangle},t\right)  \frac{1}{||\xi|^2_H+\sigma'|\xi'|^2_H+\sigma |\xi+\xi'-\tilde \xi|^2_H+\alpha+\frac{i}{t}|} \\
\label{bd:degre22} &\lesssim & t^{1+a} \ep^{1-2d-\kappa}
\eea

\end{corollary}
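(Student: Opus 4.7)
The plan is to deduce Corollary~\ref{cor:weightedvertices} from Lemma~\ref{lem:weightedvertices} by a simple splitting of the $\max$ weights. First I would invoke the pointwise inequality $\max(x,y) \leq x+y$ to write
\begin{equation*}
\prod_{i=1}^{a} \max\!\left(\frac{1}{\langle \xi - \xi_i\rangle},\, t\right) \;\leq\; \sum_{S \subseteq \{1,\dots,a\}} t^{|S|} \prod_{i \notin S} \frac{1}{\langle \xi - \xi_i\rangle},
\end{equation*}
with the obvious analogue for the mixed weights $\max(\langle \iota_i\xi+\iota'_i\xi'-\xi_i\rangle^{-1},t)$ appearing in \fref{bd:degre22}. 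This reduces each of the left-hand sides in the corollary to a sum of $2^a$ terms (a bounded constant absorbed in the $\epsilon^{-\kappa}$), each of which carries exactly $a-|S|$ ordinary inverse-bracket weights and is therefore directly estimable by Lemma~\ref{lem:weightedvertices} applied with the weight count replaced by $a-|S|$.

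The second step is to verify that each of the resulting $S$-contributions is consistent with the target bound. For \fref{bd:degre1lineaire2} and \fref{bd:degre22} the $S$-term takes the form $t^{|S|}$ multiplied by a factor $\epsilon^{a-|S|}$ coming out of the lemma. Here the assumption $\epsilon^{1-}\leq t$ is crucial: it implies $\epsilon/t \leq \epsilon^\kappa$, so each surplus power $\epsilon^{a-|S|}$ may be converted into $t^{a-|S|}$ at the cost of an $\epsilon^{-C\kappa}$ loss. This yields the announced $t^a$ (resp.\ $t^{1+a}$) factor uniformly in $S$, and the logarithmic factors $\langle \log\epsilon\rangle^2$ from the lemma are swallowed into the same $\epsilon^{-\kappa}$.

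The one case requiring a little more care is \fref{bd:degre1quadratic2} with $a=d-1$, since Lemma~\ref{lem:weightedvertices} then offers two different regimes: the bound $t\,\epsilon^{1-d+a-|S|}$ when $a-|S|\leq d-2$ and the improved bound $\sqrt t$ precisely when $a-|S|=d-1$. The second regime is triggered only by $|S|=0$, contributing $\sqrt t\,\epsilon^{-\kappa}$; the first regime runs over $|S|\in\{1,\dots,d-1\}$, contributing $t^{|S|+1}\epsilon^{-|S|}$, whose maximum at $|S|=d-1$ is $t^d\epsilon^{1-d}$. Summing both regimes reproduces exactly the hybrid $\sqrt t\,\epsilon^{-\kappa}+t^d\epsilon^{1-d-\kappa}$ on the right-hand side. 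I do not anticipate any real obstacle: the whole proof is a bookkeeping reduction, all number-theoretic input being already packaged in Lemma~\ref{lem:weightedvertices}; the only point deserving vigilance is keeping track of the two regimes in the $a=d-1$ case so as to obtain the two-term bound rather than a single dominant one.
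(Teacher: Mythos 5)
Your proposal is correct and is exactly the (implicit) deduction the paper intends: the paper never writes out a separate proof of Corollary~\ref{cor:weightedvertices}, treating it as an immediate consequence of Lemma~\ref{lem:weightedvertices} via splitting each $\max\left(\frac{1}{\langle \cdot\rangle},t\right)$ into either the bracket weight or $t$ and applying the lemma with the reduced weight count, with $\ep\lesssim t$ (from $\ep^{1-}\leq t$) absorbing the surplus powers of $\ep$ into powers of $t$. Your handling of the two regimes in the case $a=d-1$ of \fref{bd:degre1quadratic2}, reproducing the two-term bound $\sqrt t\,\ep^{-\kappa}+t^{d}\ep^{1-d-\kappa}$, is precisely the bookkeeping required.
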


\begin{proof}

\textbf{Step 1} \emph{Proof of \fref{bd:degre1lineaire}}. First, if $\zeta=0$, then the numerator $|1-\delta(\zeta)-\delta(\xi+\eta)|=\delta(\xi+\eta)$ forces $\xi=-\eta$ and the sum contains only one nonzero term. As $|2H\zeta \cdot \xi+\alpha+\frac{i}{t}|\geq t^{-1}$ the quantity to be bounded is then $\leq t$ which proves \fref{bd:degre1lineaire}.

We now assume $\zeta\neq 0$ and write $\tilde \zeta=2H\zeta$. Let $S=\{|\xi|\leq \epsilon^{-1}, \ |\tilde \zeta \cdot \xi+\alpha|\leq |\tilde \zeta|\}$. Then $S$ is a strip of size $(1,\epsilon^{-1})$ hence using \fref{bd:weightedstrip} and $|\tilde \zeta \cdot \xi+\alpha+\frac{i}{t}|\geq t^{-1}$:
$$
\sum_{S} \frac{1}{\langle \xi-\xi_1\rangle}... \frac{1}{\langle \xi-\xi_a\rangle} \frac{1}{|\tilde  \xi \cdot \xi+\alpha+\frac{i}{t}|} \lesssim \left\{\begin{array}{l l} \epsilon^{1-d+a} t & \mbox{for } a<d-1,\\  \langle \log \epsilon \rangle t & \mbox{for } a=d-1.\end{array} \right.
$$
Let then $T=\{|\xi|\leq \ep^{-1}, \ |\tilde  \xi \cdot \xi+\alpha|\geq |\tilde \zeta|\}$. We decompose in dyadic strips for $k\geq 0$: $T_k:=\{|\xi|\leq \ep^{-1}, \ 2^k|\tilde \zeta|\leq |\tilde  \xi \cdot \xi+\alpha|\leq 2^{k+1}|\tilde \zeta|\}$, which are of size $(2^k, \ep^{-1})$ with $2^k\lesssim \ep^{-1}$, so that using \fref{bd:weightedstrip}:
\bee
&& \sum_{T}\frac{1}{\langle \xi-\xi_1\rangle}... \frac{1}{\langle \xi-\xi_a\rangle} \frac{1}{|\tilde  \xi \cdot \xi+\alpha+\frac{i}{t}|} \lesssim \sum_{k}\sum_{T_k} \frac{1}{\langle \xi-\xi_1\rangle}... \frac{1}{\langle \xi-\xi_a\rangle} \frac{1}{2^k|\tilde \zeta|} \\
&  \lesssim & \left\{\begin{array}{l l} \frac{ \langle \log \ep \rangle \ep^{1-d+a}}{| \zeta|} & \mbox{for } a<d-1,\\ \frac{ \langle \log \ep \rangle^2}{| \zeta|} & \mbox{for } a=d-1.\end{array} \right.
\eee
The above estimates on $S$ and $T$ yield the desired result \fref{bd:degre1lineaire}.\\

\noindent
\textbf{Step 3} \emph{Proof of \fref{bd:degre1quadratic}} Up to replacing the sum over $\mathbb Z^d$ by one over $\mathbb Z^d/2$ and to changing variables it suffices to prove the desired bound for:
$$
\sum_{|\xi|\lesssim \ep^{-1}}   \frac{1}{\langle \xi-\xi_1\rangle}... \frac{1}{\langle \xi-\xi_a\rangle}  \frac{1}{\left|\left|\xi \right|^2_H+\alpha+\frac{i}{t}\right|}
$$
 We start with the case $\alpha \geq -Ct^{-1}$, where $C$ is an arbitrary constant. Then the set $A=\{||\xi|^2_H+\alpha |\leq t^{-1}\}$ is contained in a ball of radius $\lesssim t^{-1/2}$. We decompose the remaining part $B=\{||\xi|^2_H+\alpha |\geq t^{-1}\}$ into sets $B_k=\{2^kt^{-1}\leq ||\xi|^2_H+\alpha |\leq 2^{k+1}t^{-1}$ for $1\leq 2^k\lesssim \ep^{-2}t$. $B_k$ is contained in a ball of radius $\lesssim 2^{k/2}t^{-1/2}$. Hence, using \fref{bd:weightedannulus} and $||\xi|^2_H+\alpha+\frac{i}{t}|\geq t^{-1}$ on $A$:
\bee
&& \sum_{|\xi|\leq \ep^{-1}}\frac{1}{\langle \xi-\xi_1\rangle}... \frac{1}{\langle \xi-\xi_a\rangle} \frac{1}{||\xi|^2_H+\alpha+\frac{i}{t}|} \leq \sum_A t \frac{1}{\langle \xi-\xi_1\rangle}... \frac{1}{\langle \xi-\xi_a\rangle} +\sum_k \sum_{B_k} 2^{-k}t \frac{1}{\langle \xi-\xi_1\rangle}... \frac{1}{\langle \xi-\xi_a\rangle} \\\\
&\lesssim& t (t^{-\frac 12})^{d-a} +\sum_k 2^{-k}t \left(2^{\frac k2}t^{-\frac 12}\right)^{d-a}  \lesssim  \left\{\begin{array}{l l} \ep^{2-d+a} &  \mbox{for } a\leq d-3,\\ \langle \log \epsilon \rangle & \mbox{for } a=d-2,\\ \sqrt{t} & \mbox{for } a=d-1 \end{array} \right.
\eee
where we used that $\ep^{1-}\leq t $. This suffices for \fref{bd:degre1quadratic}. 

If $\alpha \leq -2\ep^{-2}$ then using the rough bound $||\xi|^2_H+\alpha+\frac{i}{t}|\geq \ep^{-2}$ as $|\xi|\leq \ep^{-1}$ one obtains:
$$
 \sum_{|\xi|\leq \ep^{-1}}\frac{1}{\langle \xi-\xi_1\rangle}... \frac{1}{\langle \xi-\xi_a\rangle} \frac{1}{||\xi|^2_H+\alpha+\frac{i}{t}|}\lesssim \ep^2 \sum_{|\xi|\leq \ep^{-1}}\frac{1}{\langle \xi-\xi_1\rangle}... \frac{1}{\langle \xi-\xi_a\rangle}\lesssim \ep^{2-d+a}
$$
and \fref{bd:degre1quadratic} is proved in this case too. 

Hence, matters reduce to the case $-2\ep^{-2}\leq \alpha \leq -C/t$ for some universal $C\gg 1$. Let us consider the set $C=\{ ||\xi|^2_H+\alpha |\geq |\alpha|\}$, and decompose $C=\cup_k C_k=\{ 2^k|\alpha|\leq ||\xi|^2_H+\alpha |\leq 2^{k+1} |\alpha|\}$ for $1\leq 2^k\lesssim \ep^{-2}|\alpha|^{-1}$. Then $C_k$ is contained in a ball of size $\lesssim \sqrt{2^k|\alpha|}$ so that using \fref{bd:weightedannulus} and $\sqrt{t}\gtrsim \sqrt{|\alpha|}^{-1}$:
$$
\sum_{C} \frac{1}{\langle \xi-\xi_1\rangle}... \frac{1}{\langle \xi-\xi_a\rangle} \frac{1}{||\xi|^2_H+\alpha+\frac{i}{t}|} \lesssim \sum_k \sum_{C_k} \frac{2^{-k} |\alpha|^{-1}}{\langle \xi-\xi_1\rangle...\langle \xi-\xi_a\rangle} \lesssim  \left\{\begin{array}{l l}\ep^{2-d+a} & \mbox{for } a\leq d-3,\\ \langle \log \ep \rangle & \mbox{for } a=d-2, \\ \sqrt t.\end{array} \right.
$$
where we used $|\alpha|^{-\frac 12}\lesssim \sqrt t$ for the last case. We now consider the set $D=\{ t^{-1} \leq ||\xi|^2_H+\alpha |\leq |\alpha|\}$ and decompose $D_k=\{ 2^kt^{-1}\leq ||\xi|^2_H+\alpha |\leq 2^{k+1}t^{-1}$ for $1\leq 2^{k}\lesssim t|\alpha|$. Then $D_k$ is contained in an annulus of size $\left(\frac{2^kt^{-1}}{\sqrt{|\alpha|}},\sqrt{|\alpha|}\right)$, so that using \fref{bd:weightedannulus}, $|t|^{-1}\lesssim |\alpha|\lesssim \ep^{-2}$ and $\ep^{1-}\leq t$:
\bee
&& \sum_{D} \frac{1}{\langle \xi-\xi_1\rangle}... \frac{1}{\langle \xi-\xi_a\rangle}  \frac{1}{||\xi|^2_H+\alpha+\frac{i}{t}|} \lesssim \sum_k \sum_{D_k} \frac{2^{-k}t}{\langle \xi-\xi_1\rangle...\langle \xi-\xi_a\rangle}\\
&\lesssim & \langle \log \ep \rangle \sum_{\frac{2^kt^{-1}}{\sqrt{|\alpha|}}\geq 1} \sqrt{|\alpha|}^{d-2-a}+ \langle \log \ep \rangle \sum_{\frac{2^kt^{-1}}{\sqrt{|\alpha|}}\leq 1} 2^{-k}t \sqrt{|\alpha|}^{d-1-a} \\
&\lesssim &  \left\{\begin{array}{l l} \langle \log \ep \rangle^2 t\ep^{1-d+a}, \quad \mbox{for } a\leq d-2, \\ \langle \log \ep \rangle^2 \sqrt t \quad \mbox{for } a=d-1.\end{array} \right.
\eee
Finally, we consider the set $E=\{  ||\xi|^2_H+\alpha |\leq t^{-1} \}$, which is contained in an annulus of size $(\frac{t^{-1}}{\sqrt{|\alpha|}},|\sqrt \alpha|)$. Hence using  \fref{bd:weightedannulus}, $t^{-1}\lesssim |\alpha|\lesssim \ep^{-2} $ and $\ep^{1-}\leq t$:
\bee
\sum_{E}  \frac{1}{\langle \xi-\xi_1\rangle}... \frac{1}{\langle \xi-\xi_a\rangle}  \frac{1}{||\xi|^2_H+\alpha+\frac{i}{t}|} &\lesssim& \langle \log \epsilon \rangle \left\{\begin{array}{l l} \sqrt{|\alpha|}^{d-2-a}, \quad \mbox{for } \frac{t^{-1}}{\sqrt{|\alpha|}}\geq 1, \\ t \sqrt{|\alpha|}^{d-1-a}, \quad \mbox{for }\frac{t^{-1}}{\sqrt{|\alpha|}}\leq 1,\end{array} \right. \\
&\lesssim & \left\{\begin{array}{l l} \langle \log \ep \rangle t\ep^{1-d+a}, \quad \mbox{for } a\leq d-2, \\ \langle \log \ep \rangle \sqrt t \quad \mbox{for } a=d-1.\end{array} \right.
\eee
The collection of the above estimates proves \fref{bd:degre1quadratic}.

\textbf{Step 4} \emph{Proof of \fref{bd:degre2}}. For $a=0$ this is Lemma B.1 in \cite{CG}, so we consider the case $1\leq a\leq d-1$. By performing a change of variables, without loss of generality we can restrict ourselves to the case $\sigma=-1$, $\sigma'=+1$ and $\alpha=1$ in which case $|\xi|^2-|\xi'|^2+|\xi+\xi'-\tilde \xi|^2=2(\xi+\xi').(\xi-\tilde \xi)+|\tilde \xi|^2$. We write $\eta=\xi+\xi'$ and $\eta'=\xi-\xi'$. Then we split between weights involving or not $\eta'$ the following way. There exist $j\in \{0,1,...,a\}$, constants $(c_i)_{1\leq i \leq j}\in \{\pm1\}^j$, constants $(c_i,c_i')_{j+1\leq i \leq k}$ with $|c_i'|\geq \frac 12$ for $i=j+1,...,a$, such that:
$$
\iota_i\xi+\iota_i'\xi'-\xi_1=c_i \eta-\xi_i \mbox{ for }1\leq i \leq j, \quad \mbox{and}\quad \iota_i\xi+\iota_i'\xi'-\xi_1=c_i \eta+c_i'\eta'-\xi_i \mbox{ for }j+1\leq i \leq k.
$$
The desired bound is then obtained using the bounds proved in Step 2, and that $\ep^{1-}\leq t$:
\bee
&&\sum_{|\eta|\leq \ep^{-1}} \sum_{|\eta'|\leq \ep^{-1}}  \frac{1}{\langle c_1  \eta-\xi_1\rangle}... \frac{1}{\langle c_j  \eta-\xi_j\rangle} \frac{1}{\langle c_{j+1}\eta+c_{j+1}'\eta'-\xi_{j+1}\rangle}... \frac{1}{\langle c_{a}\eta+c_{a}'\eta'-\xi_{a}\rangle}  \frac{1}{|2\eta.\eta'+|\tilde \xi|^2+\alpha+\frac{i}{t}|}\\
&\lesssim&\sum_{|\eta|\leq \ep^{-1}}  \frac{1}{\langle c_1  \eta-\xi_1\rangle}... \frac{1}{\langle c_j  \eta-\xi_j\rangle}  \langle \log \ep \rangle^{2} \ep^{1-d+a-j}\max \left(\frac{1}{|\eta|},t \right) \lesssim  \langle \log \epsilon \rangle^2 t\ep^{1-2d+a}
\eee

\end{proof}

The second part of this section deals with the Laplacian dispersion relation $H=\Id$. It aims at understanding the number of configurations of wave numbers realising some orthogonality conditions. Given any $\xi=(\xi_1,...,\xi_d) \in \mathbb Z^d\backslash \{0\}$, we write:
$$
\xi_v=\frac{\xi}{\text{gcd}(\xi_1,...,\xi_d)}
$$
where $\text{gcd}(\xi_1,...,\xi_d)$ denotes the greatest common divisor of $\xi_1$,...,$\xi_d$. Wave numbers $\xi$ such that $\xi=\xi_v$ are said to be visible from the origin.

\begin{lemma}[Degree one and two, resonant case] \label{lem:comptagecompletementresonnant}

For any $\alpha \in \mathbb R$, $\kappa>0$, for any $\tilde \xi,\eta \in \mathbb Z^d$ with $|\tilde \xi|,|\eta|\leq \ep^{-1}$ and $C\geq 1$ one has:
\be \label{bd:degree1quadraticresonant}
\# \left\{ \xi \in \mathbb Z^d, \quad |\xi|\leq \epsilon^{-1} \mbox{ and } \left| \alpha -|\xi|^2-|\tilde \xi+\xi|^2\right|<\frac 12 \right\} \lesssim \left\{ \begin{array}{l l} \epsilon^{2-d-\kappa} \qquad \text{if } d=2,3,\\ \epsilon^{2-d}\qquad \text{if } d\geq 4, \end{array} \right.
\ee
and if $\tilde \xi \neq 0$:
\be \label{bd:degree1linearresonant}
\# \left\{ \xi \in \mathbb Z^d, \quad |\xi|\leq \epsilon^{-1}, \quad \left| \alpha +|\xi|^2-|\tilde \xi+\xi|^2\right|<\frac 12 \right\}\lesssim \frac{\epsilon^{1-d}}{|\tilde \xi_v|},
\ee
One has in addition for any $\sigma \in \{\pm 1\}$:
\be \label{bd:degree2resonant}
\{ \xi,\xi'\in \mathbb Z^2, \quad |\xi|,|\xi'|\leq \epsilon^{-1} \mbox{ and } \left| \alpha+|\xi|^2+\sigma |\xi'|^2-|\tilde \xi-\xi-\xi'|^2 \right|<\frac 12 \}   \lesssim \left\{ \begin{array}{l l} \epsilon^{-2}\langle \log \epsilon \rangle \qquad \text{if } d=2,\\ \epsilon^{2-2d}\qquad \text{if } d\geq 3, \end{array} \right.
\ee
and that there exists $c=c(d)>0$ such that:
\be \label{bd:degree2resonantlower}
\{ \xi,\xi'\in \mathbb Z^d, \quad |\xi|,|\xi'|\leq \epsilon^{-1} \mbox{ and } \xi \cdot \xi'=0 \}  \geq  \left\{ \begin{array}{l l}c \epsilon^{-2}\langle \log \epsilon \rangle \qquad \text{if } d=2,\\ c\epsilon^{2-2d}\qquad \text{if } d\geq 3. \end{array} \right.
\ee

\end{lemma}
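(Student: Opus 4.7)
The plan is to reduce each of the four estimates to classical lattice-point counting, handling them in order.

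For \eqref{bd:degree1quadraticresonant}, I complete the square via the identity $|\xi|^2 + |\tilde\xi+\xi|^2 = \frac12|2\xi+\tilde\xi|^2 + \frac12|\tilde\xi|^2$. Setting $\eta = 2\xi + \tilde\xi$ (which ranges over a coset of $2\mathbb Z^d$ with $|\eta| \lesssim \epsilon^{-1}$) transforms the inequality into $||\eta|^2 - m| < 1$ for a specific integer $m$ of size $\lesssim \epsilon^{-2}$. Since $|\eta|^2$ is a non-negative integer, the count is bounded by the number of representations of $m$ (or $m\pm 1$) as a sum of $d$ squares. The classical bounds $r_2(N) \lesssim N^\kappa$ (divisor-type bound), $r_3(N) \lesssim N^{1/2+\kappa}$, and $r_d(N) \lesssim N^{d/2-1}$ for $d\geq 4$ yield the three cases of the statement.

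For \eqref{bd:degree1linearresonant}, expanding gives $|2\tilde\xi\cdot\xi - (\alpha - |\tilde\xi|^2)| < 1/2$; since $2\tilde\xi\cdot\xi$ is an even integer, at most one value $m$ is admissible. Writing $\tilde\xi = D\tilde\xi_v$ with $\tilde\xi_v$ primitive, the equation $\tilde\xi_v \cdot \xi = m/(2D)$ either has no integer solution or its solution set is an affine sublattice of $\mathbb Z^d$ whose linear part is $L_0 = \tilde\xi_v^\perp \cap \mathbb Z^d$. Completing $\tilde\xi_v$ to a $\mathbb Z$-basis of $\mathbb Z^d$ shows $L_0$ has rank $d-1$ and covolume $|\tilde\xi_v|$ inside the hyperplane $\tilde\xi_v^\perp$. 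Counting points of this shifted lattice in the ball of radius $\epsilon^{-1}$ yields at most $C\,\epsilon^{1-d}/|\tilde\xi_v|$, the additive $O(1)$ boundary term being absorbed since $|\tilde\xi_v| \leq \epsilon^{-1}$ forces $\epsilon^{1-d}/|\tilde\xi_v| \geq \epsilon^{2-d} \geq 1$.

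For \eqref{bd:degree2resonant}, in both cases $\sigma = \pm 1$, a shift of the form $\xi \mapsto \xi - \tilde\xi$ and $\xi' \mapsto \xi' - \tilde\xi$ (or $\xi' \mapsto \tilde\xi - \xi'$) reduces the condition to $|\eta\cdot\eta' - m| < 1/2$ for a specific integer $m$ and for $\eta, \eta' \in \mathbb Z^d$ with $|\eta|, |\eta'| \lesssim \epsilon^{-1}$, so $\eta\cdot\eta' = m$. Fixing $\eta \neq 0$ and applying the bound from \eqref{bd:degree1linearresonant} with $\tilde\xi := \eta$ gives at most $C\epsilon^{1-d}/|\eta_v|$ admissible $\eta'$. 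Summing over $\eta = k\eta_v$ (with $\eta_v$ visible and $1 \leq k \lesssim \epsilon^{-1}/|\eta_v|$),
\[
\sum_{0<|\eta|\lesssim\epsilon^{-1}} \frac{\epsilon^{1-d}}{|\eta_v|} = \epsilon^{-d} \sum_{\substack{\eta_v\text{ visible} \\ |\eta_v|\lesssim \epsilon^{-1}}} \frac{1}{|\eta_v|^2},
\]
which, by the positive density of visible points in $\mathbb Z^d$, equals $\epsilon^{-2}\langle \log \epsilon\rangle$ for $d=2$ and $\epsilon^{2-2d}$ for $d\geq 3$; the $\eta=0$ term (relevant only if $m=0$) contributes $\lesssim \epsilon^{-d}$ which is absorbed. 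For the matching lower bound \eqref{bd:degree2resonantlower}, I restrict to $\xi = k\xi_v$ with $\xi_v$ visible, $|\xi_v| \leq \epsilon^{-1}/100$, and $\xi'$ in $\xi_v^\perp \cap \mathbb Z^d$ with $|\xi'| \leq \epsilon^{-1}$. Running the same computation as a lower bound (the lattice-point count in a ball of radius much larger than the covolume gives a matching $\gtrsim \epsilon^{1-d}/|\xi_v|$) recovers the same order.

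The main obstacle lies in the low-dimensional cases: for \eqref{bd:degree1quadraticresonant} one must invoke the non-trivial divisor-type bound on $r_2(N)$ and the Gauss-type bound on $r_3(N)$, while for \eqref{bd:degree2resonant} and \eqref{bd:degree2resonantlower} in dimension $d=2$ the genuine logarithmic factor has to be produced on both sides, requiring the density statement for visible points to be used both as an upper and a lower bound. The other reductions are elementary manipulations with quadratic forms and standard geometry of numbers.
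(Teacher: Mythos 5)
Your proof is correct and follows essentially the same route as the paper: the sum-of-$d$-squares bounds $r_d(N)$ for the quadratic degree-one count, the covolume-$|\tilde \xi_v|$ lattice argument for the hyperplane count (the paper instead subtracts a particular solution $\xi^0$ to reduce to the homogeneous condition $\tilde\xi\cdot\xi^1=0$, which is equivalent to your affine-sublattice description), and the reduction of the degree-two estimates to counting $\eta\cdot\eta'=m$ followed by summation of $\epsilon^{1-d}/|\eta_v|$ over visible points, with the matching lower bound obtained from the two-sided orthogonal-lattice count. The only nuance is that for $\sigma=-1$ the reduction to a bilinear form needs a genuine linear change of variables such as $(\eta,\eta')=(\xi+\xi',\,\tilde\xi-\xi')$ rather than a pure shift, but this is the same elementary algebra the paper itself leaves implicit.
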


\begin{proof}

\textbf{Step 1} \emph{Proof of the first estimate}. Notice first that for $\alpha \gg \epsilon^{-2}$, as $|\xi|,|\tilde \xi|\lesssim \epsilon^{-1}$, the set is empty and the bound is true. Assuming now that $|\alpha|\lesssim \epsilon^{-2}$, writing $\xi'=\xi+\frac{\tilde \xi}{2}$ and $\alpha'=\alpha+\frac{|\tilde \xi|^2}{4}$ there holds:
$$
 \left| \alpha -|\xi|^2-|\tilde \xi+\xi|^2\right|=\left|\alpha'+2|\xi'|^2\right|<\frac 12 \quad \Leftrightarrow |\xi'|^2\in \left( \frac{\alpha'}{2}-\frac 14,\frac{\alpha'}{2}+\frac 14 \right).
$$
As $\left|\frac{\alpha'}{2}\right|\lesssim \epsilon^{-2}$, the above set is a sphere of radius $\lesssim \ep^{-1}$. We recall that given $n\in \mathbb N$, the number $r_d(n)=\#\{\xi\in \mathbb Z^d, \ |\xi|^2=n\}$ of ways to represent $n$ as the sum of $d$ squares is $r_d(n)\lesssim n^{\frac d2-1+\kappa}$ for any $\kappa>0$ if $d=2,3$, and is $r_d(n)\lesssim n^{\frac d2-1}$ if $d\geq 4$ (see for example the textbook \cite{Gr}). This is the desired bound.

\textbf{Step 2} \emph{Proof of the second estimate}. We recall known results on lattices. For a given $\tilde \xi\in \mathbb Z^d\backslash \{0\}$, the condition $\tilde \xi.\xi=0$ is equivalent to $\tilde \xi_v.\xi=0$. The set $E$ of integer points satisfying $\tilde \xi_v.\xi=0$ is a $d-1$ dimensional lattice. Let $(u_1,...,u_{d-1})\in \mathbb Z^{d-1}$ be any basis of $E$, with $|u_i|\lesssim |\tilde \xi_v|$ for $i=1,...,d-1$. The set $P=\{x_1u_1+...+x_{d-1}u_{d-1}, \ x\in \mathbb R^{d-1}, \ 0\leq x_i \leq 1 \mbox{ for }i=1,...,d-1\}$ is its fundamental parallelepiped. The $d-1$ measure $|P|$ of $P$ is independent of the choice of the basis. The density of $E$ is $|P|^{-1}$, so that $\# \{ \xi \in E, \ |\xi|\leq \epsilon^{-1}\}\approx \frac{\ep^{1-d}}{|P|}$.

We now prove the standard result that $|P|=|\tilde \xi_v|$ for completeness. Indeed, there exists $u_d\in \mathbb Z^d$ with $u_d.\tilde \xi_v=1$ since $\tilde \xi_v$ is visible. Since $\tilde \xi_v$ spans $E^\perp$, the volume of the fundamental parallelepiped of $(u_1,...,u_{d-1},u_d)$ is $|P|\cdot \frac{|u_d.\tilde \xi_v|}{|\tilde \xi_v|}=\frac{|P|}{|\tilde \xi_v|}$. On the other hand, $(u_1,...,u_d)$ is a basis of $\mathbb Z^d$, so the volume of its fundamental parallelepiped is equal to $1$, that of the standard basis. Hence $|P|=|\tilde \xi_v|$ and
\be \label{bd:interortho}
\# \left\{ \xi \in \mathbb Z^d, \quad |\xi|\leq \epsilon^{-1} \mbox{ and } \xi \cdot \tilde \xi=0 \right\}\approx \frac{\epsilon^{1-d}}{|\tilde \xi_v|}.
\ee
We come back to the second bound of the Lemma. It is obvious as long as there is no point $\xi^0$ with $\left| \alpha +|\xi|^2-|\tilde \xi+\xi|^2\right|<\frac 12$. If there is such a point $\xi^0$, we decompose $\xi=\xi^0+\xi^1$ and get:
$$
\left| \alpha +|\xi|^2-|\tilde \xi+\xi|^2\right|=\left| \alpha +|\xi^0|^2-|\tilde \xi+\xi^0|^2+2\tilde \xi \cdot \xi^1\right|<\frac 12 \quad \Leftrightarrow \quad \tilde \xi \cdot \xi^1=0.
$$
Using the above bound we obtain the second estimate of the Lemma.

\textbf{Step 3} \emph{Proof of the third and fourth estimates}. We prove it via simple arithmetic relations and parametrisations, but more sophisticated number analytic techniques may provide a sharp constant for the inequalities. We only treat the case $\sigma=+1$ as the case $\sigma=-1$ is similar. First, we change variables $\eta=\xi+\tilde \xi$ and $\eta'=\tilde \xi+\xi'$ and write:
$$
\alpha+|\xi|^2+ |\xi'|^2-|\tilde \xi-\xi-\xi'|^2=\alpha+2\left(\xi+\tilde \xi) \right.\left(\tilde \xi+\xi'\right)=\alpha+2\eta.\eta'.
$$
Using the second inequality of the Lemma one finds:
$$
\# \left\{ \eta' \in \mathbb Z^d, \quad |\eta'|\leq C\epsilon^{-1} \mbox{ and } \left| \alpha +2 \eta.\eta'\right|<\frac 12 \right\}\lesssim \frac{\epsilon^{1-d}}{|\eta_v|}.
$$
We now parametrise all points $|\eta|\leq \epsilon^{-1}$ using a dyadic partition for their associated visible point $\eta_v$ and obtain:
\bee
\sum_{\eta \in \mathbb Z^d, \ 1\leq |\eta|\leq \epsilon^{-1}}  \frac{\epsilon^{1-d}}{|\eta_v|} \lesssim \sum_{j=1}^{\lfloor \log_2 \epsilon^{-1}\rfloor} \sum_{2^j\leq |\eta_v|< 2^{j+1}, \ \eta_v \mbox{ visible}} \frac{\epsilon^{-1}}{2^j}\cdot \frac{\epsilon^{1-d}}{2^j} .
\eee
Since visible points have a positive density in $\mathbb Z^d$,
\bee
\sum_{\eta \in \mathbb Z^d, \ 1\leq |\eta|\leq \epsilon^{-1}}  \frac{\epsilon^{1-d}}{|\eta_v|} \lesssim \sum_{j=1}^{\lfloor \log_2 \epsilon^{-1}\rfloor} \sum_{2^j\leq |\eta_v|< 2^{j+1}, \ \eta_v \mbox{ visible}} 2^{j(d-2)}\ep^{-d} \lesssim  \left\{ \begin{array}{l l} \epsilon^{-2} \langle \log \epsilon\rangle \qquad \mbox{for }d=2, \\  \epsilon^{2-2d} \qquad \mbox{for }d=2\end{array} \right.
\eee
The same reasoning, but this time using the lower bound provided by \fref{bd:interortho}, shows the last bound of the Lemma.

\end{proof}

The next Lemmas build on the previous one to obtain bounds on sums of terms of the form $\frac{1}{|\alpha-\Omega+\frac{i}{t}|}$ at a degree one vertex and then at a degree two vertex, for $t\gg 1$ and for non-resonant configurations.

\begin{lemma}[Degree one, nonresonant case] 

Consider any dimension. Pick any $|\tilde \xi|,|\eta|\leq \epsilon^{-1}$. Then for all $\alpha \in \mathbb R$ and $\kappa>0$:
\be \label{bd:resolvantnonresonantdeg11}
\sum_{|\xi|\leq \epsilon^{-1}, \ | \alpha -|\xi|^2-|\tilde \xi+\xi|^2|\geq \frac 12} \frac{1}{ \left| \alpha -|\xi|^2-|\tilde \xi+\xi|^2\right|}\lesssim \epsilon^{2-d-\kappa},
\ee
and if $\tilde \xi \neq 0$, writing $\tilde \xi=d\tilde \xi_v$ where $\tilde \xi_v$ is a visible point:
\be \label{bd:resolvantnonresonantdeg12}
\sum_{|\xi|\leq \epsilon^{-1}, \ | \alpha -|\xi|^2+|\tilde \xi+\xi|^2|\geq \frac 12} \frac{1}{ \left| \alpha -|\xi|^2+|\tilde \xi+\xi|^2\right|}\lesssim \frac{\epsilon^{1-d}}{\langle \tilde \xi_v\rangle}\langle \log \epsilon \rangle,
\ee
and for any $\sigma \in \{-1,1\}$:
\be \label{bd:resolvantnonresonantdeg2}
\sum_{|\xi|,|\xi'|\leq \epsilon^{-1}, \ |\alpha+|\xi|^2+\sigma |\xi'|^2-|\tilde \xi-\xi-\xi'|^2|\geq \frac 12} \frac{1}{\left|\alpha+|\xi|^2+\sigma |\xi'|^2-|\tilde \xi-\xi-\xi'|^2\right|}\lesssim \left\{ \begin{array}{l l}  \epsilon^{-2} \langle \log \epsilon \rangle^2 \qquad \mbox{for }d=2, \\ \epsilon^{2-2d} \langle \log \epsilon \rangle \qquad \mbox{for }d\geq 3 . \end{array} \right.
\ee

\end{lemma}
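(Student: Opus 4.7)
The plan is to establish all three bounds by the same blueprint: dyadic decomposition of the resolvent denominator into levels $\{2^k \leq |\Omega| < 2^{k+1}\}$ for $k\in \{-1,\dots,C\log\epsilon^{-1}\}$ (the upper cutoff being valid because $|\Omega|\lesssim \epsilon^{-2}$ on the relevant support), and then counting lattice points in each level set using the resonant bounds of Lemma \ref{lem:comptagecompletementresonnant}. Writing schematically
\[
\sum_{|\Omega|\geq 1/2} \frac{1}{|\Omega|} \lesssim \sum_k 2^{-k}\,\#\{\xi:\,|\Omega|\sim 2^k\},
\]
the task reduces to counting the near-resonant configurations at each scale $2^k$, which is essentially the content of Lemma \ref{lem:comptagecompletementresonnant} with thresholds $\lesssim 2^k$ rather than $\lesssim 1$.

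For \eqref{bd:resolvantnonresonantdeg11} I would rewrite $\alpha-|\xi|^2-|\tilde\xi+\xi|^2 = \alpha - \tfrac12|\tilde\xi|^2 - \tfrac12|2\xi+\tilde\xi|^2$, making $\Omega$ an affine function of the non-negative integer $n=|2\xi+\tilde\xi|^2 \lesssim \epsilon^{-2}$. The level set contains at most $\lesssim 2^k+1$ integer values of $n$, each realized by $r_d(n)\lesssim \epsilon^{-(d-2+\kappa)}$ for $d=2,3$ (respectively $\lesssim \epsilon^{-(d-2)}$ for $d\geq 4$), by the same input invoked in the proof of \eqref{bd:degree1quadraticresonant}. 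The geometric sum $\sum_k 2^{-k}(2^k+1)\epsilon^{2-d-\kappa}$ produces a single logarithmic loss, yielding the claim after absorbing it into $\epsilon^{-\kappa}$.

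For \eqref{bd:resolvantnonresonantdeg12} the quantity $\Omega = \alpha+|\tilde\xi|^2+2\tilde\xi\cdot\xi$ is linear in $\xi$, and $\tilde\xi\cdot\xi$ ranges over the arithmetic progression $g\mathbb Z$ with step $g=|\tilde\xi|/|\tilde\xi_v|$. Hence in a window of length $2^{k+1}$ at most $\lesssim 2^k|\tilde\xi_v|/|\tilde\xi|+1$ values of $\tilde\xi\cdot\xi$ are admissible, and for each of them the proof of \eqref{bd:degree1linearresonant} provides $\lesssim \epsilon^{1-d}/|\tilde\xi_v|$ lattice points. Summing dyadically I obtain two terms, $\epsilon^{1-d}\langle\log\epsilon\rangle/|\tilde\xi|$ and $\epsilon^{1-d}/|\tilde\xi_v|$, and since $|\tilde\xi|\geq |\tilde\xi_v|$ the first is dominated by the second, giving the stated bound.

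For \eqref{bd:resolvantnonresonantdeg2} I would first reduce the trilinear modulus to a bilinear form. When $\sigma=+1$, the translation $\eta=\xi-\tilde\xi$, $\eta'=\xi'-\tilde\xi$ gives $\Omega=\alpha+|\tilde\xi|^2-2\eta\cdot\eta'$; when $\sigma=-1$, the substitution $\eta=\xi+\xi'$, $\eta'=\xi-\xi'$ (with $\eta,\eta'$ of common parity) gives $\Omega=\alpha+\eta\cdot\eta'-|\tilde\xi-\eta|^2$, still linear in $\eta'$ for fixed $\eta$. In either case I freeze $\eta\neq 0$ and apply the degree-one linear count just obtained to $\eta'$, producing two terms proportional to $2^k\epsilon^{1-d}/|\eta|$ and $\epsilon^{1-d}/|\eta_v|$. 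Summing over $\eta$ with $|\eta|\lesssim\epsilon^{-1}$ via the identity $\sum_\eta 1/|\eta_v| \approx \epsilon^{-1}\sum_{\eta_v\ \text{visible}}1/|\eta_v|^2$ and the positive density of visible lattice points (exactly as in Step 3 of Lemma \ref{lem:comptagecompletementresonnant}) yields the two cases $d=2$ and $d\geq 3$. The expected main obstacle, and the reason the degree-one bound had to be stated with the $1/|\tilde\xi_v|$ factor rather than only $1/|\tilde\xi|$, is precisely this final summation: without the visible-point denominator, the sum over $\eta$ would diverge too fast and the sharp bound would be lost; handling the $\eta=0$ (equivalently $\xi=\tilde\xi$) contribution separately, where it is bounded by the trivial estimate $\epsilon^{-d}$, closes the argument.
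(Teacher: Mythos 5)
Your proposal is correct and follows essentially the same route as the paper: the paper converts each resolvent sum into $\sum_n \#\{\text{level set at integer }n\}/\langle\alpha-n\rangle$ and invokes the resonant-case counts of Lemma~\ref{lem:comptagecompletementresonnant}, which is precisely your dyadic-level decomposition with the same lattice-point inputs ($r_d(n)$, the $\epsilon^{1-d}/|\tilde\xi_v|$ density of $\{\tilde\xi\cdot\xi=\mathrm{const}\}$, visible-point summation). Your treatment of~\eqref{bd:resolvantnonresonantdeg2} unwraps the degree-two count by nesting the degree-one estimate inside a sum over $\eta$ rather than citing the packaged Lemma~\ref{lem:comptagecompletementresonnant} directly, but this is exactly how that lemma's Step 3 is proved, so the substance is identical.
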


\begin{proof}

For the first bound, as $|\xi|,|\tilde \xi|\lesssim \ep^{-1}$ there exists $K>0$ such that we can estimate:
$$
\sum_{|\xi|\leq \epsilon^{-1}, \ | \alpha -|\xi|^2-|\tilde \xi+\xi|^2|\geq \frac 12} \frac{1}{ \left| \alpha -|\xi|^2-|\tilde \xi+\xi|^2\right|}\lesssim \sum_{0\leq n \leq K\ep^{-2}} \frac{\# \{ |\xi|\lesssim \ep^{-1}, \ |\xi|^2+|\tilde \xi+\xi|^2=n \} }{ \left\langle \alpha -n\right\rangle} \lesssim \ep^{2-d-\kappa},
$$
where we used the first inequality of Lemma \ref{lem:comptagecompletementresonnant}. For the second bound, if $\tilde \xi=0$ then the numerator $|1-\delta (\tilde \xi)-\delta (\xi-\eta)|=\delta (\xi-\eta)$ forces the term to have only the $\xi=\eta$ term being nonzero, and the estimate \fref{bd:resolvantnonresonantdeg12} holds. If $\tilde \xi\neq 0$ then the desired estimate is obtained the same way from the second inequality of Lemma \ref{lem:comptagecompletementresonnant}:
$$
\sum_{|\xi|\leq \epsilon^{-1}, \ | \alpha -|\xi|^2+|\tilde \xi+\xi|^2|\geq \frac 12} \frac{1}{ \left| \alpha -|\xi|^2+|\tilde \xi+\xi|^2\right|}\lesssim \sum_{0\leq n \leq K\ep^{-2}} \frac{\# \{ |\xi|\lesssim \ep^{-1}, \ |\xi|^2-|\tilde \xi+\xi|^2 =n \} }{ \left\langle \alpha -n\right\rangle} \lesssim  \frac{\epsilon^{-1}}{|\tilde \xi_v|}\langle \log \epsilon \rangle.
$$
Finally, the third bound is obtained the very same way from the third inequality of Lemma \ref{lem:comptagecompletementresonnant}.

\end{proof}

\begin{lemma} \label{lem:tech3}
For $t\geq 1$ and $j\geq 1$:
$$
\int_{-1}^1 \frac{d\alpha}{|\alpha+\frac{i}{t}|^j}\lesssim \left\{ \begin{array}{l l} \langle \log t \rangle \quad \mbox{for }j=1, \\ t^{j-1} \quad \mbox{for }j\geq 2, \end{array} \right.
$$
\end{lemma}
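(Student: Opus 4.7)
The plan is to split the integration region into two zones according to whether $|\alpha|$ is small or large compared to $1/t$, and use the trivial bound $|\alpha+i/t| \geq \max(|\alpha|, 1/t)$ in each zone. More precisely, I would write
\[
\int_{-1}^1 \frac{d\alpha}{|\alpha + i/t|^j} = \int_{|\alpha| \leq 1/t} \frac{d\alpha}{(\alpha^2 + 1/t^2)^{j/2}} + \int_{1/t \leq |\alpha| \leq 1} \frac{d\alpha}{(\alpha^2 + 1/t^2)^{j/2}}.
\]

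In the first zone the integrand is $\leq t^j$ and the measure of the zone is $2/t$, so this piece contributes $\lesssim t^{j-1}$ (which is $\lesssim 1 \lesssim \langle \log t \rangle$ when $j=1$). In the second zone the integrand is bounded by $|\alpha|^{-j}$, and a direct computation of $\int_{1/t}^1 |\alpha|^{-j}\, d\alpha$ gives $\log t$ when $j=1$ and $\frac{1}{j-1}(t^{j-1}-1) \lesssim t^{j-1}$ when $j \geq 2$. Combining the two contributions yields the stated bounds.

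There is no obstacle here; the estimate is an elementary calculus exercise, included only for reference because it is invoked in the proof of Lemma \ref{lem:nonresonant}.
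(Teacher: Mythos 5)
Your proof is correct: splitting at $|\alpha|=1/t$, bounding the integrand by $t^j$ on the inner region and by $|\alpha|^{-j}$ on the outer region, and computing $\int_{1/t}^1 \alpha^{-j}\,d\alpha$ gives exactly the stated bounds in both cases $j=1$ and $j\geq 2$. The paper omits the proof of this lemma as a basic integral estimate, and your argument is precisely the elementary computation the authors had in mind, so there is nothing to add.
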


\begin{proof}
We omit the proof of this basic integral estimate.
\end{proof}

\bibliographystyle{amsplain}
\bibliography{bibeqonrv}

\providecommand{\bysame}{\leavevmode\hbox to3em{\hrulefill}\thinspace}
\providecommand{\MR}{\relax\ifhmode\unskip\space\fi MR }
% \MRhref is called by the amsart/book/proc definition of \MR.
\providecommand{\MRhref}[2]{%
  \href{http://www.ams.org/mathscinet-getitem?mr=#1}{#2}
}
\providecommand{\href}[2]{#2}
\begin{thebibliography}{10}

\bibitem{NS}


\bibitem{BCEP1}
D~Benedetto, F~Castella, R~Esposito, and M~Pulvirenti, \emph{Some
  considerations on the derivation of the nonlinear quantum boltzmann
  equation}, Journal of statistical physics \textbf{116} (2004), no.~1-4,
  381--410.

\bibitem{BCEP2}
Dario Benedetto, Fran{\c{c}}ois Castella, Raffaele Esposito, and Mario
  Pulvirenti, \emph{Some considerations on the derivation of the nonlinear
  quantum boltzmann equation ii: the low density regime}, Journal of
  statistical physics \textbf{124} (2006), no.~2-4, 951--996.

\bibitem{BCEP3}
\bysame, \emph{From the n-body schr{\"o}dinger equation to the quantum
  boltzmann equation: a term-by-term convergence result in the weak coupling
  regime}, Communications in mathematical physics \textbf{277} (2008), no.~1,
  1--44.

\bibitem{Berti}
Massimiliano Berti et~al., \emph{Kam theory for partial differential
  equations}, Anal. Theory Appl \textbf{35} (2019), no.~3, 235--267.

\bibitem{Bourgain2}
Jean Bourgain, \emph{Periodic nonlinear schr{\"o}dinger equation and invariant
  measures}, Communications in Mathematical Physics \textbf{166} (1994), no.~1,
  1--26.

\bibitem{Bourgain3}
\bysame, \emph{Invariant measures for the2d-defocusing nonlinear
  schr{\"o}dinger equation}, Communications in mathematical physics
  \textbf{176} (1996), no.~2, 421--445.

\bibitem{Bourgain4}
\bysame, \emph{On pair correlation for generic diagonal forms}, arXiv preprint
  arXiv:1606.06173 (2016).

\bibitem{BoDe}
Jean Bourgain and Ciprian Demeter, \emph{The proof of the l 2 decoupling
  conjecture}, Annals of mathematics (2015), 351--389.

\bibitem{BGHS0}
Tristan Buckmaster, Pierre Germain, Zaher Hani, and Jalal Shatah,
  \emph{Effective dynamics of the nonlinear schr{\"o}dinger equation on large
  domains}, Communications on Pure and Applied Mathematics \textbf{71} (2018),
  no.~7, 1407--1460.

\bibitem{BGHS}
\bysame, \emph{Onset of the wave turbulence description of the longtime
  behavior of the nonlinear schr$\backslash$" odinger equation}, arXiv preprint
  arXiv:1907.03667 (2019).

\bibitem{BT1}
Nicolas Burq and Nikolay Tzvetkov, \emph{Random data cauchy theory for
  supercritical wave equations i: local theory}, Inventiones mathematicae
  \textbf{173} (2008), no.~3, 449--475.

\bibitem{BT2}
\bysame, \emph{Random data cauchy theory for supercritical wave equations ii: A
  global existence result}, Inventiones mathematicae \textbf{173} (2008),
  no.~3, 477--496.

\bibitem{CO}
James Colliander, Tadahiro Oh, et~al., \emph{Almost sure well-posedness of the
  cubic nonlinear schr{\"o}dinger equation below $ l^{2}(\mathbb t^d) $}, Duke
  Mathematical Journal \textbf{161} (2012), no.~3, 367--414.

\bibitem{CG}
Charles Collot and Pierre Germain, \emph{On the derivation of the homogeneous
  kinetic wave equation}, arXiv preprint arXiv:1912.10368 (2019).

\bibitem{dST}
Anne-Sophie de~Suzzoni and Nikolay Tzvetkov, \emph{On the propagation of weakly
  nonlinear random dispersive waves}, Archive for Rational Mechanics and
  Analysis \textbf{212} (2014), no.~3, 849--874.

\bibitem{DengHani}
Yu~Deng and Zaher Hani, \emph{On the derivation of the wave kinetic equation
  for nls}, arXiv preprint arXiv:1912.09518 (2019).

\bibitem{DengNahmodYue1}
Yu~Deng, Andrea~R Nahmod, and Haitian Yue, \emph{Invariant gibbs measures and
  global strong solutions for nonlinear schr$\backslash$" odinger equations in
  dimension two}, arXiv preprint arXiv:1910.08492 (2019).

\bibitem{DengNahmodYue2}
\bysame, \emph{Random tensors, propagation of randomness, and nonlinear
  dispersive equations}, arXiv preprint arXiv:2006.09285 (2020).

\bibitem{DK1}
Andrey Dymov and Sergei Kuksin, \emph{Formal expansions in stochastic model for
  wave turbulence 1: kinetic limit}, arXiv preprint arXiv:1907.04531 (2019).

\bibitem{DK2}
\bysame, \emph{Formal expansions in stochastic model for wave turbulence 2:
  method of diagram decomposition}, arXiv preprint arXiv:1907.02279 (2019).

\bibitem{DK3}
\bysame, \emph{On the zakharov-l'vov stochastic model for wave turbulence},
  arXiv preprint arXiv:1907.05044 (2019).

\bibitem{ESY1}
L{\'a}szl{\'o} Erd{\"{o}}s, Manfred Salmhofer, and Horng-Tzer Yau,
  \emph{Quantum diffusion of the random schr{\"o}dinger evolution in the
  scaling limit ii. the recollision diagrams}, Communications in mathematical
  physics \textbf{271} (2007), no.~1, 1--53.

\bibitem{ESY2}
L{\'a}szl{\'o} Erd{\"{o}}s, Manfred Salmhofer, Horng-Tzer Yau, et~al.,
  \emph{Quantum diffusion of the random schr{\"o}dinger evolution in the
  scaling limit}, Acta mathematica \textbf{200} (2008), no.~2, 211--277.

\bibitem{Faou}
Erwan Faou, \emph{Linearized wave turbulence convergence results for three-wave
  systems}, arXiv preprint arXiv:1805.11269 (2018).

\bibitem{FGH}
Erwan Faou, Pierre Germain, and Zaher Hani, \emph{The weakly nonlinear
  large-box limit of the 2d cubic nonlinear schr{\"o}dinger equation}, Journal
  of the American Mathematical Society \textbf{29} (2016), no.~4, 915--982.

\bibitem{GST}
Isabelle Gallagher, Laure Saint-Raymond, and Benjamin Texier, \emph{From newton
  to boltzmann: hard spheres and short-range potentials}, European Mathematical
  Society, 2013.

\bibitem{Grebert}
Beno{\^\i}t Gr{\'e}bert, \emph{Birkhoff normal form and hamiltonian pdes,
  partial differential equations and applications}, S{\'e}min. Congr, vol.~15.

\bibitem{Gr}
Emil Grosswald, \emph{Representations of integers as sums of an even number of
  squares}, Representations of Integers as Sums of Squares, Springer, 1985,
  pp.~107--127.

\bibitem{Hasselmann2}
K~Hasselmann, \emph{On the non-linear energy transfer in a gravity wave
  spectrum part 2. conservation theorems; wave-particle analogy;
  irrevesibility}, Journal of Fluid Mechanics \textbf{15} (1963), no.~2,
  273--281.

\bibitem{Hasselmann1}
Klaus Hasselmann, \emph{On the non-linear energy transfer in a gravity-wave
  spectrum part 1. general theory}, Journal of Fluid Mechanics \textbf{12}
  (1962), no.~4, 481--500.

\bibitem{Lanford}
Oscar~E Lanford, \emph{Time evolution of large classical systems}, Dynamical
  systems, theory and applications, Springer, 1975, pp.~1--111.

\bibitem{LS1}
Jani Lukkarinen and Herbert Spohn, \emph{Not to normal order?notes on the
  kinetic limit for weakly interacting quantum fluids}, Journal of Statistical
  Physics \textbf{134} (2009), no.~5-6, 1133--1172.

\bibitem{LS2}
\bysame, \emph{Weakly nonlinear schr{\"o}dinger equation with random initial
  data}, Inventiones mathematicae \textbf{183} (2011), no.~1, 79--188.

\bibitem{Nazarenko}
Sergey Nazarenko, \emph{Wave turbulence}, vol. 825, Springer Science \&
  Business Media, 2011.

\bibitem{NR}
Alan~C Newell and Benno Rumpf, \emph{Wave turbulence}, Annual review of fluid
  mechanics \textbf{43} (2011), 59--78.

\bibitem{Peierls}
Rudolf Peierls, \emph{Zur kinetischen theorie der w{\"a}rmeleitung in
  kristallen}, Annalen der Physik \textbf{395} (1929), no.~8, 1055--1101.

\bibitem{Sarnak}
Peter Sarnak, \emph{Values at integers of binary quadratic forms}, Harmonic
  analysis and number theory (1996).

\bibitem{Spohn1}
Herbert Spohn, \emph{Derivation of the transport equation for electrons moving
  through random impurities}, Journal of Statistical Physics \textbf{17}
  (1977), no.~6, 385--412.

\bibitem{Spohn2}
\bysame, \emph{The phonon boltzmann equation, properties and link to weakly
  anharmonic lattice dynamics}, Journal of statistical physics \textbf{124}
  (2006), no.~2-4, 1041--1104.

\bibitem{Tao}
Terence Tao, \emph{Nonlinear dispersive equations: local and global analysis},
  no. 106, American Mathematical Soc., 2006.

\bibitem{ZLF}
Vladimir~E Zakharov, Victor~S L'vov, and Gregory Falkovich, \emph{Kolmogorov
  spectra of turbulence i: Wave turbulence}, Springer Science \& Business
  Media, 2012.

\bibitem{ZL}
Vladimir~E Zakharov and VS~L'vov, \emph{Statistical description of nonlinear
  wave fields}, Radiophysics and Quantum Electronics \textbf{18} (1975),
  no.~10, 1084--1097.

\end{thebibliography}

\end{document}